\newcommand{\ubar}[1]{\underaccent{\bar}{#1}} 
\newtheorem{thmprime}{Theorem}[section]
\newtheorem{thmpprime}{Theorem}[section]
\newtheorem{thm}{Theorem}[section]
 \newtheorem{cor}[thm]{Corollary}
 \newtheorem{lem}[thm]{Lemma}
 \newtheorem{prop}[thm]{Proposition}
\newtheorem{introthm}{Theorem}
 \newtheorem{introdefn}[introthm]{Definition}
\numberwithin{equation}{section}
 \theoremstyle{definition}
  \newtheorem{defn}[thm]{Definition}
 \theoremstyle{remark}
 \newtheorem{rem}[thm]{Remark}
  \newtheorem{ex}[thm]{Example}
\newtheorem*{claim*}{Claim}
\newcommand{\mhyphen}{\operatorname{-}}
\newcommand{\variable}{\,\mhyphen\,}
\newcommand{\symdiff}{{\, \resizebox{0.8 em}{!}{$\triangle$}\, }}
\def\B{\mathfrak B}
\def\K{\mathfrak K}
\def\G{\mathcal G}
\def\s{\mathrm{s}}
\def\r{\mathrm{r}}
\def\dyn{\mathrm{dyn}}
\def\q{\mathrm{q}}
\def\dy{\mathrm{dy}}
\def\d{\mathrm{d}}
\def\Ad{\mathrm{Ad}}
\def\uq{\mathrm{uq}}
\def\u{\mathrm{u}}
\def\AFS{\mathrm{AFS}}
\def\N{\mathbb N}
\def\C{\mathbb C}
\def\SOT{\mathrm{SOT}}
\def\sdeg{s\text{-}\mathrm{deg}}
\def\CC{\mathbf{C}}
\def\G{\mathcal{G}}
\def\Gz{\mathcal{G}^{(0)}}
\def\RR{\mathcal{R}}
\def\S{\mathcal{S}}
\def\P{\mathfrak{P}}
\def\RN{\mathfrak{R}}
\def\KK{\mathcal{K}}
\begin{document}

\title{Asymptotic expansion for groupoids and Roe type algebras}
\author{Xulong Lu}
\author{Qin Wang}
\author{Jiawen Zhang}

\address[Xulong Lu]{Research Center for Operator Algebras, School of Mathematical Sciences, East China Normal University, 500 Dongchuan Road, Shanghai, 200241, China.}
\email{51255500057@stu.ecnu.edu.cn}

\address[Q. Wang]{Research Center for Operator Algebras, School of Mathematical Sciences, East China Normal University, Shanghai, 200241, China.}
\email{qwang@math.ecnu.edu.cn}

\address[Jiawen Zhang]{School of Mathematical Sciences, Fudan University, 220 Handan Road, Shanghai, 200433, China.}
\email{jiawenzhang@fudan.edu.cn}

\thanks{QW is partially supported by NSFC (No.  12171156), and the Science and Technology Commission of Shanghai Municipality (No. 22DZ2229014). JZ is supported by National Key R\&D Program of China 2022YFA1007000 and NSFC12422107.}

\begin{abstract}
In this paper, we introduce a notion of expansion for groupoids, which recovers the classical notion of expander graphs by a family of pair groupoids and expanding actions in measure by transformation groupoids. We also consider an asymptotic version for expansion and establish structural theorems, showing that asymptotic expansion can be approximated by domains of expansions. On the other hand, we introduce dynamical propagation and quasi-locality for operators on groupoids and the associated Roe type algebras. Our main results characterise when these algebras possess block-rank-one projections by means of asymptotic expansion, which generalises the crucial ingredients in previous works to provide counterexamples to the coarse Baum-Connes conjecture.
\end{abstract}

\date{\today}

\maketitle

\textit{Keywords: Groupoid, Expansion and asymptotic expansion, Structural theorems, Dynamical Roe algebras, Dynamical quasi-local algebras.}

\section{Introduction}\label{sec:intro}

Over the last few decades, the phenomenon of expansion has been discovered and extensively studied across various branches of mathematics. For instance in graph theory, the expansion phenomenon leads to the notion of expander graphs, which plays an important role not only in pure and applied mathematics but also in theoretical computer science (see the excellent survey article \cite{Lub12} and the references therein). In dynamical systems, the expansion phenomenon leads to the notion of expanding actions in measure, which turns out to be equivalent to the classic notion of spectral gap for measure-preserving actions (see \cite{Vig19}) and numerous examples have been discovered (\emph{e.g.}, \cite{BdS16, BG08, GJS99}).


Recently, an asymptotic version of expansion was introduced in different areas of mathematics (\cite{Li2019QuasilocalAA, asymptoticexpansionandstrongergodicity}), which is more stable under small perturbations and hence leads to important applications in operator algebras and higher index theory (\cite{structure, li2022markovianroealgebraicapproachasymptotic, faucris.282438222}). A crucial step therein is structural type theorems, showing that objects with asymptotic expansion can be approximated by those with expansion. In dynamical systems, asymptotic expansion also provides a new quantitative viewpoint on the classic notion of strong ergodicity, which was introduced in \cite{CW80, Sch80, Sch81} in relation with Ruziewicz problem, Kazhdan’s Property (T) and amenability.


In this paper, we aim to generalise and unify the theory of asymptotic expansion from different areas (including graph theory and dynamical systems) in the language of groupoids. Groupoids provide a framework encompassing both groups and spaces. They arise naturally in a variety of research areas such as dynamical systems, topology and geometry, geometric group theory and operator algebras, building bridges between all these areas of mathematics (see, \emph{e.g.}, \cite{brownronald1987, Higgins1971}).


To achieve this, we introduce the notion of expansion and asymptotic expansion for groupoids, generalising both (asymptotic) expander graphs and (asymptotically) expanding actions in measure, and establish structural type theorems in the groupoid setting. Furthermore, we introduce two classes of operator algebras associated to the dynamics of groupoids, generalising the classical Roe and quasi-local algebras from higher index theory. Our main results show that the existence of certain projection operators in these operators algebras characterise asymptotic expansion of groupoids, which provide a unified approach to results in \cite{structure, li2022markovianroealgebraicapproachasymptotic, faucris.282438222} and allow a boarder range of examples and applications.

We will now give a more detailed overview of this work.


\subsection{Expansion and asymptotic expansion}
To motivate our notion of asymptotic expansion, let us first recall the notion of asymptotically expanding actions in measure from \cite{asymptoticexpansionandstrongergodicity}. 

Let $\rho:\Gamma\curvearrowright X$ be a measure-class-preserving action of a countable group $\Gamma$ acting on a probability space $(X,\mu)$, and $\ell_\Gamma$ be a proper length function on $\Gamma$ (\emph{i.e.}, for each $L>0$, the ball $B_L\coloneqq \{\gamma \in \Gamma ~|~ \ell_\Gamma(\gamma) \leq L\}$ is finite). The action is called \emph{asymptotically expanding in measure $\mu$} if for any $\alpha \in (0,\frac{1}{2}]$ there exists $C_\alpha, L_\alpha>0$ such that for any measurable $A \subseteq X$ with $\alpha \leq \mu(A) \leq \frac{1}{2}$, we have $\mu(B_{L_\alpha} \cdot A)> (1+C_\alpha)\mu(A)$. When the functions $\alpha \mapsto L_\alpha$ and $\alpha \mapsto C_\alpha$ can taken to be constant functions, then the action is called \emph{expanding in measure $\mu$}.

To generalise the above to the setting of groupoids (see Section \ref{Basic notions for groupoid} for basic notions of groupoids), we consider the transformation groupoid. Recall that for the action $\rho:\Gamma\curvearrowright X$, the \emph{transformation groupoid} $X \rtimes \Gamma$ has source $\s(x,\gamma) = \gamma^{-1}x$, range $\r(x,\gamma)=x$ and inverse $(x,\gamma)^{-1} = (\gamma^{-1}x, \gamma^{-1})$. The length function $\ell_\Gamma$ naturally gives rise to a length function $\ell$ on $X \rtimes \Gamma$ by $\ell(x, \gamma)\coloneqq\ell_{\Gamma}(\gamma)$ for $x\in X$ and $\gamma \in \Gamma$. 

This leads to the following setting of our paper: Let $\G$ be a groupoid with a length function $\ell$ (see Definition \ref{length function}), and the unit space $\Gz$ is equipped with a measure $\mu$ on some $\sigma$-algebra $\RR$. To abstract measure-class-preserving transformations, we introduce the following: A bisection $K \subseteq \G$ is called \emph{admissible} if its source $\s(K)$ and range $\r(K)$ are measurable, the induced bijection 
\begin{equation}\label{EQ:alpha_K}
\tau_K\coloneqq\r|_K \circ (\s|_K)^{-1}: \s(K)\rightarrow \r(K)
\end{equation}
is a measure-class-preserving measurable isomorphism and the length $\ell(K)\coloneqq\sup\{\ell(x) ~|~ x\in K\}$ is finite. Moreover, a subset $K \subseteq \G$ is called \emph{decomposable} if $K=\bigcup_{i=1}^{N} K_{i}$ for $N \in \N$ and admissible bisections $K_i$. Note that for transformation groupoids, any measurable subset (with respect to the product structure) with finite length is decomposable, while unfortunately this does not hold in general.

Now we are in the position to introduce our notion of asymptotic expansion:


%

\begin{introdefn}[Definition \ref{expanding} and \ref{asymptotically expanding}]\label{introdefn:asymptotic expansion}
Let $\mathcal{G}$ be a groupoid with a length function $\ell$, and $(\Gz,\RR,\mu)$ be a probability measure space. We say that $\G$ is \emph{asymptotically expanding (in measure $\mu$)} if for any $\alpha \in (0,\frac{1}{2}]$, there exists a decomposable $K_\alpha \subseteq \G$ with $\ell(K_\alpha) < \infty$ such that for any $A \in \RR$ with $\alpha \leq \mu(A) \leq \frac{1}{2}$, then $\mu(\r(K_\alpha \cdot A)\setminus A)> C_\alpha \mu(A)$. If $K_\alpha \equiv K$, then we say that $\G$ is \emph{expanding (in measure $\mu$)}.
\end{introdefn}

It is clear that in the case of transformation groupoids, this recovers the notion of (asymptotically) expanding actions (see Section \ref{Group action}). On the other hand, when considering pair groupoids, a family version of Definition \ref{introdefn:asymptotic expansion} recovers the notion of (asymptotic) expander graphs (see Section \ref{main thm for Uniform Roe algebra} for details).

We establish the following structure theorem for asymptotic expansion, showing that it can be approximated by domains of expansion:

%


\begin{introthm}[Theorem \ref{exhaustion of expansion with radon controlled} and \ref{thm:Markov structure theorem}]\label{introthm structure theorem}
Let $\mathcal{G}$ be a groupoid with a length function $\ell$, and $(\Gz,\RR,\mu)$ be a probability measure space. Then the following are equivalent:
	\begin{enumerate}
		\item $\mathcal{G}$ is asymptotically expanding in measure.
		\item $\mathcal{G}^{(0)}$ admits an exhaustion by domains of expansion with bounded ratio.
		\item $\mathcal{G}^{(0)}$ admits an exhaustion by domains of Markov expansion with bounded ratio.
	\end{enumerate}
\end{introthm}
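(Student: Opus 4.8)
The plan is to prove the equivalences by a cycle, with the implications $(3)\Rightarrow(2)$ being essentially trivial (Markov expansion is a special case of expansion, so an exhaustion by domains of Markov expansion is in particular an exhaustion by domains of expansion), and the bulk of the work going into $(1)\Rightarrow(3)$ and $(2)\Rightarrow(1)$. For $(2)\Rightarrow(1)$, I would start from an exhaustion $\Gz = \bigcup_n Y_n$ (up to null sets) by domains of expansion with uniformly bounded ratio, say with expansion constants $C_n$ and propagation bounds $L_n$. Given $\alpha\in(0,\tfrac12]$, the bounded-ratio hypothesis should let me choose $n=n(\alpha)$ so that $\mu(\Gz\setminus Y_n)$ is small compared to $\alpha$ — roughly, small enough that any $A$ with $\mu(A)\ge\alpha$ has $\mu(A\cap Y_n)$ comparable to $\mu(A)$; then the expansion estimate on $Y_n$, which thickens $A\cap Y_n$ by a definite factor using the bisection $K$ witnessing expansion of the domain $Y_n$, transfers to the global estimate $\mu(\r(K_\alpha\cdot A)\setminus A) > C_\alpha\mu(A)$ after absorbing the error from $\Gz\setminus Y_n$. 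The decomposable set $K_\alpha$ is then the one attached to the domain $Y_n$, which has finite length since it is a domain of expansion.

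For the harder direction $(1)\Rightarrow(3)$, I would follow the strategy indicated by the reference to Theorem~\ref{thm:Markov structure theorem}: assume $\G$ is asymptotically expanding and, for each $\alpha$ in a sequence $\alpha_k\downarrow 0$, take the decomposable $K_{\alpha_k}\subseteq\G$ of finite length with the expansion property against all $A$ satisfying $\alpha_k\le\mu(A)\le\tfrac12$. The task is to manufacture an increasing sequence of measurable sets $Y_k\subseteq\Gz$ whose union is conull, such that each $Y_k$ is a domain of Markov expansion and the ratios $\mu(\Gz)/\mu(Y_k)$ (or whatever the "bounded ratio" bookkeeping is in Definition~\ref{expanding}/the domain-of-expansion definition) stay bounded. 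The natural candidate is to let $Y_k$ be (a measurable hull of) the union of orbits, under the groupoid generated by the $K_{\alpha_j}$ for $j\le k$, of a suitably chosen "large" piece of $\Gz$; one then has to check that the Markov/self-adjointness normalisation can be imposed — typically by symmetrising $K_{\alpha_k}\cup K_{\alpha_k}^{-1}$ and adjoining units — without destroying either the finite-length bound or the expansion estimate, at the cost of only changing constants. The passage from a one-sided expansion inequality $\mu(\r(K\cdot A)\setminus A)>C\mu(A)$ to the Markovian (reversible) formulation is where the decomposition $K=\bigcup_{i=1}^N K_i$ into admissible bisections and the measure-class-preserving isomorphisms $\tau_{K_i}$ must be used carefully, since the Radon–Nikodym cocycles of the $\tau_{K_i}$ enter the Markov operator.

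The main obstacle I anticipate is precisely this Markovization step together with the "bounded ratio" control: it is easy to produce \emph{some} exhaustion by domains of expansion, but keeping the ratio bounded requires that the domains $Y_k$ be chosen not too small relative to $\mu(\Gz)=1$, and this competes with the requirement that $Y_k$ be genuinely invariant (or close to invariant) under the relevant bisections so that the localized expansion estimate holds on the nose rather than up to boundary error. I expect one needs a maximality/exhaustion argument (choose $Y_k$ as large as possible subject to being a domain of expansion with the given data), plus a quantitative lemma converting the asymptotic-expansion inequality for the threshold $\alpha_k$ into a genuine expansion property on a set of measure at least $1-\varepsilon_k$ with $\varepsilon_k$ controlled. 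A secondary technical point is that "decomposable" sets are only finite unions of admissible bisections with \emph{a priori} no control on $N$, so the number of bisections may grow with $k$; one must check nothing in the definition of "domain of Markov expansion with bounded ratio" is sensitive to $N$, or otherwise fold the $N$-dependence into the constants. Modulo these bookkeeping issues, the argument should be a groupoid adaptation of the structure theorems in \cite{structure, li2022markovianroealgebraicapproachasymptotic}, and I would organize the proof so that the measure-theoretic heart — transferring between local and global expansion estimates — is isolated in one lemma reused in both nontrivial implications.
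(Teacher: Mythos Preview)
Your proposal has the right skeleton for $(2)\Rightarrow(1)$, but the hard direction $(1)\Rightarrow(2)$/$(1)\Rightarrow(3)$ rests on two misreadings that would prevent the argument from closing.

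First, ``bounded ratio'' does not refer to $\mu(\Gz)/\mu(Y_k)$. In Definition~\ref{defn:domain of expansion with bdd ratio}, the ratio being bounded is the Radon--Nikodym derivative $\RN(K_i,x)$ of each admissible bisection in the decomposition of $K$, restricted to points whose image also lies in $Y$: one requires $\tfrac{1}{\theta}\le\RN(K_i,x)\le\theta$. This is essential for passing between ordinary expansion and Markov expansion (Lemma~\ref{expansion and Markov expansion}), and it is \emph{not} automatic from asymptotic expansion. The paper obtains it by a separate truncation step: one first throws away the set $Z_n$ of points where some $\RN(K_{n,i},x)$ is too small (taking $\theta_n$ large enough that $\mu(Z_n)<\tfrac{1}{n+1}$), and works inside $X_n=\Gz\setminus Z_n$. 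Your proposal never addresses how to force the derivatives to be bounded, and the ``union of orbits'' construction certainly does not do this.

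Second, the mechanism producing a genuine domain of expansion inside $X_n$ is not an orbit construction or ``choose $Y_k$ maximal subject to being a domain of expansion.'' The paper uses the dual idea: one proves (Lemma~\ref{lem:maximal Folner set}) that among all $(\epsilon,K_n)$-F{\o}lner subsets of $X_n$ there is a maximal one $F_n$, and then sets $Y_n\coloneqq X_n\setminus F_n$. Maximality of $F_n$ forces that any $A\subseteq Y_n$ with $0<\mu(A)\le\tfrac12\mu(X_n)-\mu(F_n)$ already expands (Lemma~\ref{lem:expansion and Folner set}); the remaining range of $\mu(A)$ is handled by Lemma~\ref{subset also asym expansion1}, which is itself a bootstrapping of the asymptotic-expansion hypothesis that lets one expand all the way to measure close to $1$. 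The F{\o}lner-set removal is the substantive idea here; without it (or something equivalent) you have no way to pass from ``sets of measure $\ge\alpha$ expand'' to ``all nonnull sets in $Y_n$ expand.'' Your vague maximality suggestion is in the right spirit but on the wrong side: maximizing over expanding sets has no obvious monotonicity, whereas maximizing over F{\o}lner sets does, and that is what makes the argument go through.
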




Roughly speaking, here ``domain of expansion'' is a measurable subset $Y$ of $\Gz$ such that the reduction $\G^Y_Y$ is expanding (see Definition \ref{domain of asymptotic expansion} and Remark \ref{rem:reduction for domain}), and ``exhaustion'' means a sequence of domains $Y_n$ such that $\mu(\Gz \setminus Y_n) \to 0$. Here we also construct a Markov kernel for each  domain (see Definition \ref{defn:Markov kernel for K}) and consider a Markovian version of expansion, which will play a key role later to produce certain projection operators. Details can be found in Section \ref{ssec:structure thm} and \ref{ssec:Markov structure theorem}.

Theorem \ref{introthm structure theorem} provides a unified approach to recover the structure results in \cite{structure, li2022markovianroealgebraicapproachasymptotic, faucris.282438222}, and is more general to allow other types of examples (see Section \ref{Examples of measurable groupoids}). Even for asymptotic expander graphs and asymptotically expanding actions, our proof simplifies the original ones in a systematic way.

\subsection{Dynamical propagation and quasi-locality}\label{introsubsection finite dynamical propagatio n and quasi-local}

Now we introduce two classes of operator algebras associated to groupoids, which encode dynamical information and have their root in higher index theory. 

Recall that for a metric space, there are two associated operator algebras, called the Roe algebra and the quasi-local algebra. They were essentially introduced by Roe in his pioneering work on higher index theory \cite{Roe1988AnIT}, where he discovered that higher indices of elliptic differential operators on open manifolds belong to $K$-theories of the Roe algebra. More recently, Engel showed in \cite{Engel2015IndexTO,Engel2019RoughIT} that $K$-theories of the quasi-local algebra contains higher indices of more general elliptic pseudo-differential operators on open manifolds. 

To compute their $K$-theories, a programmatic and practical approach is to consult the coarse Baum-Connes conjecture \cite{BCH94}, a central conjecture in higher index theory and closely related to other conjectures like the Novikov conjecture and the Gromov-Lawson conjecture. Unfortunately, counterexamples were discovered in \cite{Higson2002, structure, li2022markovianroealgebraicapproachasymptotic, sawicki2017warped} using (asymptotic) expanders and warped cones of (asymptotically) expanding actions.  
Due to their importance, Roe and quasi-local algebras have been extensively studied and several variants have also been introduced (see \cite{Bao2021StronglyQA, delaat2023dynamicalpropagationroealgebras, quasilocalityforetalegroupoid, li2022markovianroealgebraicapproachasymptotic, ozawa2024embeddingsmatrixalgebrasuniform, ZhangquasilocalityandpropertyA, Yu00} and a recent excellent monograph \cite{willett2020higher}).


Inspired by these works, we introduce the following notions of dynamical propagation and quasi-locality for operators on groupoids. Note that in the context of transformation groupoids and pair groupoids, the following recovers the original (dynamical) Roe and quasi-local algebras (see Section \ref{Group action} and \ref{main thm for Uniform Roe algebra}).

\begin{introdefn}[Definition \ref{finite dynamical propagation} and \ref{dynamical quasi local}]\label{introdefinition of uniformly dynamical finite propagation}
Let $\mathcal{G}$ be a groupoid with a length function $\ell$, and $(\Gz,\RR,\mu)$ be a measure space. For an operator $T \in \B(L^2(\Gz,\mu))$, we say:
\begin{enumerate}
 \item $T$ has finite \emph{dynamical propagation} if there exists a decomposable $K\subseteq \G$ with $\ell(K) < \infty$ such that for any $A,B \in \RR$ with $\mu(\r(K \cdot A)\cap B)=0$, then $\chi_A T\chi_B=0$;
 \item $T$ is \emph{dynamically quasi-local} if for any $\epsilon > 0$, there exists a decomposable  $K_\epsilon \subseteq \G$ with $\ell(K_\epsilon) < \infty$ such that for any $A,B \in \RR$ with $\mu(\r(K_\epsilon \cdot A)\cap B)=0$, then $\|\chi_A T\chi_B\| < \epsilon$. 
\end{enumerate}
The \emph{dynamical Roe algebra of $\G$} is the norm closure of all operators with finite dynamical propagation, denoted by $\CC^*_{\dyn}(\mathcal{G})$. The \emph{dynamical quasi-local algebra of $\G$} is the set of all dynamically quasi-local operators, denoted by $\CC^*_{\dyn,\q}(\mathcal{G})$.
\end{introdefn}

Recall that in graph theory and dynamical systems, a key consequence of asymptotic expansion is that the associated Roe type algebras possess block-rank-one projections (called the ghost projections), which is crucial to provide counterexamples to the coarse Baum-Connes conjecture (see \cite{structure, li2022markovianroealgebraicapproachasymptotic, faucris.282438222}). In the current context of groupoids, we study dynamical propagation and quasi-locality of block-rank-one projections and establish a more general result, which unifies and simplifies the previous ones.

More precisely, we first consider a rank-one projection $P \in \B(L^2(\Gz,\mu))$ and a unit vector $\xi \in L^2(\Gz,\mu)$ in the range of $P$. This induces a probability measure $\nu$ on $\Gz$ given by $\d \nu(x) \coloneqq |\xi(x)|^2 \d \mu(x)$ for $x\in \Gz$, called the \emph{associated measure}. 

The following is our main result:


\begin{introthm}[Theorem \ref{thm:main thm ave proj general}]\label{introthm for main thm for family of block rank one projection}
Let $\mathcal{G}$ be a groupoid with a length function $\ell$, and $(\Gz,\RR,\mu)$ be a measure space. Let $P \in \B(L^2(\Gz,\mu))$ be a rank-one projection, and $\nu$ the associated probability measure on $\Gz$. Then the following are equivalent:
\begin{enumerate}
 \item $P \in \CC^*_{\dyn}(\mathcal{G})$;
 \item $P \in \CC^*_{\dyn,\q}(\mathcal{G})$;
 \item $\G$ is asymptotically expanding in measure $\nu$.
\end{enumerate}
\end{introthm}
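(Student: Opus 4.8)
The plan is to prove the cycle of implications $(1) \Rightarrow (2) \Rightarrow (3) \Rightarrow (1)$. The implication $(1) \Rightarrow (2)$ should be essentially formal: every operator with finite dynamical propagation is trivially dynamically quasi-local (take $K_\epsilon \equiv K$), and the class of dynamically quasi-local operators is norm-closed (a standard $\epsilon/3$-argument, using that if $\|\chi_A T_n \chi_B\| < \epsilon$ for all $A,B$ with $\mu(\r(K_\epsilon\cdot A)\cap B)=0$ and $\|T-T_n\|$ is small, then $\|\chi_A T\chi_B\|$ is small), hence contains $\CC^*_{\dyn}(\G)$. So $P \in \CC^*_{\dyn}(\G)$ implies $P \in \CC^*_{\dyn,\q}(\G)$.

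The heart of the argument is the equivalence between $(2)$ and $(3)$, which I would organize around the associated measure $\nu$ with $\d\nu = |\xi|^2 \d\mu$. The key translation is that $P$ is the rank-one projection onto $\C\xi$, so $\chi_A P \chi_B$ is the operator $\eta \mapsto \langle \chi_B\eta, \xi\rangle \chi_A \xi$, whose norm is $\|\chi_A \xi\|_2 \cdot \|\chi_B \xi\|_2 = \nu(A)^{1/2}\nu(B)^{1/2}$. Thus the quasi-locality condition ``$\mu(\r(K_\epsilon \cdot A)\cap B)=0 \Rightarrow \|\chi_A P\chi_B\|<\epsilon$'' becomes the purely measure-theoretic statement: for all $A,B$ with $\mu(\r(K_\epsilon\cdot A)\cap B)=0$ one has $\nu(A)\nu(B) < \epsilon^2$. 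Now I would show this is equivalent to asymptotic expansion in measure $\nu$. For the direction $(3) \Rightarrow (2)$: given $\epsilon$, choose $\alpha$ small (roughly $\alpha \sim \epsilon^2$) and let $K_\alpha$ be the decomposable set witnessing expansion at level $\alpha$ for measure $\nu$; one checks, possibly after enlarging $K_\alpha$ slightly and taking $K_\epsilon$ to be a symmetrized version like $K_\alpha \cup K_\alpha^{-1}$ together with units, that if $\mu(\r(K_\epsilon\cdot A)\cap B)=0$ then $\nu(A)$ or $\nu(B)$ must be small — if both had measure $\geq \alpha$ then applying the expansion inequality to (a truncation to measure $\leq \frac12$ of) $A$ forces $\r(K_\alpha \cdot A)$ to have positive $\nu$-overlap with $B$, and since $\nu \ll \mu$ this contradicts the $\mu$-null hypothesis. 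This gives $\nu(A)\nu(B) < \epsilon^2$. For $(2) \Rightarrow (3)$: unwinding the definition, if $P \in \CC^*_{\dyn,\q}(\G)$ then for each $\epsilon$ there is a decomposable $K_\epsilon$ with $\nu(A)\nu(B)<\epsilon^2$ whenever $\mu(\r(K_\epsilon\cdot A)\cap B)=0$; given $\alpha\in(0,\frac12]$ and $A$ with $\alpha \leq \nu(A)\leq \frac12$, set $B \coloneqq \Gz \setminus \r(K_\epsilon \cdot A)$ so that $\mu(\r(K_\epsilon\cdot A)\cap B)=0$, whence $\nu(A)\nu(B)<\epsilon^2$, i.e. $\nu(\Gz \setminus \r(K_\epsilon\cdot A)) < \epsilon^2/\alpha$, so $\nu(\r(K_\epsilon \cdot A)) > 1 - \epsilon^2/\alpha$; choosing $\epsilon$ with $\epsilon^2/\alpha$ suitably small (e.g. $\epsilon^2 = \alpha^2/4$) yields $\nu(\r(K_\epsilon\cdot A)\setminus A) > 1 - \alpha/4 \cdot\alpha^{-1}\cdot\alpha\cdots$, more carefully $\nu(\r(K_\epsilon\cdot A)) - \nu(A) > 1 - \epsilon^2/\alpha - \frac12 \geq C_\alpha \nu(A)$ for an appropriate constant $C_\alpha$ depending only on $\alpha$, which is exactly expansion in measure $\nu$ at level $\alpha$ with $K_\alpha \coloneqq K_\epsilon$.

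Finally, to close the loop I would prove $(3) \Rightarrow (1)$, which is the substantive part and where Theorem \ref{introthm structure theorem} (and especially its Markovian refinement, clause (3), via Definition \ref{defn:Markov kernel for K}) enters. The idea: asymptotic expansion in measure $\nu$ gives, by the structure theorem, an exhaustion of $\Gz$ by domains of Markov expansion $Y_n$ with $\nu(\Gz\setminus Y_n)\to 0$ and bounded ratio. On each domain one builds the averaging/Markov operator $M_n$ associated to the Markov kernel of $K_n$; these have finite dynamical propagation (controlled by $\ell(K_n)<\infty$ and decomposability of $K_n$), and the Markov expansion estimate provides a spectral gap which lets one iterate $M_n$ and show $M_n^{k_n} \to$ (the compression of $P$ to $Y_n$) in norm as $k_n \to \infty$ — powers of finite-propagation operators still have finite dynamical propagation, so the limit lies in $\CC^*_{\dyn}(\G)$. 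Letting $n\to\infty$, the compressions of $P$ to $Y_n$ converge to $P$ in norm (since $\nu(\Gz\setminus Y_n)\to 0$ controls $\|\chi_{\Gz\setminus Y_n}\xi\|_2$), so $P \in \CC^*_{\dyn}(\G)$.

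The main obstacle I anticipate is the spectral-gap-to-norm-convergence step in $(3) \Rightarrow (1)$: one needs the Markov expansion constants to translate into a genuine gap for the averaging operators $M_n$ that is uniform enough (after suitable normalization) to push $M_n^{k_n}$ to the rank-one compression in operator norm rather than merely strongly, and to control the interplay between ``$k_n \to \infty$'' and ``$n \to \infty$'' so that dynamical propagation stays finite at each stage — this is precisely the place where the Markovian version in Theorem \ref{introthm structure theorem}(3) is needed rather than plain expansion, and where the bounded-ratio hypothesis does the real work. The measure-theoretic equivalence $(2)\Leftrightarrow(3)$ is conceptually clean but requires care with $\mu$-null versus $\nu$-null sets (using only $\nu \ll \mu$, which holds by construction) and with the symmetrization of $K$ needed to make the range condition symmetric in $A$ and $B$.
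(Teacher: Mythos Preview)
Your approach follows the same conceptual arc as the paper: the key norm formula $\|\chi_A P \chi_B\| = \sqrt{\nu(A)\nu(B)}$ (which the paper obtains by conjugating $P$ to the averaging projection $P_\G$ on $L^2(\Gz,\nu)$ via multiplication by $\xi$, and then invoking Lemma~\ref{well known fact for averaging projection}), together with the Markovian structure theorem for $(3)\Rightarrow(1)$. The paper organises this by first proving the special case where $P$ is the averaging projection and $\mu$ is itself the probability measure (Theorem~\ref{thm:main thm ave proj}), and then reducing the general statement to that case via two lemmas; your plan attempts the general case directly.

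There is, however, a technical point your direct route underestimates. The notion of \emph{admissible bisection}, and hence of \emph{decomposable} subset, depends on the measure through the requirement that $\tau_K$ be measure-class-preserving. Since only $\nu\ll\mu$ is guaranteed (not equivalence), a $\mu$-decomposable $K$ need not be $\nu$-decomposable and vice versa: for instance, $\tau_K$ can be a $\mu$-measure-class-preserving isomorphism yet map a $\nu$-null set (a $\mu$-positive set on which $\xi$ vanishes) onto a $\nu$-positive set. Your arguments for $(2)\Leftrightarrow(3)$ pass the same $K$ freely between the quasi-locality condition (formulated with $\mu$-decomposable sets) and the expansion condition (formulated with $\nu$-decomposable sets), and your $(3)\Rightarrow(1)$ builds Markov approximants over $\nu$ but needs them to land in $\CC^*_{\dyn}(\G)$, which is defined with $\mu$. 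The paper resolves all of this in one stroke by first restricting to $Y=\Gz\setminus Z$ where $Z=\{\xi=0\}$ (Lemma~\ref{lem:reduction Q}): on $Y$ the measures $\mu|_Y$ and $\nu|_Y$ are equivalent, so the two notions of decomposability coincide, and a short argument shows $P\in\CC^*_{\dyn}(\G)$ (resp.\ $\CC^*_{\dyn,\q}(\G)$) if and only if its compression lies in $\CC^*_{\dyn}(\G_Y^Y)$ (resp.\ $\CC^*_{\dyn,\q}(\G_Y^Y)$). This same reduction is what makes the multiplication-by-$\xi$ map $L^2(\Gz,\nu)\to L^2(\Gz,\mu)$ a genuine unitary rather than merely an isometry (Lemma~\ref{lem:equiv measure quasi-local and ppg}), so that the Markov approximants transfer to $\CC^*_{\dyn}(\G;\mu)$. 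Once this restriction-to-support step is inserted, your sketch matches the paper's proof.
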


The proof of Theorem \ref{introthm for main thm for family of block rank one projection} is quite involved, similar to the outline in the context of graph theory and dynamical systems. Roughly speaking, we firstly reduce to the case of averaging projection by changing measures, and prove that this is equivalent to asymptotic expansion. Then applying the structure theorem (Theorem \ref{introthm structure theorem}), we obtain an exhaustion by domain of Markov expansion. Thanks to the theory of Markov kernels, on each domain we obtain a rank-one projection in the dynamical Roe algebra by functional calculus. Finally, these projections converge in norm to the averaging projection, which concludes the proof. 

After establishing Theorem \ref{introthm for main thm for family of block rank one projection}, we further consider a family version by chasing parameters. Then we obtain a family version of Theorem \ref{introthm for main thm for family of block rank one projection} (Theorem \ref{cor:cor to thm general case prime}), which characterises when the dynamical Roe and quasi-local algebras possess block-rank-one projections. This recovers the most technical results in \cite{structure, li2022markovianroealgebraicapproachasymptotic, faucris.282438222}, and our proof simplifies the original ones in a systematic way.


Finally, we apply our results to several examples, including the known results on asymptotic expanders and asymptotically expanding actions. Moreover, we study new examples including groupoid actions on fibred spaces, the HLS groupoid and graph groupoids, and establish corresponding results therein.

\subsection{Organization}\label{Organization}
The paper is organised as follows. In Section \ref{Preliminaries}, we recall necessary background knowledge in groupoids and Markov kernels. In Section \ref{sec:asymptotic expansion}, we introduce the notion of expansion and asymptotic expansion for groupoids and establish the structure result (Theorem \ref{introthm structure theorem}). Section \ref{Dynamical concepts on groupoids} is devoted to introducing dynamical propagation and quasi-locality, and relate asymptotic expansion to the quasi-locality of the averaging projection (Proposition \ref{asym expansion and quasi local}, a special case of part of Theorem \ref{introthm for main thm for family of block rank one projection}). Then in Section \ref{sec:Main results}, we accomplish the proof of Theorem \ref{introthm for main thm for family of block rank one projection}, making use of Theorem \ref{introthm structure theorem}. Finally in Section \ref{Examples of measurable groupoids}, we explain in detail how our results recover the original ones for asymptotic expanders (by a family version of pair groupoids) and asymptotically expanding actions (by transformation groupoids), and provide new examples including groupoid actions, the HLS groupoid and graph groupoids.


\subsection*{Acknowledgement} We would like to thank Prof. Kang Li and Prof. Federico Vigolo for helpful suggestions after reading an early draft of this preprint.

\section{Preliminaries}\label{Preliminaries}
In this section, we recall the notions of groupoids and Markov kernels.

\subsection{Basic notions for groupoids}\label{Basic notions for groupoid}

Recall that a \emph{groupoid} $\mathcal{G}$ is a small category consists of a set $\mathcal{G}$, a subset $\mathcal{G}^{(0)}$ called the \emph{unit space}, two maps $\s,\r:\mathcal{G}\rightarrow \mathcal{G}^{(0)}$ called the \emph{source} and the \emph{range} maps, respectively, a composition law $(\gamma_1,\gamma_2) \mapsto \gamma_1\gamma_2$ for $(\gamma_1,\gamma_2)\in \mathcal{G}^{(2)}\coloneqq \{(\gamma_1,\gamma_2)\in \mathcal{G}\times \mathcal{G} ~|~\s(\gamma_1)=\r(\gamma_2)\}$ and an inverse map $\gamma \mapsto \gamma^{-1}$. There operations satisfy a couple of axioms, including the associativity law and the fact that $\mathcal{G}^{(0)}$ act as units. 
For $A,B\subseteq \mathcal{G}$, we denote
\begin{itemize}
    \item $A^{-1}\coloneqq\{\gamma^{-1} ~|~ \gamma\in A\}$,
    \item $A \cdot B\coloneqq\{\gamma\in \mathcal{G} ~|~ \gamma=\gamma_1\gamma_2~\text{where}~\gamma_1\in A,\gamma_2 \in B\text{ with }\s(\gamma_1)=\r(\gamma_2)\}$.
\end{itemize}
We say that $A$ is \emph{symmetric} if $A=A^{-1}$, and \emph{unital} if $\Gz \subseteq A$. A subset $A\subseteq \mathcal{G}$ is called a \emph{bisection} if the restrictions of $\s,\r$ to $A$ are injective and recall from (\ref{EQ:alpha_K}) that we have the induced bijection $\tau_A\coloneqq\r|_A \circ (\s|_A)^{-1}: \s(A)\rightarrow \r(A)$.

We also need the notion of length functions on groupoids:

\begin{defn}\label{length function}
Let $\mathcal{G}$ be a groupoid. A \emph{length function} on $\G$ is a map $\ell:\G \rightarrow \mathbb{R}^{+}$ such that $\ell(\mathcal{G}^{(0)})=\{0\}$ and
\begin{enumerate}
    \item $\ell(\gamma^{-1})=\ell(\gamma)$ for all $\gamma\in \mathcal{G}$;
    \item $\ell(\gamma_1\gamma_2) \leq \ell(\gamma_1)+\ell(\gamma_2)$ for all $\gamma_1, \gamma_2 \in \G$ with  $\s(\gamma_1)=\r(\gamma_2)$.
\end{enumerate}
\end{defn}

Given a length function $\ell$ on a groupoid $\G$ and $n\in \N$, denote the closed ball
\begin{equation}\label{EQ:Kn}
B_n\coloneqq\{\gamma \in \mathcal{G} ~|~ \ell(\gamma)\leq n\}.
\end{equation}
Clearly $\{B_n\}_{n\in \N}$ is a sequence of increasing subsets in $\G$ satisfying:
\begin{itemize}
\item $\Gz \subseteq B_0$ and $B_n$ is symmetric for $n\in \mathbb{N}$;
\item $B_n \cdot B_m \subseteq B_{n+m}$ for $n,m \in \N$;
\item $\mathcal{G}=\bigcup_{n\in \N} B_n$.
\end{itemize}
Conversely, given an increasing sequence $\{B_n\}_{n\in \N}$ of subsets in $\G$ satisfying the three conditions above, we can construct a function $\tilde{\ell}$ on $\G$ by
\begin{equation}\label{EQ:tilde ell}
\tilde{\ell}(\gamma)\coloneqq\inf\left\{n\in \N ~|~ \gamma\in B_n\right\} \quad \text{for} \quad \gamma \in \G.
\end{equation}
It is routine to check that $\tilde{\ell}$ is a length function on $\G$. Moreover, it is easy to see:

\begin{lem}\label{length function2}
Given a length function $\ell$ on a groupoid $\G$, let $\{B_n\}_{n\in \N}$ be the sequence from Equation (\ref{EQ:Kn}) and define the length function $\tilde{\ell}$ on $\G$ using (\ref{EQ:tilde ell}). Then $\ell$ and $\tilde{\ell}$ are \emph{coarsely equivalent} in the sense that for any $r> 0$, we have:
\[
\sup\{\tilde{\ell}(\gamma) ~|~ \ell(\gamma)\leq r\} < \infty\quad\text{and}\quad\sup\{\ell(\gamma) ~|~ \tilde{\ell}(\gamma)\leq r\}< \infty.
\]
\end{lem}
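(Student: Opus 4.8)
The plan is to verify the two displayed suprema directly from the definitions, using only the elementary properties of $\ell$ and the fact that the balls $B_n$ in (\ref{EQ:Kn}) are \emph{finite} in the applications but, more importantly, that the two collections $\{B_n\}$ and $\{\tilde B_r\coloneqq\{\gamma\mid\tilde\ell(\gamma)\le r\}\}$ are cofinal in each other. Since the paper's $\ell$ takes values in $\R^+$ (not necessarily integer), the only subtlety is matching real radii with integer radii, and this is handled by rounding up.

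First I would prove the second inequality, $\sup\{\ell(\gamma)\mid\tilde\ell(\gamma)\le r\}<\infty$. Fix $r>0$ and set $m\coloneqq\lfloor r\rfloor$. If $\tilde\ell(\gamma)\le r$, then by (\ref{EQ:tilde ell}), $\inf\{n\in\N\mid\gamma\in B_n\}\le r$; since this infimum is over natural numbers and is $\le r$, it is in fact $\le m$, and because the $B_n$ are increasing the infimum is attained, so $\gamma\in B_m$. By the definition (\ref{EQ:Kn}) of $B_m$ this gives $\ell(\gamma)\le m\le r<\infty$. Hence $\sup\{\ell(\gamma)\mid\tilde\ell(\gamma)\le r\}\le r$. (This direction is essentially trivial: $\tilde\ell\le r$ forces $\gamma\in B_{\lfloor r\rfloor}$ and membership in $B_{\lfloor r\rfloor}$ is by definition the bound $\ell\le\lfloor r\rfloor$.)

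Next I would prove the first inequality, $\sup\{\tilde\ell(\gamma)\mid\ell(\gamma)\le r\}<\infty$. Fix $r>0$ and set $k\coloneqq\lceil r\rceil\in\N$. If $\ell(\gamma)\le r\le k$, then $\gamma\in B_k$ by (\ref{EQ:Kn}), hence $\tilde\ell(\gamma)=\inf\{n\in\N\mid\gamma\in B_n\}\le k$ by (\ref{EQ:tilde ell}). Therefore $\sup\{\tilde\ell(\gamma)\mid\ell(\gamma)\le r\}\le\lceil r\rceil<\infty$, which is the claim. Combining the two bounds establishes that $\ell$ and $\tilde\ell$ are coarsely equivalent in the stated sense, completing the proof.

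The content here is genuinely routine, so there is no real obstacle; the only thing to be careful about is the passage between the real-valued $\ell$ and the integer-indexed balls $B_n$, which is why both directions go through the ceiling/floor of $r$. One should also note that the displayed suprema could a priori be over an empty set (if no $\gamma$ satisfies the constraint), in which case they are $-\infty$ by convention and the inequalities hold vacuously; otherwise the bounds $\lfloor r\rfloor$ and $\lceil r\rceil$ apply as above. I would present the argument in this order — easy direction first to warm up the reader, then the symmetric estimate — and keep it to a few lines, since the substance of the lemma is just that $B_n$ is literally defined as a sublevel set of $\ell$.
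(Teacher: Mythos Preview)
Your proof is correct; the paper omits the proof entirely (it only says ``it is easy to see''), and your argument via $\lfloor r\rfloor$ and $\lceil r\rceil$ is exactly the routine verification that is intended.
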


%
%

Throughout the paper, let $\G$ be a groupoid with length function $\ell$, and $\Gz$ equipped with a (not necessarily finite) measure $\mu$ on some $\sigma$-algebra $\RR$. 
Motivated by the case of group actions as explained in Section \ref{sec:intro}, we introduce the following:


\begin{defn}\label{admissible}
Let $\mathcal{G}$ be a groupoid with a length function $\ell$, and $(\Gz,\RR,\mu)$ be a measure space. A bisection $K \subseteq \G$ is called \emph{admissible} if $\s(K),\r(K) \in \RR$, $\tau_K: \s(K)\rightarrow \r(K)$ is a measure-class-preserving measurable isomorphism, and $\ell(K)\coloneqq\sup\{\ell(x) ~|~ x\in K\}$ is finite. 
\end{defn}

We record the following elementary observation, whose proof is straightforward and hence omitted.

\begin{lem}\label{inverse,union,composition,intersection still admissible}
If $K,K_1,K_2 \subseteq \G$ are admissible bisections, then $K^{-1}$ and $K_1\cdot K_2$ are also admissible bisections.
\end{lem}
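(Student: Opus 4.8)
The plan is to verify directly that each of the three claimed operations preserves the defining properties of an admissible bisection: being a bisection, having measurable source and range, inducing a measure-class-preserving measurable isomorphism via $\tau$, and having finite length. I would treat each operation in turn, reducing everything to elementary groupoid identities together with Lemma~\ref{length function2}'s companion facts about $\ell$ (that $\ell(\gamma^{-1})=\ell(\gamma)$ and $\ell(\gamma_1\gamma_2)\le\ell(\gamma_1)+\ell(\gamma_2)$).

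First I would handle the inverse $K^{-1}$. That $K^{-1}$ is a bisection is immediate since $\s|_{K^{-1}}=\r|_K\circ(\text{inv})$ and $\r|_{K^{-1}}=\s|_K\circ(\text{inv})$, so injectivity is inherited; moreover $\s(K^{-1})=\r(K)$ and $\r(K^{-1})=\s(K)$ are measurable by hypothesis. The induced map satisfies $\tau_{K^{-1}}=\tau_K^{-1}:\r(K)\to\s(K)$, which is a measure-class-preserving measurable isomorphism precisely because $\tau_K$ is one (measure-class-preservation is symmetric: $\tau_K$ and its inverse both send null sets to null sets). Finally $\ell(K^{-1})=\sup\{\ell(\gamma^{-1})\mid\gamma\in K\}=\sup\{\ell(\gamma)\mid\gamma\in K\}=\ell(K)<\infty$ by property (1) of a length function. (Here I am using the convention implicit in Definition~\ref{admissible} that $\ell(K)$ means the supremum of $\ell$ over elements of $K$; note $\s(K),\r(K)\subseteq\Gz$ on which $\ell$ vanishes, so this is consistent.)

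Next I would treat the product $K_1\cdot K_2$. Writing $K\coloneqq K_1\cdot K_2$, every element of $K$ has the unique form $\gamma_1\gamma_2$ with $\gamma_i\in K_i$ and $\s(\gamma_1)=\r(\gamma_2)$. To see $K$ is a bisection: if $\gamma_1\gamma_2$ and $\gamma_1'\gamma_2'$ have the same range, then $\r(\gamma_1)=\r(\gamma_1')$, so $\gamma_1=\gamma_1'$ since $K_1$ is a bisection, hence $\s(\gamma_1)=\s(\gamma_1')$, which forces $\r(\gamma_2)=\r(\gamma_2')$, so $\gamma_2=\gamma_2'$ since $K_2$ is a bisection; similarly for the source map. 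For the domains one computes $\r(K)=\tau_{K_1}\big(\s(K_1)\cap\r(K_2)\big)$ and $\s(K)=\tau_{K_2}^{-1}\big(\s(K_1)\cap\r(K_2)\big)$, which are measurable: $\s(K_1)\cap\r(K_2)\in\RR$ since both are measurable, and $\tau_{K_1},\tau_{K_2}$ are measurable isomorphisms with measurable domains and ranges. On this common ground one checks the pointwise identity $\tau_{K_1\cdot K_2}=\tau_{K_1}\circ\tau_{K_2}$ (restricted appropriately): indeed if $x=\s(\gamma_1\gamma_2)=\s(\gamma_2)$ then $\tau_{K_2}(x)=\r(\gamma_2)=\s(\gamma_1)$ and $\tau_{K_1}(\tau_{K_2}(x))=\r(\gamma_1)=\r(\gamma_1\gamma_2)$. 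A composition of measure-class-preserving measurable isomorphisms (after restriction to the measurable set $\s(K_1)\cap\r(K_2)$, where restriction clearly preserves the property) is again one. Finally $\ell(K)=\sup\{\ell(\gamma_1\gamma_2)\}\le\ell(K_1)+\ell(K_2)<\infty$ by property (2) of a length function. Assembling these lines gives the result; I would present it compactly, perhaps just recording the two key identities $\tau_{K^{-1}}=\tau_K^{-1}$ and $\tau_{K_1\cdot K_2}=\tau_{K_1}\circ\tau_{K_2}$ and the length estimates, since the verifications are routine.

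I do not expect any real obstacle here — the lemma is genuinely elementary, which is why the paper defers the proof. The only mild subtlety worth a sentence of care is the bookkeeping of domains in the product case (that $\s(K_1\cdot K_2)$ and $\r(K_1\cdot K_2)$ are the indicated measurable subsets rather than all of $\s(K_2)$, $\r(K_1)$), and the observation that the restriction of a measure-class-preserving measurable isomorphism to a measurable subset of its domain remains one; both are immediate once stated explicitly.
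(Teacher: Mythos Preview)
Your proposal is correct and is exactly the routine verification the paper has in mind; the paper itself omits the proof entirely with the remark that it is ``straightforward and hence omitted.'' One trivial slip: you refer to ``three claimed operations,'' but the lemma as stated only asserts admissibility of $K^{-1}$ and $K_1\cdot K_2$ (the label mentions union and intersection, but these are not in the statement), and indeed you only treat those two.
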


Based on Definition \ref{admissible}, we further introduce the following, which is also motivated from the case of group actions  as explained in Section \ref{sec:intro}.

\begin{defn}\label{decomposable}
Let $\mathcal{G}$ be a groupoid with a length function $\ell$, and $(\Gz,\RR,\mu)$ be a measure space. A subset $K \subseteq \G$ is called \emph{decomposable} if $K=\bigcup_{i=1}^{N} K_{i}$ for $N \in \N$ and admissible bisections $K_i$. 

Given a unital symmetric decomposable $K \subseteq \G$, a decomposition $K=\bigcup_{i=1}^N K_i$ is called \emph{unital} if $K_i = \Gz$ for some $i$ and \emph{symmetric} if there exists a bijection $\sigma:\{1,2,\cdots,N\} \to \{1,2,\cdots,N\}$ such that $K_i^{-1} = K_{\sigma(i)}$. We say that $K$ is \emph{$N$-decomposable} if there exists a unital symmetric decomposition $K=\bigcup_{i=1}^N K_i$ for admissible $K_i$. 
\end{defn}

Applying Lemma \ref{inverse,union,composition,intersection still admissible}, we obtain the following:

\begin{lem}\label{inverse,union,composition still decomposable}
\begin{enumerate}
 \item If $K$ is $N$-decomposable with $\ell(K) \leq L$, then $K^{-1}$ is $N$-decomposable with $\ell(K^{-1}) \leq L$.
 \item If $K_i$ is $N_i$-decomposable with $\ell(K_i) \leq L_i$ for $i=1,2$, then $K_1 \cup K_2$ is $(N_1+N_2)$-decomposable with $\ell(K_1 \cup K_2) \leq \max\{L_1, L_2\}$ and $K_1\cdot K_2$ is $N_1N_2$-decomposable with $\ell(K_1 \cdot K_2) \leq L_1 + L_2$.
\end{enumerate}
\end{lem}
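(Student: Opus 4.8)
The plan is to deduce Lemma \ref{inverse,union,composition still decomposable} directly from Lemma \ref{inverse,union,composition,intersection still admissible} together with the bookkeeping built into Definition \ref{decomposable}, keeping careful track of the number of pieces, the unital/symmetric structure, and the length bound. Since all statements are finitary, no analytic input is needed; the only real content is producing, in each case, an explicit \emph{unital symmetric} decomposition of the required size.

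For part (1), I would start from a unital symmetric decomposition $K = \bigcup_{i=1}^N K_i$ with $\ell(K) \le L$, witnessed by a bijection $\sigma$ of $\{1,\dots,N\}$ with $K_i^{-1} = K_{\sigma(i)}$ and with $K_{i_0} = \Gz$ for some $i_0$. Taking inverses gives $K^{-1} = \bigcup_{i=1}^N K_i^{-1} = \bigcup_{i=1}^N K_{\sigma(i)}$, which is again the union of the same $N$ admissible bisections (using Lemma \ref{inverse,union,composition,intersection still admissible} that each $K_i^{-1}$ is admissible); this decomposition is unital since $\Gz^{-1} = \Gz$ appears, and symmetric with witnessing bijection $\sigma^{-1}$ (or just note $\sigma$ itself works after relabelling). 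The length bound $\ell(K^{-1}) \le L$ is immediate from $\ell(\gamma^{-1}) = \ell(\gamma)$ in Definition \ref{length function}(1), so $\ell(x) = \ell(x^{-1})$ for every $x \in K$.

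For part (2), write unital symmetric decompositions $K_j = \bigcup_{i=1}^{N_j} K_{j,i}$ with $\ell(K_j) \le L_j$ for $j=1,2$. For the union, $K_1 \cup K_2 = \bigcup_{j=1,2}\bigcup_{i=1}^{N_j} K_{j,i}$ is a decomposition into $N_1 + N_2$ admissible bisections; it is unital because $\Gz$ occurs among the pieces of $K_1$ (or $K_2$), and symmetric because the symmetry of each $K_j$ gives a bijection of its own index set, and these assemble to a bijection of the disjoint union of index sets. The length bound is $\ell(x) \le \max\{L_1, L_2\}$ for $x$ in either piece. For the product, $K_1 \cdot K_2 = \bigcup_{i=1}^{N_1}\bigcup_{i'=1}^{N_2} (K_{1,i} \cdot K_{2,i'})$; each $K_{1,i} \cdot K_{2,i'}$ is admissible by Lemma \ref{inverse,union,composition,intersection still admissible}, giving $N_1 N_2$ pieces. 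The subtle points here are the two structural claims: unitality follows because $\Gz \cdot \Gz = \Gz$ appears (using that $\Gz$ is a piece of both decompositions), and symmetry follows from $(K_{1,i}\cdot K_{2,i'})^{-1} = K_{2,i'}^{-1}\cdot K_{1,i}^{-1} = K_{2,\sigma_2(i')}\cdot K_{1,\sigma_1(i)}$; one must check this lands among the chosen $N_1 N_2$ pieces — it does precisely because $K_2$ being symmetric and unital (so in particular $K_2 = K_2^{-1}$, hence $K_2^{-1}\cdot K_1^{-1} \subseteq K_1\cdot K_2$ after reindexing, but more carefully one passes to the decomposition $K_1 \cdot K_2 = \bigcup (K_{1,i}\cdot K_{2,i'})$ and notes the inverse of the $(i,i')$ piece equals the $(\sigma_1(i), \sigma_2(i'))$-indexed piece of $K_2 \cdot K_1$, then uses $K_2 \cdot K_1$ and $K_1 \cdot K_2$ agree as sets when both are unital symmetric). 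Finally $\ell(x_1 x_2) \le \ell(x_1) + \ell(x_2) \le L_1 + L_2$ by Definition \ref{length function}(2).

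The main obstacle is the symmetry bookkeeping for the product $K_1 \cdot K_2$: one wants a genuinely symmetric decomposition of size exactly $N_1 N_2$, and the naive double-indexed family is sent by inversion to a double-indexed family of $K_2 \cdot K_1$, so one needs the observation that for unital symmetric $K_1, K_2$ the sets $K_1 \cdot K_2$ and $K_2 \cdot K_1$ coincide (each contains the other since $\Gz$ lies in both factors and one can conjugate), allowing the two index sets to be matched up by $(i,i') \mapsto (\sigma_1(i),\sigma_2(i'))$ after this identification. Everything else is routine and I would present it compactly.
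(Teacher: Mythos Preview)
The paper gives no proof at all, simply citing Lemma~\ref{inverse,union,composition,intersection still admissible} and stating the result, so your write-up already goes well beyond what the authors record. Your treatment of part~(1) and of the union in part~(2) is correct and matches the intended routine argument.

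However, your handling of the symmetry of the product decomposition contains a genuine error. You assert that for unital symmetric $K_1,K_2$ one has $K_1\cdot K_2 = K_2\cdot K_1$, and you sketch a reason (``$\Gz$ lies in both factors and one can conjugate''). This is false. Take $\G$ to be the free group on $a,b$ (so $\Gz=\{e\}$) with $K_1=\{e,a,a^{-1}\}$ and $K_2=\{e,b,b^{-1}\}$; both are unital symmetric and $3$-decomposable, yet $ab\in K_1\cdot K_2$ while $ab\notin K_2\cdot K_1$. In particular $(K_1\cdot K_2)^{-1}=K_2\cdot K_1\neq K_1\cdot K_2$, so $K_1\cdot K_2$ is not even symmetric as a set, and no symmetric decomposition of it can exist. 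Unitality of the factors only gives $K_1\cup K_2\subseteq K_1\cdot K_2\cap K_2\cdot K_1$, which is much weaker than what you claim.

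So the product clause, read literally with the exact count $N_1N_2$, is problematic as stated; your instinct that there is something to check here was right, but the fix you propose does not work. What rescues the paper is that every invocation of the product case (Lemma~\ref{subset also asym expansion1}, the proof of Proposition~\ref{asym expansion and quasi local}) concerns a power $K^m$ of a single unital symmetric $K$. There the issue evaporates: $(K^m)^{-1}=(K^{-1})^m=K^m$, and the obvious decomposition indexed by $(i_1,\dots,i_m)$ is symmetric via $(i_1,\dots,i_m)\mapsto(\sigma(i_m),\dots,\sigma(i_1))$, giving the claimed $N^m$ pieces. For the general product needed in Lemma~\ref{check of finite propagation algebra} one can instead pass to $K_1\cdot K_2\cup K_2\cdot K_1$, which is unital symmetric and $(2N_1N_2)$-decomposable; the precise constant is irrelevant there.
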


The following easy observation is useful to define (asymptotic) expansion for groupoids later.

\begin{lem}\label{basic assumprop2}
If $K \subseteq \G$ is decomposable and $A \subseteq \Gz$ is measurable, then $\r(K\cdot A)$ is measurable.
\end{lem}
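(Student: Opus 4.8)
The plan is to reduce the statement to the case where $K$ is a single admissible bisection, since the general case follows by taking finite unions: if $K = \bigcup_{i=1}^N K_i$ for admissible bisections $K_i$, then $K \cdot A = \bigcup_{i=1}^N (K_i \cdot A)$ and hence $\r(K \cdot A) = \bigcup_{i=1}^N \r(K_i \cdot A)$, which is measurable as soon as each $\r(K_i \cdot A)$ is. So it suffices to show that $\r(K \cdot A)$ is measurable when $K$ is admissible and $A \in \RR$.

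For a single admissible bisection $K$, I would identify $\r(K \cdot A)$ explicitly. An element $\gamma \in K \cdot A$ is of the form $\gamma = \gamma_1 \gamma_2$ with $\gamma_1 \in K$, $\gamma_2 \in A \subseteq \Gz$, and $\s(\gamma_1) = \r(\gamma_2) = \gamma_2$ (using that units are fixed by $\s$ and $\r$). Thus $\gamma = \gamma_1 \s(\gamma_1)^{\phantom{-1}} = \gamma_1$ where $\gamma_1$ ranges over those elements of $K$ with $\s(\gamma_1) \in A$, i.e. $K \cdot A = \{\gamma_1 \in K ~|~ \s(\gamma_1) \in A\} = K \cap \s^{-1}(A)$ (intersected with $K$, using $A \subseteq \s(K)$ only on the relevant part; more precisely $K \cdot A = (\s|_K)^{-1}(A \cap \s(K))$). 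Applying $\r|_K$, which is a bijection onto $\r(K)$, we get
\[
\r(K \cdot A) = (\r|_K)\big((\s|_K)^{-1}(A \cap \s(K))\big) = \tau_K(A \cap \s(K)),
\]
where $\tau_K = \r|_K \circ (\s|_K)^{-1}: \s(K) \to \r(K)$ is the induced bijection from Equation~(\ref{EQ:alpha_K}).

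Now the admissibility of $K$ enters: $\s(K), \r(K) \in \RR$ and $\tau_K: \s(K) \to \r(K)$ is a measurable isomorphism. Since $A \in \RR$ and $\s(K) \in \RR$, we have $A \cap \s(K) \in \RR$; and since $\tau_K$ is a measurable isomorphism (in particular its inverse is measurable, so it maps measurable sets to measurable sets), $\tau_K(A \cap \s(K)) \in \RR$. Hence $\r(K \cdot A) = \tau_K(A \cap \s(K)) \in \RR$ for admissible $K$, and the general decomposable case follows from the finite-union remark above. The only mild subtlety — and the one place to be careful — is the bookkeeping identity $\r(K \cdot A) = \tau_K(A \cap \s(K))$, which relies on the groupoid axioms that units act as identities and are fixed points of $\s$ and $\r$; once this is pinned down, the rest is a one-line consequence of the definition of admissibility. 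I do not expect any real obstacle here; it is genuinely an elementary observation as the authors indicate.
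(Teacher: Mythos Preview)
Your proof is correct and follows essentially the same approach as the paper: decompose $K$ into admissible bisections $K_i$, identify $\r(K_i \cdot A) = \tau_{K_i}(A \cap \s(K_i))$, and use that each $\tau_{K_i}$ is a measurable isomorphism. The paper's proof is more terse, writing the union and the identity in a single displayed line, but the content is identical.
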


\begin{proof} 
By definition, we can write $K=\bigcup_{i=1}^N K_i$ for admissible bisection $K_i$. Then:
\[
\r(K\cdot A)=\r\left(\bigcup_{i=1}^{N} K_{i}\cdot A\right)=\bigcup_{i=1}^{N}\tau_{K_{i}}(A\cap \s(K_{i})),
\]
which is measurable since each $\tau_{K_i}$ is a measurable isomorphism on $\s(K_i)$.
\end{proof}

Finally, note that the notion of admissible bisections and decomposability remain the same if we change the length function up to coarse equivalence.

\subsection{Markov kernel}
Here we recall a few elementary properties of reversible Markov kernels, which will be used later to establish our main results. We refer to the first chapters of \cite{Rev75} for more background and details.

\begin{defn}\label{def of Markov kernel}
	Let $\S$ be a $\sigma$-algebra on a set $X$. A \emph{Markov kernel} on the measurable space $(X,\S)$ is a function $\Pi\colon X\times\S \to [0,1]$ such that:
	\begin{enumerate}
		\item for every $x\in X$, the function $\Pi(x,\variable)\colon\S \to[0,1]$ is a probability measure;
		\item for every $A\in \S$, the function $\Pi(\variable,A)\colon X\to[0,1]$ is $\S$-measurable.
	\end{enumerate}
\end{defn}

The associated \emph{Markov operator} $\P$ is a linear operator on the space of bounded $\S$-measurable functions, defined by
\begin{equation}\label{EQ:Markov operator defn}
\P f(x)\coloneqq  \int_{X}f(y)\Pi(x,\d y).
\end{equation}

\begin{defn}\label{defn:markov boundary}
	Given a measure $\mu$ on $(X,\S)$ and $A\in\S$, the \emph{($\mu$-)size of the boundary} of $A$ (with respect to $\Pi$) is defined as
	\[
	|\partial_\Pi (A)|_\mu\coloneqq\int_A\Pi(x,X\setminus A)\d \mu(x).
	\]
\end{defn}

We are only interested in the special case of reversible Markov kernel:

\begin{defn}
A Markov kernel $\Pi$ is called \emph{reversible} if there exists a measure $m$ on $(X,\S)$ such that
\[
\int_X f(x)\P g(x)\d m(x)=\int_X \P f(x) g(x)\d m(x)
\]
for every pair of measurable bounded functions $f,g: X\to\mathbb{R}$. The measure $m$ is called a \emph{reversing measure} for $\Pi$ (note that $m$ need not be unique in general). In this case, we also say that $\Pi$ is a reversible Markov kernel \emph{on $(X,m)$}.
\end{defn}

Given a reversing measure $m$ on $X$,
the Markov operator $\P$ can be regarded as a bounded self-adjoint operator on $L^2(X,m)$ with norm $\|\P\|\leq 1$. 
Define the \emph{Laplacian} of $\Pi$ as $\Delta\coloneqq 1-\P$, which is a positive self-adjoint operator whose spectrum is contained in $[0,2]$.

Let $\Pi$ be a reversible Markov kernel on $(X,m)$, where $m$ is a finite measure. Then all constant functions on $X$ belong to $L^2(X,m)$ and are fixed by $\P$. It follows that $\|\P\|=1$ and $1$ belongs to the spectrum of $\P$. Denote the orthogonal complement of the constant functions in $L^2(X,m)$ by $L^2_0(X,m)$, \emph{i.e.}, 
\[
L^2_0(X,m)\coloneqq \left\{f\in L^2(X,m) ~\middle|~ \int_X f(x)\d m(x)=0\right\}.
\]
Note that $L^2_0(X,m)$ is $\P$-invariant and that the spectrum of the restriction of $\P$ on $L^2_0(X,m)$ is contained in $[-1,1]$. We denote the supremum of this spectrum by $\lambda\in \mathbb{R}$. We make the following definition:

\begin{defn}\label{spectral gap markov kernel}
	A reversible Markov kernel on a finite measure space $(X,m)$ is said to have a \emph{spectral gap} if $\lambda<1$. 
\end{defn}

On the other hand, we recall the notion of Cheeger constant as follows:
\begin{defn}\label{defn:Cheeger.constant.Markov}
	The \emph{Cheeger constant} for a reversible Markov kernel $\Pi$ on a finite measure space $(X,m)$ is defined to be
	\[
	\kappa\coloneqq\inf\left\{\frac{|\partial_\Pi (A)|_m}{m(A)}\;\middle|\; A\in\S,\ 0<m(A)\leq\frac 12 m(X)\right\}.
	\]
\end{defn}

Consequently, we have the following significant result relating the spectral gap with the Cheeger constant from \cite{lawler1988bounds} (see also \cite{li2022markovianroealgebraicapproachasymptotic}):

\begin{thm}[{\cite[Theorem 2.1]{lawler1988bounds}}]\label{thm:spectral.characterisation.markov.exp}
	Let $\Pi$ be a reversible Markov kernel on $(X,m)$ where $m$ is finite. Then
	\[
	\frac{\kappa^2}{2} \leq 1-\lambda\leq 2\kappa.
	\]
\end{thm}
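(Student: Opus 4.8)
The plan is to reduce the claim to the classical Cheeger inequality for the Dirichlet form associated to $\Pi$. The starting point is that reversibility of $\Pi$ on $(X,m)$ says precisely that the \`\`edge measure'' $\d\eta(x,y):=\Pi(x,\d y)\,\d m(x)$ on $(X\times X,\S\otimes\S)$ is invariant under swapping coordinates; this is first checked against product functions $\chi_A\otimes\chi_B$ using the reversibility identity and then extended to all of $\S\otimes\S$ by a monotone class argument. Combining this symmetry with $\Pi(x,X)=1$ gives, for every $f\in L^2(X,m)$, the Dirichlet-form identity
\[
\langle\Delta f,f\rangle=\|f\|^2-\langle\P f,f\rangle=\tfrac12\int_X\int_X\bigl(f(x)-f(y)\bigr)^2\,\Pi(x,\d y)\,\d m(x),
\]
and, specializing to $f=\chi_A$, the equality $\langle\Delta\chi_A,\chi_A\rangle=|\partial_\Pi(A)|_m$. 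Since $\Delta$ is positive self-adjoint on $L^2(X,m)$ with $\Delta\mathbf{1}=0$, and $L^2_0(X,m)=\mathbf{1}^{\perp}$ is $\Delta$-invariant, the spectral gap is the infimum of the Rayleigh quotient: $1-\lambda=\inf\{\langle\Delta f,f\rangle/\|f\|^2 : f\in L^2_0(X,m)\setminus\{0\}\}$. (If $X$ carries no measurable set of $m$-measure in $(0,\tfrac12 m(X)]$ the statement is degenerate, so we may assume otherwise.)

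For the upper bound $1-\lambda\le 2\kappa$, I would test the Rayleigh quotient against a normalized indicator. Given $A\in\S$ with $0<m(A)\le\tfrac12 m(X)$, set $g:=\chi_A-\frac{m(A)}{m(X)}\mathbf{1}\in L^2_0(X,m)$. Because the Dirichlet form annihilates constants, $\langle\Delta g,g\rangle=\langle\Delta\chi_A,\chi_A\rangle=|\partial_\Pi(A)|_m$, while a direct computation gives $\|g\|^2=m(A)\bigl(1-\frac{m(A)}{m(X)}\bigr)\ge\tfrac12 m(A)$. Hence $1-\lambda\le\langle\Delta g,g\rangle/\|g\|^2\le 2|\partial_\Pi(A)|_m/m(A)$, and taking the infimum over $A$ yields $1-\lambda\le 2\kappa$.

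For the lower bound $\tfrac{\kappa^2}{2}\le 1-\lambda$ --- the substantive direction --- I would first reduce to a nonnegative test function supported on a set of $m$-measure at most $\tfrac12 m(X)$. Given $f\in L^2_0(X,m)\setminus\{0\}$, choose a median $t_0$ of $f$, so that both $\{f>t_0\}$ and $\{f<t_0\}$ have measure $\le\tfrac12 m(X)$. Replacing $f$ by $h:=f-t_0\mathbf{1}$ leaves $\langle\Delta f,f\rangle$ unchanged and, since $f\perp\mathbf{1}$, does not decrease the norm, so it only decreases the Rayleigh quotient. Writing $h=h_+-h_-$ and using that $t\mapsto t_+$ is $1$-Lipschitz one verifies the pointwise bound $\bigl(h(x)-h(y)\bigr)^2\ge\bigl(h_+(x)-h_+(y)\bigr)^2+\bigl(h_-(x)-h_-(y)\bigr)^2$, hence $\langle\Delta h,h\rangle\ge\langle\Delta h_+,h_+\rangle+\langle\Delta h_-,h_-\rangle$, while $\|h\|^2=\|h_+\|^2+\|h_-\|^2$; the mediant inequality then selects $g\in\{h_+,h_-\}$ with $g\ge0$, $g\not\equiv0$, $m(\supp g)\le\tfrac12 m(X)$ and Rayleigh quotient no larger than that of $f$. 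It thus suffices to show $\langle\Delta g,g\rangle\ge\tfrac{\kappa^2}{2}\|g\|^2$ for such $g$. Here I would apply Cauchy--Schwarz against $\eta$ to the factorization $|g(x)^2-g(y)^2|=|g(x)-g(y)|\,|g(x)+g(y)|$, together with $\int_X\int_X\bigl(g(x)+g(y)\bigr)^2\Pi(x,\d y)\d m(x)\le 4\|g\|^2$, to obtain
\[
\int_X\int_X|g(x)^2-g(y)^2|\,\Pi(x,\d y)\,\d m(x)\le 2\|g\|\,\bigl(2\langle\Delta g,g\rangle\bigr)^{1/2}.
\]
For the matching lower bound I would use the layer-cake identity $g(x)^2=\int_0^\infty\chi_{A_t}(x)\,\d t$ with $A_t:=\{g^2>t\}$, noting $\int_X\int_X|\chi_{A_t}(x)-\chi_{A_t}(y)|\Pi(x,\d y)\d m(x)=2|\partial_\Pi(A_t)|_m\ge 2\kappa\,m(A_t)$, which is legitimate since $A_t\subseteq\supp g$ forces $m(A_t)\le\tfrac12 m(X)$; integrating in $t$ gives $\int_X\int_X|g(x)^2-g(y)^2|\Pi(x,\d y)\d m(x)\ge 2\kappa\int_0^\infty m(A_t)\d t=2\kappa\|g\|^2$. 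Comparing the two estimates yields $\kappa\|g\|\le(2\langle\Delta g,g\rangle)^{1/2}$, i.e. $\langle\Delta g,g\rangle\ge\tfrac{\kappa^2}{2}\|g\|^2$, and the reduction completes the proof.

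The main obstacle is this last direction, the Cheeger inequality itself; its heart is the coarea/layer-cake decomposition of $g^2$ coupled with the Cauchy--Schwarz splitting, the key point being to differentiate $g^2$ rather than $g$ so that the level sets $A_t$ appear weighted to give total mass exactly $\|g\|^2$. The remaining work is measure-theoretic bookkeeping: Tonelli/Fubini applies throughout because every integrand is either nonnegative or dominated by $4\|g\|^2<\infty$; the symmetry of $\eta$ against nonnegative measurable functions follows from the bounded case by monotone convergence; and if one prefers to establish the Dirichlet-form identity only for bounded measurable $f$, one recovers the general case by density of bounded functions in $L^2_0(X,m)$ and continuity of the Rayleigh quotient.
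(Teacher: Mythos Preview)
Your proof is correct and follows the standard Cheeger-inequality argument for reversible Markov kernels. Note, however, that the paper does not actually prove this theorem: it is quoted verbatim from \cite[Theorem 2.1]{lawler1988bounds} and used as a black box, so there is no ``paper's own proof'' to compare against. What you have written is essentially the Lawler--Sokal argument (variational characterisation of $1-\lambda$, test-vector bound for the easy direction, median reduction plus coarea/Cauchy--Schwarz for the hard direction), and all the measure-theoretic caveats you flag are handled correctly.
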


\section{Asymptotic expansion in measure and structure theorems}\label{sec:asymptotic expansion}

In this section, we introduce the notion of expansion and asymptotic expansion for groupoids and then establish their structure theory. This generalises both (measured) asymptotic expanders in \cite{structure, Li2019QuasilocalAA, faucris.282438222} and asymptotic expansion in measure for group actions in \cite{asymptoticexpansionandstrongergodicity, li2022markovianroealgebraicapproachasymptotic}. 

Again we always assume that $\G$ is a groupoid with a length function $\ell$, and $\Gz$ is equipped with a measure $\mu$ on some $\sigma$-algebra $\RR$. In this section, we require $\mu$ to be finite. For simplicity, further assume that $\mu$ is a \textbf{probability measure}. Note that all the results in this section are also available for finite measures by rescaling.

\subsection{Expansion and asymptotic expansion}\label{Expansion and asymptotic expansion on Borel groupoids}
We firstly introduce the following:

\begin{defn}\label{expanding}
Let $\mathcal{G}$ be a groupoid with a length function $\ell$, and $(\Gz,\RR,\mu)$ be a probability measure space. We say that $\G$ is \emph{expanding (in measure $\mu$)} if there exist $C,N,L>0$ and a unital symmetric\footnote{Since we are only interested in the existence of $K$, we can always make it unital and symmetric.} $N$-decomposable $K \subseteq \G$ with $\ell(K) \leq L$ such that for any $A \in \RR$ with $0 < \mu(A) \leq \frac{1}{2}$, then $\mu(\r(K \cdot A)\setminus A)> C \mu(A)$. 
In this case, we also say that $\G$ is $(C,N,L)$-\emph{expanding}.
\end{defn}

The inequality ``$\mu(\r(K \cdot A)\setminus A)> C \mu(A)$'' in the definition above makes sense since $\r(K \cdot A)$ is measurable thanks to Lemma \ref{basic assumprop2}.


We also consider the following asymptotic version of Definition \ref{expanding}:

\begin{defn}\label{asymptotically expanding} 
Let $\mathcal{G}$ be a groupoid with a length function $\ell$, and $(\Gz,\RR,\mu)$ be a probability measure space. We say that $\G$ is \emph{asymptotically expanding (in measure $\mu$)} if for any $\alpha \in (0,\frac{1}{2}]$, there exist $C_\alpha, N_\alpha, L_\alpha>0$ and a unital symmetric $N_\alpha$-decomposable $K_\alpha \subseteq \G$ with $\ell(K_\alpha) \leq L_\alpha$ such that for any $A \in \RR$ with $\alpha \leq \mu(A) \leq \frac{1}{2}$, then $\mu(\r(K_\alpha \cdot A)\setminus A)> C_\alpha \mu(A)$. 
The functions $\alpha \mapsto C_\alpha$, $\alpha \mapsto N_\alpha$ and $\alpha \mapsto L_\alpha$ are called \emph{expansion parameters}
of $\G$ (which are not unique).
\end{defn}

To establish the structure theorem, we also need the following notion:

\begin{defn}\label{domain of asymptotic expansion}
Let $\mathcal{G}$ be a groupoid with a length function $\ell$, and $(\Gz,\RR,\mu)$ be a probability measure space. A measurable subset $Y \subseteq \Gz$ is called a \emph{domain of asymptotic expansion} if for any $\alpha \in (0,\frac{1}{2}]$, there exist $C_\alpha, N_\alpha, L_\alpha>0$ and a unital symmetric $N_\alpha$-decomposable $K_\alpha \subseteq \G$ with $\ell(K_\alpha) \leq L_\alpha$ such that for any measurable $A \subseteq Y$ with $\alpha\mu(Y) \leq \mu(A) \leq \frac{1}{2}\mu(Y)$, we have:
\[
\mu\big((\r(K_\alpha \cdot A)\setminus A)\cap Y\big)> C_\alpha \mu(A).
\] 
The functions $\alpha \mapsto C_\alpha$, $\alpha \mapsto N_\alpha$ and $\alpha \mapsto L_\alpha$ are called \emph{expansion parameters} for $Y$. 

If $C_\alpha \equiv C$, $N_\alpha \equiv N$, $L_\alpha \equiv L$ and $K_\alpha \equiv K,$ then we also say that $Y$ is a \emph{domain of $(C,N,L)$-expansion} (or simply a \emph{domain of $(N,L)$-expansion} or a \emph{domain of expansion)}.
\end{defn}

\begin{rem}\label{rem:reduction for domain}
It is easy to see that $Y$ is a domain of asymptotic expansion \emph{if and only if} the reduction $\G_Y^Y\coloneqq \s^{-1}(Y) \cap \r^{-1}(Y)$ is asymptotically expanding in Definition \ref{asymptotically expanding}. Here we choose the former since it is more convenient for our proofs.
\end{rem}


In the following, we collect several useful facts for domains of (asymptotic) expansion, generalising those in \cite{asymptoticexpansionandstrongergodicity,faucris.282438222}. They will play an important role to prove the structure result later.

\begin{lem}\label{extend to beta}
Let $Y\subseteq \mathcal{G}^{(0)}$ be a domain of asymptotic expansion. Then for any $\alpha \in (0,\frac{1}{2}]$ and $\beta \in [\frac{1}{2},1)$, there exist $C,N,L>0$ and a unital symmetric $N$-decomposable $K \subseteq \mathcal{G}$ with $\ell(K) \leq L$ such that for any measurable $A\subseteq Y$ with $\alpha \mu(Y) \leq \mu(A) \leq \beta \mu(Y)$, we have:
\[
\mu\big((\r(K \cdot A)\setminus A)\cap Y\big)> C \mu(A). 
\]
Here $C,N,L$ only depend on $\alpha,\beta$ and expansion parameters for $Y$.
\end{lem}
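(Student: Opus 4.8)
The plan is to bootstrap from the defining property of a domain of asymptotic expansion, which controls sets $A$ with $\alpha\mu(Y) \leq \mu(A) \leq \frac12\mu(Y)$, up to the larger range $\alpha\mu(Y) \leq \mu(A) \leq \beta\mu(Y)$ where $\beta$ may exceed $\frac12$. The natural idea is to pass to complements within $Y$: if $A \subseteq Y$ with $\mu(A) > \frac12\mu(Y)$, then $A' \coloneqq Y \setminus A$ satisfies $\mu(A') < \frac12\mu(Y)$, and one expects the expansion of $A'$ to force ``expansion'' of $A$ in a suitable sense. So the first step is to apply the domain property (and Lemma \ref{extend to beta}'s hypothesis, i.e. the expansion parameters for $Y$) to $A'$, provided $\mu(A')$ is bounded below by some positive multiple of $\mu(Y)$; since $\mu(A) \leq \beta\mu(Y)$ with $\beta < 1$, we get $\mu(A') \geq (1-\beta)\mu(Y) > 0$, so choosing $\alpha' \coloneqq \min\{\alpha, 1-\beta\}$ we may invoke the domain property with parameter $\alpha'$ to obtain $C', N', L'$ and a unital symmetric $N'$-decomposable $K' \subseteq \mathcal G$ with $\ell(K') \leq L'$ such that $\mu\big((\r(K' \cdot A')\setminus A')\cap Y\big) > C'\mu(A')$.

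The second step is to convert this lower bound on the ``boundary'' of $A'$ into one for $A$. Here I would use that $K'$ is symmetric and unital: points of $Y$ that lie in $\r(K' \cdot A')$ but not in $A'$ are, roughly, points of $A$ that can be moved into $A'$ by $K'$; applying $K'$ (or $K'^{-1} = K'$) again should move a comparable chunk of $A$ outside of $A$. More precisely, I expect the elementary set-theoretic estimate
\[
\mu\big((\r(K' \cdot A)\setminus A)\cap Y\big) \;\geq\; \mu\big((\r(K' \cdot A')\setminus A')\cap Y\big)
\]
to hold (or something very close, perhaps with $K'$ replaced by $K' \cdot K'$), because $\tau_{K'}$ is a measure-class-preserving bijection on each admissible piece, so the $\mu$-measure of the symmetric-difference-type region is essentially the same whether computed from $A$ or from $A' = Y\setminus A$ inside $Y$. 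Combining with $\mu(A') \geq (1-\beta)\mu(Y)$ and $\mu(A) \leq \beta\mu(Y)$, we get
\[
\mu\big((\r(K' \cdot A)\setminus A)\cap Y\big) > C'\mu(A') \geq C'(1-\beta)\mu(Y) \geq \frac{C'(1-\beta)}{\beta}\,\mu(A),
\]
so the case $\mu(A) > \frac12\mu(Y)$ is handled with constant $C'' \coloneqq C'(1-\beta)/\beta$.

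The third step is to merge the two regimes. For $A$ with $\alpha\mu(Y) \leq \mu(A) \leq \frac12\mu(Y)$ use the original data $(C_\alpha, N_\alpha, L_\alpha, K_\alpha)$ for $Y$; for $\frac12\mu(Y) < \mu(A) \leq \beta\mu(Y)$ use $(C'', N', L', K')$ from the previous step. To get a single $K$, $N$, $L$, $C$ that works uniformly, take $K \coloneqq K_\alpha \cup K'$, which by Lemma \ref{inverse,union,composition still decomposable}(2) is unital symmetric $(N_\alpha + N')$-decomposable with $\ell(K) \leq \max\{L_\alpha, L'\}$; enlarging $K_\alpha$ to $K$ only enlarges $\r(K\cdot A)$, so both inequalities persist, and we set $C \coloneqq \min\{C_\alpha, C''\}$, $N \coloneqq N_\alpha + N'$, $L \coloneqq \max\{L_\alpha, L'\}$. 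All of these depend only on $\alpha$, $\beta$ and the expansion parameters for $Y$, as required. The main obstacle I anticipate is the set-theoretic inequality in the second step: making precise how symmetry and unitality of $K'$ (and the measure-class-preserving property of the $\tau_{K_i}$) transfer a boundary estimate from $A'$ to $A$ — one may need to argue piece by piece over a decomposition $K' = \bigcup_i K'_i$ and possibly pass to $K' \cdot K'$ to absorb the asymmetry between ``moving into $A'$'' and ``moving out of $A$'', using that on each admissible bisection $\tau_{K'_i}$ and $\tau_{K'_i}^{-1}$ are mutually inverse measurable isomorphisms. Everything else is routine bookkeeping of constants.
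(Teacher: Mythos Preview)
Your overall plan---handle $\mu(A)\leq\frac12\mu(Y)$ directly, handle $\mu(A)>\frac12\mu(Y)$ by passing to a complement, then merge via $K=K_\alpha\cup K'$---matches the paper. The gap is in your second step: the inequality
\[
\mu\big((\r(K'\cdot A)\setminus A)\cap Y\big)\ \geq\ \mu\big((\r(K'\cdot A')\setminus A')\cap Y\big)
\]
for $A'=Y\setminus A$ is \emph{false} in general when the bisections are only measure-class-preserving. Take $Y=\{a,b\}$ with $\mu(\{a\})=1-\epsilon$, $\mu(\{b\})=\epsilon$, and $K'$ the swap together with the identity (unital, symmetric, admissible). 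With $A=\{a\}$ the left side equals $\epsilon$ and the right side equals $1-\epsilon$. Replacing $K'$ by $K'\cdot K'$ does not help (here $K'\cdot K'=K'$), and arguing piece by piece over admissible $K'_i$ cannot save it either, since on each piece $\tau_{K'_i}$ may distort measure arbitrarily.

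The paper's fix is to replace $A'=Y\setminus A$ by $B\coloneqq Y\setminus \r(K'\cdot A)$. Then symmetry of $K'$ alone gives the \emph{set-theoretic} inclusion
\[
(\r(K'\cdot B)\setminus B)\cap Y\ \subseteq\ (\r(K'\cdot A)\setminus A)\cap Y,
\]
with no appeal to how $\tau_{K'_i}$ moves measure: if $z$ lies in the left side then $z\in\r(K'\cdot A)$ (since $z\notin B$) and $z\notin A$ (an arrow from $z$ into $B$ would otherwise put a point of $\r(K'\cdot A)$ inside $B$). The price is that $\mu(B)$ can be small, so one must split into the sub-cases $\mu(B)<\alpha'\mu(Y)$ (where $\r(K'\cdot A)$ already covers most of $Y$) and $\mu(B)\geq\alpha'\mu(Y)$ (where one applies the expansion hypothesis to $B$), with $\alpha'=\frac{1-\beta}{2}$. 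Your merging step and constant bookkeeping are fine once this is in place.
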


\begin{proof}
For $\beta\in [\frac{1}{2},1)$, set $\alpha'=\frac{1-\beta}{2} \in (0,\frac{1}{4}]$. 
By assumption, there exist $C_{\alpha'}, L_{\alpha'}, N_{\alpha'}>0$ and a unital symmetric $N_{\alpha'}$-decomposable $K_{\alpha'} \subseteq \G$ with $\ell(K_{\alpha'}) \leq L_{\alpha'}$ such that for any measurable $A \subseteq Y$ with $\alpha'\mu(Y) \leq \mu(A) \leq \frac{1}{2}\mu(Y)$, we have:
\begin{equation}\label{EQ:expansion beta}
\mu\big((\r(K_{\alpha'} \cdot A)\setminus A)\cap Y\big)> C_{\alpha'} \mu(A).
\end{equation}

Given measurable $A\subseteq Y$ with $\mu(A)\in [\frac{1}{2} \mu(Y),\beta \mu(Y)]$, let 
\[
B=Y\setminus \r(K_{\alpha'} \cdot A).
\] 
Then $\mu(B)\leq \mu(Y\setminus A)\leq \frac{1}{2} \mu(Y)$. Now we divide into two cases:

If $\mu(B)< \alpha' \mu(Y)$, then $\mu\big(\r(K_{\alpha'} \cdot A)\cap Y\big)> (1-\alpha')\mu(Y) \geq \frac{1+\beta}{2\beta} \mu(A)$. Hence we obtain $\mu\big((\r(K_{\alpha'}\cdot A)\setminus A)\cap Y\big)> \frac{1-\beta}{2\beta} \mu(A)$.

If $\mu(B)\geq \alpha' \mu(Y)$, Inequality (\ref{EQ:expansion beta}) implies $\mu((\r(K_{\alpha'} \cdot B)\setminus B)\cap Y)> C_{\alpha'} \mu(B)$. Then:
\[
\big(\r(K_{\alpha'}\cdot B)\setminus B\big)\cap Y \subseteq \big(\r(K_{\alpha'}\cdot A)\setminus A\big)\cap Y.
\]
Hence
\[
\mu\big((\r(K_{\alpha'}\cdot A)\setminus A)\cap Y\big) \geq \mu\big((\r(K_{\alpha'}\cdot B)\setminus B)\cap Y\big)> C_{\alpha'}\frac{1-\beta}{2\beta} \mu(A).
\]

Finally, set
\[
C\coloneqq \min\left\{C_{\alpha},\frac{1-\beta}{2\beta} C_{\alpha'}, \frac{1-\beta}{2\beta}\right\} \quad \text{and} \quad K\coloneqq K_\alpha \cup K_{\alpha'}.
\]
Then $K$ is unital symmetric and $(N_\alpha+N_{\alpha'})$-decomposable with $\ell(K) \leq \max\{L_{\alpha}, L_{\alpha'}\}$. Hence we conclude the proof.
\end{proof}

\begin{lem}\label{subset also asym expansion1}
Assume $\mathcal{G}$ is asymptotically expanding in measure. Then for any $\beta\in (0,1],\alpha \in (0,\frac{1}{2}]$ and $C\in (0,1)$, there exists a unital symmetric $N$-decomposable $K\subseteq \mathcal{G}$ with $\ell(K) \leq L$ such that for any measurable $Y\subseteq \mathcal{G}^{(0)}$ with $\mu(Y)\geq \beta$ and measurable $A\subseteq Y$ with $\alpha \mu(Y)\leq \mu(A)\leq \frac{1}{2}\mu(Y)$, we have $\mu((\r(K \cdot A)\setminus A)\cap Y)> C \mu(A)$. Here $N,L$ only depend on $\alpha,\beta,C$ and expansion parameters of $\G$.
\end{lem}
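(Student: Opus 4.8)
The plan is to manufacture, from a single \emph{global} expander, one operator $K$ that works uniformly for all $Y$, by iterating it enough times to inflate any admissible starting set until it fills almost all of $\Gz$, at which point its trace on $Y$ is automatically large. To make this precise, first note that the target estimate can be rephrased away from $Y$: if $K\subseteq\G$ is unital then $A\subseteq\r(K\cdot A)$, so for measurable $A\subseteq Y\subseteq\Gz$ one has the disjoint decomposition $\r(K\cdot A)\cap Y=A\sqcup\big((\r(K\cdot A)\setminus A)\cap Y\big)$, whence
\[
\mu\big((\r(K\cdot A)\setminus A)\cap Y\big)=\mu(\r(K\cdot A)\cap Y)-\mu(A)\geq\mu(\r(K\cdot A))-(1-\mu(Y))-\mu(A),
\]
using $\mu(\r(K\cdot A)\setminus Y)\leq\mu(\Gz\setminus Y)=1-\mu(Y)$. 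Since $\mu(A)\leq\tfrac{1}{2}\mu(Y)$ and $\mu(Y)\geq\beta$, a one-line computation gives $(1+C)\mu(A)+(1-\mu(Y))\leq 1-\tfrac{\beta(1-C)}{2}$; so, setting $\beta'\coloneqq 1-\tfrac{\beta(1-C)}{4}\in(\tfrac{3}{4},1)$, it suffices to produce $K$ (depending only on $\alpha,\beta,C$ and the expansion parameters) with $\mu(\r(K\cdot A))>\beta'$ for all $Y,A$ as in the statement.

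\textbf{Construction of $K$.} Since $\G$ is asymptotically expanding, $\Gz$ is a domain of asymptotic expansion (Remark \ref{rem:reduction for domain}), so Lemma \ref{extend to beta} applied with $Y=\Gz$, with the parameter $\alpha_0\coloneqq\alpha\beta\in(0,\tfrac{1}{2}]$ and with the above $\beta'\in[\tfrac{1}{2},1)$ provides $C_1,N_1,L_1>0$ and a unital symmetric $N_1$-decomposable $K_1\subseteq\G$ with $\ell(K_1)\leq L_1$ such that $\mu(\r(K_1\cdot A')\setminus A')>C_1\mu(A')$ for every $A'\in\RR$ with $\alpha\beta\leq\mu(A')\leq\beta'$; here $C_1,N_1,L_1$ depend only on $\alpha,\beta,C$ and the expansion parameters of $\G$. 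Now set $m\coloneqq\lceil\log_{1+C_1}\tfrac{1}{\alpha\beta}\rceil+1$ and $K\coloneqq K_1^m$, which by Lemma \ref{inverse,union,composition still decomposable} is unital symmetric and $N_1^m$-decomposable with $\ell(K)\leq mL_1$; so $N\coloneqq N_1^m$ and $L\coloneqq mL_1$ have the required dependence.

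\textbf{Iteration.} Fix $Y$ with $\mu(Y)\geq\beta$ and $A\subseteq Y$ with $\alpha\mu(Y)\leq\mu(A)\leq\tfrac{1}{2}\mu(Y)$, and set $E_j\coloneqq\r(K_1^j\cdot A)$. These are measurable (Lemmas \ref{inverse,union,composition still decomposable} and \ref{basic assumprop2}), nested increasing by unitality of $K_1$ with $E_{j+1}=\r(K_1\cdot E_j)$, and satisfy $\mu(E_j)\geq\mu(E_0)=\mu(A)\geq\alpha\beta$. Were $\mu(E_j)\leq\beta'$ to hold for all $j<m$, then the expansion property of $K_1$ would apply successively to $E_0,\dots,E_{m-1}$ and yield $\mu(E_m)>(1+C_1)^m\mu(A)\geq(1+C_1)^m\alpha\beta>1$, impossible; hence $\mu(E_{j_0})>\beta'$ for some $j_0<m$, and monotonicity gives $\mu(\r(K\cdot A))=\mu(E_m)\geq\mu(E_{j_0})>\beta'$. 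Combined with the reduction above this yields $\mu\big((\r(K\cdot A)\setminus A)\cap Y\big)>C\mu(A)$, as required.

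\textbf{Main obstacle.} The conceptual step is the reduction: a single use of asymptotic expansion on $A$ itself is useless here, since $\r(K\cdot A)\setminus A$ can leak entirely out of the (possibly thin) set $Y$; the remedy of inflating $A$ until it nearly exhausts $\Gz$ is what forces one to use the version of expansion valid on the range $[\tfrac{1}{2},1)$ (Lemma \ref{extend to beta}), because bare asymptotic expansion stalls at measure $\tfrac{1}{2}$. Everything after that point, in particular checking that the constants depend only on $\alpha,\beta,C$ and the expansion parameters, is routine bookkeeping.
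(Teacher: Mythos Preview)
Your proof is correct and follows essentially the same approach as the paper: both reduce to showing $\mu(\r(K\cdot A))$ exceeds a threshold just below $1$ (the paper uses $\gamma=1-\tfrac{1-C}{2}\mu(Y)$ where you use a $Y$-independent $\beta'=1-\tfrac{\beta(1-C)}{4}$), then invoke Lemma~\ref{extend to beta} on $\Gz$ with lower bound $\alpha\beta$ and iterate the resulting $K_1$ enough times to force a contradiction. Your presentation is slightly cleaner in that you fix the target independently of $Y$ from the outset, making the uniformity in $Y$ explicit, but the underlying argument is the same.
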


\begin{proof}
Fix $\alpha\in (0,\frac{1}{2}],\beta\in (0,1]$, $C\in (0,1)$ and a measurable $Y \subseteq \Gz$ with $\mu(Y)\geq \beta$. Set $\epsilon\coloneqq \frac{1+C}{2}\mu(Y)$ and $\gamma\coloneqq\mu(\mathcal{G}^{(0)}\setminus Y)+\epsilon=1-\frac{1-C}{2}\mu(Y)$. It suffices to find a unital symmetric decomposable $K \subseteq \G$ such that for any measurable $A\subseteq Y$ with $\alpha \mu(Y)\leq \mu(A)\leq \frac{1}{2}\mu(Y)$, then $\mu(\r(K\cdot A))>\gamma$. If this holds, then the proof would be completed by the following estimate:
\[
\mu\big(\r(K\cdot A)\cap Y\big)>\gamma-\mu(\G^{(0)}\setminus Y)=\epsilon\geq \frac{2\epsilon}{\mu(Y)}\mu(A)=(1+C)\mu(A).
\]

Now we aim to find such a $K$. Since $\G$ is asymptotically expanding, Lemma \ref{extend to beta} provides $C',L', N'>0$ and a unital symmetric $N'$-decomposable $K'\subseteq \G$ with $\ell(K') \leq L'$ such that for any measurable $A'\subseteq \G^{(0)}$ with $\alpha\beta\leq \mu(A')\leq 1-\frac{1-C}{2}\beta$, then
\begin{equation}\label{EQ:expansion subspace proof}
\mu\big(\r(K'\cdot A')\big)>(1+C')\mu(A')\geq (1+C')\alpha\beta.
\end{equation}
Set $m$ to be the minimal integer satisfying $(1+C')^m \geq \frac{1}{\alpha\beta}$, and take $K\coloneqq (K')^m$. 
Then Lemma \ref{inverse,union,composition still decomposable} shows that $K$ is unital symmetric $(N')^m$-decomposable and $\ell(K) \leq mL'$.

Now for any measurable $A\subseteq Y$ with $\alpha \mu(Y)\leq \mu(A)\leq \frac{1}{2}\mu(Y)$, we need to show that $\mu(\r(K\cdot A))>\gamma$. If not, then $\mu\big(\r((K')^m \cdot A)\big)\leq \gamma$. This shows that
\[
\alpha\beta\leq \mu\big(\r((K')^i\cdot A)\big)\leq \gamma\leq 1-\frac{1-C}{2}\beta\quad\text{for any}\quad  i=0,1,\cdots m.
\]
Applying Inequality (\ref{EQ:expansion subspace proof}) inductively, we obtain:
\[
\mu\big(\r((K')^m \cdot A)\big)>(1+C')^m \mu(A)\geq (1+C')^m \alpha\beta \geq 1 >\gamma,
\]
which leads to a contradiction.
\end{proof}

\subsection{The structure theorem}\label{ssec:structure thm}

Now we are in the position to introduce the structure theorem for asymptotic expansion on groupoids. The main idea is to approximate by domains of expansions in the following sense:

\begin{defn}\label{def of uniform exhaustion}
	Let $\mathcal{G}$ be a groupoid with a length function $\ell$, and $(\Gz,\RR,\mu)$ be a probability measure space.  We say that a sequence of measurable subsets $\{Y_n\}_{n\in \N}$ in $\mathcal{G}^{(0)}$ \emph{forms a (measured) exhaustion of $\Gz$} if $\lim_{n\to \infty}\mu(Y_n)=1$.
\end{defn}


For technical reasons, we also need to consider:

\begin{defn}\label{defn:RN der}
Let $\mathcal{G}$ be a groupoid with a length function $\ell$, and $(\Gz,\RR,\mu)$ be a probability measure space. Given an admissible bisection $K \subseteq \G$, we set
\begin{equation}\label{EQ:RN der}
\RN(K,x)\coloneqq \frac{\d (\tau_K^{-1})_\ast \mu|_{\r(K)}}{\d \mu|_{\s(K)}}(x)
\end{equation}
the Radon-Nikodym derivative at $x\in \s(K)$, where $\tau_K$ comes from (\ref{EQ:alpha_K}). Here $\mu|_{\s(K)}$ and $\mu|_{\r(K)}$ are the restrictions of $\mu$, and $(\tau_K^{-1})_\ast \mu|_K$ is the push-forward measure defined by $\big((\tau_K^{-1})_\ast \mu|_{\r(K)}\big)(A) = \mu(\tau_K(A))$ for measurable $A \subseteq \s(K)$.
\end{defn}

To simplify the statement of our main result, let us package the derivative information into the notion of domain of expansion:

\begin{defn}\label{defn:domain of expansion with bdd ratio}
Let $\mathcal{G}$ be a groupoid with a length function $\ell$, and $(\Gz,\RR,\mu)$ be a probability measure space. For measurable $Y \subseteq \Gz$, we say that $Y$ is a \emph{domain of expansion with bounded ratio} if there exist $C,N,L>0$ and a unital symmetric $N$-decomposable $K \subseteq \G$ with $\ell(K) \leq L$, together with a unital symmetric decomposition $K= \bigcup_{i=1}^N K_i$ satisfying the following:
\begin{enumerate}
 \item for measurable $A \subseteq Y$ with $0 < \mu(A) \leq \frac{1}{2}\mu(Y)$, then we have
 \[
 \mu\big((\r(K_\alpha \cdot A)\setminus A)\cap Y\big)> C_\alpha \mu(A).
 \]
\item there exists $\theta\geq 1$ such that $\frac{1}{\theta}\leq \RN(K_i,x)\leq \theta$ for any $x\in Y$ and $i \in \{1,2,\ \cdots,N\}$ with $\tau_{K_i}(x) \in Y$. (As a priori to $\tau_{K_i}(x) \in Y$, we have $x\in \s(K_i)$ and hence $\RN(K_i,x)$ makes sense.)
\end{enumerate}
In this case, we say that $Y$ is a \emph{domain of $(C,N,L)$-expansion with ratio bounded by $\theta$}.
\end{defn}

The following is our structure theorem:

\begin{thm}\label{exhaustion of expansion with radon controlled}
Let $\mathcal{G}$ be a groupoid with a length function $\ell$, and $(\Gz,\RR,\mu)$ be a probability measure space. Then the following are equivalent:
	\begin{enumerate}
		\item The groupoid $\mathcal{G}$ is asymptotically expanding in measure.
		\item The unit space $\mathcal{G}^{(0)}$ admits an exhaustion by domains $Y_n$ of $(C_n,N_n,L_n)$-expansion of ratio bounded by $\theta_n$ for $C_n,N_n,L_n >0$ and $\theta_n \geq 1$.
		\item The unit space $\mathcal{G}^{(0)}$ admits an exhaustion by domains of asymptotic expansion.
	\end{enumerate}
\end{thm}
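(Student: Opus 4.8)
## Proof proposal for Theorem \ref{exhaustion of expansion with radon controlled}

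\textbf{Overall strategy.} The plan is to prove the cycle of implications $(1) \Rightarrow (2) \Rightarrow (3) \Rightarrow (1)$. The implication $(2) \Rightarrow (3)$ is immediate: a domain of $(C,N,L)$-expansion is in particular a domain of asymptotic expansion (take constant expansion parameters), and the ratio-bound data in (2) is simply forgotten. So the two substantive implications are $(1) \Rightarrow (2)$ (the ``structure'' direction, constructing the exhausting domains together with their Radon--Nikodym control) and $(3) \Rightarrow (1)$ (reassembling a global asymptotic expansion out of the local ones). I expect $(1) \Rightarrow (2)$ to be the main obstacle.

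\textbf{The implication $(3) \Rightarrow (1)$.} Suppose $\{Y_n\}$ is an exhaustion of $\Gz$ by domains of asymptotic expansion, and fix $\alpha \in (0,\tfrac12]$. First choose $n$ large enough that $\mu(\Gz \setminus Y_n) < \delta$ for a small $\delta = \delta(\alpha)$ to be specified; I will want $\mu(Y_n)$ close to $1$ so that the ``defect'' $\Gz \setminus Y_n$ cannot absorb too much mass. For a measurable $A \subseteq \Gz$ with $\alpha \le \mu(A) \le \tfrac12$, split $A = (A \cap Y_n) \sqcup (A \setminus Y_n)$; when $\mu(A \cap Y_n)$ is a definite proportion of $\mu(A)$ (which holds once $\delta < \alpha/2$, say), one applies the defining inequality for the domain $Y_n$ — after passing, via Lemma \ref{extend to beta}, to the range of relative masses $[\alpha'\mu(Y_n), \beta'\mu(Y_n)]$ that covers $\tfrac{\mu(A\cap Y_n)}{\mu(Y_n)}$ — to get expansion of $A \cap Y_n$ inside $Y_n$, hence expansion of $A$. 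The decomposable set $K_\alpha$ is the one furnished by $Y_n$ (together with a unital symmetric $N$-decomposition via Lemma \ref{inverse,union,composition still decomposable}), and $\ell(K_\alpha)$ is controlled since $n$ depends only on $\alpha$. The boundary bookkeeping — that $\mu((\r(K_\alpha\cdot A)\setminus A)\cap Y_n)$ survives after intersecting with all of $\Gz$ — is routine since intersecting with $Y_n$ only loses mass.

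\textbf{The implication $(1) \Rightarrow (2)$.} This is where the real work lies. Assuming $\G$ is asymptotically expanding, I would, for each $n$, build $Y_n$ by an exhaustion argument inside $\Gz$: roughly, one wants $Y_n$ large in measure yet with the property that the reduction $\G^{Y_n}_{Y_n}$ is genuinely \emph{expanding} (constant parameters) — this should follow from Lemma \ref{subset also asym expansion1}, which already produces, for fixed $\alpha, \beta, C$, a single decomposable $K$ that witnesses the expansion inequality uniformly over all sufficiently large subsets $Y$ simultaneously; one then diagonalizes over a sequence $\alpha_n \to 0$, $C_n$ chosen appropriately, while keeping $\mu(Y_n) \to 1$. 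The genuinely new ingredient over \cite{asymptoticexpansionandstrongergodicity, faucris.282438222} is the \emph{bounded ratio} clause (2) of Definition \ref{defn:domain of expansion with bdd ratio}: one must arrange that the Radon--Nikodym derivatives $\RN(K_i, x)$ are squeezed into $[\tfrac1\theta, \theta]$ on $Y_n$. The mechanism I would use: each admissible bisection $K_i$ has a measurable derivative function $\RN(K_i, \cdot)$ on $\s(K_i)$, so the set where it lies in $[\tfrac1\theta_n, \theta_n]$ has measure $\to \mu(\s(K_i))$ as $\theta_n \to \infty$; intersecting $Y_n$ with the (finitely many) ``good'' sets for $K_1, \dots, K_N$ and their inverses, and then further shrinking, preserves $\mu(Y_n) \to 1$ while installing the ratio bound. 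The delicate point is that shrinking $Y_n$ to install the ratio bound must not destroy the expansion inequality — so the two shrinking procedures have to be interleaved carefully, or one applies Lemma \ref{subset also asym expansion1} \emph{after} the ratio-bound restriction (since that lemma tolerates any sufficiently large $Y$), choosing the target Cheeger-type constant $C_n$ small enough to have room. I also need to replace the $\alpha$-dependent decomposable sets by a single unital symmetric $N$-decomposition; here Lemma \ref{inverse,union,composition still decomposable} lets me take unions and symmetrizations at the cost of enlarging $N$ and $L$, which is harmless since (2) allows $N_n, L_n$ to grow with $n$.

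\textbf{Expected main obstacle.} The crux is the compatibility, inside the construction of $Y_n$, between three competing demands: $\mu(Y_n) \to 1$, uniform (constant-parameter) expansion of the reduction, and the Radon--Nikodym bound. Each demand individually forces a shrinking of $Y_n$, and one must verify that the expansion inequality (proved for the reduction via Lemma \ref{subset also asym expansion1}) is robust under the further ratio-bound shrinking — i.e. that the order of operations (restrict for bounded ratio, \emph{then} apply Lemma \ref{subset also asym expansion1} to get expansion on whatever large set remains) genuinely closes. Making this bookkeeping precise, while tracking that all constants depend only on the data allowed to depend on $n$, is the technical heart of the argument.
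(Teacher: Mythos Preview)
Your treatment of $(3)\Rightarrow(1)$ is essentially the paper's argument (the paper phrases it as a contradiction, but the content is the same: pick $n$ with $\mu(\Gz\setminus Y_n)$ small relative to $\alpha$, pass to $A\cap Y_n$, and invoke Lemma~\ref{extend to beta} to cover the resulting range of relative masses).

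The gap is in $(1)\Rightarrow(2)$. Lemma~\ref{subset also asym expansion1} only yields expansion for sets $A\subseteq Y$ with $\mu(A)\ge\alpha\mu(Y)$, and the $K$ it produces depends on $\alpha$. Your ``diagonalize over $\alpha_n\to 0$'' does not close this: a domain of $(C_n,N_n,L_n)$-expansion requires a \emph{single} $K_n$ witnessing the inequality for \emph{every} $A\subseteq Y_n$ with $0<\mu(A)\le\tfrac12\mu(Y_n)$, including arbitrarily small ones. Applying the lemma with parameter $\alpha_n$ leaves all $A$ of relative mass below $\alpha_n$ unaccounted for, and there is no reason the $K$ produced for $\alpha_n$ handles them. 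So as written, your $Y_n$ is only a domain of asymptotic expansion with threshold $\alpha_n$, not a domain of genuine (constant-parameter) expansion.

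The missing mechanism is a maximal F\o lner set argument. After restricting to $X_n\subseteq\Gz$ for the ratio bound (the paper removes the set $Z_n$ of images of points where $\RN(K_{n,i},\cdot)<1/\theta_n$; symmetry of the decomposition then automatically enforces the upper bound $\RN\le\theta_n$ on the complement), one fixes $K_n$ from Lemma~\ref{subset also asym expansion1} with $\alpha=\alpha_n$ small, takes a \emph{maximal} $(C,K_n)$-F\o lner set $F_n\subseteq X_n$ (Lemma~\ref{lem:maximal Folner set}), and sets $Y_n\coloneqq X_n\setminus F_n$. Maximality forces every small $A\subseteq Y_n$ to expand (Lemma~\ref{lem:expansion and Folner set}: if $A$ failed to expand, $F_n\sqcup A$ would be a strictly larger F\o lner set), while $A$ of relative mass near $\tfrac12$ is handled directly by Lemma~\ref{subset also asym expansion1}; the choice $\alpha_n\approx\tfrac{C}{n}$ guarantees $\mu(F_n)<\alpha_n\mu(X_n)$ is negligible, so $\mu(Y_n)\to 1$. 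This F\o lner removal step is the idea your proposal is missing.
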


Here the extra requirement on the Radon-Nikodym derivatives in condition (2) will play an important role to prove our main results in Section \ref{sec:Main results}. 

A key tool to prove Theorem \ref{exhaustion of expansion with radon controlled} is to consider maximal F{\o}lner sets as follows.

\begin{defn}\label{Folner set}
Given measurable $Y\subseteq \mathcal{G}^{(0)}$, decomposable $K \subseteq \G$ and $\epsilon > 0$, we say that a measurable subset $F \subseteq Y$ is \emph{$(\epsilon,K)$-F\o lner in $Y$} if $\mu(F)\leq \frac{1}{2}\mu(Y)$ and 
\[
\mu\big((\r(K\cdot F)\setminus F)\cap Y\big)\leq \epsilon\mu(F).
\]
When $Y=\Gz$, we simply say that $F$ is \emph{$(\epsilon,K)$-F\o lner}.
\end{defn}

Now fix measurable $Y\subseteq \mathcal{G}^{(0)}$, decomposable $K \subseteq \G$ and $\epsilon > 0$. Denote the set of all $(\epsilon,K)$-F\o lner sets in $Y$ by $\mathcal{F}_{\epsilon,K}$. Consider the equivalence relation on $\mathcal{F}_{\epsilon,K}$ by setting $F\sim F'$ in $\mathcal{F}_{\epsilon,K}$ if and only if they differ by a null-set. Define a partial order on $\mathcal{F}_{\epsilon,K}/\sim$ by setting $[F]\sqsubseteq [F']$ if $F\subseteq F'$ up to null-sets.

The following lemma is similar to \cite[Lemma 4.2]{asymptoticexpansionandstrongergodicity}:

\begin{lem}\label{lem:maximal Folner set}
The partial ordered set $(\mathcal{F}_{\epsilon,K}/\sim,\sqsubseteq)$ has maximal elements.
\end{lem}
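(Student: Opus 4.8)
The plan is to apply Zorn's lemma to the poset $(\mathcal{F}_{\epsilon,K}/\sim, \sqsubseteq)$, so the main task is to verify that every chain has an upper bound. The natural candidate for the upper bound of a chain is the ``supremum'' of its members, but since the chain may be uncountable while $\mu$ is only countably additive, one cannot simply take a union; instead I would pass to a countable cofinal subchain. Concretely, given a chain $\mathcal{C} \subseteq \mathcal{F}_{\epsilon,K}/\sim$, set $s \coloneqq \sup\{\mu(F) \mid [F] \in \mathcal{C}\} \leq \frac12 \mu(Y)$, choose representatives $F_n$ with $[F_n] \in \mathcal{C}$ and $\mu(F_n) \to s$, and — using that $\mathcal{C}$ is totally ordered, so finitely many elements of $\mathcal{C}$ always have a maximum among them — arrange that $F_1 \subseteq F_2 \subseteq \cdots$ up to null sets. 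Then put $F_\infty \coloneqq \bigcup_{n} F_n$.

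Next I would check that $F_\infty$ lies in $\mathcal{F}_{\epsilon,K}$, i.e.\ that it is $(\epsilon,K)$-F\o{}lner in $Y$. That $F_\infty \subseteq Y$ and $\mu(F_\infty) = \lim_n \mu(F_n) = s \leq \frac12\mu(Y)$ is immediate from monotone convergence. For the F\o{}lner inequality, write $K = \bigcup_{i=1}^N K_i$ with admissible bisections $K_i$; since $\r(K \cdot F_\infty) = \bigcup_{i=1}^N \tau_{K_i}(F_\infty \cap \s(K_i))$ and each $\tau_{K_i}$ is a measurable isomorphism on $\s(K_i)$, continuity of $\tau_{K_i}$ along the increasing sequence $F_n \cap \s(K_i)$ gives $\r(K \cdot F_\infty) = \bigcup_n \r(K \cdot F_n)$ up to null sets, hence $(\r(K\cdot F_\infty)\setminus F_\infty)\cap Y = \bigcup_n \big((\r(K\cdot F_n)\setminus F_\infty)\cap Y\big)$ is an increasing union, and its measure is $\lim_n \mu\big((\r(K\cdot F_n)\setminus F_\infty)\cap Y\big) \leq \lim_n \mu\big((\r(K\cdot F_n)\setminus F_n)\cap Y\big) \leq \lim_n \epsilon\,\mu(F_n) = \epsilon\,\mu(F_\infty)$. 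So $[F_\infty] \in \mathcal{F}_{\epsilon,K}/\sim$.

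Finally I would confirm that $[F_\infty]$ is an upper bound for the whole chain $\mathcal{C}$, not just for the subsequence $\{[F_n]\}$. Given any $[F] \in \mathcal{C}$: if $[F] \sqsubseteq [F_n]$ for some $n$, then $[F] \sqsubseteq [F_\infty]$ trivially; otherwise, by totality of the chain, $[F_n] \sqsubseteq [F]$ for every $n$, so $\mu(F) \geq \sup_n \mu(F_n) = s = \mu(F_\infty)$; combined with $\mu(F) \leq s$ this forces $\mu(F) = s$, and since $F_\infty \subseteq F$ up to a null set while $\mu(F_\infty) = \mu(F)$, we get $F \sim F_\infty$, so again $[F] \sqsubseteq [F_\infty]$. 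Thus every chain has an upper bound, and Zorn's lemma yields a maximal element. The main obstacle is the cardinality issue just flagged — making sure the chain-to-countable-subchain reduction is legitimate and that the resulting $F_\infty$ genuinely dominates \emph{all} chain members — together with the (routine but necessary) verification that taking the increasing union commutes with applying $\r(K \cdot {-})$ modulo null sets.
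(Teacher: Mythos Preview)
Your Zorn's-lemma argument is correct. The paper, however, does not invoke Zorn: it gives a direct greedy construction. Starting from an arbitrary $F_0\in\mathcal{F}_{\epsilon,K}$, it inductively chooses $F_n\supseteq F_{n-1}$ with $\mu(F_n)>\beta_{n-1}-\tfrac{1}{n}$, where $\beta_{n-1}=\sup\{\mu(F)\mid F\in\mathcal{F}_{\epsilon,K},\ F_{n-1}\subseteq F\}$; then either some $[F_n]$ is already maximal, or $F=\bigcup_n F_n$ is shown to be maximal (the verification that $F$ is again F\o{}lner is the same increasing-union computation you carry out). The two routes share the core analytic step---that an increasing countable union of $(\epsilon,K)$-F\o{}lner sets is $(\epsilon,K)$-F\o{}lner---but differ in the surrounding logic: the paper's construction needs only countable dependent choice and produces a maximal element explicitly, while your approach is the standard Zorn template, cleaner to state and more readily portable to analogous maximality statements. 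One small omission in your write-up: Zorn requires the poset to be nonempty (equivalently, the empty chain to have an upper bound), which holds here since $\emptyset\in\mathcal{F}_{\epsilon,K}$.
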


\begin{proof}
Take an arbitrary $F_0\in \mathcal{F}_{\epsilon,K}$ and set $\beta_0\coloneqq \sup\{\mu(F)~|~F\in \mathcal{F}_{\epsilon,K} \text{ and } F_0\subseteq F\}$. Inductively, we choose $F_n\in \mathcal{F}_{\epsilon,K}$ such that $F_{n-1}\subseteq F_n$ and $\mu(F_n)> \beta_{n-1}-\frac{1}{n}$ where 
\[
\beta_n\coloneqq \sup\left\{\mu(F)~|~F\in \mathcal{F}_{\epsilon,K} \text{ and } F_n\subseteq F\right\}.
\]
If $\beta_n=\mu(F_n)$ for some $n$, then $[F_n]$ is maximal. Otherwise, we obtain an increasing sequence $\{F_n\}_{n \in \N}$. Setting $F\coloneqq\bigcup_{n\in \mathbb{N}}F_n$ and similar to the proof of \cite[Lemma 4.2]{asymptoticexpansionandstrongergodicity}, we obtain that $[F]$ is maximal. 
\end{proof}

Maximal F{\o}lner sets are useful to produce domains of expansions, which are crucial to prove Theorem \ref{exhaustion of expansion with radon controlled}. We start with the following:

\begin{lem}\label{lem:expansion and Folner set}
Given measurable $Y\subseteq \mathcal{G}^{(0)}$, decomposable $K \subseteq \G$ and $\epsilon > 0$, let $F_{\epsilon,K}\subseteq Y$ be a maximal $(\epsilon,K)$-F\o lner set in $Y$. Then for any measurable $A\subseteq Y\setminus F_{\epsilon,K}$ with $0< \mu(A)\leq \frac{1}{2}\mu(Y)-\mu(F_{\epsilon,K})$, we have 
\[
\mu\big(((\r(K\cdot A)\setminus A)\cap Y)\setminus F_{\epsilon,K}\big)> \epsilon\mu(A).
\]
\end{lem}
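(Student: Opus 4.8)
The plan is to argue by contradiction, exploiting the maximality of $F_{\epsilon,K}$ in a way that is standard for Følner-type extremal arguments. Suppose there is a measurable $A \subseteq Y \setminus F_{\epsilon,K}$ with $0 < \mu(A) \leq \frac{1}{2}\mu(Y) - \mu(F_{\epsilon,K})$ for which the conclusion fails, i.e. $\mu\big(((\r(K\cdot A)\setminus A)\cap Y)\setminus F_{\epsilon,K}\big) \leq \epsilon\mu(A)$. I would then form the candidate set $F' \coloneqq F_{\epsilon,K} \cup A$, which is a genuinely larger measurable subset of $Y$ (since $A$ is disjoint from $F_{\epsilon,K}$ and has positive measure), and show that $F'$ is again $(\epsilon,K)$-Følner in $Y$; this contradicts maximality of $[F_{\epsilon,K}]$ in $(\mathcal{F}_{\epsilon,K}/\sim,\sqsubseteq)$ and finishes the proof.

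To verify that $F'$ is $(\epsilon,K)$-Følner, I first check the size constraint: $\mu(F') = \mu(F_{\epsilon,K}) + \mu(A) \leq \frac{1}{2}\mu(Y)$ by the hypothesis on $\mu(A)$. Then I estimate the boundary term $\mu\big((\r(K\cdot F')\setminus F')\cap Y\big)$. The key set-theoretic observation is that $\r(K\cdot F') = \r(K\cdot F_{\epsilon,K}) \cup \r(K\cdot A)$, so
\[
(\r(K\cdot F')\setminus F')\cap Y \subseteq \big((\r(K\cdot F_{\epsilon,K})\setminus F_{\epsilon,K})\cap Y\big) \cup \big(((\r(K\cdot A)\setminus A)\cap Y)\setminus F_{\epsilon,K}\big),
\]
where for the first piece I use $F_{\epsilon,K} \subseteq F'$ and for the second piece I use $A \subseteq F'$ together with $A \subseteq Y \setminus F_{\epsilon,K}$, so that a point of $\r(K\cdot A)\cap Y$ not in $F'$ is in particular not in $A$ and not in $F_{\epsilon,K}$. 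Taking measures and using subadditivity, the Følner property of $F_{\epsilon,K}$, and the assumed failure of the conclusion for $A$, gives
\[
\mu\big((\r(K\cdot F')\setminus F')\cap Y\big) \leq \epsilon\mu(F_{\epsilon,K}) + \epsilon\mu(A) = \epsilon\mu(F'),
\]
so $F' \in \mathcal{F}_{\epsilon,K}$.

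The main (and really only) subtlety is bookkeeping the set inclusions carefully enough that the two boundary pieces land exactly where claimed — in particular making sure that removing $F'$ (rather than just $F_{\epsilon,K}$ or just $A$) on the outer side does not cost anything, which is why the statement is phrased with the extra ``$\setminus F_{\epsilon,K}$'' on the term for $A$. Everything else is routine: measurability of all sets involved follows from Lemma \ref{basic assumprop2} since $K$ is decomposable, and the contradiction with Lemma \ref{lem:maximal Folner set} is immediate once $F'$ is shown to be Følner and strictly larger than $F_{\epsilon,K}$ modulo null sets.
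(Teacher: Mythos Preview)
Your proof is correct and is essentially the same as the paper's: both hinge on the inclusion
\[
(\r(K\cdot(A\sqcup F_{\epsilon,K}))\setminus (A\sqcup F_{\epsilon,K}))\cap Y \subseteq \big(((\r(K\cdot A)\setminus A)\cap Y)\setminus F_{\epsilon,K}\big) \cup \big((\r(K\cdot F_{\epsilon,K})\setminus F_{\epsilon,K})\cap Y\big)
\]
together with maximality of $F_{\epsilon,K}$. The only cosmetic difference is that the paper argues directly (maximality forces $A\sqcup F_{\epsilon,K}$ to fail the F{\o}lner inequality strictly, then subtract), whereas you phrase the same step as a proof by contradiction.
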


\begin{proof}
Given such an $A\subseteq Y\setminus F_{\epsilon,K}$, we have $\mu(F_{\epsilon,K})< \mu(A\sqcup F_{\epsilon,K})\leq \frac{1}{2}\mu(Y)$ and $\big(\r(K\cdot(A\sqcup F_{\epsilon,K}))\setminus (A\sqcup F_{\epsilon,K})\big)\cap Y$ is contained in 
\[
\big(((\r(K\cdot A)\setminus A)\cap Y)\setminus F_{\epsilon,K}\big) \cup \big((\r(K\cdot F_{\epsilon,K})\setminus F_{\epsilon,K})\cap Y\big).
\]
By the maximality of $F_{\epsilon,K}$, it follows that 
\begin{align*}
\epsilon\mu(A\sqcup F_{\epsilon,K}) &< \mu\big((\r(K\cdot(A\sqcup F_{\epsilon,K}))\setminus (A\sqcup F_{\epsilon,K}))\cap Y\big)\\ &\leq\mu\big(((\r(K\cdot A)\setminus A)\cap Y)\setminus F_{\epsilon,K}\big)+\mu\big((\r(K\cdot F_{\epsilon,K})\setminus F_{\epsilon,K})\cap Y\big).
\end{align*}
Since $\mu\big((\r(K\cdot F_{\epsilon,K})\setminus F_{\epsilon,K})\cap Y\big) \leq \epsilon\mu(F_{\epsilon,K})$, we have
\[
\epsilon\mu(A)<
\mu\big(((\r(K\cdot A)\setminus A)\cap Y)\setminus F_{\epsilon,K}\big).
\]
Then we finish the proof.
\end{proof}

\begin{proof}[Proof of Theorem \ref{exhaustion of expansion with radon controlled}]
	``$(1)\Rightarrow(2)$'': Fix $C\in (0,\frac{1}{2})$ and let $\alpha_n \coloneqq \frac{C}{(4+2C)(n+1)}$ for each $n \in \N$. Since $\G$ is asymptotically expanding, Lemma \ref{subset also asym expansion1} provides a unital symmetric $N_n$-decomposable $K_n$ with $\ell(K_n) \leq L_n$ satisfying the condition therein for $\beta=\frac{1}{2}$, $\alpha=\alpha_n$ and $C$. Take a unital symmetric decomposition $K_n=\bigcup_{i=1}^{N_n} K_{n,i}$ for admissible $K_{n,i}$ and set $\theta_n\coloneqq N_n\cdot (n+1)$. Denote
\[
	Z_n \coloneqq \left\{\tau_{K_{n,i}}(x)~\middle|~x\in \s(K_{n,i}) \text{ and } i=1,\cdots, N_n \text{ such that } \RN(K_{n,i},x) < \frac{1}{\theta_n}\right\}.
\]
Then $\mu(Z_n)< \frac{N_n}{\theta_n}\cdot \mu(\mathcal{G}^{(0)})=\frac{1}{n+1}$, which implies that 
	\[
	\mu(\mathcal{G}^{(0)}\setminus Z_n)\geq \frac{n}{n+1}\geq \frac{1}{2}.
	\]
	
	Let $X_n\coloneqq \mathcal{G}^{(0)}\setminus Z_n$, and take $F_n$ to be a maximal $(C,K_n)$-F\o lner sets in $X_n$, ensured by Lemma \ref{lem:maximal Folner set}. Then we have $\mu(F_n) < \alpha_n\cdot \mu(X_n)$. Setting $Y_n\coloneqq X_n \setminus F_n$, then $\mu(Y_n)> (1-\alpha_n)\mu(X_n)\geq (1-\alpha_n)\cdot \frac{n}{n+1}$.
	
Now we claim that $Y_n$ is a domain of $(\frac{C}{2},N_n, L_n)$-expansion. In fact, we take an arbitrary measurable $A \subseteq Y_n$ with $0< \mu(A)\leq \frac{1}{2}\mu(Y_n)$ and divide into two cases:

\noindent \textbf{Case I.} If $\mu(A)\leq \frac{1}{2}\mu(X_n)-\mu(F_n)$, then by Lemma \ref{lem:expansion and Folner set} we have
	\[
	\mu\big((\r(K_n\cdot A)\setminus A)\cap Y_n\big)> C \mu(A).
	\]
\noindent \textbf{Case II.} If $\mu(A)> \frac{1}{2}\mu(X_n)-\mu(F_n)$, then we have 
	\[
	\frac{1}{2}\mu(X_n)\geq \mu(A) > (\frac{1}{2}-\alpha_n)\mu(X_n)\geq \alpha_n \mu(X_n).
	\]
It follows from the requirement on $K_n$ that $\mu\big((\r(K_n\cdot A)\setminus A)\cap X_n\big)> C\mu(A)$. Since
\[
\mu\big((\r(K_n\cdot A)\setminus A)\cap Y_n\big) \geq \mu\big((\r(K_n\cdot A)\setminus A)\cap X_n\big)-\mu(F_n)> C\mu(A)-\mu(F_n)
\]
and 
\[
\mu(A)> (\frac{1}{2}-\alpha_n) \mu(X_n)\geq (\frac{1}{2}-\frac{C}{4+2C})\mu(X_n)=\frac{1}{C+2}\mu(X_n),
\]
then we have
\[
C\mu(A)-\mu(F_n) > C\mu(A)-\alpha_n \mu(X_n) > C\mu(A)-\alpha_n(C+2)\mu(A) \geq \frac{C}{2}\mu(A).
\]

In conclusion, we showed that $Y_n$ is a domain of $(\frac{C}{2},N_n, L_n)$-expansion with ratio bounded by $\theta_n$ and $\mu(Y_n)> (1-\alpha_n)\cdot \frac{n}{n+1}$.
	
	``$(2)\Rightarrow(3)$'' is trivial.
	
	``$(3)\Rightarrow(1)$'': Take an exhaustion of $\Gz$ by domains $Y_n$ of asymptotic expansion. Assume that $\mathcal{G}$ were not asymptotically expanding. Then there exists $\alpha_0\in(0,\frac{1}{2})$ such that for any $C>0$ and any unital symmetric decomposable $K\subseteq \mathcal{G}$, there exists $A_{C,K} \in \RR$ with $\alpha_0\leq \mu(A_{C,K})\leq\frac{1}{2}$ such that $\mu\big(\r(K\cdot A_{C,K})\setminus A_{C,K}\big) \leq C \mu(A_{C,K})$.
	
Take $n\in \N$ such that $\mu(Y_n)\geq 1-\frac{\alpha_0}{2}$. Then for any measurable $A \subseteq \Gz$ with $\alpha_0 \leq \mu(A) \leq\frac{1}{2}$, direct calculations show that $\frac{\alpha_0}{2} \mu(Y_n)\leq \mu(A\cap Y_n)\leq\frac{1}{2-\alpha_0}\mu(Y_n)$. Then Lemma \ref{extend to beta} provides $\epsilon> 0$ and unital symmetric decomposable $K \subseteq \mathcal{G}$ such that 
	 \[
	 \mu\big((\r(K \cdot (A\cap Y_n))\setminus (A\cap Y_n))\cap Y_n\big)> \epsilon \mu(A\cap Y_n) \geq \epsilon(\mu(A)-\frac{\alpha_0}{2})\geq \frac{\epsilon}{2} \mu(A), 
	 \]
which implies that
\[
\mu(\r(K\cdot A)\setminus A) \geq \mu\big((\r(K \cdot (A\cap Y_n))\setminus A)\cap Y_n\big) > \frac{\epsilon}{2}\mu(A).
\]
This leads to a contradiction.
\end{proof}

\begin{rem}\label{rem:depending on parameters}
From the proof of ``$(1)\Rightarrow(2)$'' above, we know that if $\G$ is asymptotically expanding, then in condition (2) we can take $C_n \equiv \frac{C}{2}$ for any chosen $C\in (0,\frac{1}{2})$, domains $Y_n$ satisfying $\mu(Y_n)> \big(1-\frac{C}{(4+2C)(n+1)}\big)\cdot \frac{n}{n+1}$, $N_n, L_n$ and $\theta_n$ only depend on the expansion parameters of $\G$.
\end{rem}

\subsection{Markov kernels on groupoids}\label{ssec:Markov structure theorem}

Here we construct reversible Markov kernels on groupoids and study the relation between their Cheeger constants and the expansion of groupoids.

Firstly, we aim to construct a Markov kernel for decomposable subsets. 
Let us fix a unital symmetric decomposable subset $K$ together with a unital symmetric decomposition $K = \bigcup_{i=1}^N K_i$.

\begin{defn}\label{defn:Markov kernel for K}
For measurable $Y \subseteq \Gz$ and $x\in Y$, denote 
\begin{equation}\label{EQ:aux func for Markov}
K_{Y,x}:=\{i ~|~ x\in \s(K_i) \text{ and }\tau_{K_{i}}(x)\in Y\} \quad \text{and} \quad \sigma_{Y,K}(x)=\sum_{i \in K_{Y,x}} \RN(K_i,x)^{\frac{1}{2}}.
\end{equation}
The \emph{normalized local Markov kernel} associated to $Y$ and $K$ is the Markov kernel on $Y$ defined as follows:
\begin{equation}\label{EQ:Markov for K}
\Pi_{Y,K}(x,-)=\frac{1}{\sigma_{Y,K}(x)}\sum_{i\in K_{Y,x}} \RN(K_{i},x)^{\frac{1}{2}}\delta_{\tau_{K_i}(x)}(-) \quad \text{for} \quad  x\in Y.
\end{equation}
Here $\delta_y$ is the Dirac delta measure on $y$.
\end{defn}

Since the decomposition is unital, it is clear that $\sigma_{Y,K}>0$ on $Y$ and (\ref{EQ:Markov for K}) makes sense. It is also  routine to check that (\ref{EQ:Markov for K}) is indeed a Markov kernel on $Y$. 
To see that $\Pi_{Y,K}$ is reversible, we consider the measure on $Y$ defined by:
\begin{equation}\label{EQ:reservsing measure}
\d\tilde{\mu}_{Y,K} \coloneqq \sigma_{Y,K}\cdot \d(\mu|_Y).
\end{equation}
Note that both $\Pi_{Y,K}$ and $\tilde{\mu}_{Y,K}$ depend on the decomposition of $K$.

We collect several useful properties of $\Pi_{Y,K}$ in the following. The proof is similar to that of \cite[Proposition 3.10]{li2022markovianroealgebraicapproachasymptotic} since the decomposition $K=\bigcup_{i=1}^N K_i$ is unital and symmetric. Hence we omit the details.

\begin{prop}\label{check reversible Markov kernel}
In the setting above, we have:
\begin{enumerate}
\item The measure $\tilde{\mu}_{Y,K}$ is reversing for the Markov kernel $\Pi_{Y,K}$. 
\item For measurable $A \subseteq Y$, we have $\mu(A)\leq \tilde{\mu}_{Y,K}(A) \leq N\sqrt{{\mu(A)\mu(Y)}}$. Hence $\tilde{\mu}_{Y,K}$ is equivalent to the restriction $\mu|_Y$.
\end{enumerate}
\end{prop}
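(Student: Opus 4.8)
The plan is to prove the two claims in Proposition \ref{check reversible Markov kernel} by essentially unwinding the definitions of $\Pi_{Y,K}$, $\tilde\mu_{Y,K}$ and the Radon--Nikodym derivatives $\RN(K_i,\variable)$, following the template of \cite[Proposition 3.10]{li2022markovianroealgebraicapproachasymptotic} but taking care of the groupoid bookkeeping. First I would record the basic identities for $\RN$: since each $K_i$ is an admissible bisection, $\tau_{K_i}$ is a measure-class-preserving isomorphism $\s(K_i)\to\r(K_i)$, so by the chain rule for Radon--Nikodym derivatives $\RN(K_i^{-1},\tau_{K_i}(x)) = \RN(K_i,x)^{-1}$ for $x\in\s(K_i)$, and the change-of-variables formula reads $\int_{\s(K_i)} g(\tau_{K_i}(x))\,\RN(K_i,x)\,\d\mu(x) = \int_{\r(K_i)} g(y)\,\d\mu(y)$ for measurable $g\geq 0$. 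I would also note the symmetry of the decomposition: if $\sigma$ is the involution on $\{1,\dots,N\}$ with $K_i^{-1}=K_{\sigma(i)}$, then $\tau_{K_{\sigma(i)}} = \tau_{K_i}^{-1}$, and $i\in K_{Y,x}$ with $y=\tau_{K_i}(x)$ if and only if $\sigma(i)\in K_{Y,y}$ with $\tau_{K_{\sigma(i)}}(y)=x$; this is the combinatorial heart of reversibility.

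\emph{Part (1).} To show $\tilde\mu_{Y,K}$ is reversing, I would compute $\int_Y f(x)\,\P g(x)\,\d\tilde\mu_{Y,K}(x)$ for bounded measurable $f,g$ on $Y$. Expanding $\P g(x) = \frac{1}{\sigma_{Y,K}(x)}\sum_{i\in K_{Y,x}}\RN(K_i,x)^{1/2} g(\tau_{K_i}(x))$ and $\d\tilde\mu_{Y,K} = \sigma_{Y,K}\,\d(\mu|_Y)$, the normalizing factors cancel and the integral becomes
\[
\sum_{i=1}^N \int_{\{x\in Y \,:\, i\in K_{Y,x}\}} f(x)\,g(\tau_{K_i}(x))\,\RN(K_i,x)^{1/2}\,\d\mu(x).
\]
For the $i$-th term I would substitute $y=\tau_{K_i}(x)$, i.e.\ $x=\tau_{K_{\sigma(i)}}(y)$, using the change-of-variables formula above; the domain $\{x\in Y: i\in K_{Y,x}\}$ maps onto $\{y\in Y: \sigma(i)\in K_{Y,y}\}$, the factor $\RN(K_i,x)^{1/2}$ combines with the Jacobian $\RN(K_i,x)=\RN(K_{\sigma(i)},y)^{-1}$ to give $\RN(K_{\sigma(i)},y)^{-1/2}\cdot$ (from the measure) $=\RN(K_{\sigma(i)},y)^{1/2}$ after accounting correctly, so the term becomes $\int f(\tau_{K_{\sigma(i)}}(y))\, g(y)\,\RN(K_{\sigma(i)},y)^{1/2}\,\d\mu(y)$. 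Re-indexing the sum by $j=\sigma(i)$ (a bijection) reassembles exactly $\int_Y \P f(y)\, g(y)\,\d\tilde\mu_{Y,K}(y)$, which is the reversibility identity. The only subtlety I expect here is tracking the powers of $\RN$ and the Jacobian cleanly — one factor $\RN^{1/2}$ from the kernel weight, one full factor $\RN$ from the change of variables, leaving net $\RN^{1/2}$ with the flipped index — so I would write this out with care, which is precisely why the paper defers to the cited reference.

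\emph{Part (2).} For the sandwich estimate, the lower bound is immediate: since the decomposition is unital, some $K_{i_0}=\Gz$, so $i_0\in K_{Y,x}$ for every $x\in Y$ with $\RN(K_{i_0},x)=1$, hence $\sigma_{Y,K}(x)\geq 1$ and $\tilde\mu_{Y,K}(A)=\int_A\sigma_{Y,K}\,\d\mu\geq\mu(A)$. For the upper bound I would write $\tilde\mu_{Y,K}(A) = \sum_{i=1}^N \int_{A\cap\{i\in K_{Y,x}\}}\RN(K_i,x)^{1/2}\,\d\mu(x)$ and apply Cauchy--Schwarz to each term: $\int_{A_i}\RN(K_i,x)^{1/2}\,\d\mu(x)\leq \mu(A_i)^{1/2}\big(\int_{A_i}\RN(K_i,x)\,\d\mu(x)\big)^{1/2}$ where $A_i = A\cap\{x: i\in K_{Y,x}\}\subseteq A$. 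By the change-of-variables identity, $\int_{A_i}\RN(K_i,x)\,\d\mu(x) = \mu(\tau_{K_i}(A_i))\leq\mu(Y)$ since $\tau_{K_i}(A_i)\subseteq Y$. Thus each term is $\leq\sqrt{\mu(A)\mu(Y)}$, and summing over the $N$ indices gives $\tilde\mu_{Y,K}(A)\leq N\sqrt{\mu(A)\mu(Y)}$. Equivalence of $\tilde\mu_{Y,K}$ with $\mu|_Y$ then follows since $\tilde\mu_{Y,K}$ has a density $\sigma_{Y,K}$ with respect to $\mu|_Y$ that is bounded below by $1$ and above by $\sum_i \sup_x\RN(K_i,x)^{1/2}<\infty$ (finiteness from admissibility, or more simply: equivalence of measures is symmetric and the lower bound $\sigma_{Y,K}\geq 1$ together with $\sigma_{Y,K}<\infty$ a.e.\ — which the Cauchy--Schwarz bound already delivers in integrated form — suffices). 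I do not anticipate a genuine obstacle in Part (2); the main work is the careful Jacobian accounting in Part (1).
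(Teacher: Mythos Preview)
Your proposal is correct and follows exactly the template the paper defers to (the proof is omitted in the paper with a pointer to \cite[Proposition 3.10]{li2022markovianroealgebraicapproachasymptotic}); your Jacobian bookkeeping in Part~(1) and the Cauchy--Schwarz argument in Part~(2) are the intended computations. One small cleanup: in the parenthetical at the end of Part~(2), admissibility alone does not force $\RN(K_i,\variable)$ to be essentially bounded, so drop that remark and simply observe that the sandwich $\mu(A)\leq\tilde\mu_{Y,K}(A)\leq N\sqrt{\mu(A)\mu(Y)}$ already gives mutual absolute continuity directly.
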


\begin{defn}\label{Markov expansion}
For a measurable $Y \subseteq \Gz$ and $C,N,L > 0$, we call $Y$ a \emph{domain of Markov $(C,N,L)$-expansion} (or simply, \emph{domain of Markov expansion}) if there exists a unital symmetric $N$-decomposable $K$ together with a unital symmetric decomposition $K=\bigcup_{i=1}^N K_i$ such that $\ell(K) \leq L$ and the associated normalised local Markov kernel $\Pi_{Y,K}$ on $(Y, \tilde{\mu}_{Y,K})$ has Cheeger constant greater than $C$. 

Moreover, if the decomposition for $K$ above has ratio bounded by $\theta \geq 1$, then we say that $Y$ is a \emph{domain of Markov $(C,N,L)$-expansion with ratio bounded by $\theta$}.
\end{defn}

The following lemma relates the domain of Markov expansion with the ordinary expansion. The proof is similar to that of \cite[Lemma 3.14]{li2022markovianroealgebraicapproachasymptotic} and hence omitted.

\begin{lem}\label{expansion and Markov expansion}
For a measurable $Y \subseteq \Gz$, we have:
\begin{enumerate}
 \item If $Y$ is a domain of $(C,N,L)$-expansion with ratio bounded by $\theta$, then $Y$ is a domain of $(\frac{C}{N\theta}, N,L)$-Markov expansion;
 \item If $Y$ is a domain of $(\kappa,N,L)$-Markov expansion with ratio bounded by $\theta$, then $Y$ is a domain of $(\frac{\kappa}{N\sqrt{\theta} + \kappa}, N,L)$-expansion. 
\end{enumerate}
\end{lem}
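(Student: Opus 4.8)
The statement asserts a two-way comparison between the Cheeger constant of the ordinary boundary functional on $Y$ and the Cheeger constant of the Markov kernel $\Pi_{Y,K}$ on $(Y,\tilde\mu_{Y,K})$. The plan is to fix a unital symmetric $N$-decomposable $K=\bigcup_{i=1}^N K_i$ with $\ell(K)\le L$ witnessing the hypothesis (of expansion in (1), or of Markov expansion in (2)) and ratio bounded by $\theta$, and to directly estimate, for a measurable $A\subseteq Y$ with $0<\tilde\mu_{Y,K}(A)\le\frac12\tilde\mu_{Y,K}(Y)$, the quantity $|\partial_{\Pi_{Y,K}}(A)|_{\tilde\mu_{Y,K}}=\int_A \Pi_{Y,K}(x,Y\setminus A)\,\d\tilde\mu_{Y,K}(x)$ against $\mu\big((\r(K\cdot A)\setminus A)\cap Y\big)$ and $\mu(A)$. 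Both directions hinge on unwinding the definitions: from (\ref{EQ:Markov for K}) and (\ref{EQ:reservsing measure}),
\[
|\partial_{\Pi_{Y,K}}(A)|_{\tilde\mu_{Y,K}}=\int_A \sum_{i\in K_{Y,x}} \RN(K_i,x)^{\frac12}\,\chi_{Y\setminus A}(\tau_{K_i}(x))\,\d\mu(x),
\]
so that the Markov boundary is a weighted count of pairs $(x,i)$ with $x\in A$, $\tau_{K_i}(x)\in Y\setminus A$, and the weights $\RN(K_i,x)^{1/2}$ lie in $[\theta^{-1/2},\theta^{1/2}]$ by the ratio bound.

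\textbf{Direction (1).} Assuming $Y$ is a domain of $(C,N,L)$-expansion, I would first relate $\tilde\mu_{Y,K}$-small sets to $\mu$-small sets: since $\mu(A)\le\tilde\mu_{Y,K}(A)$ by Proposition \ref{check reversible Markov kernel}(2), a set with $\tilde\mu_{Y,K}(A)\le\frac12\tilde\mu_{Y,K}(Y)$ need not have $\mu(A)\le\frac12\mu(Y)$, so one has to be slightly careful; one handles this by noting $\mu(A)\le\tilde\mu_{Y,K}(A)\le\frac12\tilde\mu_{Y,K}(Y)\le\frac N2\mu(Y)$ is too weak, and instead passing through $\mu(A)\le \tilde\mu_{Y,K}(A)$ together with, if necessary, replacing $A$ by $Y\setminus A$ — but in fact the cleanest route is to directly bound $|\partial_{\Pi_{Y,K}}(A)|_{\tilde\mu_{Y,K}}$ below by $\theta^{-1/2}$ times the number-of-witnesses, and then bound that number below using that at least one $i$ with $\tau_{K_i}(x)\in\r(K\cdot A)\setminus A$ exists for each $x$ in a suitable subset, giving $|\partial_{\Pi_{Y,K}}(A)|_{\tilde\mu_{Y,K}}\ge \theta^{-1/2}\mu\big((\r(K\cdot A)\setminus A)\cap Y\big)$ up to handling the two halves. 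Combining with the expansion inequality $\mu\big((\r(K\cdot A)\setminus A)\cap Y\big)>C\mu(A)$ and the upper bound $\tilde\mu_{Y,K}(A)\le N\sqrt{\mu(A)\mu(Y)}\le N\sqrt{\theta}\cdot\text{(something)}$ — more precisely using $\tilde\mu_{Y,K}(A)\le N\mu(A)$ is false in general, so one uses $\tilde\mu_{Y,K}(A)\le N\sqrt{\mu(A)\mu(Y)}$ only when comparing to $\tilde\mu_{Y,K}(Y)$ — one arrives at the ratio $\ge \frac{C}{N\theta}$. The bookkeeping of which normalization ($\mu(A)$ vs $\tilde\mu_{Y,K}(A)$) appears in numerator and denominator is where I expect to have to be most careful, and it is exactly this that produces the factor $N\theta$ rather than $N\sqrt\theta$ or $\theta$.

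\textbf{Direction (2) and the main obstacle.} For the converse, assuming $\Pi_{Y,K}$ has Cheeger constant $>\kappa$ on $(Y,\tilde\mu_{Y,K})$, I would take $A\subseteq Y$ with $0<\mu(A)\le\frac12\mu(Y)$, and want $\mu\big((\r(K\cdot A)\setminus A)\cap Y\big)>\frac{\kappa}{N\sqrt\theta+\kappa}\mu(A)$. The issue is that $\mu(A)\le\frac12\mu(Y)$ does not give $\tilde\mu_{Y,K}(A)\le\frac12\tilde\mu_{Y,K}(Y)$, so one cannot apply the Cheeger bound to $A$ directly; the standard fix (as in \cite{li2022markovianroealgebraicapproachasymptotic}) is: if $\tilde\mu_{Y,K}(A)\le\frac12\tilde\mu_{Y,K}(Y)$ apply it to $A$, otherwise apply it to $Y\setminus A$ and observe $(\r(K\cdot(Y\setminus A))\setminus(Y\setminus A))\cap Y$ relates back to $(\r(K\cdot A)\setminus A)\cap Y$ by symmetry of the decomposition. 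Either way one gets $|\partial_{\Pi_{Y,K}}(\text{relevant set})|_{\tilde\mu_{Y,K}}>\kappa\,\tilde\mu_{Y,K}(\cdot)\ge\kappa\mu(A')$ for the relevant set $A'$; then bound the left side above: each witness pair $(x,i)$ contributes weight $\le\theta^{1/2}$ and lands in $(\r(K\cdot A)\setminus A)\cap Y$, and summing over at most $N$ values of $i$ gives $|\partial_{\Pi_{Y,K}}(A)|_{\tilde\mu_{Y,K}}\le N\sqrt\theta\cdot\mu\big((\r(K\cdot A)\setminus A)\cap Y\big)$. Rearranging, with the usual trick of absorbing the part of $A'$ already inside the boundary, yields the constant $\frac{\kappa}{N\sqrt\theta+\kappa}$. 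I expect the genuine difficulty to be entirely in these measure-comparison and which-half arguments — making the symmetry of the decomposition interact correctly with passing to complements, and tracking the asymmetric roles of $\mu$ and $\tilde\mu_{Y,K}$ — rather than in any single inequality; since the paper explicitly says the proof parallels \cite[Lemma 3.14]{li2022markovianroealgebraicapproachasymptotic}, I would follow that template closely and only verify that unitality and symmetry of the groupoid decomposition substitute correctly for the corresponding features in the group-action setting.
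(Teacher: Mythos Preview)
Your proposal is correct and follows the same template the paper defers to (\cite[Lemma 3.14]{li2022markovianroealgebraicapproachasymptotic}): unwind the Markov boundary as a sum over the $N$ admissible pieces, use reversibility to swap $A$ and $Y\setminus A$ when the two normalisations disagree on which half is ``small'', and bound the Radon--Nikodym weights by $\theta^{\pm 1/2}$.

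One concrete correction for direction (1): the upper bound you need on $\tilde\mu_{Y,K}(A)$ is not the one from Proposition~\ref{check reversible Markov kernel}(2), which gives $N\sqrt{\mu(A)\mu(Y)}$ and is too weak when $\mu(A)$ is small. Instead, use directly that the ratio bound forces $\sigma_{Y,K}(x)\le N\sqrt\theta$ pointwise on $Y$, hence $\tilde\mu_{Y,K}(A)\le N\sqrt\theta\,\mu(A)$; combined with your lower bound $|\partial_{\Pi_{Y,K}}(A)|_{\tilde\mu_{Y,K}}\ge\theta^{-1/2}\mu\big((\r(K\cdot A)\setminus A)\cap Y\big)>\theta^{-1/2}C\mu(A')$ (with $A'=A$ or $Y\setminus A$ according to which has $\mu$-measure at most $\tfrac12\mu(Y)$, and using $\tilde\mu_{Y,K}(A)\le\tilde\mu_{Y,K}(A')$ in the complemented case), this gives exactly $\frac{C}{N\theta}$. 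For direction (2) your plan is fine; in fact the straightforward estimate $|\partial_{\Pi_{Y,K}}(A)|_{\tilde\mu_{Y,K}}\le N\sqrt\theta\,\mu\big((\r(K\cdot A)\setminus A)\cap Y\big)$ together with $|\partial_{\Pi_{Y,K}}(A)|_{\tilde\mu_{Y,K}}>\kappa\,\mu(A)$ already yields $\frac{\kappa}{N\sqrt\theta}$, which is stronger than the stated $\frac{\kappa}{N\sqrt\theta+\kappa}$.
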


Combining Theorem \ref{exhaustion of expansion with radon controlled} with Lemma \ref{expansion and Markov expansion}, we reach the following Markovian version of the structure theorem:

\begin{thm}\label{thm:Markov structure theorem}
Let $\mathcal{G}$ be a groupoid with a length function $\ell$, and $(\Gz,\RR,\mu)$ be a probability measure space. Then the following are equivalent:
	\begin{enumerate}
		\item The groupoid $\mathcal{G}$ is asymptotically expanding in measure;
		\item The unit space $\mathcal{G}^{(0)}$ admits an exhaustion by domains $Y_n$ of $(C_n, N_n, L_n)$-Markov expansion with ratio bounded by $\theta_n$ for $C_n,N_n,L_n >0$ and $\theta_n \geq 1$.
	\end{enumerate}
\end{thm}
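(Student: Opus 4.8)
The plan is to deduce this result purely formally from the already-established structure theorem (Theorem \ref{exhaustion of expansion with radon controlled}) together with the comparison between ordinary expansion and Markov expansion recorded in Lemma \ref{expansion and Markov expansion}. The key observation is that both implications only involve translating numerical parameters, keeping the \emph{same} subsets $Y_n$ and the \emph{same} decomposable sets $K_n$ (with their chosen decompositions), so no new measure-theoretic construction is needed.

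For the implication $(1)\Rightarrow(2)$, I would start from Theorem \ref{exhaustion of expansion with radon controlled}: asymptotic expansion of $\mathcal{G}$ yields an exhaustion $\{Y_n\}$ of $\Gz$ by domains of $(C_n,N_n,L_n)$-expansion with ratio bounded by $\theta_n$. Applying part (1) of Lemma \ref{expansion and Markov expansion} to each $Y_n$ converts it into a domain of $(\tfrac{C_n}{N_n\theta_n},N_n,L_n)$-Markov expansion, and inspecting the proof of that lemma (which retains the same decomposition for $K_n$) shows the ratio bound $\theta_n$ is preserved. Since $\{Y_n\}$ still satisfies $\mu(Y_n)\to 1$, this is exactly the desired exhaustion, with new parameters $C_n' = C_n/(N_n\theta_n)$, $N_n' = N_n$, $L_n' = L_n$, $\theta_n' = \theta_n$.

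For the implication $(2)\Rightarrow(1)$, I would feed the Markov exhaustion into part (2) of Lemma \ref{expansion and Markov expansion}: each domain $Y_n$ of $(C_n,N_n,L_n)$-Markov expansion with ratio bounded by $\theta_n$ is a domain of $\big(\tfrac{C_n}{N_n\sqrt{\theta_n}+C_n},N_n,L_n\big)$-expansion. This produces an exhaustion of $\Gz$ by domains of expansion (each such domain being trivially a domain of asymptotic expansion with constant expansion parameters), so condition (3) of Theorem \ref{exhaustion of expansion with radon controlled} holds, and we conclude that $\mathcal{G}$ is asymptotically expanding in measure by the equivalence $(3)\Rightarrow(1)$ there.

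There is essentially no substantive obstacle here: the theorem is a bookkeeping corollary of two prior results. The only point requiring a modicum of care is to confirm that the ratio bound $\theta$ genuinely carries through Lemma \ref{expansion and Markov expansion} in both directions --- that is, that the decomposition realizing the Markov expansion is the same one realizing the ordinary expansion, so the Radon--Nikodym bounds in Definition \ref{defn:domain of expansion with bdd ratio}(2) and in Definition \ref{Markov expansion} refer to the same data. Since Lemma \ref{expansion and Markov expansion} is stated with the ratio bound explicitly tracked, this is immediate, and the proof reduces to citing Theorem \ref{exhaustion of expansion with radon controlled} and Lemma \ref{expansion and Markov expansion} in sequence.
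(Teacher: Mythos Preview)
Your proposal is correct and matches the paper's own argument exactly: the paper simply states that the theorem follows by combining Theorem \ref{exhaustion of expansion with radon controlled} with Lemma \ref{expansion and Markov expansion}, which is precisely the two-step translation of parameters you describe. Your additional remark that the ratio bound $\theta_n$ persists because the same decomposition is used is a helpful clarification of a point the paper leaves implicit.
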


\begin{rem}\label{rem:dependence 2}
Combining Remark \ref{rem:depending on parameters} and the dependence of parameters in Lemma \ref{expansion and Markov expansion}, we know that if $\G$ is asymptotically expanding, then in Theorem \ref{thm:Markov structure theorem}(2) we can take domains $Y_n$ to  satisfy $\mu(Y_n)> t_n$ for some universal $t_n$ independent of $\G$, $C_n, N_n, L_n$ and $\theta_n$ only depend on expansion parameters of $\G$.
\end{rem}

\section{Dynamical propagation and quasi-locality}\label{Dynamical concepts on groupoids}

In this section, we introduce the notion of dynamical propagation and dynamical quasi-locality for operators on groupoids. Again we always assume that $\G$ is a groupoid with a length function $\ell$, and $\Gz$ is equipped with a measure $\mu$ on some $\sigma$-algebra $\RR$. To include more examples, here we do \emph{not} require $\mu$ to be finite.

\subsection{Basic Notions}\label{ssec:basic notions}

We start with the following key notions:

\begin{defn}\label{finite dynamical propagation}
Let $\mathcal{G}$ be a groupoid with a length function $\ell$, and $(\Gz,\RR,\mu)$ be a measure space. An operator $T \in \B(L^2(\Gz,\mu))$ is said to have \emph{finite dynamical propagation} if there is a unital symmetric $N$-decomposable subset $K\subseteq \G$ with $\ell(K) \leq L$ for some $N,L>0$ such that for any $A,B \in \RR$ with $\mu(\r(K \cdot A)\cap B)=0$, then $\chi_A T\chi_B=0$. 
In this case, we also say that $T$ has \emph{$(N,L)$-dynamical propagation}.

Denote the set of all operators with finite dynamical propagation by $\C_{\dyn}(\G)$, and its norm completion by $\CC^*_{\dyn}(\mathcal{G})$, called the \emph{dynamical Roe algebra of $\G$}.
\end{defn}

Similarly, we consider its quasi-local counterpart:

\begin{defn}\label{dynamical quasi local}
Let $\mathcal{G}$ be a groupoid with a length function $\ell$, and $(\Gz,\RR,\mu)$ be a measure space. An operator $T \in \B(L^2(\Gz,\mu))$ is called \emph{dynamically quasi-local} if for any $\epsilon > 0$, there is a unital symmetric $N_\epsilon$-decomposable subset $K_\epsilon \subseteq \G$ with $\ell(K_\epsilon) \leq L_\epsilon$ for some $N_\epsilon, L_\epsilon>0$ such that for any $A,B \in \RR$ with $\mu(\r(K_\epsilon \cdot A)\cap B)=0$, then $\|\chi_A T\chi_B\| < \epsilon$. The maps $\epsilon \mapsto N_\epsilon, \epsilon \mapsto L_\epsilon$ are called \emph{quasi-local parameters} for $T$.

Denote the set of all dynamically quasi-local operators in $\B(L^2(\Gz,\mu))$ by $\CC^*_{\dyn,\q}(\mathcal{G})$, called the \emph{dynamical quasi-local algebra of $\G$}.
\end{defn}

The following shows that $\CC^*_{\dyn}(\mathcal{G})$ and $\CC^*_{\dyn,\q}(\mathcal{G})$ are indeed $C^*$-algebras:

\begin{lem}\label{check of finite propagation algebra}
Given $\G$, $\ell$ and $\mu$ as above, we have:
\begin{enumerate}
 \item The set $\C_{\dyn}(\G)$ is a $\ast$-algebra and hence, $\CC^*_{\dyn}(\mathcal{G})$ is a $C^*$-algebra;
 \item The set $\CC^*_{\dyn,\q}(\mathcal{G})$ is a $C^*$-algebra.
\end{enumerate}
\end{lem}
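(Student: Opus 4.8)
The plan is to verify the $C^*$-algebra axioms directly, treating both parts in parallel since the arguments are structurally identical up to replacing ``$\chi_A T \chi_B = 0$'' by ``$\|\chi_A T \chi_B\| < \epsilon$''. Both sets are clearly closed under scalar multiplication and adjoints: if $T$ has $(N,L)$-dynamical propagation witnessed by a unital symmetric $N$-decomposable $K$ with $\ell(K) \leq L$, then since $K$ is symmetric we have $\mu(\r(K \cdot A) \cap B) = 0 \iff \mu(\r(K \cdot B) \cap A) = 0$ (this symmetry swap is the one genuinely groupoid-flavoured point, and I would spell it out: $\r(K\cdot A)\cap B$ and $\r(K\cdot B)\cap A$ are exchanged under $\tau_{K_i} \leftrightarrow \tau_{K_{\sigma(i)}}$ using the unital symmetric decomposition, and $\mu$-nullity of one forces $\mu$-nullity of the other because each $\tau_{K_i}$ is measure-class-preserving), hence $\chi_B T^* \chi_A = (\chi_A T \chi_B)^* = 0$, so $T^*$ has $(N,L)$-dynamical propagation too; the same applies verbatim with norms for the quasi-local case.

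The main work is additivity and multiplicativity, where I would use Lemma \ref{inverse,union,composition still decomposable} to combine the witnessing decomposable sets. For a sum $S + T$ with $S$ having $(N_1, L_1)$- and $T$ having $(N_2, L_2)$-dynamical propagation witnessed by $K_1, K_2$, set $K \coloneqq K_1 \cup K_2$; by Lemma \ref{inverse,union,composition still decomposable}(2) this is unital symmetric $(N_1+N_2)$-decomposable with $\ell(K) \leq \max\{L_1, L_2\}$. If $\mu(\r(K \cdot A) \cap B) = 0$, then since $\r(K_i \cdot A) \subseteq \r(K \cdot A)$ we get $\mu(\r(K_i \cdot A) \cap B) = 0$ for $i = 1,2$, hence $\chi_A S \chi_B = \chi_A T \chi_B = 0$, so $\chi_A(S+T)\chi_B = 0$. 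For a product $ST$, set $K \coloneqq K_1 \cdot K_2$, which by Lemma \ref{inverse,union,composition still decomposable}(2) is $N_1 N_2$-decomposable with $\ell(K) \leq L_1 + L_2$; the key observation is that for any $A \in \RR$ one has $\mu\big(\r(K_2 \cdot A) \setminus \r(K_1^{-1}\cdot(\r(K\cdot A)))\big)$ controlled, or more cleanly: if $\mu(\r(K \cdot A) \cap B) = 0$, write $C \coloneqq \r(K_2 \cdot A)$ and observe $\r(K_1 \cdot C) = \r(K_1 \cdot K_2 \cdot A) = \r(K\cdot A)$ (using $\r(K_1 \cdot (K_2 \cdot A)) = \r((K_1 K_2)\cdot A)$, which is immediate from associativity of groupoid composition) so $\mu(\r(K_1 \cdot C) \cap B) = 0$, giving $\chi_C S \chi_B = 0$; also for any $A'$ with $\mu(\r(K_2 \cdot A) \cap A') = 0$ we have $\chi_A T \chi_{A'} = 0$, and taking $A' = \Gz \setminus C$ up to null sets, $\chi_A T \chi_{\Gz \setminus C}$ is zero, i.e. $\chi_A T = \chi_A T \chi_C$; therefore $\chi_A ST \chi_B = \chi_A T \chi_C S \chi_B$...

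— here I need to be careful with the order of $S$ and $T$; the correct insertion is $\chi_A (ST) \chi_B = \chi_A S (\chi_C + \chi_{\Gz\setminus C}) T \chi_B$ and I should instead split on the right factor: set $C \coloneqq \r(K_2^{-1} \cdot B) = \r(K_2 \cdot B)^{-1}$... the cleanest route is to insert $1 = \chi_C + \chi_{\Gz \setminus C}$ between $S$ and $T$ with $C \coloneqq \Gz \setminus \r(K_1 \cdot B)$ for the $T$-part and check each summand vanishes; I would work out the precise choice of $C$ in the writeup so that $\mu(\r(K_1 \cdot (\Gz \setminus C)) \cap B) = 0$ makes $\chi_A S \chi_{\Gz\setminus C} = 0$ — wait, the symmetry of $K_1$ converts this to $\mu(\r(K_1 \cdot B) \cap (\Gz \setminus C)) = 0$, forcing $\Gz \setminus C \subseteq \r(K_1 \cdot B)$ up to null, i.e. $C \supseteq \Gz \setminus \r(K_1\cdot B)$, so take $C \coloneqq \Gz \setminus \r(K_1 \cdot B)$ and then the other summand needs $\mu(\r(K_2 \cdot A) \cap C) = 0$, i.e. $\r(K_2 \cdot A) \subseteq \r(K_1 \cdot B)$ up to null, which should follow from $\mu(\r((K_1 K_2)\cdot A) \cap B) = 0$ via symmetry of $K_1$ and $K_2$. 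This delicate bookkeeping with the choice of the intermediate projection — getting the symmetrizations and the associativity identity $\r((K_1 K_2) \cdot A) = \r(K_1 \cdot (K_2 \cdot A))$ lined up correctly — is the part I expect to be the main obstacle; everything else (scalar multiples, closedness of $\CC^*_{\dyn}$ under norm limits is automatic from it being a norm closure, and norm-closedness of $\CC^*_{\dyn,\q}$ under limits via an $\epsilon/3$ argument using the quasi-local parameters) is routine. I would also remark that $\CC^*_{\dyn,\q}(\G)$ being norm-closed needs the separate easy $\epsilon/3$ estimate: given $T_n \to T$ with each $T_n$ dynamically quasi-local and $\epsilon > 0$, pick $n$ with $\|T - T_n\| < \epsilon/2$ and use quasi-local parameters of $T_n$ at level $\epsilon/2$, so $\|\chi_A T \chi_B\| \leq \|\chi_A(T-T_n)\chi_B\| + \|\chi_A T_n \chi_B\| < \epsilon$ whenever $\mu(\r(K_{\epsilon/2}\cdot A) \cap B) = 0$.
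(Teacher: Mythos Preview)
Your approach is correct and matches the paper's: the paper in fact only details the adjoint case (exactly your argument via symmetry of $K$ and the measure-class-preserving property of each $\tau_{K_i}$) and dispatches the algebra structure in one line by citing Lemma~\ref{inverse,union,composition still decomposable}, so you are spelling out strictly more than the paper does. For the product bookkeeping you flag as the obstacle, the clean choice is $C\coloneqq \r(K_1\cdot A)$ and $K\coloneqq K_2\cdot K_1$: then $\chi_A S\chi_{\Gz\setminus C}=0$ trivially, and $\chi_C T\chi_B=0$ follows from $\mu(\r(K_2\cdot C)\cap B)=\mu(\r((K_2K_1)\cdot A)\cap B)=0$ using $\r(K_2\cdot\r(K_1\cdot A))=\r((K_2K_1)\cdot A)$.
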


\begin{proof}
We only prove (1), since (2) is similar.

By Lemma \ref{inverse,union,composition still decomposable}, $\C_{\dyn}(\G)$ forms an algebra. To see it preserves the $\ast$-operator, fix $T \in \C_{\dyn}(\G)$. Then there exists a unital symmetric decomposable subset $K = \bigcup_{i=1}^N K_i$  such that for any $A,B \in \RR$ with $\mu(\r(K \cdot A)\cap B)=0$, then $\chi_A T\chi_B=0$. 
Then we have $\chi_B T^* \chi_A=0$ for any measurable $A,B \subseteq \Gz$ with $\mu(\r(K \cdot A)\cap B)=0$. Since each $\tau_{K_i}$ is measure-class-preserving and $K$ is symmetric, this is equivalent to that $\mu(\r(K \cdot B)\cap A)=0$. Hence we conclude that $T^* \in \C_{\dyn}(\G)$.
\end{proof}

For a sequence of groupoids, we can combine them into a single groupoid and translate the notion of dynamical propagation and quasi-locality thereon in a uniform version. This is important to recover the case of uniform Roe algebras.

More precisely, for each $n\in \N$, let $\G_n$ be a groupoid with a length function $\ell_n$ and $(\Gz_n,\RR_n,\mu_n)$ be a measure space. Form a groupoid $\G$ to be their disjoint union $\bigsqcup_{n\in \N} \G_n$, and equip $\Gz=\bigsqcup_{n\in \N} \Gz_n$ with a measure $\mu$ on some $\sigma$-algebra $\RR$ generated by $\bigsqcup_{n\in \N} \RR_n$ and $\mu$ is determined by $\mu|_{\Gz_n} \coloneqq \mu_n$ for each $n$. Also take a length function $\ell$ on $\G$ to be the disjoint union of $\ell_n$. Then we have
\[
L^2(\Gz, \mu) = \bigoplus_{n\in \N} L^2(\Gz_n, \mu_n).
\]
The following shows that operators in $\CC^*_{\dyn,\q}(\G)$ are always diagonal and can be described in a uniform version:

\begin{lem}\label{family of quasi local and quasi local}
Given $T \in \B(L^2(\Gz,\mu))$, we have:
\begin{enumerate}
 \item $T \in \C_{\dyn}(\G)$ \emph{if and only if} there exists $T_n \in \B(L^2(\Gz_n, \mu_n))$ for each $n\in \N$ with $\sup_{n\in \N} \|T_n\| < \infty$ such that $T= (\SOT)-\sum_{n \in \N} T_n$ and there exist $N,L>0$ satisfying: for any $n\in \N$ there exists a unital symmetric $N$-decomposable $K_n \subseteq \G_n$ with $\ell_n(K_n) \leq L$ such that for any $A,B \in \RR_n$ with $\mu(\r(K_n \cdot A)\cap B)=0$, we have $\chi_A T_n \chi_B=0$. 
 \item $T \in \CC^*_{\dyn,\q}(\G)$ \emph{if and only if} there exists $T_n \in \B(L^2(\Gz_n, \mu_n))$ for each $n\in \N$ with $\sup_{n\in \N} \|T_n\| < \infty$ such that $T= (\SOT)-\sum_{n \in \N} T_n$ and for any $\epsilon > 0$ there exist $N, L>0$ satisfying: for any $n\in \N$, there exists a unital symmetric $N$-decomposable $K_n \subseteq \G_n$ with $\ell_n(K_n) \leq L$ such that for any $A,B \in \RR_n$ with $\mu(\r(K_n \cdot A)\cap B)=0$, we have $\|\chi_A T_n\chi_B\| < \epsilon$. 
\end{enumerate}
\end{lem}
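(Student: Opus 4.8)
\textbf{Proof proposal for Lemma \ref{family of quasi local and quasi local}.}

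The plan is to treat both parts simultaneously, since the only difference is whether the bound on $\chi_A T_n \chi_B$ is exact zero (case (1)) or $\epsilon$-small (case (2)), and the structural argument about diagonality is identical. The key point to establish first is that any $T$ with finite dynamical propagation (or dynamically quasi-local) on $\G = \bigsqcup_n \G_n$ must be block-diagonal with respect to the decomposition $L^2(\Gz,\mu) = \bigoplus_n L^2(\Gz_n,\mu_n)$. For this, fix $m \neq n$ and observe that since $\G = \bigsqcup_n \G_n$ as groupoids, for any admissible bisection $K \subseteq \G$ we have $K \cdot A \subseteq \G_n$ whenever $A \subseteq \Gz_n$ (composability forces source and range into the same component), so $\r(K\cdot A) \subseteq \Gz_n$. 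Hence for $A \subseteq \Gz_n$ and $B \subseteq \Gz_m$ with $m \neq n$ we automatically have $\mu(\r(K \cdot A) \cap B) = 0$, which forces $\chi_A T \chi_B = 0$ in case (1), and $\|\chi_A T\chi_B\| < \epsilon$ for all $\epsilon$ in case (2), hence $\chi_A T \chi_B = 0$ again. Taking $A = \Gz_n$, $B = \Gz_m$ shows $T$ commutes with each projection $\chi_{\Gz_n}$; set $T_n \coloneqq \chi_{\Gz_n} T \chi_{\Gz_n}$, viewed as an operator on $L^2(\Gz_n,\mu_n)$. Then $\sup_n \|T_n\| \leq \|T\| < \infty$ and $T = (\SOT)\text{-}\sum_n T_n$.

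Next I would extract the uniformity of the decomposable sets. For the forward direction of (1): since $T$ has $(N,L)$-dynamical propagation via some unital symmetric $N$-decomposable $K \subseteq \G$ with $\ell(K) \leq L$, write $K = \bigsqcup_n K_n$ where $K_n \coloneqq K \cap \G_n$. One checks $K_n$ is unital symmetric $N$-decomposable in $\G_n$ with $\ell_n(K_n) \leq L$ (intersecting a unital symmetric decomposition of $K$ with $\G_n$ gives one for $K_n$, possibly with some pieces equal to $\Gz_n$; the bound on the number of pieces is preserved since it can only drop, and one pads back up to $N$ using copies of $\Gz_n$). Then for $A, B \in \RR_n$ with $\mu(\r(K_n \cdot A) \cap B) = 0$, using $\r(K \cdot A) = \r(K_n \cdot A)$ for $A \subseteq \Gz_n$, we get $\chi_A T \chi_B = 0$, and since $T_n = \chi_{\Gz_n} T \chi_{\Gz_n}$ with $A, B \subseteq \Gz_n$, also $\chi_A T_n \chi_B = 0$. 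For the converse direction of (1): given the family $\{T_n\}$ and uniform $N, L$, the operator $T = (\SOT)\text{-}\sum_n T_n$ is bounded with $\|T\| = \sup_n \|T_n\|$; set $K \coloneqq \bigsqcup_n K_n$, which is unital symmetric $N$-decomposable in $\G$ with $\ell(K) \leq L$. For general $A, B \in \RR$ with $\mu(\r(K\cdot A) \cap B) = 0$, decompose $A = \bigsqcup_n A_n$, $B = \bigsqcup_n B_n$ with $A_n, B_n \in \RR_n$; the hypothesis gives $\mu(\r(K_n \cdot A_n) \cap B_n) = 0$ for each $n$, hence $\chi_{A_n} T_n \chi_{B_n} = 0$, and summing (using block-diagonality, so $\chi_A T \chi_B = \sum_n \chi_{A_n} T_n \chi_{B_n}$ in $\SOT$) yields $\chi_A T \chi_B = 0$. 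Part (2) runs verbatim with ``$= 0$'' replaced by ``$< \epsilon$'' throughout, the quasi-local parameters $N_\epsilon, L_\epsilon$ playing the role of $N, L$ at each scale $\epsilon$; the only extra observation is that $\|\chi_A T \chi_B\| = \sup_n \|\chi_{A_n} T_n \chi_{B_n}\|$ by block-diagonality, so the uniform-in-$n$ bound $< \epsilon$ transfers to $T$.

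I expect the main obstacle to be purely bookkeeping rather than conceptual: carefully verifying that intersecting (or disjoint-unioning) unital symmetric $N$-decompositions behaves correctly with respect to the parameters $N$ and $L$, and that the strong-operator-topology sum $T = (\SOT)\text{-}\sum_n T_n$ is genuinely bounded and interacts correctly with the cutdowns $\chi_A, \chi_B$ (in particular that $\chi_A T \chi_B$ is itself the $\SOT$-sum of the $\chi_{A_n} T_n \chi_{B_n}$, and that its norm is the supremum of the block norms). All of these are routine given Lemma \ref{inverse,union,composition still decomposable} and the fact that $\G = \bigsqcup_n \G_n$ forces composability to respect components; I would state them without belaboring the elementary measure-theoretic and functional-analytic details.
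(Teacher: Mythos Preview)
Your proposal is correct and follows essentially the same approach as the paper: establish block-diagonality from the fact that composability in $\G=\bigsqcup_n\G_n$ respects components, then restrict/assemble decomposable sets via $K_n=K\cap\G_n$ and $K=\bigsqcup_n K_n$ (the paper makes the $N$-decomposability of the latter explicit by writing $K^{(i)}=\bigsqcup_n K_n^{(i)}$), and pass the propagation or quasi-locality estimate through the block decomposition $\chi_A T\chi_B=\sum_n\chi_{A_n}T_n\chi_{B_n}$. Your extra remark that $\|\chi_A T\chi_B\|=\sup_n\|\chi_{A_n}T_n\chi_{B_n}\|$ for part (2) is exactly the point the paper leaves implicit when it says ``(2) is similar.''
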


\begin{proof}
(1). \emph{Necessity}: Firstly, we show that $T$ is diagonal. Note that for any $n\neq m$ and $A \in \RR_n$, $B \in \RR_m$, we have $(K \cdot A) \cap B = \emptyset$ for any decomposable $K \subseteq \G$. Then it follows from definition that $\chi_A T \chi_B = 0$, and hence $T= (\SOT)-\sum_{n \in \N} T_n$ for $T_n=\chi_{\Gz_n} T \chi_{\Gz_n}$. 

Furthermore, there exists a unital symmetric decomposable $K \subseteq \G$ such that for any $A,B \in \RR$ with $\mu(\r(K \cdot A)\cap B)=0$, then $\chi_A T\chi_B=0$. Assume that $K$ is $N$-decomposable and $\ell(K) \leq L$ for some $N,L>0$. For each $n\in \N$, set $K_n \coloneqq K \cap \G_n$. Then $K_n$ is $N$-decomposable, unital and symmetric with $\ell_n(K_n) \leq L$. It is easy to check that $K_n$ satisfies the requirement.

\emph{Sufficiency}: Assume $K_n$ satisfies the requirement, and write $K_n=\bigcup_{i=1}^N K_n^{(i)}$ for admissible $K_n^{(i)} \subseteq \G_n$ with $\ell_n(K_n^{(i)}) \leq L$. Define $K\coloneqq \bigsqcup_{n\in \N} K_n \subseteq \G$ and for $i=1,\cdots, N$, define $K^{(i)} \coloneqq \bigsqcup_{n\in \N} K_n^{(i)}$. Since $K_n^{(i)}$ is admissible, it is easy to see that $K^{(i)} \subseteq \G$ is admissible for each $i$. Hence $K$ is $N$-decomposable and $\ell(K) \leq L$. 

Now for any $A,B \in \RR$ with $\mu(\r(K \cdot A)\cap B)=0$, set $A_n\coloneqq A \cap \Gz_n$ and $B_n \coloneqq B \cap \Gz_n$ and then we have $\mu_n(\r(K_n \cdot A_n)\cap B_n)=0$ for each $n\in \N$. By assumption, we have $\chi_{A_n} T_n \chi_{B_n} = 0$. Finally, note that
\[
\chi_A T \chi_B = \sum_{n\in \N} \chi_{A_n} T_n \chi_{B_n} = 0,
\]
which concludes (1). (2) is similar to (1) and hence omitted.
\end{proof}

\subsection{Quasi-locality of the averaging projection}

Now we focus on a special projection operator in $\B(L^2(\Gz,\mu))$ when $\mu$ is a probability measure, whose dynamical quasi-locality is closely related to the asymptotic expansion of the groupoid. 

\begin{defn}\label{defn:ave proj}
Let $\mathcal{G}$ be a groupoid with a length function $\ell$, and $(\Gz,\RR,\mu)$ be a measure space. For any measurable $Y \subseteq \Gz$ with $0< \mu(Y) < \infty$, denote by $P_Y \in \B(L^2(\Gz, \mu))$ the \emph{averaging projection on $Y$}, which is the orthogonal projection onto the one-dimensional subspace in $L^2(\Gz,\mu)$ spanned by $\chi_Y$. In other words, 
\[
P_Y(f)\coloneqq \frac{\langle f,\chi_Y\rangle}{\mu(Y)}\chi_Y \quad \text{for} \quad f \in L^2(\Gz,\mu).
\]
If $\mu(\Gz) < \infty$, we simply write $P_{\G}$ for $P_{\Gz}$.
\end{defn}

To study the quasi-locality of the averaging projection, we firstly record the following. The proof is similar to that of \cite[Lemma 3.8]{Li2019QuasilocalAA}, and hence omitted.

\begin{lem}\label{well known fact for averaging projection}
Assume that $\mu(\Gz) =1$. Then for any measurable $A,B \subseteq \Gz$, we have
\[
\|\chi_A P_\G \chi_B\| = \sqrt{\mu(A) \mu(B)}.
\]
\end{lem}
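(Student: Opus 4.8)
The statement to prove is that, when $\mu(\Gz)=1$, for any measurable $A,B\subseteq\Gz$ one has $\|\chi_A P_\G \chi_B\|=\sqrt{\mu(A)\mu(B)}$. The plan is to compute this operator norm directly, exploiting the fact that $P_\G$ is a rank-one projection, so $\chi_A P_\G \chi_B$ is also at most rank one and its norm is the norm of the single rank-one piece. First I would write out the action of the composite operator on a vector: for $f\in L^2(\Gz,\mu)$,
\[
\chi_A P_\G \chi_B f \;=\; \chi_A \cdot \frac{\langle \chi_B f,\chi_\Gz\rangle}{\mu(\Gz)}\,\chi_\Gz \;=\; \Big(\int_B f\,\d\mu\Big)\,\chi_A,
\]
using $\mu(\Gz)=1$ and $\chi_\Gz=1$ a.e. Thus $\chi_A P_\G \chi_B f = \langle f,\chi_B\rangle\,\chi_A$, a rank-one operator of the form $\xi \mapsto \langle \xi,\eta\rangle\zeta$ with $\eta=\chi_B$ and $\zeta=\chi_A$.

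Next I would invoke the elementary fact that a rank-one operator $\xi\mapsto\langle\xi,\eta\rangle\zeta$ on a Hilbert space has operator norm exactly $\|\eta\|\cdot\|\zeta\|$: the upper bound is Cauchy--Schwarz, and the bound is attained (up to scaling) at $\xi=\eta$. Applying this with $\|\chi_A\|_{L^2(\mu)}=\sqrt{\mu(A)}$ and $\|\chi_B\|_{L^2(\mu)}=\sqrt{\mu(B)}$ gives
\[
\|\chi_A P_\G \chi_B\| \;=\; \|\chi_A\|\cdot\|\chi_B\| \;=\; \sqrt{\mu(A)}\,\sqrt{\mu(B)} \;=\; \sqrt{\mu(A)\mu(B)},
\]
which is the claim.

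There is essentially no obstacle here; the only point requiring a word of care is the degenerate case where $\mu(A)=0$ or $\mu(B)=0$, in which $\chi_A$ or $\chi_B$ is the zero vector in $L^2(\Gz,\mu)$ and both sides vanish, so the formula still holds. (Also, since $P_\G$ is a genuine element of $\B(L^2(\Gz,\mu))$ only when $\mu(\Gz)<\infty$, the hypothesis $\mu(\Gz)=1$ is exactly what makes the expression well defined and makes $\chi_\Gz$ a unit vector.) As the excerpt notes, this mirrors \cite[Lemma 3.8]{Li2019QuasilocalAA}, so I would simply present the two-line computation above and refer to that source for any further routine detail.
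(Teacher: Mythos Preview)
Your proof is correct and follows the standard rank-one computation that the paper alludes to by citing \cite[Lemma~3.8]{Li2019QuasilocalAA}; since the paper omits the proof entirely, your argument is exactly the intended one.
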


The following relates the quasi-locality of $P_{\G}$ with asymptotic expansion of $\G$:

\begin{prop}\label{asym expansion and quasi local} 
Let $\mathcal{G}$ be a groupoid with a length function $\ell$, and $(\Gz,\RR,\mu)$ be a probability measure space. Then the averaging projection $P_{\G}$ is dynamically quasi-local \emph{if and only if} $\G$ is asymptotically expanding in measure $\mu$.
\end{prop}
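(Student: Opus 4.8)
The plan is to prove both implications by directly unwinding the definitions and using Lemma \ref{well known fact for averaging projection}, which tells us that $\|\chi_A P_\G \chi_B\| = \sqrt{\mu(A)\mu(B)}$. The key observation linking the two conditions is that for a decomposable $K \subseteq \G$ and measurable $A, B \subseteq \Gz$, the condition $\mu(\r(K \cdot A) \cap B) = 0$ forces control on the product $\mu(A)\mu(B)$ when $\G$ expands; conversely, failure of quasi-locality of $P_\G$ produces, for every decomposable $K$, sets $A, B$ with $\r(K\cdot A) \cap B$ null but $\mu(A)\mu(B)$ bounded below, which we will convert into a violation of asymptotic expansion.

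\textbf{Sufficiency (asymptotic expansion $\Rightarrow$ $P_\G$ dynamically quasi-local).} Assume $\G$ is asymptotically expanding with expansion parameters $\alpha \mapsto (C_\alpha, N_\alpha, L_\alpha)$ and witnessing sets $K_\alpha$. Fix $\epsilon > 0$; I want to find a decomposable $K_\epsilon$ so that $\mu(\r(K_\epsilon \cdot A) \cap B) = 0$ implies $\sqrt{\mu(A)\mu(B)} < \epsilon$. Choose $\alpha = \alpha(\epsilon)$ small (to be tuned, roughly $\alpha \sim \epsilon^2$) and iterate the witnessing set: set $K_\epsilon \coloneqq (K_{\alpha})^m$ for a suitable $m = m(\epsilon)$, using Lemma \ref{inverse,union,composition still decomposable} to keep it unital symmetric decomposable with controlled $N, L$. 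The point is that by iterating the expansion inequality $\mu(\r(K_\alpha \cdot A')\setminus A') > C_\alpha \mu(A')$ (exactly as in the proof of Lemma \ref{subset also asym expansion1}), starting from any $A$ with $\mu(A) \ge \alpha$, after $m$ steps we get $\mu(\r(K_\epsilon \cdot A)) > 1 - \epsilon^2$, say, once $m$ is large enough — here one must handle the case distinction between $\mu(A) \le \frac12$ (where the inequality applies directly, possibly after replacing $A$ by $\r(K_\alpha\cdot A)$ repeatedly and stopping when measure exceeds $\frac12$) and $\mu(A) > \frac12$ (already large). Now if $\mu(A) \ge \alpha$ and $\mu(\r(K_\epsilon \cdot A)\cap B) = 0$, then $\mu(B) \le 1 - \mu(\r(K_\epsilon\cdot A)) < \epsilon^2$, so $\sqrt{\mu(A)\mu(B)} < \epsilon$. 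If instead $\mu(A) < \alpha$, then directly $\sqrt{\mu(A)\mu(B)} < \sqrt{\alpha} \le \epsilon$ for $\alpha \le \epsilon^2$. In both cases $\|\chi_A P_\G \chi_B\| < \epsilon$ by Lemma \ref{well known fact for averaging projection}, so $P_\G$ is dynamically quasi-local.

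\textbf{Necessity ($P_\G$ dynamically quasi-local $\Rightarrow$ asymptotic expansion).} I argue by contrapositive. Suppose $\G$ is not asymptotically expanding: there is $\alpha_0 \in (0,\frac12)$ such that for every $C > 0$ and every unital symmetric decomposable $K$, there is $A_{C,K} \in \RR$ with $\alpha_0 \le \mu(A_{C,K}) \le \frac12$ and $\mu(\r(K \cdot A_{C,K})\setminus A_{C,K}) \le C\mu(A_{C,K})$. Fix $\epsilon \coloneqq \tfrac12\sqrt{\alpha_0(1 - \tfrac12)}$ (a small positive number depending only on $\alpha_0$; any value below $\sqrt{\alpha_0 \cdot \tfrac12}$ will do after bookkeeping). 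Given any candidate decomposable $K_\epsilon$ from the quasi-locality of $P_\G$, apply the non-expansion hypothesis with a small $C$ (chosen so that $C \cdot \tfrac12 < \tfrac12\alpha_0$, i.e.\ $C < \alpha_0$) to obtain $A \coloneqq A_{C, K_\epsilon}$ with $\alpha_0 \le \mu(A) \le \frac12$ and $\mu(\r(K_\epsilon \cdot A)\setminus A) \le C\mu(A) < \tfrac12 \alpha_0 \le \tfrac12\mu(A)$. Set $B \coloneqq \Gz \setminus \r(K_\epsilon \cdot A)$; then $\mu(\r(K_\epsilon \cdot A) \cap B) = 0$ by construction, while $\mu(B) = 1 - \mu(\r(K_\epsilon \cdot A)) \ge 1 - \mu(A) - \mu(\r(K_\epsilon\cdot A)\setminus A) \ge 1 - \tfrac32\mu(A) \ge 1 - \tfrac34 = \tfrac14$. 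Hence $\|\chi_A P_\G \chi_B\| = \sqrt{\mu(A)\mu(B)} \ge \sqrt{\alpha_0 \cdot \tfrac14} \ge \epsilon$, contradicting dynamical quasi-locality of $P_\G$. (One should double-check the constants: it may be cleaner to first invoke Lemma \ref{extend to beta}-type reasoning or simply adjust $\epsilon$; the only real content is that a failure of expansion yields sets $A,B$ with $\r(K\cdot A)\cap B$ null but both measures bounded below.)

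\textbf{Main obstacle.} The delicate part is the sufficiency direction: iterating the expansion inequality to push the measure of $\r(K_\epsilon \cdot A)$ arbitrarily close to $1$ uniformly over all $A$ with $\mu(A) \ge \alpha$, while tracking the decomposability parameters $(N, L)$ of the iterated set $K_\epsilon$ so they depend only on $\epsilon$ (not on $A$). This is essentially the argument already carried out in the proof of Lemma \ref{subset also asym expansion1}, so I expect to be able to quote or closely mimic it; the case analysis around $\mu(A)$ crossing $\tfrac12$ (where the hypothesis $\mu(A') \le \frac12$ must be respected at each step) is the one spot needing genuine care.
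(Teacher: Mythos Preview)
Your proposal is correct. For necessity, your contrapositive argument and the paper's direct argument are two sides of the same coin: both take $B = \Gz \setminus \r(K \cdot A)$ and invoke Lemma~\ref{well known fact for averaging projection}; the paper simply chooses, for given $\alpha$, the set $K$ coming from quasi-locality at level $\sqrt{\alpha}/2$ and reads off $\mu(\r(K\cdot A)) > \tfrac34 \ge (1+\tfrac12)\mu(A)$ directly.

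For sufficiency the routes genuinely diverge. You plan to iterate $K_\alpha$ until $\mu(\r(K_\epsilon \cdot A)) > 1-\epsilon^2$, which---as you correctly flag---requires handling the crossing of $\tfrac12$ and in effect needs Lemma~\ref{extend to beta} (or the iteration buried in the proof of Lemma~\ref{subset also asym expansion1}). The paper sidesteps this completely with a symmetry trick: it sets $K \coloneqq K_\epsilon^{2n}$ (twice the na\"ive power) and uses that, since $K_\epsilon$ is symmetric, the hypothesis $\mu(\r(K\cdot A)\cap B) = 0$ forces $\mu\bigl(\r(K_\epsilon^n \cdot A) \cap \r(K_\epsilon^n \cdot B)\bigr) = 0$. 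Hence one of $\r(K_\epsilon^n \cdot A)$, $\r(K_\epsilon^n \cdot B)$ has measure at most $\tfrac12$; say the former. Then every iterate $\r(K_\epsilon^j \cdot A)$ for $j \le n$ also has measure at most $\tfrac12$, so the expansion inequality applies at every step and one gets $(1+C_\epsilon)^n \mu(A) \le \tfrac12$, contradicting $\mu(A) \ge \epsilon$ by the choice of $n$. This avoids any need to push measures beyond $\tfrac12$ and keeps the argument self-contained (no appeal to Lemma~\ref{extend to beta}). Your approach works too, but the doubling-plus-symmetry device is the cleaner idea here and is worth internalising.
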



\begin{proof} 
\emph{Necessity}. 
Assume $\epsilon \mapsto N_\epsilon$, $\epsilon \mapsto L_\epsilon$ are quasi-local parameters for $P_\G$. Given $\alpha \in (0,\frac{1}{2}]$, set $N\coloneqq N_{\sqrt{\alpha}/2}$ and $L\coloneqq L_{\sqrt{\alpha}/2}$. Then there exists a unital symmetric $N$-decomposable $K$ with $\ell(K) \leq L$ such that for any $A,B \in \RR$ with $\mu(\r(K \cdot A)\cap B)=0$, we have $\|\chi_A T\chi_B\| < \frac{\sqrt{\alpha}}{2}$. Then for any $A \in \RR$ with $\alpha \leq \mu(A) \leq \frac{1}{2}$, Lemma \ref{well known fact for averaging projection} shows:
\[
\sqrt{\mu(A) \cdot \mu\big(\Gz \setminus \r(K \cdot A)\big)} = \|\chi_A P_\G \chi_{\Gz \setminus \r(K \cdot A)}\| <  \frac{\sqrt{\alpha}}{2}.
\]
Hence we obtain 
\[
\mu(A) \cdot \mu\big(\Gz \setminus \r(K \cdot A)\big) < \frac{\alpha}{4},
\]
which implies that $\mu\big(\Gz \setminus \r(K \cdot A)\big) < \frac{1}{4}$. Therefore, we obtain:
\[
\mu(\r(K \cdot A)) > \frac{3}{4} \geq (1+\frac{1}{2}) \cdot \mu(A).
\]
This concludes that $\G$ is asymptotically expanding in measure.

\emph{Sufficiency}. 
Assuming $\alpha \mapsto C_\alpha$, $\alpha \mapsto N_\alpha$ and $\alpha \mapsto L_\alpha$ are expansion parameters for $\G$, we take $K_\alpha$ as in Definition \ref{asymptotically expanding}. Given $0<\epsilon\leq \frac{1}{2}$, take $n \in \N$ to be the smallest number such that 
\[
(1+C_\epsilon)^n \cdot \epsilon > \frac{1}{2}.
\]
Set $K\coloneqq K_\epsilon^{2n}$, which is unital symmetric $N_\epsilon^{2n}$-decomposable with length at most $2nL_\epsilon$ by Lemma \ref{inverse,union,composition still decomposable}. 

For any measurable $A, B \subseteq \Gz$ with $\mu(\r(K \cdot A) \cap B) = 0$, we have
\[
\mu\big(\r(K_\epsilon^n \cdot A) \cap \r(K_\epsilon^n \cdot B) \big) = 0
\]
since $K_\epsilon$ is symmetric. Hence without loss of generality, we assume
\[
\mu(\r(K_\epsilon^n \cdot A)) \leq \frac{1}{2}.
\]
If $\mu(A) < \epsilon$, then by Lemma \ref{well known fact for averaging projection} we have 
\[
\|\chi_A P_\G \chi_B\| = \sqrt{\mu(A)\mu(B)} < \sqrt{\epsilon}.
\]
If $\mu(A) \geq \epsilon$, then using asymptotic expansion inductively, we have:
\[
\frac{1}{2} \geq \mu(\r(K_\epsilon^n \cdot A)) \geq (1+C_\epsilon) \mu(\r(K_\epsilon^{n-1} \cdot A)) \geq \cdots \geq (1+C_\epsilon)^n \mu(A) \geq (1+C_\epsilon)^n \cdot \epsilon.
\]
This leads to a contradiction to the choice of $n$. 
\end{proof}

\begin{rem}\label{rem:dependence of parameters 2}
From the proof above, it is clear that if $P_\G$ is dynamically quasi-local, then we can choose expansion parameters for $\G$ only depending on quasi-local parameters for $P_\G$, and vice versa.
\end{rem}

\section{Main results}\label{sec:Main results}

Having established Proposition \ref{asym expansion and quasi local}, now we are in the position to prove the following fundamental case of the main result:


\begin{thm}\label{thm:main thm ave proj}
Let $\mathcal{G}$ be a groupoid with a length function $\ell$, and $(\Gz,\RR,\mu)$ be a probability measure space. Then the following are equivalent:
\begin{enumerate}
 \item the averaging projection $P_\G \in \CC^*_{\dyn}(\mathcal{G})$;
 \item the averaging projection $P_\G \in \CC^*_{\dyn,\q}(\mathcal{G})$;
 \item the groupoid $\G$ is asymptotically expanding in measure.
\end{enumerate}
\end{thm}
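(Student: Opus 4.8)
The plan is to prove the cycle of implications $(1)\Rightarrow(2)\Rightarrow(3)\Rightarrow(1)$. The implication $(1)\Rightarrow(2)$ is immediate: every operator of finite dynamical propagation is trivially dynamically quasi-local (take $K_\epsilon\equiv K$), and $\CC^*_{\dyn,\q}(\mathcal{G})$ is a norm-closed $C^*$-algebra by Lemma \ref{check of finite propagation algebra}, so it contains the norm closure $\CC^*_{\dyn}(\mathcal{G})$; in particular it contains $P_\G$. The implication $(2)\Rightarrow(3)$ is precisely the necessity direction of Proposition \ref{asym expansion and quasi local} (taking $T=P_\G$), so nothing new is needed there.

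The real content is $(3)\Rightarrow(1)$: from asymptotic expansion I must produce $P_\G$ as a \emph{norm limit} of finite-dynamical-propagation operators. First I would invoke the Markovian structure theorem (Theorem \ref{thm:Markov structure theorem}): asymptotic expansion gives an exhaustion $\{Y_n\}$ of $\Gz$ with $\mu(Y_n)\to 1$, where each $Y_n$ is a domain of Markov $(C_n,N_n,L_n)$-expansion with ratio bounded by $\theta_n$, witnessed by a unital symmetric decomposition $K^{(n)}=\bigcup_{i=1}^{N_n}K^{(n)}_i$. On each $Y_n$ this yields a reversible Markov kernel $\Pi_{Y_n,K^{(n)}}$ on $(Y_n,\tilde\mu_{Y_n,K^{(n)}})$ whose Cheeger constant exceeds $C_n$; by Theorem \ref{thm:spectral.characterisation.markov.exp} its Laplacian $\Delta_n=1-\P_n$ has a spectral gap, with $1-\lambda_n\geq \frac{C_n^2}{2}$, i.e.\ the spectrum of $\Delta_n$ restricted to $L^2_0(Y_n,\tilde\mu_{Y_n,K^{(n)}})$ is bounded below by $C_n^2/2>0$, while $\ker\Delta_n$ is the line of constants. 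The key geometric observation is that the Markov operator $\P_n$, being built from the maps $\tau_{K^{(n)}_i}$ with $\ell(K^{(n)})\leq L_n$, has finite dynamical propagation (controlled by $N_n,L_n$) when transported to $\B(L^2(\Gz,\mu))$ via the equivalence of $\tilde\mu_{Y_n,K^{(n)}}$ with $\mu|_{Y_n}$ from Proposition \ref{check reversible Markov kernel}(2); hence so does any polynomial in $\Delta_n$, and so does any continuous function of $\Delta_n$ approximated uniformly by polynomials on the spectrum (using that $\CC^*_{\dyn}(\mathcal{G})$ is a $C^*$-algebra).

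Concretely, choose a continuous function $\phi_n\colon[0,2]\to[0,1]$ with $\phi_n(0)=1$ and $\phi_n\equiv 0$ on $[C_n^2/2,2]$; then $\phi_n(\Delta_n)$ is exactly the rank-one projection onto the constants in $L^2(Y_n,\tilde\mu_{Y_n,K^{(n)}})$, which — after accounting for the measure change — is a rank-one projection $Q_n\in\CC^*_{\dyn}(\mathcal{G})$ whose range is spanned by $\chi_{Y_n}$ (up to the density $\sigma_{Y_n,K^{(n)}}$; one checks $Q_n$ is the orthogonal projection in $L^2(\Gz,\mu)$ onto $\C\chi_{Y_n}$, i.e.\ $Q_n=P_{Y_n}$, or at least lies within controlled norm distance of it). Finally I would show $P_{Y_n}\to P_\G$ in operator norm: since $\mu(Y_n)\to 1=\mu(\Gz)$, the unit vectors $\mu(Y_n)^{-1/2}\chi_{Y_n}$ converge in $L^2(\Gz,\mu)$ to $\chi_\Gz$, and for rank-one projections onto one-dimensional subspaces spanned by unit vectors $u_n\to u$ one has $\|P_{u_n}-P_u\|\to 0$ (indeed $\|P_{u_n}-P_u\|\leq 2\|u_n-u\|$). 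Hence $P_\G=\lim_n P_{Y_n}$ lies in the norm-closed algebra $\CC^*_{\dyn}(\mathcal{G})$, giving $(1)$.

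I expect the main obstacle to be the bookkeeping in identifying $\phi_n(\Delta_n)$, computed on the abstract measure space $(Y_n,\tilde\mu_{Y_n,K^{(n)}})$, with an honest element of $\B(L^2(\Gz,\mu))$ of finite dynamical propagation that is genuinely (or approximately, with vanishing error) equal to the averaging projection $P_{Y_n}$ — this requires carefully tracking the weight $\sigma_{Y_n,K^{(n)}}$, checking that the unitary $L^2(Y_n,\tilde\mu_{Y_n,K^{(n)}})\cong L^2(Y_n,\mu|_{Y_n})$ given by multiplication by $\sigma_{Y_n,K^{(n)}}^{1/2}$ intertwines the relevant operators, and verifying that multiplication operators and the partial isometries implementing the $\tau_{K^{(n)}_i}$ do not enlarge dynamical propagation. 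The spectral-calculus/norm-limit part is routine once the propagation bookkeeping is in place; this is the step that mirrors the most technical arguments of \cite{li2022markovianroealgebraicapproachasymptotic} and where I would spend the most care.
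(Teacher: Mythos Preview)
Your approach is essentially that of the paper. The one place where you should be careful is the transfer from $L^2(Y_n,\tilde\mu_n)$ to $L^2(\Gz,\mu)$: the unitary given by multiplication by $\sigma_n^{1/2}$ sends $\tilde P_n$ to the orthogonal projection onto $\sigma_n^{1/2}$, which is \emph{not} close to $P_{Y_n}$ in general (the bound $1\le\sigma_n\le N_n\sqrt{\theta_n}$ allows $\sigma_n$ to be far from constant, and $\theta_n$ grows with $n$ in the structure theorem). The paper instead uses the non-unitary extension-by-zero map $I_n\colon L^2(Y_n,\tilde\mu_n)\to L^2(\Gz,\mu)$, whose adjoint is $I_n^*g=\sigma_n^{-1}g|_{Y_n}$; a direct computation then gives the exact identity $I_n\tilde P_n I_n^*=\frac{\mu(Y_n)}{\tilde\mu_n(Y_n)}P_{Y_n}$. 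Since $\Ad_n=I_n(\cdot)I_n^*$ is not a $*$-homomorphism, one cannot transport continuous functional calculus through it; the paper therefore approximates $\tilde P_n$ by the explicit power $(\tfrac12\chi_{Y_n}+\tfrac12\P_n)^m$ \emph{before} applying $\Ad_n$, and then verifies directly that $\Ad_n\bigl[(\tfrac12\chi_{Y_n}+\tfrac12\P_n)^m\bigr]$ has $(N_n^m,mL_n)$-dynamical propagation (the maps $I_n,I_n^*$ being local). With this adjustment your argument goes through, and your final step $\|P_{Y_n}-P_\G\|\to 0$ is exactly how the paper concludes.
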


\begin{proof}
``(1) $\Rightarrow$ (2)'' is trivial, and ``(2) $\Leftrightarrow$ (3)'' comes from Proposition \ref{asym expansion and quasi local}. Hence we only focus on ``(3) $\Rightarrow$ (1)''. 

Fix $C\in (0,\frac{1}{2})$. By Theorem \ref{thm:Markov structure theorem}, Remark \ref{rem:depending on parameters} and Remark \ref{rem:dependence 2}, there exists a sequence of measurable subsets $Y_n \subseteq \Gz$ with $\mu(Y_n)> \big(1-\frac{C}{(4+2C)(n+1)}\big)\cdot \frac{n}{n+1}$ such that each $Y_n$ is a domain of $(C_n, N_n, L_n)$-Markov expansion with ratio bounded by $\theta_n$ for $C_n,N_n,L_n >0$ and $\theta_n \geq 1$ only depending on expansion parameters of $\G$. From Definition \ref{Markov expansion}, we can choose a unital symmetric $N_n$-decomposable $K_n$ together with a unital symmetric decomposition $K_n = \bigcup_{i=1}^{N_n} K_n^{(i)}$ such that $\ell(K_n) \leq L_n$ and the associated normalised local Markov kernel $\Pi_n \coloneqq \Pi_{Y_n,K_n}$ from (\ref{EQ:Markov for K}) on $(Y_n, \tilde{\mu}_n)$ has Cheeger constant greater than $C_n$, where $\tilde{\mu}_n \coloneqq \tilde{\mu}_{Y_n,K_n}$ is the reversing measure defined in (\ref{EQ:reservsing measure}). We have a function $\sigma_n\coloneqq \sigma_{Y_n, K_n}$ from (\ref{EQ:aux func for Markov}) such that $\sigma_n\geq 1$ on $Y_n$. 
Denote the associated Markov operator by $\P_n$ from (\ref{EQ:Markov operator defn}) with spectral gap $\lambda_n$.

Now for each $n\in \N$, we consider the following embedding
\[
I_n: L^2(Y_n,\tilde{\mu}_n) \longrightarrow L^2(\Gz,\mu)
\]
simply by extending each function in $L^2(Y_n,\tilde{\mu}_n)$ by zero on $\Gz \setminus Y_n$. This is well-defined since $\tilde{\mu}_n$ is equivalent to $\mu|_{Y_n}$ and we have $\|I_n\| \leq 1$ thanks to Proposition \ref{check reversible Markov kernel}(2). Direct calculations show that its adjoint has the form:
\[
I_n^*: L^2(\Gz,\mu) \longrightarrow L^2(Y_n,\tilde{\mu}_n), \quad g \mapsto \frac{1}{\sigma_n} \cdot g|_{Y_n} \quad \text{for} \quad g \in L^2(\Gz,\mu).
\]
Denote the adjoint map
\[
\Ad_n: \B(L^2(Y_n,\tilde{\mu}_n)) \longrightarrow \B(L^2(\Gz,\mu)), \quad T \mapsto I_n \circ T \circ I_n^* \quad \text{for} \quad T \in \B(L^2(Y_n,\tilde{\mu}_n)).
\]

Recall from Definition \ref{defn:ave proj} that we have the averaging projection $P_n \coloneqq P_{Y_n}$ in $\B(L^2(\Gz,\mu))$, which is the orthogonal projection onto constant functions on $Y_n$ in $L^2(\Gz,\mu)$. Denote another orthogonal projection $\tilde{P}_n \in \B(L^2(Y_n,\tilde{\mu}_n))$ onto constant functions on $Y_n$ in $L^2(Y_n,\tilde{\mu}_n)$. Direct calculations show that
\begin{equation}\label{EQ:relation between P and tildeP}
\Ad_n(\tilde{P}_n) = \frac{\mu(Y_n)}{\tilde{\mu}_n(Y_n)} \cdot P_n.
\end{equation}

On the other hand, Theorem \ref{thm:spectral.characterisation.markov.exp} shows that $\frac{C_n^2}{2}\leq 1-\lambda_n\leq 2C_n$. Hence $\frac{1}{2} \chi_{Y_n} + \frac{1}{2}\P_n$ has spectrum contained in $[-\frac{3}{4},1-\frac{C_n^2}{4}]\cup\{1\}$. Therefore, for any $m \in \N$ we have
\[
\left\|\left(\frac{1}{2}\chi_{Y_n}+\frac{1}{2}\P_n\right)^m-\tilde{P}_n\right\| \leq \left(1-\frac{C_n^2}{4}\right)^m.
\]
Applying $\Ad_n$ and using (\ref{EQ:relation between P and tildeP}), we obtain:
\[
\left\|\frac{\tilde{\mu}_n(Y_n)}{\mu(Y_n)} \cdot\Ad_n\left[\left(\frac{1}{2}\chi_{Y_n}+\frac{1}{2}\P_n\right)^m\right]- P_n\right\| \leq \frac{\tilde{\mu}_n(Y_n)}{\mu(Y_n)} \cdot \left(1-\frac{C_n^2}{4}\right)^m.
\]
Note that 
\[
\frac{\tilde{\mu}_n(Y_n)}{\mu(Y_n)} \leq \|\sigma_n\|_\infty \leq \left\| \sum_{i=1}^{N_n} \RN(K_n^{(i)},x)^{\frac{1}{2}} \right\|_\infty \leq N_n\sqrt{\theta_n}.
\]
Combining the above together, we obtain
\begin{equation}\label{EQ: app for proj}
\left\|\frac{\tilde{\mu}_n(Y_n)}{\mu(Y_n)} \cdot\Ad_n\left[\left(\frac{1}{2}\chi_{Y_n}+\frac{1}{2}\P_n\right)^m\right]- P_n\right\| \leq N_n\sqrt{\theta_n} \cdot \left(1-\frac{C_n^2}{4}\right)^m.
\end{equation}
Hence given $\epsilon>0$, we can choose $m_n \in \N$ such that 
\begin{equation}\label{EQ:estimate in pf of main thm}
N_n\sqrt{\theta_n} \cdot \left(1-\frac{C_n^2}{4}\right)^{m_n} < \frac{\epsilon}{2}. 
\end{equation}
Moreover, direct calculations show that for each $n\in \N$ we have:
\[
\|P_n - P_\G\| \leq \sqrt{\mu(\Gz \setminus Y_n)}  = \sqrt{\frac{1}{n+1} + \frac{nC}{(4+2C)(n+1)^2}}.
\]
For $\epsilon$ above, we can further choose $\tilde{N} \in \N$ such that for any $n >  \tilde{N}$ we have 
\begin{equation}\label{EQ:estimate in pf of main thm2}
\sqrt{\frac{1}{n+1} + \frac{nC}{(4+2C)(n+1)^2}} < \frac{\epsilon}{2}.
\end{equation}
Combining with (\ref{EQ: app for proj}), (\ref{EQ:estimate in pf of main thm}) and (\ref{EQ:estimate in pf of main thm2}), we obtain that for any $n >  \tilde{N}$ we have:
\begin{small}
\begin{align}\label{EQ:estimate in pf of main thm3}
\Big\|\frac{\tilde{\mu}_n(Y_n)}{\mu(Y_n)}  \cdot\Ad_n&\left[\left(\frac{1}{2}\chi_{Y_n}+\frac{1}{2}\P_n\right)^{m_n}\right]- P_\G\Big\| < \Big\|\frac{\tilde{\mu}_n(Y_n)}{\mu(Y_n)} \cdot\Ad_n\left[\left(\frac{1}{2}\chi_{Y_n}+\frac{1}{2}\P_n\right)^{m_n}\right]- P_n\Big\| + \|P_n - P_\G\| \nonumber \\
&<N_n\sqrt{\theta_n} \cdot \left(1-\frac{C_n^2}{4}\right)^{m_n} + \sqrt{\frac{1}{n+1} + \frac{nC}{(4+2C)(n+1)^2}} < \frac{\epsilon}{2} + \frac{\epsilon}{2} = \epsilon.
\end{align}
\end{small}

Finally, for $n\in \N$ and measurable $A,B \subseteq Y_n$ with $\mu((K_n \cdot A) \cap B) = 0$, we have $\tilde{\mu}_n((K_n \cdot A) \cap B) = 0$ by Proposition \ref{check reversible Markov kernel}(2). For $x\in Y_n$ and $\xi \in L^2(Y_n, \tilde{\mu}_n)$, we have:
\begin{align*}
(\chi_A \P_n \chi_B \xi)(x) &= \chi_A(x) \cdot \int_{B} \xi(y) \Pi_n(x, \dy) \\
&= \chi_A(x) \cdot \frac{1}{\sigma_n(x)}\sum_{i:\tau_{K_n^{(i)}}(x) \in B} \RN(K_n^{(i)},x)^{\frac{1}{2}} \xi(\tau_{K_n^{(i)}}(x)).
\end{align*}
It follows that $\chi_A \P_n \chi_B=0$. Note that the operator $I_n$ does not change the propagation and hence, the following  operator
\[
\frac{\tilde{\mu}_n(Y_n)}{\mu(Y_n)}  \cdot\Ad_n\left[\left(\frac{1}{2}\chi_{Y_n}+\frac{1}{2}\P_n\right)^{m_n}\right] \quad \text{in} \quad \B(L^2(\Gz,\mu))
\]
has $(N_n^{m_n}, m_n \cdot L_n)$-propagation.
Combining with (\ref{EQ:estimate in pf of main thm3}), we conclude the proof.
\end{proof}

\begin{rem}\label{rem:dependence on parameters for main thm}
From the proof above together with Remark \ref{rem:depending on parameters} and Remark \ref{rem:dependence 2}, we know that if $\G$ is asymptotically expanding in measure, then for any $\epsilon>0$ we can choose $T_\epsilon \in \C_{\dyn}(\G)$ with $(N_\epsilon, L_\epsilon)$-dynamical propagation such that $\|T_\epsilon - P_\G\| < \epsilon$ and the functions $\epsilon \mapsto N_\epsilon$ and $\epsilon \mapsto L_\epsilon$ only depend on expansion parameters of $\G$.
\end{rem}

In the following, we consider general rank-one projections on $L^2(\Gz,\mu)$ for general $\mu$. Let $P \in \B(L^2(\Gz,\mu))$ be a rank-one projection and $\xi \in L^2(\Gz,\mu)$ be a unit vector in the range of $P$. Then
\[
P(\eta)=\langle \eta,\xi\rangle \xi \quad \text{for} \quad \eta \in L^2(\Gz,\mu).
\]
This induces a probability measure $\nu$ on $(\Gz,\RR)$ defined by
\[
\d \nu(x) \coloneqq |\xi(x)|^2 \d \mu(x) \quad \text{for} \quad x\in \Gz.
\]
It is clear that the measure $\nu$ only depends on $P$, called the \emph{associated measure to $P$}. 

Then we have the following:

\begin{thm}\label{thm:main thm ave proj general}
Let $\mathcal{G}$ be a groupoid with a length function $\ell$, and $(\Gz,\RR,\mu)$ be a (not necessarily finite) measure space. Let $P \in \B(L^2(\Gz,\mu))$ be a rank-one projection, and $\nu$ the associated probability measure on $\Gz$. Then the following are equivalent:
\begin{enumerate}
 \item $P \in \CC^*_{\dyn}(\mathcal{G})$;
 \item $P \in \CC^*_{\dyn,\q}(\mathcal{G})$;
 \item $\G$ is asymptotically expanding in measure $\nu$.
\end{enumerate}
\end{thm}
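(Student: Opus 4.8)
The plan is to reduce Theorem \ref{thm:main thm ave proj general} to the already-established Theorem \ref{thm:main thm ave proj} by a change-of-measure argument. The key observation is that a rank-one projection $P$ onto $\C\xi$ with $\|\xi\|_{L^2(\mu)}=1$ becomes the \emph{averaging projection} $P_\G$ once we pass from $\mu$ to the associated probability measure $\nu$, where $\d\nu = |\xi|^2\,\d\mu$. More precisely, first I would construct an isometry (in fact a unitary onto its range, which is all of $L^2(\Gz,\nu)$ when $\xi$ is nowhere zero; otherwise one must be slightly careful and work on the support of $\xi$)
\[
U: L^2(\Gz,\nu) \longrightarrow L^2(\Gz,\mu), \qquad (Uf)(x) = \xi(x) f(x),
\]
and check that $U^* T U$ carries an operator $T$ on $L^2(\Gz,\mu)$ to one on $L^2(\Gz,\nu)$, with $U^* P U = P_\G$ computed with respect to $\nu$ (since $\d\nu(\Gz)=\|\xi\|_2^2=1$). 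The main point then is that $U$ is \emph{diagonal} — it is multiplication by the function $\xi$ — so it commutes with all the characteristic-function multiplication operators $\chi_A$, and hence conjugation by $U$ preserves both finite dynamical propagation and dynamical quasi-locality. Concretely, for $A,B\in\RR$ we have $\chi_A(U^*TU)\chi_B = U^*(\chi_A T\chi_B)U$, so $\chi_A(U^*TU)\chi_B=0$ (resp.\ has small norm) exactly when $\chi_A T\chi_B=0$ (resp.\ has small norm). Since the conditions ``$\mu(\r(K\cdot A)\cap B)=0$'' and ``$\nu(\r(K\cdot A)\cap B)=0$'' coincide (as $\mu$ and $\nu$ are mutually absolutely continuous on the support of $\xi$, and $K$ is decomposable so the relevant sets are measurable by Lemma \ref{basic assumprop2}), we get $U^*\,\CC^*_{\dyn}(\G)\,U = \CC^*_{\dyn}(\G)$ and likewise for the quasi-local algebra, where the left-hand side is computed on $L^2(\Gz,\mu)$ and the right on $L^2(\Gz,\nu)$.

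With this dictionary in place, the equivalences fall out immediately. We have $P\in\CC^*_{\dyn}(\G)$ (acting on $L^2(\Gz,\mu)$) if and only if $U^*PU = P_\G\in\CC^*_{\dyn}(\G)$ (acting on $L^2(\Gz,\nu)$), and similarly for $\CC^*_{\dyn,\q}(\G)$. Applying Theorem \ref{thm:main thm ave proj} to the probability measure space $(\Gz,\RR,\nu)$ then gives that all three of ``$P\in\CC^*_{\dyn}(\G)$'', ``$P\in\CC^*_{\dyn,\q}(\G)$'', and ``$\G$ is asymptotically expanding in measure $\nu$'' are equivalent. This is exactly the statement of Theorem \ref{thm:main thm ave proj general}.

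I expect the main obstacle to be the bookkeeping around the zero set of $\xi$. If $\xi$ vanishes on a set $Z$ of positive $\mu$-measure, then $U$ is not surjective and $L^2(\Gz,\nu)$ is naturally identified with $L^2(\Gz\setminus Z,\mu|_{\Gz\setminus Z})$ rather than with all of $L^2(\Gz,\mu)$; one then has to argue that $\chi_Z T\chi_{\Gz\setminus Z}$ and $\chi_{\Gz\setminus Z}T\chi_Z$ vanish for $T=P$ (immediate, since $\xi$ is supported off $Z$), so that membership of $P$ in the dynamical Roe or quasi-local algebra is genuinely detected on the corner $\chi_{\Gz\setminus Z}\,\B(L^2(\Gz,\mu))\,\chi_{\Gz\setminus Z}\cong\B(L^2(\Gz\setminus Z,\mu))$, and that the asymptotic expansion of $\G$ in measure $\nu$ is insensitive to $Z$ since $\nu(Z)=0$. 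The cleanest route is probably to replace $\Gz$ by $\{x : \xi(x)\neq 0\}$ from the outset, noting this is $\RR$-measurable and that the reduction of $\G$ to it has the same dynamical algebras acting on $L^2$ of the smaller space (for the operator $P$) and the same asymptotic-expansion behaviour in $\nu$; the rest is then the routine diagonal-conjugation argument sketched above. A secondary, purely cosmetic point is verifying the adjoint formula for $U$ and the identity $U^*PU=P_\G$, which are one-line computations using $\langle Uf,\xi\rangle_\mu = \int \xi \bar\xi f\,\d\mu = \int f\,\d\nu$.
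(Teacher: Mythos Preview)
Your proposal is correct and follows essentially the same approach as the paper: the paper also reduces to the support $Y=\{\xi\neq 0\}$ (its Lemma~\ref{lem:reduction Q}, proving $P\in\CC^*_{\dyn}(\G)$ iff $Q^*PQ\in\CC^*_{\dyn}(\G_Y^Y)$ and likewise for quasi-locality), then uses the same diagonal unitary $U\colon L^2(\Gz,\nu)\to L^2(\Gz,\mu),\ f\mapsto f\xi$ to identify $P$ with the averaging projection $P_\G$ (its Lemma~\ref{lem:equiv measure quasi-local and ppg}), and finally applies Theorem~\ref{thm:main thm ave proj}. Your handling of the zero set and the observation that $U$ commutes with characteristic functions match the paper's Lemmas~\ref{lem:reduction Q} and~\ref{lem:equiv measure quasi-local and ppg} exactly.
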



Our idea is to apply Theorem \ref{thm:main thm ave proj} to the probability measure $\nu$. Firstly, note that $\nu$ might \emph{not} be equivalent to $\mu$ and hence, we consider
\[
Z\coloneqq \{x\in \mathcal{G}^{(0)} ~|~\xi(x)=0\}.
\]
Here $Z$ is well-defined up to $\mu$-null sets. We set $Y \coloneqq \Gz \setminus Z$ and then the restrictions $\mu|_Y$ and $\nu|_Y$ are equivalent. Now we consider the reduction $\G_Y^Y=\s^{-1}(Y) \cap \r^{-1}(Y)$,
equipped with the restriction $\ell_Y$ of the length function $\ell$. Denote
\[
Q: L^2(Y,\mu|_Y) \longrightarrow L^2(\Gz, \mu)
\]
the embedding by extending functions in $L^2(Y,\mu|_Y)$ to $0$ on $Z$. Then 
we have:

\begin{lem}\label{lem:reduction Q}
With the notation above, we have:
\begin{enumerate}
 \item $P \in \CC^*_{\dyn,\q}(\mathcal{G})$ \emph{if and only if} $Q^*PQ \in \CC^*_{\dyn,\q}(\G_Y^Y)$;
  \item $P \in \CC^*_{\dyn}(\mathcal{G})$ \emph{if and only if} $Q^*PQ \in \CC^*_{\dyn}(\G_Y^Y)$.
\end{enumerate}
Here we equip $\G_Y^Y$ with the length function $\ell_Y$ and $(\G_Y^Y)^{(0)}=Y$ with the measure $\mu|_Y$ to define $\CC^*_{\dyn}(\G_Y^Y)$ and $\CC^*_{\dyn,\q}(\G_Y^Y)$.
\end{lem}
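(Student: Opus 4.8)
The plan is to reduce the statement to an operator-algebraic reflection of the fact that $P$ acts as $0$ on the subspace $L^2(Z,\mu)$, where $Z = \{\xi = 0\}$, so all of the dynamical information is carried by the reduction $\G_Y^Y$ on $Y = \Gz \setminus Z$. Throughout I would write $p \coloneqq QQ^* \in \B(L^2(\Gz,\mu))$ for the orthogonal projection onto the image of $Q$, i.e.\ onto $L^2(Y,\mu|_Y) \subseteq L^2(\Gz,\mu)$; this is exactly $\chi_Y$ regarded as a multiplication operator, and $Q^*Q = \Id_{L^2(Y,\mu|_Y)}$. Since $\xi$ vanishes on $Z$, we have $\chi_Z \xi = 0$, hence $P\chi_Z = \chi_Z P = 0$, i.e.\ $P = pPp$ and correspondingly $Q(Q^*PQ)Q^* = P$. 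This identity is the backbone of both directions: conjugation by $Q$ and by $Q^*$ are mutually inverse $*$-isomorphisms between $\B(L^2(Y,\mu|_Y))$ and $p\,\B(L^2(\Gz,\mu))\,p$, and $P$ lies in the latter corner.

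The first key step is to check that the map $T \mapsto Q^*TQ$ carries $\C_{\dyn}(\G)$ into (a subset of) $\C_{\dyn}(\G_Y^Y)$, and more precisely preserves dynamical propagation bounds, and likewise for quasi-locality; and conversely that $S \mapsto QSQ^*$ carries operators with $(N,L)$-dynamical propagation on $\G_Y^Y$ to operators with $(N,L)$-dynamical propagation on $\G$. For the forward direction: if $T$ has $(N,L)$-dynamical propagation witnessed by a unital symmetric $N$-decomposable $K \subseteq \G$ with $\ell(K) \le L$, then $K_Y \coloneqq K \cap \G_Y^Y$ is again unital symmetric $N$-decomposable in $\G_Y^Y$ with $\ell_Y(K_Y) \le L$ (intersecting each admissible bisection $K_i$ with $\s^{-1}(Y)\cap\r^{-1}(Y)$ keeps it admissible, keeps $\Gz \cap \G_Y^Y = Y$ among the pieces, and respects the symmetry $\sigma$); and for measurable $A,B \subseteq Y$ with $\mu|_Y(\r(K_Y\cdot A)\cap B)=0$ one has, viewing $A,B$ inside $\Gz$, that $\r(K\cdot A) \cap B = \r(K_Y \cdot A)\cap B$ up to a $\mu$-null set because any element of $K \cdot A$ landing in $B \subseteq Y$ already has source and range in $Y$, so $\chi_A T \chi_B = 0$ and hence $\chi_A (Q^*TQ)\chi_B = \chi_A T\chi_B|_{L^2(Y)} = 0$. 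The quasi-local case is identical with $0$ replaced by $\epsilon$ and $K$ by $K_\epsilon$. The reverse direction is easier still: given $S$ with $(N,L)$-dynamical propagation on $\G_Y^Y$ witnessed by $K_Y = \bigcup_i K_{Y,i}$, the same bisections are admissible in $\G$ (admissibility and length only depend on the ambient $\G$ and the measure), they are unital symmetric $N$-decomposable in $\G$, and $QSQ^*$ is supported in the corner $\chi_Y \B(L^2(\Gz,\mu))\chi_Y$; for $A,B \subseteq \Gz$ with $\mu(\r(K_Y \cdot A)\cap B)=0$ one has $\chi_A (QSQ^*) \chi_B = \chi_{A\cap Y}(QSQ^*)\chi_{B\cap Y}$ and $\r(K_Y \cdot (A\cap Y)) \subseteq Y$, so $\mu|_Y(\r(K_Y \cdot (A\cap Y)) \cap (B\cap Y))=0$ and the propagation hypothesis on $S$ kills it.

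The second step is to assemble these into the stated equivalences. Since $P = pPp = Q(Q^*PQ)Q^*$ and $Q^*PQ = Q^*(QQ^*PQQ^*)Q$, applying the two transport facts back and forth shows: if $P \in \C_{\dyn}(\G)$ with given propagation, then $Q^*PQ \in \C_{\dyn}(\G_Y^Y)$ with the same propagation; conversely if $Q^*PQ \in \C_{\dyn}(\G_Y^Y)$ then $Q(Q^*PQ)Q^* = P \in \C_{\dyn}(\G)$. Passing to norm closures is automatic because both transport maps are norm-contractive (indeed isometric on the relevant corners) and send finite-propagation operators to finite-propagation operators with controlled parameters, so they descend to the $C^*$-completions; this gives part (2) for the Roe algebras. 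For part (1), the quasi-local versions, the same argument applies verbatim with $\epsilon$-bounds in place of exact vanishing, using that $\CC^*_{\dyn,\q}$ is already norm-closed by Lemma \ref{check of finite propagation algebra}. I would present this compactly: state that $\Phi \colon T \mapsto Q^*TQ$ and $\Psi \colon S \mapsto QSQ^*$ are mutually inverse on the corner $p\B(L^2(\Gz,\mu))p$ and $\B(L^2(Y,\mu|_Y))$ respectively, observe that each preserves $(N,L)$-dynamical propagation and $(\epsilon;N,L)$-quasi-locality by the bisection-intersection argument above, conclude both $*$-algebra statements, and then take closures.

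The step I expect to be the main obstacle is the measure-theoretic bookkeeping in showing $\r(K\cdot A)\cap B$ agrees with $\r(K_Y\cdot A)\cap B$ up to null sets when $A,B \subseteq Y$, and more subtly that intersecting an admissible bisection $K_i$ with $\s^{-1}(Y)\cap\r^{-1}(Y)$ yields an admissible bisection of $\G_Y^Y$ with the \emph{same} length bound and that the unital/symmetric decomposition structure survives — one must be slightly careful that $Y$ is only defined up to $\mu$-null sets and that $\tau_{K_i}$ restricted to $\s(K_i)\cap Y \cap \tau_{K_i}^{-1}(Y)$ is still a measure-class-preserving isomorphism onto its image, which it is because restrictions of such isomorphisms to measurable subsets retain the property. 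Once this is pinned down, everything else is a formal corner-algebra argument. I would also remark (as the paper does implicitly) that $\nu|_Y \sim \mu|_Y$, so that "asymptotically expanding in measure $\nu$" for $\G$ and for the reduction $\G_Y^Y$ in measure $\mu|_Y$ coincide after the obvious rescaling — this is the bridge that, combined with Theorem \ref{thm:main thm ave proj} applied on $\G_Y^Y$ to the averaging projection $P_{\G_Y^Y} = Q^*PQ$ after a change of measure, will yield Theorem \ref{thm:main thm ave proj general}; but that last assembly is the job of the proof of the theorem itself, not of this lemma.
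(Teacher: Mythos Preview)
Your proposal is correct and follows essentially the same route as the paper: restrict the witnessing decomposable set to $\G_Y^Y$ in one direction, use $P = Q(Q^*PQ)Q^*$ and transport the witness back in the other, with the norm-closure argument for part (2). One small slip: in the reverse direction $K_Y$ is unital in $\G_Y^Y$ (it contains $Y$) but not in $\G$ (it need not contain $\Gz$), so you should use $\hat K \coloneqq K_Y \cup \Gz$ as the paper does --- this keeps the decomposition unital symmetric and the rest of your argument goes through unchanged since $\r(\hat K \cdot A)\supseteq \r(K_Y\cdot A)$.
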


\begin{proof}
Here we only prove (1), and (2) is similar.

\emph{Necessity}: By assumption, for any $\epsilon>0$ there exists a unital symmetric decomposable subset $K \subseteq \G$ such that for any measurable $A, B \subseteq \Gz$ with $\mu(\r(K \cdot A) \cap B)=0$, we have $\|\chi_A P \chi_B\| < \epsilon$. Set $K'\coloneqq K \cap \G_Y^Y$, which is clearly unital symmetric and decomposable in $\G_Y^Y$. For any measurable $A',B' \subseteq Y$ with $\mu(\r(K' \cdot A') \cap B')=0$, it is easy to see that $\mu(\r(K \cdot A') \cap B')=0$. Hence 
\[
\|\chi_{A'} Q^*PQ \chi_{B'}\| = \|Q^* \chi_{A'} P \chi_{B'}Q\| \leq \|\chi_{A'} P \chi_{B'}\| < \epsilon.
\]

\emph{Sufficiency}: By assumption, for any $\epsilon>0$ there exists a unital symmetric decomposable subset $K \subseteq \G_Y^Y$ such that for any measurable subsets $A, B \subseteq Y$ with $\mu(\r(K \cdot A) \cap B)=0$, we have $\|\chi_A Q^*PQ \chi_B\| < \epsilon$. Set $\hat{K}\coloneqq K \cup \Gz$, which is clearly unital symmetric and decomposable in $\G$. By the construction of $Q$, we have
\[
P = Q(Q^*PQ)Q^*.
\]
Hence for any measurable $\hat{A}, \hat{B} \subseteq \Gz$ with $\mu(\r(\hat{K} \cdot \hat{A}) \cap \hat{B})=0$, we have
\[
\|\chi_{\hat{A}} P \chi_{\hat{B}}\| = \|\chi_{\hat{A}} Q(Q^*PQ)Q^* \chi_{\hat{B}}\| = \|Q\chi_{\hat{A} \cap Y} (Q^*PQ) \chi_{\hat{B} \cap Y}Q^*\| \leq \|\chi_{\hat{A} \cap Y} (Q^*PQ) \chi_{\hat{B} \cap Y}\|  < \epsilon.
\]
Here we use the fact that $\mu\big(\r(K \cdot (\hat{A}\cap Y)) \cap (\hat{B}\cap Y)\big)=0$.
\end{proof}

Consequently, to prove Theorem \ref{thm:main thm ave proj general} we only need consider the case that $\mu$ and $\nu$ are equivalent, \emph{i.e.}, $\mu(Z) = 0$. Therefore in the following, we assume that $Z=\emptyset$.

Now we form the associated dynamical Roe and quasi-local algebras for $\G$ using the measures $\mu$ and $\nu$ on $\Gz$. To tell the difference, we denote $\CC^*_{\dyn}(\G;\mu)$ and $\CC^*_{\dyn,\q}(\G;\mu)$ for the algebras defined using $\mu$, while $\CC^*_{\dyn}(\G;\nu)$ and $\CC^*_{\dyn,\q}(\G;\nu)$ for those using $\nu$. 
Then we have the following:

\begin{lem}\label{lem:equiv measure quasi-local and ppg}
With the notation above and assuming $Z=\emptyset$, we have:
\begin{enumerate}
 \item $P \in \CC^*_{\dyn,\q}(\G;\mu)$ \emph{if and only if} $P_\G \in \CC^*_{\dyn,\q}(\G;\nu)$;
 \item $P \in \CC^*_{\dyn}(\G;\mu)$ \emph{if and only if} $P_\G \in \CC^*_{\dyn}(\G;\nu)$.
\end{enumerate}
Here $P_\G$ is the averaging projection in $\B^2(L^2(\Gz,\nu))$ from Definition \ref{defn:ave proj}.
\end{lem}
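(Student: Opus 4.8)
The plan is to relate the two Hilbert spaces $L^2(\Gz,\mu)$ and $L^2(\Gz,\nu)$ by an explicit unitary built from the vector $\xi$, and then track how the rank-one projection $P$ and the averaging projection $P_\G$ correspond under this unitary, together with how dynamical propagation and quasi-locality behave. Since $Z=\emptyset$, we have $\xi(x) \neq 0$ for $\mu$-a.e. $x$, and $\d\nu = |\xi|^2 \d\mu$ with $\nu$ a probability measure equivalent to $\mu$. Define
\[
U : L^2(\Gz,\nu) \longrightarrow L^2(\Gz,\mu), \qquad (Uf)(x) = \xi(x) f(x),
\]
which is a well-defined unitary: indeed $\int |Uf|^2 \d\mu = \int |f|^2 |\xi|^2 \d\mu = \int |f|^2 \d\nu$. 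A direct computation shows $U^* g = g/\xi$ (as an element of $L^2(\Gz,\nu)$), and crucially $U(\chi_{\Gz}) = \xi$, so $U$ carries the constant function $1 \in L^2(\Gz,\nu)$ to the unit vector $\xi \in L^2(\Gz,\mu)$. Hence $U P_\G U^* = P$, since $P_\G$ is the rank-one projection onto $\C\cdot\chi_\Gz$ in $L^2(\Gz,\nu)$ and $P$ is the rank-one projection onto $\C\cdot\xi$ in $L^2(\Gz,\mu)$.

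The key observation is that conjugation by $U$ preserves both dynamical propagation and dynamical quasi-locality, because $U$ is a diagonal operator (multiplication by a function) and therefore commutes with all multiplication operators $\chi_A$: explicitly, $\chi_A U = U \chi_A$ for every measurable $A$, so for any $T \in \B(L^2(\Gz,\nu))$ and measurable $A,B$ we have $\chi_A (UTU^*) \chi_B = U (\chi_A T \chi_B) U^*$, whence $\|\chi_A (UTU^*)\chi_B\| = \|\chi_A T \chi_B\|$ and one vanishes exactly when the other does. Moreover the condition ``$\mu(\r(K\cdot A)\cap B)=0$'' appearing in Definitions \ref{finite dynamical propagation} and \ref{dynamical quasi local} is equivalent to ``$\nu(\r(K\cdot A)\cap B)=0$'' since $\mu$ and $\nu$ are equivalent measures on $(\Gz,\RR)$, and the notions of admissible bisection, decomposability, and $\ell(K)\leq L$ depend only on the measure class (this is exactly the content of Definition \ref{admissible} requiring $\tau_K$ merely measure-class-preserving). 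Therefore, for a fixed decomposable $K$, an operator $T$ on $L^2(\Gz,\nu)$ satisfies the defining inequality of Definition \ref{finite dynamical propagation} (resp. Definition \ref{dynamical quasi local}) with respect to $\nu$ if and only if $UTU^*$ on $L^2(\Gz,\mu)$ satisfies it with respect to $\mu$, with the same $K$ (and hence the same $N,L$, or the same quasi-local parameters). This gives $U\,\C_{\dyn}(\G;\nu)\,U^* = \C_{\dyn}(\G;\mu)$ and $U\,\CC^*_{\dyn,\q}(\G;\nu)\,U^* = \CC^*_{\dyn,\q}(\G;\mu)$; taking norm closures also yields $U\,\CC^*_{\dyn}(\G;\nu)\,U^* = \CC^*_{\dyn}(\G;\mu)$.

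Combining the two facts: $P = U P_\G U^*$ lies in $\CC^*_{\dyn,\q}(\G;\mu)$ if and only if $P_\G$ lies in $U^* \CC^*_{\dyn,\q}(\G;\mu) U = \CC^*_{\dyn,\q}(\G;\nu)$, proving (1); and identically with $\CC^*_{\dyn}$ in place of $\CC^*_{\dyn,\q}$, proving (2). I do not anticipate a serious obstacle here; the only point requiring a little care is verifying $U^*g = g/\xi$ genuinely defines an element of $L^2(\Gz,\nu)$ (it does, since $\int |g/\xi|^2 \d\nu = \int |g|^2 \d\mu < \infty$) and that $U$ is surjective, i.e. that $U U^* = \mathrm{Id}$ on $L^2(\Gz,\mu)$, which again uses $Z=\emptyset$ so that division by $\xi$ is a.e. defined. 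Everything else is a routine unwinding of definitions, and the measure-class-only dependence of the dynamical notions — already recorded in the text after Lemma \ref{basic assumprop2} and in Lemma \ref{check of finite propagation algebra} — does the real work.
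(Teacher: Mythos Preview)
Your proof is correct and follows essentially the same approach as the paper: both arguments construct the same unitary $U: L^2(\Gz,\nu)\to L^2(\Gz,\mu)$, $f\mapsto \xi f$, verify $UP_\G U^* = P$, and use that $U$ commutes with all multiplication operators together with the equivalence of $\mu$ and $\nu$ to conclude. Your observation that $U(\chi_{\Gz})=\xi$ immediately yields $UP_\G U^* = P$ is slightly slicker than the paper's explicit integral computation, and your phrasing in terms of $U\,\CC^*_{\dyn}(\G;\nu)\,U^* = \CC^*_{\dyn}(\G;\mu)$ (and likewise for the quasi-local algebra) is a clean way to package the conclusion, but the substance is identical.
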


\begin{proof}
We only prove (1), since (2) is similar. Denote
\[
U: L^2(\Gz,\nu) \longrightarrow L^2(\Gz,\mu) ,\quad f \mapsto f\cdot \xi \quad \text{for} \quad f \in L^2(\Gz,\nu).
\]
By the construction and the assumption that $Z = \emptyset$ (which implies that $\xi$ is non-zero everywhere), it is easy to check that $U$ is a unitary operator with $U^*\eta = \frac{1}{\xi} \cdot \eta$ for $\eta \in L^2(\Gz,\mu)$.
Denote
\[
\Ad_U: \B(L^2(\Gz,\nu)) \longrightarrow \B(L^2(\Gz,\mu)), \quad T \mapsto UTU^* \quad \text{for} \quad T \in \B(L^2(\Gz,\nu)).
\]
We claim: $\Ad_U(P_\G) = P$. In fact, given $\eta \in L^2(\Gz,\mu)$, we have
\begin{align*}
\Ad_U(P_\G)\eta &= U P_\G U^* \eta = U P_\G \left(\frac{1}{\xi} \cdot \eta\right) =U\left(  \big\langle \frac{1}{\xi} \cdot \eta,\chi_{\Gz}\big\rangle_{L^2(\Gz, \nu)} \chi_{\Gz} \right) \\
&= \big\langle \frac{1}{\xi} \cdot \eta,\chi_{\Gz}\big\rangle_{L^2(\Gz, \nu)} \xi = \left(\int_{\Gz} \frac{1}{\xi}(x) \eta(x) \d \nu(x) \right) \cdot \xi\\
&= \left(\int_{\Gz} \eta(x)\xi(x) \d \mu(x) \right) \cdot \xi = \langle \eta, \xi \rangle_{L^2(\Gz, \mu)} \xi= P\eta.
\end{align*}
Hence we prove the claim. Therefore, for any measurable $A,B \subseteq \Gz$ we have
\[
\Ad_U(\chi_A P_\G \chi_B) = \Ad_U(\chi_A) \circ \Ad_U(P_\G) \circ \Ad_U(\chi_B) = \chi_A P \chi_B.
\]

Finally, since $\mu$ and $\nu$ are equivalent, we have $K \subseteq \G$ is decomposable with respect to $\mu$ if and only if it is decomposable with respect to $\nu$. Moreover, for any measurable $A,B \subseteq \Gz$ and unital symmetric decomposable $K \subseteq \G$, we have $\mu(\r(K \cdot A) \cap B) = 0$ if and only if $\nu(\r(K \cdot A) \cap B) = 0$. So we finish the proof.
\end{proof}

Now we are in the position to prove Theorem \ref{thm:main thm ave proj general}:

\begin{proof}[Proof of Theorem \ref{thm:main thm ave proj general}]
By Lemma \ref{lem:reduction Q}, we can assume that $Z = \emptyset$. Assume $P$ is dynamically quasi-local. It follows from Lemma \ref{lem:equiv measure quasi-local and ppg} that $P_{\G}\in  \CC^*_{\dyn,\q}(\G;\nu)$. Applying Theorem \ref{thm:main thm ave proj}, we obtain that $P_{\G}\in  \CC^*_{\dyn}(\G;\nu)$. Then using Lemma \ref{lem:equiv measure quasi-local and ppg} again, we obtain that $P \in \CC^*_{\dyn}(\G;\mu) = \CC^*_{\dyn}(\G)$.
\end{proof}

\begin{rem}\label{rem:dependence on parameters for general rank one proj}
Note that the map $\Ad_U$ in the proof of Lemma \ref{lem:equiv measure quasi-local and ppg} preserves propagation. Hence combining with Remark \ref{rem:dependence of parameters 2} and \ref{rem:dependence on parameters for main thm}, we know that if the rank-one projection $P$ belongs to $\CC^*_{\dyn,\q}(\G)$, then for any $\epsilon>0$ we can choose $T_\epsilon \in \C_{\dyn}(\G)$ with $(N_\epsilon, L_\epsilon)$-dynamical propagation such that $\|T_\epsilon - P\| < \epsilon$ and the functions $\epsilon \mapsto N_\epsilon$ and $\epsilon \mapsto L_\epsilon$ only depend on quasi-local parameters of $P$.
\end{rem}

Finally, we introduce two variants of Theorem \ref{thm:main thm ave proj general}.
The first one deals with a family version, following the discussion in Section \ref{ssec:basic notions}. Combining Lemma \ref{family of quasi local and quasi local} and Theorem \ref{thm:main thm ave proj general} together with Remark \ref{rem:dependence on parameters for general rank one proj}, we obtain:


\renewcommand{\thethmprime}{5.3'}
\begin{thmprime}\label{cor:cor to thm general case prime} 
For each $n\in \N$, let $\G_n$ be a groupoid with a length function $\ell_n$ and $(\Gz_n,\RR_n,\mu_n)$ be a measure space. Form the groupoid $\G=\bigsqcup_{n\in \N} \G_n$ together with a length function $\ell$ and a measure $\mu$ on $\Gz$ as in Section \ref{ssec:basic notions}. Moreover, let $P_n \in \B(L^2(\Gz_n, \mu_n))$ be a rank-one orthogonal projection for each $n \in \N$ and consider their direct sum
\[
P\coloneqq \mathrm{(SOT)}-\sum_{n\in \N} P_n \in \B(L^2(\Gz,\mu)).
\]
Then 
the following are equivalent:
\begin{enumerate}
 \item $P \in \CC^*_{\dyn}(\mathcal{G})$;
 \item $P \in \CC^*_{\dyn,\q}(\mathcal{G})$;
 \item $\G_n$ is asymptotically expanding in the associated measure $\nu_n$ to $P_n$ uniformly in the sense that they have same expansion parameters.
\end{enumerate}
\end{thmprime}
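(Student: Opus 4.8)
The plan is to deduce Theorem~\ref{cor:cor to thm general case prime} from the already-established Theorem~\ref{thm:main thm ave proj general} by regarding the disjoint-union groupoid $\G = \bigsqcup_{n} \G_n$ as a single groupoid and chasing the uniformity of parameters through the equivalences. First I would record the elementary observations that the direct sum $P = \mathrm{(SOT)}\text{-}\sum_n P_n$ is again a rank-one projection on $L^2(\Gz,\mu)$ only when... — wait, it is not rank one, so the direct application of Theorem~\ref{thm:main thm ave proj general} is unavailable. This is exactly why Lemma~\ref{family of quasi local and quasi local} is needed: it says that membership of a (necessarily diagonal) operator $T = \mathrm{(SOT)}\text{-}\sum_n T_n$ in $\CC^*_{\dyn}(\G)$ (resp. $\CC^*_{\dyn,\q}(\G)$) is equivalent to the $T_n$ lying in $\C_{\dyn}(\G_n)$ (resp. being dynamically quasi-local) with \emph{uniform} decomposability parameters $N, L$ (resp. uniform quasi-local parameters). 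So the first real step is to apply Lemma~\ref{family of quasi local and quasi local} to $P = \mathrm{(SOT)}\text{-}\sum_n P_n$ to translate each of conditions (1) and (2) into a statement about the individual $P_n$ with parameters independent of $n$.

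The second step is, for each fixed $n$, to apply Theorem~\ref{thm:main thm ave proj general} to the rank-one projection $P_n \in \B(L^2(\Gz_n,\mu_n))$: this gives $P_n \in \CC^*_{\dyn}(\G_n)$ $\iff$ $P_n \in \CC^*_{\dyn,\q}(\G_n)$ $\iff$ $\G_n$ is asymptotically expanding in the associated measure $\nu_n$. The only content beyond quoting this theorem is tracking that the equivalences are \emph{quantitatively} uniform. Here I would invoke Remark~\ref{rem:dependence on parameters for general rank one proj}: if $P_n \in \CC^*_{\dyn,\q}(\G_n)$ with quasi-local parameters $\epsilon \mapsto N_\epsilon, \epsilon \mapsto L_\epsilon$ independent of $n$, then for every $\epsilon$ there is $T_{n,\epsilon} \in \C_{\dyn}(\G_n)$ with $(N_\epsilon, L_\epsilon)$-dynamical propagation and $\|T_{n,\epsilon} - P_n\| < \epsilon$, where these $N_\epsilon, L_\epsilon$ depend only on the (common) quasi-local parameters of the $P_n$. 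Forming $T_\epsilon \coloneqq \mathrm{(SOT)}\text{-}\sum_n T_{n,\epsilon}$ and applying the sufficiency direction of Lemma~\ref{family of quasi local and quasi local}(1), we get $T_\epsilon \in \C_{\dyn}(\G)$ with uniform propagation, and $\|T_\epsilon - P\| = \sup_n \|T_{n,\epsilon} - P_n\| \leq \epsilon$, so $P \in \CC^*_{\dyn}(\G)$; this closes the loop $(2) \Rightarrow (1)$, while $(1)\Rightarrow(2)$ is trivial. Similarly, Remark~\ref{rem:dependence of parameters 2} ensures that the expansion parameters of $\G_n$ and the quasi-local parameters of $P_n$ determine each other, so the phrase ``uniformly in the sense that they have the same expansion parameters'' in condition (3) matches the uniform quasi-local parameters coming out of Lemma~\ref{family of quasi local and quasi local}.

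Concretely I would organize the proof as: (i) by Lemma~\ref{family of quasi local and quasi local}(2), $P \in \CC^*_{\dyn,\q}(\G)$ iff each $P_n$ is dynamically quasi-local in $\G_n$ with quasi-local parameters that can be taken independent of $n$; (ii) by Theorem~\ref{thm:main thm ave proj general} applied to each $n$, the latter holds iff each $\G_n$ is asymptotically expanding in $\nu_n$, and by Remark~\ref{rem:dependence of parameters 2} the uniformity of quasi-local parameters for the $P_n$ is equivalent to the uniformity of expansion parameters for the $\G_n$; this gives $(2)\iff(3)$; (iii) $(1)\Rightarrow(2)$ is immediate since $\C_{\dyn}(\G) \subseteq \CC^*_{\dyn,\q}(\G)$ (Lemma~\ref{check of finite propagation algebra}); (iv) for $(3)\Rightarrow(1)$, use Remark~\ref{rem:dependence on parameters for general rank one proj} to produce, for each $\epsilon$, operators $T_{n,\epsilon}$ with uniform $(N_\epsilon,L_\epsilon)$-dynamical propagation approximating $P_n$ within $\epsilon$, assemble $T_\epsilon = \mathrm{(SOT)}\text{-}\sum_n T_{n,\epsilon}$, apply Lemma~\ref{family of quasi local and quasi local}(1) to see $T_\epsilon \in \C_{\dyn}(\G)$, and note $\|T_\epsilon - P\| \leq \epsilon$.

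I expect the main obstacle to be purely bookkeeping rather than conceptual: one must be careful that ``decomposable subset $K_n \subseteq \G_n$'' combines correctly into a decomposable $K = \bigsqcup_n K_n \subseteq \G$ with the \emph{same} $N$ and $L$ (this is precisely the sufficiency argument inside the proof of Lemma~\ref{family of quasi local and quasi local}, which I would cite rather than repeat), and that the zero-propagation condition $\mu(\r(K \cdot A) \cap B) = 0$ decomposes over the summands (which it does because $\r(K_n \cdot A) \subseteq \Gz_n$ and distinct fibres are disjoint). The other point requiring a line of care is that ``asymptotically expanding uniformly'' must be read as: the expansion parameters $\alpha \mapsto C_\alpha, \alpha \mapsto N_\alpha, \alpha \mapsto L_\alpha$ in Definition~\ref{asymptotically expanding} can be chosen independent of $n$ — once this is fixed, everything goes through by the dependence statements in Remarks~\ref{rem:dependence of parameters 2}, \ref{rem:dependence on parameters for main thm} and \ref{rem:dependence on parameters for general rank one proj}. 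No new analytic estimates are needed.
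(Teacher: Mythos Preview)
Your proposal is correct and matches the paper's own argument exactly: the paper states Theorem~\ref{cor:cor to thm general case prime} as an immediate consequence of combining Lemma~\ref{family of quasi local and quasi local} and Theorem~\ref{thm:main thm ave proj general} together with Remark~\ref{rem:dependence on parameters for general rank one proj}, without writing out any further details. Your write-up simply unpacks this one-line justification, and the bookkeeping you flag (assembling the $K_n$ into $K$, tracking uniformity via the dependence remarks) is precisely what is implicit in the paper's citation of those results.
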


The second one focuses on a specific family of admissible and decomposable subsets allowed to build the notion of (asymptotic) expansion, dynamical propagation and quasi-locality. More precisely, we introduce the following refined version of Definition \ref{decomposable}:

\begin{defn}\label{defn:allowed family}
Let $\mathcal{G}$ be a groupoid with a length function $\ell$, and $(\Gz,\RR,\mu)$ be a measure space. Let $\KK$ be a family of admissible bisections which is closed under taking compositions and inverses and $\Gz \in \KK$. A subset $K \subseteq \G$ is called \emph{$\KK$-decomposable} if $K=\bigcup_{i=1}^{N} K_{i}$ for $N \in \N$ and admissible bisection $K_i \in \KK$. 
\end{defn}

Given such a family $\KK$ in Definition \ref{defn:allowed family}, we can replace the word ``decomposable'' by ``$\KK$-decomposable'' in Definition \ref{expanding}, \ref{asymptotically expanding}, \ref{finite dynamical propagation} and \ref{dynamical quasi local} to define the notion of $\KK$-expansion (in measure), $\KK$-asymptotic expansion (in measure), $\KK$-dynamical propagation and $\KK$-dynamical quasi-local, together with $C^*$-algebras $\CC^*_{\dyn}(\G,\KK)$ and  $\CC^*_{\dyn,\q}(\G,\KK)$.

\begin{rem}\label{rem:cofinal family}
Note that if $\KK$ is \emph{cofinal} in the sense that any admissible bisection is contained in the union of finitely many elements in $\KK$, then these notions coincide with the original ones of \emph{possibly different} parameters.
\end{rem}

Following exactly the same proofs, we obtain:

\renewcommand{\thethmpprime}{5.3''}
\begin{thmpprime}\label{cor:cor to thm general case pprime} 
Let $\mathcal{G}$ be a groupoid with a length function $\ell$, $(\Gz,\RR,\mu)$ be a measure space and  $\KK$ be a family of admissible bisections which is closed under taking compositions and inverses and $\Gz \in \KK$. Let $P \in \B(L^2(\Gz,\mu))$ be a rank-one projection, and $\nu$ the associated probability measure on $\Gz$. Then the following are equivalent:
\begin{enumerate}
 \item $P \in \CC^*_{\dyn}(\mathcal{G},\KK)$;
 \item $P \in \CC^*_{\dyn,\q}(\mathcal{G},\KK)$;
 \item $\G$ is $\KK$-asymptotically expanding in measure $\nu$.
\end{enumerate}
\end{thmpprime}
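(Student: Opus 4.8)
Since the statement is the $\KK$-relative analogue of Theorem \ref{thm:main thm ave proj general}, the plan is to re-run that proof together with the whole chain behind it — the structure theorems (Theorems \ref{exhaustion of expansion with radon controlled} and \ref{thm:Markov structure theorem}), Proposition \ref{asym expansion and quasi local}, and Theorem \ref{thm:main thm ave proj} — replacing ``decomposable'' by ``$\KK$-decomposable'' throughout and checking that every auxiliary set built along the way stays $\KK$-decomposable. The only properties of $\KK$ ever used are the three in Definition \ref{defn:allowed family}: closure under composition, closure under inverse, and $\Gz\in\KK$. As a harmless first move I would enlarge $\KK$ to its \emph{hereditary closure} $\widehat\KK$, the family of all admissible bisections contained in some member of $\KK$; this is again closed under composition, inverse, and contains $\Gz$, and it changes neither $\CC^*_{\dyn}(\G,\KK)$, nor $\CC^*_{\dyn,\q}(\G,\KK)$, nor the property of being $\KK$-asymptotically expanding in a given measure, since a larger witnessing set serves just as well (the same monotonicity underlying Remark \ref{rem:cofinal family}). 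So we may assume $\KK$ is hereditary, i.e. also closed under passing to admissible sub-bisections.

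Granting this, the checkpoints are routine. The $\KK$-analogues of Lemmas \ref{inverse,union,composition,intersection still admissible} and \ref{inverse,union,composition still decomposable} are immediate from closure under inverse and composition (finite unions trivially remain $\KK$-decomposable), and a $\KK$-decomposable set admits a unital symmetric $\KK$-decomposition because $\Gz\in\KK$ and $\KK$ is inverse-closed. In the proofs of the structure theorems every decomposable set that appears — $K_\alpha\cup K_{\alpha'}$ in Lemma \ref{extend to beta}, the power $(K')^m$ in Lemma \ref{subset also asym expansion1}, the sets $K_n$ in the proof of Theorem \ref{exhaustion of expansion with radon controlled} — is therefore $\KK$-decomposable; the maximal-F{\o}lner-set arguments (Lemmas \ref{lem:maximal Folner set} and \ref{lem:expansion and Folner set}) are purely measure-theoretic for a fixed $\KK$-decomposable $K$; and the Radon--Nikodym level sets $Z_n$ are defined directly from the pieces of a $\KK$-decomposition. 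The Markov-kernel constructions of Section \ref{ssec:Markov structure theorem} only require a unital symmetric $\KK$-decomposition of $K$, which is available, so Proposition \ref{check reversible Markov kernel} and Lemma \ref{expansion and Markov expansion} are unchanged, and hence the $\KK$-versions of Theorems \ref{exhaustion of expansion with radon controlled} and \ref{thm:Markov structure theorem} hold. Finally, in Proposition \ref{asym expansion and quasi local} and in the proof of Theorem \ref{thm:main thm ave proj} the only decomposable sets constructed are powers ($K_\epsilon^{2n}$, respectively $K_n^{m_n}$), which are $\KK$-decomposable; the operator produced by functional calculus has its propagation controlled by $K_n^{m_n}$, hence inherits $\KK$-dynamical propagation, and the embeddings $I_n$ do not affect propagation. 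This gives the $\KK$-version of Theorem \ref{thm:main thm ave proj}.

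The one step needing care is the reduction to the case $\mu=\nu$. As in the proof of Theorem \ref{thm:main thm ave proj general}, put $Z=\{x\in\Gz\mid\xi(x)=0\}$, $Y=\Gz\setminus Z$, and pass to the reduction groupoid $\G_Y^Y=\s^{-1}(Y)\cap\r^{-1}(Y)$. The right family to use there is $\KK^Y:=\{K\cap\G_Y^Y\mid K\in\KK\}$: each $K\cap\G_Y^Y$ is an admissible bisection of $\G_Y^Y$, the family contains $(\G_Y^Y)^{(0)}=Y=\Gz\cap\G_Y^Y$, it is closed under inverse because $(K\cap\G_Y^Y)^{-1}=K^{-1}\cap\G_Y^Y$, and — this is where heredity of $\KK$ is used — it is closed under the $\G_Y^Y$-composition, since $(K_1\cap\G_Y^Y)\cdot(K_2\cap\G_Y^Y)$ is an admissible sub-bisection of $K_1\cdot K_2\in\KK$ and therefore itself lies in $\KK$ (and inside $\G_Y^Y$). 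Thus $(\G_Y^Y,\ell_Y,\KK^Y)$ is again of the type considered in Definition \ref{defn:allowed family}, and the proofs of Lemmas \ref{lem:reduction Q} and \ref{lem:equiv measure quasi-local and ppg} transfer verbatim — using that $\r(K\cdot A)\cap B=\r((K\cap\G_Y^Y)\cdot A)\cap B$ whenever $A,B\subseteq Y$, that $\KK^Y$-decomposability of $K\cap\G_Y^Y$ is witnessed by $K\in\KK$, that the unitary $f\mapsto f\xi$ preserves propagation, and that $\KK$-decomposability and the conditions $\mu(\r(K\cdot A)\cap B)=0$ are unaffected on $Y$ by replacing $\mu|_Y$ with the equivalent $\nu|_Y$. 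Feeding these into the $\KK$-version of Theorem \ref{thm:main thm ave proj} closes the cycle $(1)\Rightarrow(2)\Rightarrow(3)\Rightarrow(1)$ exactly as in the proof of Theorem \ref{thm:main thm ave proj general}. The principal obstacle I anticipate is not any single estimate but precisely this bookkeeping around the reduction groupoid — choosing $\KK^Y$ and checking it meets Definition \ref{defn:allowed family} — which is exactly what passing to a hereditary $\KK$ is designed to make painless.
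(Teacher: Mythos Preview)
Your proposal is correct and follows essentially the same approach as the paper, which simply states that the result is obtained ``following exactly the same proofs'' with $\KK$-decomposability replacing decomposability throughout. Your treatment is in fact more careful than the paper's at the one genuinely delicate point you identify: the reduction to $\G_Y^Y$, where closure of $\KK^Y$ under composition is not automatic; your hereditary-closure manoeuvre is a clean way to handle this, and the monotonicity argument showing that enlarging $\KK$ to $\widehat\KK$ leaves the algebras and the expansion property unchanged is sound.
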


\section{Examples}\label{Examples of measurable groupoids}

In this section, we apply our theory to several classes of groupoids and recover main results in \cite{structure, li2022markovianroealgebraicapproachasymptotic, faucris.282438222}.


\subsection{Transformation groupoids}\label{Group action}

Our first example comes from group actions, which is one of our main motivations for this work.



Let $\rho:\Gamma\curvearrowright X$ be a countable group $\Gamma$ acting on a set $X$, and $\ell_\Gamma$ be a proper word length function on $\Gamma$ in the sense that for any $L>0$, the closed ball denoted by
\[
B_L\coloneqq \{\gamma \in \Gamma~|~\ell_\Gamma(\gamma) \leq L\}
\]
is finite. 
Consider the \emph{transformation groupoid} $X \rtimes \Gamma$ by $\s(x,\gamma) = \gamma^{-1}x$, $\r(x,\gamma)=x$ and $(x,\gamma)^{-1} = (\gamma^{-1}x, \gamma^{-1})$. The length function $\ell_\Gamma$ naturally gives rise to a length function $\ell$ on $X \rtimes \Gamma$ by $\ell(x, \gamma)\coloneqq\ell_{\Gamma}(\gamma)$ for $x\in X$ and $\gamma \in \Gamma$. 


Moreover, let $\mu$ be a measure on $(X,\RR)$ for some $\sigma$-algebra $\RR$ and assume that the action $\rho$ is measure-class-preserving. Then for $\gamma \in \Gamma$, the map $\tau_{X \times \{\gamma\}}: X \to  X$ coincides with $\rho(\gamma)$ (simply denoted by $\gamma$) and $X \times \{\gamma\}$ is admissible.

Concerning decomposable subsets in $X \rtimes \Gamma$, we have the following. The proof is straightforward and hence omitted.







\begin{lem}\label{lem:dec sets for gorup actions}
For any $L>0$, the subset $X \times B_L$ is unital symmetric $|B_L|$-decomposable with $\ell(X \times B_L) \leq L$, and it admits a unital symmetric decomposition $X \times B_L = \bigcup_{\gamma \in B_L} X \times \{\gamma\}$ where each $X \times \{\gamma\}$ is admissible. Here we use $|\cdot|$ to denote the cardinality. 
Conversely, for any decomposable $K \subseteq X \rtimes \Gamma$, we have $K \subseteq X \times B_{\ell(K)}$.
\end{lem}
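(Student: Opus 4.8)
The plan is to check the three conditions in Definition \ref{decomposable} directly from the structure of $X \rtimes \Gamma$, together with the standing hypothesis that $\rho$ is measure-class-preserving. For the forward direction, I would first use properness of $\ell_\Gamma$ to note that $B_L$ is a finite subset of $\Gamma$ containing $e$, so that $X \times B_L = \bigcup_{\gamma \in B_L} (X \times \{\gamma\})$ is a finite union of sets of the stated form. The substantive point is that each slice $X \times \{\gamma\}$ is an admissible bisection in the sense of Definition \ref{admissible}: the restrictions of $\s$ and $\r$ to $X \times \{\gamma\}$ are the maps $x \mapsto \gamma^{-1}x$ and $x \mapsto x$, both injective, so $X \times \{\gamma\}$ is a bisection with induced map $\tau_{X \times \{\gamma\}} = \rho(\gamma)$; since $\rho(\gamma)$ is a measure-class-preserving measurable isomorphism and $\ell(X \times \{\gamma\}) = \ell_\Gamma(\gamma) \le L < \infty$, admissibility follows.

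Next I would confirm that the decomposition $X \times B_L = \bigcup_{\gamma \in B_L} (X \times \{\gamma\})$ is both unital and symmetric. Unitality holds because $e \in B_L$ and $X \times \{e\}$ is precisely the unit space $(X \rtimes \Gamma)^{(0)}$ under the identification $x \mapsto (x,e)$. Symmetry holds because $(X \times \{\gamma\})^{-1} = X \times \{\gamma^{-1}\}$ and the map $\sigma\colon \gamma \mapsto \gamma^{-1}$ is a bijection of the index set $B_L$, using $\ell_\Gamma(\gamma^{-1}) = \ell_\Gamma(\gamma)$. Taking $\sigma$ as the reindexing bijection required in Definition \ref{decomposable} exhibits $X \times B_L$ as unital symmetric $|B_L|$-decomposable with $\ell(X \times B_L) \le L$.

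For the converse, given a decomposable $K \subseteq X \rtimes \Gamma$ and any $(x,\gamma) \in K$, the definition of the length function on $X \rtimes \Gamma$ gives $\ell_\Gamma(\gamma) = \ell(x,\gamma) \le \sup\{\ell(z) \mid z \in K\} = \ell(K)$, so $\gamma \in B_{\ell(K)}$ and hence $(x,\gamma) \in X \times B_{\ell(K)}$; thus $K \subseteq X \times B_{\ell(K)}$. None of these steps presents a genuine difficulty, which is why the paper omits them; the only bookkeeping worth stating explicitly is that the indexing bijection $\sigma$ in the symmetry clause of Definition \ref{decomposable} may here be taken to be inversion on $B_L$.
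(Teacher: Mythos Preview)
Your proposal is correct and fills in exactly the routine verifications the paper omits (it states that the proof is straightforward and hence omitted). The only minor remark is that in the converse you implicitly use $\ell(K)<\infty$, which is immediate since a decomposable $K$ is a finite union of admissible bisections, each of finite length.
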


Hence it suffices to consider decomposable subsets of the form $X \times B_L$, which is determined by its length. Therefore, the notion of (asymptotic) expansion in measure can be translated as follows:

\begin{prop}\label{prop:group action expansion coincidence}
In the above setting, the transformation groupoid $X \rtimes \Gamma$ is asymptotically expanding in measure in the sense of Definition \ref{asymptotically expanding} \emph{if and only if} the action is asymptotically expanding in measure.
A similar result holds for the notion of expansion and domain of (asymptotic) expansion.
\end{prop}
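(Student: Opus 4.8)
The plan is to unwind the definitions on both sides and observe they match once we know (via Lemma \ref{lem:dec sets for gorup actions}) that decomposable subsets of $X \rtimes \Gamma$ are, up to enlargement, exactly the cylinder sets $X \times B_L$. First I would recall the notion of asymptotically expanding action from the introduction: for each $\alpha \in (0,\tfrac12]$ there are $C_\alpha, L_\alpha > 0$ with $\mu(B_{L_\alpha} \cdot A) > (1+C_\alpha)\mu(A)$ whenever $\alpha \le \mu(A) \le \tfrac12$. The point is to show $\mu(B_L \cdot A) > (1+C)\mu(A)$ is equivalent to the groupoid condition $\mu\big(\r((X \times B_L) \cdot A)\setminus A\big) > C'\mu(A)$ for a suitable comparison of constants.

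The key computation is the identity $\r\big((X \times B_L)\cdot A\big) = B_L \cdot A$ for any measurable $A \subseteq X = (X \rtimes \Gamma)^{(0)}$, which follows directly from the formulas $\s(x,\gamma) = \gamma^{-1}x$, $\r(x,\gamma) = x$: indeed $(X \times B_L) \cdot A = \{(x,\gamma) \mid \gamma \in B_L,\ \gamma^{-1}x \in A\}$, and applying $\r$ gives $\{x \mid \exists \gamma \in B_L,\ x \in \gamma A\} = B_L \cdot A$. Since $A \subseteq B_L \cdot A$ (because $B_0 \ni e$, so $A = e\cdot A \subseteq B_L \cdot A$), we get $\mu(B_L \cdot A \setminus A) = \mu(B_L \cdot A) - \mu(A)$, so the inequality $\mu(\r((X\times B_L)\cdot A)\setminus A) > C\mu(A)$ is literally the same as $\mu(B_L \cdot A) > (1+C)\mu(A)$. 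This makes the two asymptotic-expansion conditions coincide verbatim, with the same parameter functions $\alpha \mapsto C_\alpha$ and $\alpha \mapsto L_\alpha$ (and $N_\alpha = |B_{L_\alpha}|$).

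For the converse direction inside the equivalence, I would use the second half of Lemma \ref{lem:dec sets for gorup actions}: any decomposable $K$ appearing in Definition \ref{asymptotically expanding} is contained in $X \times B_{\ell(K)}$, so $\r(K \cdot A) \subseteq \r((X \times B_{\ell(K)})\cdot A) = B_{\ell(K)} \cdot A$, giving one inclusion; conversely $X \times B_L$ is itself unital symmetric $|B_L|$-decomposable with length $\le L$, so it is an admissible choice of $K$ in the groupoid definition. Hence passing between the two formulations only changes which decomposable set one names, not the validity of the expansion inequality. The same argument, restricted to measurable $A \subseteq Y$ and intersecting ranges with $Y$, handles domains of (asymptotic) expansion, and the non-asymptotic case is the special case where the parameter functions are constant.

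I do not expect a serious obstacle here; the statement is essentially a bookkeeping lemma. The only mild subtlety is to keep track of which direction of the equivalence needs ``$K \subseteq X \times B_{\ell(K)}$'' (to deduce the action condition from the groupoid condition) versus ``$X \times B_L$ is decomposable'' (to deduce the groupoid condition from the action condition), and to note that $A \subseteq B_L \cdot A$ so that $\setminus A$ in the groupoid definition exactly produces the $(1+C)$ in the action definition. One should also remark that since $\mu$ is only required finite for the expansion notions, restricting to the probability-measure case by rescaling (as the paper does in Section \ref{sec:asymptotic expansion}) loses nothing.
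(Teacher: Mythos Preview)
Your proposal is correct and matches the paper's intended approach: the paper does not give a written proof of this proposition at all, simply stating it as an immediate consequence of Lemma~\ref{lem:dec sets for gorup actions} (``Hence it suffices to consider decomposable subsets of the form $X \times B_L$\ldots''). Your unpacking of the identity $\r\big((X\times B_L)\cdot A\big)=B_L\cdot A$ and the two directions of Lemma~\ref{lem:dec sets for gorup actions} is exactly the bookkeeping the paper leaves implicit.
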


Consequently, Theorem \ref{exhaustion of expansion with radon controlled} and Theorem \ref{thm:Markov structure theorem} recover \cite[Theorem 4.6]{asymptoticexpansionandstrongergodicity} and \cite[Theorem 3.20]{li2022markovianroealgebraicapproachasymptotic}, respectively.

\bigskip

On the other hand, applying Lemma \ref{lem:dec sets for gorup actions} again, the notion of dynamical propagation and quasi-locality can be translated as follows:

\begin{prop}\label{prop:group action ppg and quasi-local coincidence}
In the above setting, an operator $T \in L^2(X,\mu)$ is dynamically quasi-local in the sense of Definition \ref{dynamical quasi local} \emph{if and only if} $T$ is $\rho$-quasi-local in the sense of \cite[Definition 4.3]{li2022markovianroealgebraicapproachasymptotic}, \emph{i.e.}, for any $\epsilon>0$ there exists $L_\epsilon>0$ such that for any measurable $A,B \subseteq X$ with $\mu((B_{L_\epsilon} \cdot A) \cap B)=0$, then $\|\chi_A T \chi_B\| < \epsilon$. A similar result holds for the notion of finite dynamical propagation.
\end{prop}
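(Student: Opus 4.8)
The plan is to reduce everything to metric balls via Lemma \ref{lem:dec sets for gorup actions}, together with the elementary identity $\r\big((X\times B_L)\cdot A\big) = B_L\cdot A$ for measurable $A\subseteq X$. This identity follows from the computation in the proof of Lemma \ref{basic assumprop2}: writing $X\times B_L = \bigcup_{\gamma\in B_L} X\times\{\gamma\}$, each $\tau_{X\times\{\gamma\}}$ equals $\rho(\gamma)$ on $\s(X\times\{\gamma\}) = X$, so $\r\big((X\times B_L)\cdot A\big) = \bigcup_{\gamma\in B_L}\gamma A = B_L\cdot A$. I would also record the monotonicity $\r(K\cdot A)\subseteq \r(K'\cdot A)$ whenever $K\subseteq K'$, which is immediate from the same formula.

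For the direction ``$\rho$-quasi-local $\Rightarrow$ dynamically quasi-local'': given $\epsilon>0$, $\rho$-quasi-locality provides $L_\epsilon>0$ such that $\mu\big((B_{L_\epsilon}\cdot A)\cap B\big)=0$ forces $\|\chi_A T\chi_B\|<\epsilon$. Set $K_\epsilon\coloneqq X\times B_{L_\epsilon}$. By Lemma \ref{lem:dec sets for gorup actions} this is unital symmetric $|B_{L_\epsilon}|$-decomposable with $\ell(K_\epsilon)\leq L_\epsilon$, and by the identity above the condition $\mu(\r(K_\epsilon\cdot A)\cap B)=0$ is literally $\mu\big((B_{L_\epsilon}\cdot A)\cap B\big)=0$. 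Hence $T$ is dynamically quasi-local with quasi-local parameters $\epsilon\mapsto |B_{L_\epsilon}|$ and $\epsilon\mapsto L_\epsilon$.

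For the converse: given $\epsilon>0$, dynamical quasi-locality provides a unital symmetric $N_\epsilon$-decomposable $K_\epsilon$ with $\ell(K_\epsilon)\leq L_\epsilon$ such that $\mu(\r(K_\epsilon\cdot A)\cap B)=0$ forces $\|\chi_A T\chi_B\|<\epsilon$. By Lemma \ref{lem:dec sets for gorup actions}, $K_\epsilon\subseteq X\times B_{L_\epsilon}$, whence $\r(K_\epsilon\cdot A)\subseteq \r\big((X\times B_{L_\epsilon})\cdot A\big)=B_{L_\epsilon}\cdot A$ for every measurable $A\subseteq X$. Therefore $\mu\big((B_{L_\epsilon}\cdot A)\cap B\big)=0$ implies $\mu(\r(K_\epsilon\cdot A)\cap B)=0$, hence $\|\chi_A T\chi_B\|<\epsilon$, so $T$ is $\rho$-quasi-local with parameter $L_\epsilon$.

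The statement for finite dynamical propagation (equivalently $\rho$-finite propagation) is proved by exactly the same two paragraphs, simply deleting the quantifier over $\epsilon$ and replacing the norm estimate by the exact vanishing condition $\chi_A T\chi_B=0$. There is no genuine obstacle in this proof: the only points needing a line of care are the translation $\r\big((X\times B_L)\cdot A\big)=B_L\cdot A$, the monotonicity of $K\mapsto \r(K\cdot A)$, and the innocuous handling of $\mu$-null sets in the hypotheses, all of which are immediate.
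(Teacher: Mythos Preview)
Your proposal is correct and is exactly the argument the paper intends: the proposition is stated without proof as an immediate consequence of Lemma~\ref{lem:dec sets for gorup actions}, and you have simply written out the two-line reduction in each direction. The identity $\r\big((X\times B_L)\cdot A\big)=B_L\cdot A$ and the containment $K_\epsilon\subseteq X\times B_{\ell(K_\epsilon)}$ are precisely the content of that lemma, so there is nothing to add.
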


Consequently, our main result Theorem \ref{thm:main thm ave proj} recovers \cite[Proposition 4.6 and Theorem 4.16]{li2022markovianroealgebraicapproachasymptotic} for the averaging projection when $\mu$ is finite:

\begin{cor}\label{cor:main thm ave proj general gorup action}
Let $\rho:\Gamma\curvearrowright X$ be a countable group $\Gamma$ acting on a probability measure space $(X,\mu)$ and assume that $\rho$ is measure-class-preserving. For the averaging projection $P_X \in \B(L^2(X,\mu))$, the following are equivalent:
\begin{enumerate}
 \item $P$ is $\rho$-quasi-local;
 \item $P$ is a norm limit of operators in $B(L^2(X,\mu))$ with finite $\rho$-propagations;
 \item the action is asymptotically expanding in measure $\mu$.
\end{enumerate}
\end{cor}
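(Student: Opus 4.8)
The plan is to deduce Corollary~\ref{cor:main thm ave proj general gorup action} directly from the already-established groupoid-level result, Theorem~\ref{thm:main thm ave proj}, by simply translating each of the three conditions through the transformation groupoid $X \rtimes \Gamma$. First I would record that the dictionary between the action-theoretic and groupoid-theoretic vocabulary has already been set up: Lemma~\ref{lem:dec sets for gorup actions} identifies the relevant decomposable subsets of $X \rtimes \Gamma$ as precisely the sets $X \times B_L$ (up to enlarging $L$), so that ``$\ell(K) \le L$'' corresponds to restricting the group elements used to the ball $B_L$. With this in hand, Proposition~\ref{prop:group action expansion coincidence} gives the equivalence of condition~(3) in both settings: the transformation groupoid $X \rtimes \Gamma$ is asymptotically expanding in measure $\mu$ in the sense of Definition~\ref{asymptotically expanding} if and only if the action $\rho$ is asymptotically expanding in measure $\mu$.

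Next I would handle conditions~(1) and~(2). By Proposition~\ref{prop:group action ppg and quasi-local coincidence}, an operator $T \in \B(L^2(X,\mu))$ is dynamically quasi-local in the sense of Definition~\ref{dynamical quasi local} (with respect to $X \rtimes \Gamma$) if and only if it is $\rho$-quasi-local, and likewise $T$ has finite dynamical propagation if and only if it has finite $\rho$-propagation. Passing to norm closures, this yields $\CC^*_{\dyn}(X\rtimes\Gamma) = \overline{\{T : T \text{ has finite } \rho\text{-propagation}\}}$ and $\CC^*_{\dyn,\q}(X\rtimes\Gamma) = \{T : T \text{ is } \rho\text{-quasi-local}\}$. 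Hence ``$P_X \in \CC^*_{\dyn}(X\rtimes\Gamma)$'' is exactly condition~(2) of the corollary, and ``$P_X \in \CC^*_{\dyn,\q}(X\rtimes\Gamma)$'' is exactly condition~(1).

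It then only remains to observe that $P_X$, the averaging projection on the probability space $(X,\mu)$, is literally the averaging projection $P_{\G}$ of Definition~\ref{defn:ave proj} for the groupoid $\G = X \rtimes \Gamma$ with unit space $\Gz = X$ and measure $\mu$; there is nothing to check here since the two definitions coincide verbatim. Applying Theorem~\ref{thm:main thm ave proj} to $\G = X \rtimes \Gamma$, the three conditions (i) $P_{\G} \in \CC^*_{\dyn}(\G)$, (ii) $P_{\G} \in \CC^*_{\dyn,\q}(\G)$, (iii) $\G$ asymptotically expanding in measure $\mu$, are equivalent; translating via the three equivalences above gives exactly (2) $\Leftrightarrow$ (1) $\Leftrightarrow$ (3) of the corollary.

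I do not anticipate a genuine obstacle here, since all the work has been front-loaded into Theorem~\ref{thm:main thm ave proj} and the translation Propositions~\ref{prop:group action expansion coincidence} and~\ref{prop:group action ppg and quasi-local coincidence}; the corollary is a bookkeeping exercise. The only mild point of care is to make sure the quantifier structure lines up: ``finite $\rho$-propagation'' should mean that a single ball $B_L$ witnesses the support condition, matching $(N,L)$-dynamical propagation with $N = |B_L|$, and the $\rho$-quasi-local condition should allow $L_\epsilon$ to grow with $\epsilon$, matching the $\epsilon$-dependence in Definition~\ref{dynamical quasi local}; both of these are precisely the content of Proposition~\ref{prop:group action ppg and quasi-local coincidence}, so invoking it closes the gap. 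One should also note that the hypothesis ``$\mu$ a probability measure'' is what licenses the use of Theorem~\ref{thm:main thm ave proj} rather than its general-measure variant Theorem~\ref{thm:main thm ave proj general}, and this is exactly the finiteness assumption in the statement of the corollary.
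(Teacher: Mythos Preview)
Your proposal is correct and matches the paper's approach exactly: the corollary is deduced from Theorem~\ref{thm:main thm ave proj} applied to the transformation groupoid $X\rtimes\Gamma$, with Propositions~\ref{prop:group action expansion coincidence} and~\ref{prop:group action ppg and quasi-local coincidence} providing the dictionary between the groupoid-theoretic and action-theoretic formulations of conditions (1)--(3). The paper in fact gives no explicit proof beyond the introductory sentence ``Consequently, our main result Theorem~\ref{thm:main thm ave proj} recovers \ldots'', so your careful unpacking of the translation is more detailed than what the paper provides.
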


Furthermore, we have the following generalised version from Theorem \ref{thm:main thm ave proj general}, which deals with an arbitrary rank-one projection:

\begin{cor}\label{cor:main thm ave proj general gorup action}
Let $\rho:\Gamma\curvearrowright X$ be a countable group $\Gamma$ acting on a (not necessarily finite) measure space $(X,\mu)$ and assume that $\rho$ is measure-class-preserving. Then for any rank-one projection $P \in \B(L^2(X,\mu))$, we have $P$ is $\rho$-quasi-local \emph{if and only if} $P$ is a norm limit of operators in $B(L^2(X,\mu))$ with finite $\rho$-propagations.
\end{cor}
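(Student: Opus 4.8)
The plan is to deduce Corollary~\ref{cor:main thm ave proj general gorup action} (the rank-one version for group actions) directly from Theorem~\ref{thm:main thm ave proj general}, which is already available to us. The only work is to translate the groupoid-theoretic notions back into the action-theoretic ones, and this translation is exactly what Proposition~\ref{prop:group action ppg and quasi-local coincidence} supplies. So the proof is essentially a dictionary argument.

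First I would set up the transformation groupoid $\G \coloneqq X \rtimes \Gamma$ with the length function $\ell(x,\gamma) \coloneqq \ell_\Gamma(\gamma)$ and the measure $\mu$ on $X = \Gz$, as in Section~\ref{Group action}. A rank-one projection $P$ on $L^2(X,\mu)$ is literally a rank-one projection on $L^2(\Gz,\mu)$, so Theorem~\ref{thm:main thm ave proj general} applies verbatim and gives the equivalence of ``$P \in \CC^*_{\dyn}(\G)$'', ``$P \in \CC^*_{\dyn,\q}(\G)$'', and ``$\G$ is asymptotically expanding in the associated measure $\nu$.'' In particular the first two are equivalent, which is the content we want; the third (asymptotic expansion in $\nu$) is not even mentioned in the statement of this corollary, so we do not need to unwind it, though by Proposition~\ref{prop:group action expansion coincidence} it coincides with asymptotic expansion of the action in measure $\nu$.

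Next I would invoke Proposition~\ref{prop:group action ppg and quasi-local coincidence} to identify the two groupoid conditions with their action analogues: an operator on $L^2(X,\mu)$ has finite dynamical propagation (in the groupoid sense) if and only if it has finite $\rho$-propagation in the sense of \cite{li2022markovianroealgebraicapproachasymptotic}, and it is dynamically quasi-local if and only if it is $\rho$-quasi-local. Consequently $\CC^*_{\dyn}(\G)$ is precisely the norm closure of operators with finite $\rho$-propagation, and $\CC^*_{\dyn,\q}(\G)$ is precisely the algebra of $\rho$-quasi-local operators. Substituting these identifications into the equivalence ``(1)$\Leftrightarrow$(2)'' of Theorem~\ref{thm:main thm ave proj general} yields exactly the assertion: $P$ is $\rho$-quasi-local if and only if $P$ is a norm limit of operators with finite $\rho$-propagation. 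The key point making Proposition~\ref{prop:group action ppg and quasi-local coincidence} applicable is Lemma~\ref{lem:dec sets for gorup actions}, which says every decomposable subset of $X \rtimes \Gamma$ sits inside some $X \times B_L$ and conversely each $X \times B_L$ is decomposable with a canonical unital symmetric decomposition; so ``there exists a decomposable $K$ with $\ell(K)\le L$'' is interchangeable with ``there exists $L>0$'' in the propagation/quasi-locality conditions.

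There is essentially no obstacle here — the corollary is a formal consequence of the machinery already in place. The one place that would require a sentence of care is making sure that the measure-class-preserving hypothesis on $\rho$ is used in the right spot: it is what guarantees that each $X \times \{\gamma\}$ is an admissible bisection (so that $X \times B_L$ is genuinely decomposable in the sense of Definition~\ref{decomposable}), and it is implicitly used when passing to the associated measure $\nu$ via Lemma~\ref{lem:reduction Q} and Lemma~\ref{lem:equiv measure quasi-local and ppg} inside the proof of Theorem~\ref{thm:main thm ave proj general}. With that noted, the proof is just: apply Theorem~\ref{thm:main thm ave proj general}, then translate via Propositions~\ref{prop:group action expansion coincidence} and~\ref{prop:group action ppg and quasi-local coincidence}.
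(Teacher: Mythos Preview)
Your proposal is correct and matches the paper's approach exactly: the paper simply states this corollary as a direct consequence of Theorem~\ref{thm:main thm ave proj general} together with the dictionary provided by Proposition~\ref{prop:group action ppg and quasi-local coincidence} (and implicitly Lemma~\ref{lem:dec sets for gorup actions}), without writing out a separate proof. Your write-up just makes explicit the translation steps the paper leaves to the reader.
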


\subsection{Pair groupoids: a family version}\label{main thm for Uniform Roe algebra} 

Now we consider pair groupoids, and aim to recover the results for uniform Roe and quasi-local algebras of metric spaces. 

Firstly, we recall basic notions from coarse geometry and associated $C^*$-algebras.
Let $(X,d)$ be a discrete metric space, $x\in X$ and $R>0$. Denote $B(x,R)$ the \emph{closed ball} with radius $R$ and centre $x$. We say that $(X,d)$ has \emph{bounded geometry} if $\sup_{x\in X} |B(x,R)|$ is finite for each $R>0$. 
More generally, a sequence of metric spaces $\{(X_n,d_n)\}_{n\in \N}$ is said to have \emph{uniformly bounded geometry} if for all $R>0$, the number $\sup_{n\in \N}\sup_{x\in X_n}|B(x,R)|$ is finite. 

\begin{defn}\label{coarse disjoint union}
Let $\{(X_n,d_n)\}_{n\in \mathbb{N}}$ be a sequence of finite metric spaces. A \emph{coarse disjoint union} of $\{(X_n,d_n)\}_{n\in \mathbb{N}}$ is a metric space $(X,d)$, where $X=\bigsqcup_{n\in \mathbb{N}} X_n$ is the disjoint union of $\{X_n\}$ as a set and $d$ is a metric on $X$ such that
\begin{itemize}
    \item the restriction of $d$ on each $X_n$ coincides with $d_n$;
    \item $d(X_n,X_m)\to \infty$ as $n+m\to \infty$ and $n\neq m$.
\end{itemize}
\end{defn}


\begin{defn}\label{define uniform Roe}
Let $(X,d)$ be a discrete metric space with bounded geometry. For an operator $T\in \B(\ell^2(X))$, we say that
\begin{enumerate}
 \item $T$ has finite propagation if there exists $R> 0$ such that for any $A,B\subseteq X$ with $d(A,B)> R$, we have that $\chi_A T\chi_B=0$;
 \item $T$ is \emph{quasi-local} if for any $\epsilon> 0$ there exists $R> 0$ such that for any $A,B\subseteq X$ with $d(A,B)> R$, we have $\|\chi_A T\chi_B\|< \epsilon$.
\end{enumerate}
\end{defn}

The set of all finite propagation operators in $\B(\ell^2(X))$ forms a $\ast$-algebra, called the \emph{algebraic uniform Roe algebra} and denoted by $\C_{\u}[X]$. The \emph{uniform Roe algebra of $X$} is defined to be the operator norm closure of $\C_{\u}[X]$ in $\B(\ell^2(X))$, which is a $C^{\ast}$-algebra and denoted by $\CC^{\ast}_{\u} (X)$. The set of all quasi-local operators in $\B(\ell^2(X))$ forms a $C^{\ast}$-algebra, called the \emph{uniform quasi-local algebra of $X$} and denoted by $\CC^{\ast}_{\uq} (X)$.

\bigskip


In the following, we focus on a coarse disjoint union $(X,d)$ of a sequence of finite metric spaces $\{(X_n,d_n)\}_{n\in \mathbb{N}}$ with uniformly bounded geometry. We will construct a groupoid structure on $X_n$ and apply our main results to study the uniform Roe and quasi-local algebras of $(X,d)$.

To start, for each $n\in \N$ we consider the pair groupoid $X_n \times X_n$ with $(x,y) \cdot (y,z) = (x,z)$, $(x,y)^{-1} = (y,x)$ for $x,y,z \in X_n$. Its unit space can be identified with $X_n$ with source and range maps given by $\s(x,y)=y$ and $\r(x,y) = x$ for $x,y\in X_n$. Moreover, let $\mu_n$ be a finite measure on $(X_n,\RR_n)$ for $\RR_n=\mathcal{P}(X_n)$
and consider the length function $\ell_n$ on $\G_n$ defined by $\ell_n(x,y) \coloneqq d_n(x,y)$ for $x,y\in X_n$. For $L>0$, we denote
\[
E^{(n)}_L\coloneqq \{(x,y) \in X_n \times X_n ~|~ \ell_n(x,y) \leq L\}.
\]

It is clear that any bisection in $\G_n$ is admissible and moreover, we have the following. This is a well-known fact in coarse geometry (see, \emph{e.g.}, \cite[Lemma 12.2.3]{willett2020higher}) due to the assumption that the sequence has uniformly bounded geometry, and hence we omit the proof.

\begin{lem}\label{lem:dec sets for pair groupoids}
For any $L>0$, there exists $N_L \in \N$ such that for any $n\in \N$, the set $E^{(n)}_L$ is unital symmetric and $N_L$-decomposable. Conversely, for any $n\in \N$ and decomposable $K \subseteq \G_n$, we have $K \subseteq E^{(n)}_{\ell(K)}$.
\end{lem}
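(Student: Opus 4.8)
The plan is to view $E^{(n)}_L$ as the edge set of a graph of bounded degree and to decompose it into bisections by an edge‑colouring argument, the crucial point being that the number of pieces is controlled by the uniformly bounded geometry and is therefore \emph{independent of $n$}.

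I would first dispose of the ``conversely'' clause, since it is immediate from the definitions: if $K=\bigcup_{i=1}^N K_i$ is decomposable with each $K_i$ an admissible bisection, then $\ell(K)=\max_i\ell(K_i)<\infty$ and every $(x,y)\in K$ satisfies $d_n(x,y)=\ell_n(x,y)\le\ell(K)$, so $K\subseteq E^{(n)}_{\ell(K)}$.

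For the main statement, set $D_L\coloneqq\sup_{n\in\N}\sup_{x\in X_n}|B(x,L)|$, which is finite by uniformly bounded geometry. For fixed $n$, form the undirected graph $G_n$ on $X_n$ with an edge $\{x,y\}$ precisely when $0<d_n(x,y)\le L$; its maximum degree is at most $D_L-1$, because the $G_n$‑neighbours of $x$ all lie in $B(x,L)\setminus\{x\}$. By Vizing's theorem (a crude greedy edge‑colouring would also suffice), the edges of $G_n$ split into at most $D_L$ matchings $M_1,\dots,M_{k_n}$ with $k_n\le D_L$. For a matching $M$ the symmetric set $\widetilde M\coloneqq\{(x,y)\mid\{x,y\}\in M\}$ is a bisection of $\G_n$: both $\s$ and $\r$ restrict injectively to it since $M$ covers each vertex at most once, and clearly $\widetilde M^{-1}=\widetilde M$ and $\ell(\widetilde M)\le L$; being a bisection it is admissible by the observation preceding the lemma. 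Adjoining the diagonal then yields
\[
E^{(n)}_L=\Gz\ \sqcup\ \widetilde{M_1}\ \sqcup\ \cdots\ \sqcup\ \widetilde{M_{k_n}},
\]
a decomposition into admissible bisections which is unital (it contains $\Gz$) and symmetric (each piece is self‑inverse, so one may take $\sigma=\mathrm{id}$); padding with copies of $\Gz$ if $k_n<D_L$ makes this an $N_L$‑decomposition with $N_L\coloneqq D_L+1$ independent of $n$, and of course $\ell(E^{(n)}_L)\le L$.

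The only genuine issue is this uniformity of $N_L$ in $n$, and that is exactly what uniformly bounded geometry provides: the chromatic‑index bound depends only on the maximum degree, which is $\le D_L-1$ for every $n$ simultaneously. Everything else is routine bookkeeping with the pair‑groupoid structure, and indeed this type of decomposition is classical in coarse geometry; cf.\ \cite[Lemma 12.2.3]{willett2020higher}.
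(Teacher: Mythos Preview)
Your argument is correct; the paper itself omits the proof, merely invoking uniformly bounded geometry and citing \cite[Lemma 12.2.3]{willett2020higher}, and your edge-colouring decomposition is precisely one of the standard ways to establish that result. The choice to symmetrise each matching so that every piece is self-inverse is a tidy touch, since it makes the decomposition symmetric with $\sigma=\mathrm{id}$ and no further bookkeeping is needed.
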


Hence it suffices to consider decomposable subsets of the form $E^{(n)}_L$. For $A \subseteq X_n$, note that $\r(E^{(n)}_L \cdot A) \setminus A=\partial_{L} A$, where $\partial_{L} A\coloneqq \{x\in X_n \setminus A ~|~ d_n(x, A) \leq L\}$.
Therefore, we can use a family version of Definition \ref{asymptotically expanding} to recover\footnote{Note that although we define the notion of (asymptotic) expansion in the setting of probability measure in Definition \ref{expanding} and \ref{asymptotically expanding}, however, these can be naturally extended to all finite measure spaces as mentioned at the beginning of Section \ref{sec:asymptotic expansion}.} the notion of measured asymptotic expanders introduced in \cite{asymptoticexpansionandstrongergodicity}:

\begin{prop}\label{prop:pair groupoids expansion coincidence}
In the above setting, the following are equivalent:
\begin{enumerate}
 \item there exist functions $\ubar{C}, \ubar{N}, \ubar{L}: (0,\frac{1}{2}] \to (0,\infty)$ such that for all $n\in \N$, $\G_n$ is asymptotically expanding with parameters $\ubar{C}, \ubar{N}, \ubar{L}$;
 \item $\{(X_n, d_n, \mu_n)\}_{n\in \N}$ forms a sequence of measured asymptotic expanders\footnote{When $\mu_n$ is the counting measure on $X_n$, this recovers the class notion of expanders.} in the sense of \cite[Definition 6.1]{asymptoticexpansionandstrongergodicity}, \emph{i.e.}, for any $\alpha \in (0,\frac{1}{2}]$ there exist $C_\alpha>0$ and $L_\alpha>0$ such that for any $n\in \N$ and any $A_n \subseteq X_n$ with $\alpha\mu(X_n) \leq \mu_n(A_n) \leq \frac{1}{2}\mu_n(X_n)$, then $\mu_n(\partial_{L_\alpha} A_n) > C_\alpha \mu_n(A_n)$. 
\end{enumerate}
\end{prop}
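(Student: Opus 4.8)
The plan is to unwind both conditions using the dictionary established in Lemma \ref{lem:dec sets for pair groupoids}, which says that the only decomposable subsets of the pair groupoid $\G_n = X_n \times X_n$ that matter (up to enlarging the decomposition constant) are the ``metric balls'' $E^{(n)}_L$, and moreover that the decomposition number of $E^{(n)}_L$ is bounded by a constant $N_L$ depending only on $L$ (thanks to uniformly bounded geometry). The crucial combinatorial identity, already recorded in the excerpt, is that for $A \subseteq X_n$ one has $\r(E^{(n)}_L \cdot A) \setminus A = \partial_L A$, so that the defining inequality ``$\mu_n(\r(K \cdot A)\setminus A) > C\,\mu_n(A)$'' of Definition \ref{asymptotically expanding} becomes verbatim ``$\mu_n(\partial_L A) > C\,\mu_n(A)$'', which is exactly the inequality in the definition of a sequence of measured asymptotic expanders. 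Once this translation is in place the proposition is essentially a bookkeeping exercise about the parameter functions.

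For the direction (1) $\Rightarrow$ (2), I would start from the uniform expansion parameters $\ubar C, \ubar N, \ubar L : (0,\tfrac12] \to (0,\infty)$ for all the $\G_n$. Fix $\alpha \in (0,\tfrac12]$. For each $n$ we are handed a unital symmetric $\ubar N(\alpha)$-decomposable $K_n \subseteq \G_n$ with $\ell_n(K_n) \le \ubar L(\alpha)$ such that $\mu_n(\r(K_n\cdot A)\setminus A) > \ubar C(\alpha)\,\mu_n(A)$ whenever $\alpha\mu_n(X_n) \le \mu_n(A) \le \tfrac12\mu_n(X_n)$. By the converse part of Lemma \ref{lem:dec sets for pair groupoids}, $K_n \subseteq E^{(n)}_{\ubar L(\alpha)}$, hence $\r(K_n\cdot A) \subseteq \r(E^{(n)}_{\ubar L(\alpha)}\cdot A) = A \cup \partial_{\ubar L(\alpha)}A$, so $\mu_n(\partial_{\ubar L(\alpha)}A) \ge \mu_n(\r(K_n\cdot A)\setminus A) > \ubar C(\alpha)\,\mu_n(A)$. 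Thus the choice $C_\alpha \coloneqq \ubar C(\alpha)$ and $L_\alpha \coloneqq \ubar L(\alpha)$ witnesses that $\{(X_n,d_n,\mu_n)\}$ is a sequence of measured asymptotic expanders in the sense of \cite[Definition 6.1]{asymptoticexpansionandstrongergodicity}.

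For the direction (2) $\Rightarrow$ (1), given $\alpha \in (0,\tfrac12]$ take the $C_\alpha, L_\alpha$ supplied by the measured-expander hypothesis and simply set $K_n \coloneqq E^{(n)}_{L_\alpha}$. By the forward part of Lemma \ref{lem:dec sets for pair groupoids}, there is $N_{L_\alpha} \in \N$ (independent of $n$) such that each $E^{(n)}_{L_\alpha}$ is unital symmetric and $N_{L_\alpha}$-decomposable, with $\ell_n(K_n) \le L_\alpha$. For any $A \subseteq X_n$ with $\alpha\mu_n(X_n) \le \mu_n(A) \le \tfrac12\mu_n(X_n)$ we have $\r(K_n\cdot A)\setminus A = \partial_{L_\alpha}A$, so the hypothesis gives $\mu_n(\r(K_n\cdot A)\setminus A) = \mu_n(\partial_{L_\alpha}A) > C_\alpha\,\mu_n(A)$. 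Hence the functions $\ubar C(\alpha) \coloneqq C_\alpha$, $\ubar N(\alpha) \coloneqq N_{L_\alpha}$, $\ubar L(\alpha) \coloneqq L_\alpha$ serve as uniform expansion parameters for all $\G_n$. The final sentence of the statement, about the notions of expansion and domain of (asymptotic) expansion, follows by the same argument with the quantifier over $\alpha$ removed (for plain expansion) or with $X_n$ replaced by a reduction $\G_n|_Y^Y$ (for domains), using Remark \ref{rem:reduction for domain}; I would state this briefly rather than repeat the computation.

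The one genuine point requiring care — the ``main obstacle'' — is the uniformity of the decomposition number: in the groupoid formulation the number $N$ of admissible bisections needed to cover $K_n$ is part of the data, and if it grew with $n$ the two notions would genuinely differ. This is precisely where uniformly bounded geometry of $\{(X_n,d_n)\}$ enters, via Lemma \ref{lem:dec sets for pair groupoids} (the analogue of \cite[Lemma 12.2.3]{willett2020higher}): a ball of radius $L$ meets at most $\sup_n\sup_{x\in X_n}|B(x,L)|$ points, so $E^{(n)}_L$ is a union of that many graphs of partial bijections, uniformly in $n$. Everything else is a direct translation through the identity $\r(E^{(n)}_L\cdot A)\setminus A = \partial_L A$.
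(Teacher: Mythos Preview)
Your proposal is correct and follows exactly the approach the paper intends: the paper does not give a formal proof of this proposition but simply records the identity $\r(E^{(n)}_L\cdot A)\setminus A=\partial_L A$ and invokes Lemma \ref{lem:dec sets for pair groupoids}, leaving the bookkeeping implicit. Your write-up spells out precisely that bookkeeping, including the one nontrivial point (uniform bounded geometry gives a uniform $N_L$ via Lemma \ref{lem:dec sets for pair groupoids}), so it matches the paper's argument in full.
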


Consequently, Theorem \ref{thm:Markov structure theorem} and Remark \ref{rem:dependence 2} recover \cite[Theorem 4.15]{faucris.282438222}.

\bigskip

Now we move to consider finite propagation operators and quasi-local operators. Here we choose $\mu_n$ to be the counting measure on $X_n$. 
From the discussions at the end of Section \ref{ssec:basic notions}, $\G_n$ give rise to a single groupoid $\G = \bigsqcup_{n\in \N} \G_n$ together with a length function $\ell$ and a measure $\mu$ on $\Gz=X$. 
Combining Lemma \ref{family of quasi local and quasi local} and Lemma \ref{lem:dec sets for pair groupoids}, it is easy to see:


%
%


\begin{cor}
Given $T \in \B(\ell^2(X))$, we have $T \in \CC^*_{\dyn,\q}(\G)$ \emph{if and only if} there exists $T_n \in \B(\ell^2(X_n))$ for each $n\in \N$ with $\sup_{n\in \N} \|T_n\| < \infty$ such that $T = (\SOT)-\sum_{n \in \N} T_n$ and for any $\epsilon > 0$ there exists $L>0$ satisfying: for any $n\in \N$ and $A_n,B_n \subseteq X_n$ with $d_n(A_n, B_n) > L$, we have $\|\chi_{A_n} T_n\chi_{B_n}\| < \epsilon$. A similar result holds for $\CC^*_{\dyn}(\G)$.
\end{cor}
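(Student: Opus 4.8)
The plan is to deduce this corollary directly from the general machinery already set up, specifically Lemma \ref{family of quasi local and quasi local} combined with Lemma \ref{lem:dec sets for pair groupoids}, so that no new ideas are needed beyond a careful translation of the decomposability statement into metric language.

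First I would recall that by construction $\G = \bigsqcup_{n\in\N}\G_n$ with $\G_n = X_n\times X_n$ the pair groupoid, $\Gz = X = \bigsqcup_n X_n$, the measure $\mu$ restricting to the counting measure $\mu_n$ on each $X_n$, and $\ell$ restricting to $\ell_n(x,y)=d_n(x,y)$. So $L^2(\Gz,\mu)=\ell^2(X)=\bigoplus_n \ell^2(X_n)$. Then I would invoke Lemma \ref{family of quasi local and quasi local}(2): $T\in\CC^*_{\dyn,\q}(\G)$ if and only if $T$ is diagonal, $T=(\SOT)\text{-}\sum_n T_n$ with $\sup_n\|T_n\|<\infty$, and for every $\epsilon>0$ there exist $N,L>0$ such that for every $n$ there is a unital symmetric $N$-decomposable $K_n\subseteq\G_n$ with $\ell_n(K_n)\leq L$ for which $\mu_n(\r(K_n\cdot A)\cap B)=0$ forces $\|\chi_A T_n\chi_B\|<\epsilon$ for all $A,B\subseteq X_n$.

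Next I would unwind what the condition on $K_n$ says in metric terms. For the forward direction, given the data from Lemma \ref{family of quasi local and quasi local}, note that since $K_n\subseteq\G_n$ is decomposable with $\ell_n(K_n)\leq L$, Lemma \ref{lem:dec sets for pair groupoids} gives $K_n\subseteq E^{(n)}_L$; hence if $d_n(A_n,B_n)>L$ then $\r(K_n\cdot A_n)\cap B_n\subseteq\r(E^{(n)}_L\cdot A_n)\cap B_n=\partial_L A_n\cap B_n=\emptyset$, so in particular $\mu_n(\r(K_n\cdot A_n)\cap B_n)=0$ and thus $\|\chi_{A_n}T_n\chi_{B_n}\|<\epsilon$. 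For the converse, given the metric quasi-locality with constant $L$ (for the chosen $\epsilon$), take $K_n\coloneqq E^{(n)}_L$: Lemma \ref{lem:dec sets for pair groupoids} provides $N_L\in\N$, independent of $n$, such that each $E^{(n)}_L$ is unital symmetric and $N_L$-decomposable with $\ell_n(E^{(n)}_L)\leq L$; and if $\mu_n(\r(E^{(n)}_L\cdot A)\cap B)=0$ then $\partial_L A\cap B=\emptyset$, which together with the (harmless) assumption $A\cap B=\emptyset$ — obtained by splitting $B$, or simply noting $\r(E^{(n)}_L\cdot A)\supseteq A$ since $E^{(n)}_L$ is unital so $A\cap B=\emptyset$ automatically — forces $d_n(A,B)>L$, hence $\|\chi_A T_n\chi_B\|<\epsilon$. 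The same argument with a fixed $L$ (not depending on $\epsilon$) handles the parenthetical claim about $\CC^*_{\dyn}(\G)$ via Lemma \ref{family of quasi local and quasi local}(1).

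I do not expect any serious obstacle here; the corollary is essentially a dictionary entry. The only mildly delicate point is the bookkeeping in the converse direction: one must observe that $\r(E^{(n)}_L\cdot A)\cap B=\emptyset$ combined with $E^{(n)}_L$ being unital (so $A\subseteq\r(E^{(n)}_L\cdot A)$) already gives $A\cap B=\emptyset$, and then $\r(E^{(n)}_L\cdot A)=A\cup\partial_L A$ so that $(\r(E^{(n)}_L\cdot A))\cap B=\emptyset$ is equivalent to $d_n(A,B)>L$; and one must note that the integer $N_L$ from Lemma \ref{lem:dec sets for pair groupoids} is uniform in $n$, which is exactly where uniformly bounded geometry of $\{(X_n,d_n)\}$ enters. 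Since the proof is a direct combination of the two cited lemmas, I would simply write ``The proof combines Lemma \ref{family of quasi local and quasi local} with Lemma \ref{lem:dec sets for pair groupoids}, and is left to the reader'' or give the two-paragraph unwinding above, without further computation.
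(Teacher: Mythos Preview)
Your proposal is correct and matches the paper's approach exactly: the paper introduces this corollary with the single line ``Combining Lemma \ref{family of quasi local and quasi local} and Lemma \ref{lem:dec sets for pair groupoids}, it is easy to see:'' and gives no further argument. Your unwinding of the two directions, including the observation that $\mu_n$ being the counting measure turns ``measure zero'' into ``empty set'' and that the uniform bound $N_L$ from Lemma \ref{lem:dec sets for pair groupoids} is exactly what supplies the $N$ required by Lemma \ref{family of quasi local and quasi local}, is precisely what the omitted verification amounts to.
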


We aim to relate $\CC^*_{\dyn}(\G)$ to the uniform Roe algebra $C^*_{\u}(X)$, and $\CC^*_{\dyn,\q}(\G)$ to the uniform quasi-local algebra $C^*_{\uq} (X)$. To achieve, we combine Lemma \ref{family of quasi local and quasi local} and Proposition \ref{prop:pair groupoids expansion coincidence} with \cite[Lemma 3.13 and 3.14]{Bao2021StronglyQA} and reach the following:

\begin{prop}\label{prop:relation between uniform Roe algs}
With the same notation as above, we have
\begin{enumerate}
 \item $\CC^*_{\dyn}(\G) = \CC^*_{\u}(X) \cap \Pi_{n\in \N} \B(\ell^2(X_n))$; 
 \item$\CC^*_{\dyn}(\G) + \K(\ell^2(X)) = \CC^*_{\u}(X)$;
 \item $\CC^*_{\dyn,\q}(\G) = \CC^*_{\uq}(X) \cap \Pi_{n\in \N} \B(\ell^2(X_n))$; 
 \item $\CC^*_{\dyn,\q}(\G) + \K(\ell^2(X)) = \CC^*_{\uq}(X)$.
\end{enumerate}
\end{prop}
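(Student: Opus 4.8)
\textbf{Proof plan for Proposition \ref{prop:relation between uniform Roe algs}.}

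The plan is to establish all four identities by first treating the ``diagonal'' versions (1) and (3) directly from the definitions, and then deducing the ``modulo compacts'' versions (2) and (4) from them using a density/approximation argument together with the fact that any finite-propagation operator on the coarse disjoint union $X$ decomposes into a block-diagonal part lying in the corresponding dynamical algebra plus a finite-propagation part supported near the ``cross terms'' which is compact. Throughout I would use the translation, already recorded in Lemma \ref{lem:dec sets for pair groupoids}, that for the pair groupoid $\G_n = X_n \times X_n$ a subset $K \subseteq \G_n$ is decomposable (with $\ell(K) \le L$) exactly when $K \subseteq E^{(n)}_L$, so that the condition ``$\mu(\r(K \cdot A) \cap B) = 0$'' becomes simply ``$d_n(A, B) > L$'' (using that $\mu_n$ is the counting measure, so null means empty). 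Combined with Lemma \ref{family of quasi local and quasi local}, this shows that an operator $T \in \B(\ell^2(X))$ lies in $\CC^*_{\dyn}(\G)$ (resp.\ $\CC^*_{\dyn,\q}(\G)$) if and only if $T$ is block-diagonal, $T = \mathrm{(SOT)}\text{-}\sum_n T_n$ with $T_n \in \B(\ell^2(X_n))$ of uniformly bounded norm, and the $T_n$ have a \emph{common} propagation bound (resp.\ common quasi-locality parameters).

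For (1): if $T \in \CC^*_{\u}(X)$ and $T = \prod_n T_n$ is block-diagonal, then $T$ is a norm limit of finite-propagation operators $S^{(k)}$ on $X$; compressing each $S^{(k)}$ to the block diagonal $\bigoplus_n \chi_{X_n} S^{(k)} \chi_{X_n}$ keeps propagation bounded and still converges in norm to $T$, and for propagation $R$ a block-diagonal operator automatically has each block of propagation $\le R$ (this is where $d(X_n, X_m) \to \infty$ is irrelevant, only the restriction of $d$ to $X_n$ matters), so $T \in \CC^*_{\dyn}(\G)$ by the characterisation above. Conversely, an element of $\CC^*_{\dyn}(\G)$ is a norm limit of block-diagonal operators whose blocks share a propagation bound $L$; since $d(X_n, X_m) \to \infty$, for $n + m$ large enough $d(X_n, X_m) > L$, so such a block-diagonal operator \emph{is} itself a finite-propagation operator on $X$ of propagation $L$, hence lies in $\C_{\u}[X] \subseteq \CC^*_{\u}(X)$, and the limit lies in $\CC^*_{\u}(X) \cap \prod_n \B(\ell^2(X_n))$. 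The proof of (3) is identical with ``quasi-local'' replacing ``finite propagation'' and ``quasi-locality parameters'' replacing ``propagation bound'', invoking \cite[Lemma 3.13 and 3.14]{Bao2021StronglyQA} (or a direct argument) to see that compressing a quasi-local operator to the block diagonal preserves quasi-locality uniformly, and that a block-diagonal operator with uniform quasi-locality parameters is genuinely quasi-local on $X$ once $n + m$ is large.

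For (2) and (4): the inclusions $\supseteq$ follow from (1), (3) and the fact that $\CC^*_{\u}(X)$ and $\CC^*_{\uq}(X)$ both contain $\K(\ell^2(X))$ (they contain all finite-rank operators, being unital algebras containing rank-one projections onto $\delta_x$). For $\subseteq$, take $T \in \CC^*_{\u}(X)$; approximate $T$ in norm by a finite-propagation operator $S$ of propagation $R$, write $S = S_{\mathrm{diag}} + S_{\mathrm{off}}$ where $S_{\mathrm{diag}} = \bigoplus_n \chi_{X_n} S \chi_{X_n}$ and $S_{\mathrm{off}} = \sum_{n \ne m} \chi_{X_n} S \chi_{X_m}$; then $S_{\mathrm{diag}} \in \CC^*_{\dyn}(\G)$ by (1) (its blocks all have propagation $\le R$), while $S_{\mathrm{off}}$ has propagation $\le R$ and is supported on pairs $(x,y)$ with $x \in X_n$, $y \in X_m$, $n \ne m$, $d(x,y) \le R$, of which there are only finitely many because $d(X_n, X_m) > R$ for all but finitely many pairs and each $X_n$ is finite; hence $S_{\mathrm{off}}$ is finite-rank, so a fortiori compact. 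Thus $S \in \CC^*_{\dyn}(\G) + \K(\ell^2(X))$, and since the latter is a closed subspace (indeed a $C^*$-subalgebra, as $\CC^*_{\dyn}(\G)$ is a $C^*$-algebra by Lemma \ref{check of finite propagation algebra} and $\K$ is an ideal in $\B(\ell^2(X))$) containing a norm-dense subset of $\CC^*_{\u}(X)$, we get $T \in \CC^*_{\dyn}(\G) + \K(\ell^2(X))$. The argument for (4) is word-for-word the same, using that $\CC^*_{\uq}(X)$ is the norm closure of \dots\ well, here one must be slightly more careful since $\CC^*_{\uq}(X)$ is already norm-closed and not obviously densely spanned by nice operators; instead one notes that for $T \in \CC^*_{\uq}(X)$, the block-diagonal compression $T_{\mathrm{diag}}$ lies in $\CC^*_{\dyn,\q}(\G)$ by (3) and $T - T_{\mathrm{diag}} = \sum_{n\ne m}\chi_{X_n}T\chi_{X_m}$ is a quasi-local operator whose ``off-diagonal'' support, by quasi-locality, is approximable to within $\epsilon$ by an operator supported on $\{(x,y): d(x,y)\le R_\epsilon\}$ intersected with the off-diagonal blocks, hence finite-rank; so $T - T_{\mathrm{diag}} \in \K(\ell^2(X))$ and we are done.

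\textbf{Main obstacle.} The routine parts are the bookkeeping in (1) and (3). The one genuinely delicate point is verifying that compression to the block diagonal preserves quasi-locality \emph{uniformly in the blocks} --- i.e.\ that the quasi-locality parameters of the family $\{\chi_{X_n} T \chi_{X_n}\}_n$ can be taken independent of $n$ when $T$ is quasi-local on $X$ --- and the dual statement that $T - T_{\mathrm{diag}}$ is compact for quasi-local $T$ (as opposed to finite-propagation $T$, where it is transparently finite-rank). This is exactly the content that \cite[Lemma 3.13 and 3.14]{Bao2021StronglyQA} is cited for, so the ``hard part'' is really just correctly importing and applying those lemmas; once they are in hand the proposition follows formally.
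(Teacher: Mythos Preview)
Your proposal is correct and follows essentially the same approach as the paper: the paper does not give a detailed proof at all, but simply states that the proposition follows by combining Lemma~\ref{family of quasi local and quasi local} with \cite[Lemma 3.13 and 3.14]{Bao2021StronglyQA} (together with the translation of decomposability via Lemma~\ref{lem:dec sets for pair groupoids}), which are exactly the ingredients you invoke. Your write-up is in fact considerably more detailed than the paper's, and your identification of the ``main obstacle'' --- uniform quasi-locality of the block-diagonal compression and compactness of the off-diagonal part for quasi-local $T$ --- is precisely what those cited lemmas supply.
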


Finally, combining with Theorem \ref{cor:cor to thm general case prime} and Proposition \ref{prop:pair groupoids expansion coincidence}, we recover both \cite[Theorem C]{structure} and \cite[Theorem B]{faucris.282438222}:

\begin{cor}\label{cor:example Roe}
Let $\{(X_n, d_n)\}_{n\in \N}$ be a sequence of finite metric spaces with uniformly bounded geometry, and $(X,d)$ be their coarse disjoint union. For each $n\in \N$, let $P_n \in \B(\ell^2(X_n))$ be a rank-one projection and $\nu_n$ the associated probability measure on $X_n$. For $P=\mathrm{(SOT)}-\sum_{n\in \N} P_n \in \B(\ell^2(X))$, the following are equivalent:
\begin{enumerate}
 \item $P \in \CC^*_{\u}(X)$;
 \item $P \in \CC^*_{\uq}(X)$;
 \item $\{(X_n, d_n, \nu_n)\}_{n\in \N}$ forms a sequence of measured asymptotic expanders.
\end{enumerate}
\end{cor}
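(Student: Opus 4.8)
The plan is to deduce Corollary~\ref{cor:example Roe} from the family version Theorem~\ref{cor:cor to thm general case prime}, using the identifications established in this section between the groupoid-theoretic objects and the coarse-geometric ones. Concretely, form the groupoid $\G = \bigsqcup_{n\in\N} \G_n$ as above, where $\G_n = X_n \times X_n$ is the pair groupoid with length $\ell_n(x,y) = d_n(x,y)$ and $\mu_n$ the counting measure, so that $L^2(\Gz,\mu) = \ell^2(X)$. The projection $P = \mathrm{(SOT)}\text{-}\sum_n P_n$ is then exactly a direct sum of rank-one projections $P_n \in \B(L^2(\Gz_n,\mu_n))$, and each $\nu_n$ is the probability measure associated to $P_n$ in the sense preceding Theorem~\ref{thm:main thm ave proj general}.

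The argument then proceeds in three short steps. First, by Theorem~\ref{cor:cor to thm general case prime}, the conditions $P\in\CC^*_{\dyn}(\G)$, $P\in\CC^*_{\dyn,\q}(\G)$, and ``each $\G_n$ is asymptotically expanding in $\nu_n$ with uniform expansion parameters'' are all equivalent. Second, Proposition~\ref{prop:relation between uniform Roe algs}(1) gives $\CC^*_{\dyn}(\G) = \CC^*_\u(X) \cap \prod_{n\in\N}\B(\ell^2(X_n))$, and likewise part~(3) gives $\CC^*_{\dyn,\q}(\G) = \CC^*_{\uq}(X) \cap \prod_{n\in\N}\B(\ell^2(X_n))$; since $P$ is block-diagonal by construction, it lies in $\prod_{n\in\N}\B(\ell^2(X_n))$ automatically, so $P\in\CC^*_{\dyn}(\G) \iff P\in\CC^*_\u(X)$ and $P\in\CC^*_{\dyn,\q}(\G)\iff P\in\CC^*_{\uq}(X)$. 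This yields the equivalence of (1) and (2) with the uniform-expansion condition on the $\G_n$. Third, Proposition~\ref{prop:pair groupoids expansion coincidence} translates ``each $\G_n$ asymptotically expanding in $\nu_n$ with common parameters $\ubar C,\ubar N,\ubar L$'' into ``$\{(X_n,d_n,\nu_n)\}_{n\in\N}$ is a sequence of measured asymptotic expanders,'' which is condition (3). Chaining these equivalences closes the loop.

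The only genuine subtlety, and the step I would be most careful with, is making sure the notion of ``uniform expansion parameters'' in Theorem~\ref{cor:cor to thm general case prime}(3) matches verbatim the definition of a sequence of measured asymptotic expanders: Proposition~\ref{prop:pair groupoids expansion coincidence} is phrased precisely for this purpose (note the translation $\r(E^{(n)}_L\cdot A)\setminus A = \partial_L A$ and the fact that, by Lemma~\ref{lem:dec sets for pair groupoids}, the decomposability number $N_L$ of $E^{(n)}_L$ is bounded uniformly in $n$ thanks to uniformly bounded geometry), so the function $\ubar N$ can always be absorbed. Everything else is a direct citation, so there is essentially no computation to grind through.

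Thus the proof reduces to the sentence: combine Theorem~\ref{cor:cor to thm general case prime}, Proposition~\ref{prop:relation between uniform Roe algs}(1) and (3), and Proposition~\ref{prop:pair groupoids expansion coincidence}. I would write it out in roughly that form, spelling out only the block-diagonality of $P$ (so that the intersection with $\prod_n \B(\ell^2(X_n))$ is harmless) and the parameter-matching in the last step.
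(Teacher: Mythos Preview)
Your proposal is correct and follows exactly the paper's approach: the paper derives the corollary in one sentence by ``combining with Theorem~\ref{cor:cor to thm general case prime} and Proposition~\ref{prop:pair groupoids expansion coincidence},'' and your use of Proposition~\ref{prop:relation between uniform Roe algs}(1),(3) to pass between $\CC^*_{\dyn}(\G)$, $\CC^*_{\dyn,\q}(\G)$ and $\CC^*_\u(X)$, $\CC^*_{\uq}(X)$ (via block-diagonality of $P$) is precisely the missing link the paper leaves implicit. The extra care you take with the parameter-matching is appropriate and consistent with how the paper sets things up.
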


\subsection{Semi-direct product groupoids}\label{main thm for semi-direct product groupoid}

In this subsection, we consider groupoid actions on fibre spaces, generalising those in Section \ref{Group action}. Our idea is to package the information of groupoid actions into semi-direct product groupoids, which allows us to use our main results to study groupoid actions.


Let us start with some preliminaries for groupoid actions. A \emph{fibre space} over a set $X$ is a pair $(Y,p)$, where $Y$ is a set and $p\colon Y\rightarrow X$ is a surjective map. For two fibre spaces $(Y_1,p_1)$ and $(Y_2,p_2)$ over $X$, we form their \emph{fibred product} to be
\[
    Y_1\tensor*[_{p_1}]{\ast}{_{p_2}}Y_2 \coloneqq \{(y_1, y_2) \in Y_1 \times Y_2 ~|~ p_1(y_1) = p_2(y_2)\}.
\]


\begin{defn}
    Let $\G$ be a groupoid. A \textit{left $\G$-space} is a fibre space $(Y,p)$ over $\mathcal{G}^{(0)}$, equipped with a map $(\gamma,y)\mapsto \gamma y$ from $\G\tensor*[_{\s}]{\ast}{_p}Y$ to $Y$ (called a \emph{$\G$-action on $(Y,p)$}) which satisfies the following:
    \begin{itemize}
        \item $p(\gamma y) = \r(\gamma)$ for $(\gamma,y)\in \G\tensor*[_{\s}]{\ast}{_p}Y$, and $p(y)y = y$;
        \item $\gamma_{2}(\gamma_{1}y) = (\gamma_{2}\gamma_{1})y$ for $(\gamma_{1},y)\in \G\tensor*[_{\s}]{\ast}{_p}Y$ and $\s(\gamma_{2}) = \r(\gamma_{1})$.
    \end{itemize}
\end{defn}

Given a $\G$-space $(Y, p)$, the associated \textit{semi-direct product groupoid} $Y\rtimes\G$ is defined (as a set) to be $Y\tensor*[_{p}]{\ast}{_\r}\G$. For $(y,\gamma) \in Y\rtimes\G$, define its range to be $(y,\r(\gamma))$ and its source to be $(\gamma^{-1}y,\s(\gamma))$. The product and inverse are given by
\[
    (y,\gamma)(\gamma^{-1}y,\gamma')=(y,\gamma\gamma') \quad \text{and} \quad (y,\gamma)^{-1} = (\gamma^{-1}y,\gamma^{-1}).
\]
Clearly $(y,p(y))\mapsto y$ is a bijection from the unit space of $Y\rtimes\G$ onto $Y$. Hence from now on, we regard Y as the unit space of the groupoid $Y\rtimes\G$.

Given a bisection $K \subseteq \G$, we define the following map:
\begin{equation}\label{EQ:beta_K}
\beta_{K}: p^{-1}(\s(K))\rightarrow p^{-1}(\r(K)), \quad y \mapsto \gamma_y\cdot y \quad \text{for} \quad y \in p^{-1}(\s(K)),
\end{equation}
where $\gamma_y\in K$ is the unique point in $K$ such that $\s(\gamma_y)=p(y)$. The following is straightforward:

\begin{lem}\label{lem:semi direct groupoid bisection}
For a groupoid $\G$ acting on a fibre space $(Y,p)$ and a bisection $K \subseteq \G$, then $Y\tensor*[_{p}]{\ast}{_\r}K$ is a bisection in $Y\rtimes\G$. Moreover, we have $\tau_{Y\tensor*[_{p}]{\ast}{_\r}K} = \beta_K$, where $\tau_{Y\tensor*[_{p}]{\ast}{_\r}K}$ is defined in (\ref{EQ:alpha_K}) and $\beta_K$ in (\ref{EQ:beta_K}).
\end{lem}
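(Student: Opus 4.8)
The plan is to verify both assertions directly from the definitions, using the identification of the unit space of $Y\rtimes\G$ with $Y$, the formulas $\r(y,\gamma)=(y,\r(\gamma))$ and $\s(y,\gamma)=(\gamma^{-1}y,\s(\gamma))$, and the $\G$-action axioms $p(y)y=y$ and $\gamma_2(\gamma_1 y)=(\gamma_2\gamma_1)y$.

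First I would show that $S\coloneqq Y\tensor*[_{p}]{\ast}{_{\r}}K$ is a bisection, i.e. that $\r|_S$ and $\s|_S$ are injective. If $(y,\gamma),(y',\gamma')\in S$ satisfy $\r(y,\gamma)=\r(y',\gamma')$, then $y=y'$, hence $\r(\gamma)=p(y)=p(y')=\r(\gamma')$, so $\gamma=\gamma'$ because $\r|_K$ is injective. If instead $\s(y,\gamma)=\s(y',\gamma')$, i.e. $\gamma^{-1}y=(\gamma')^{-1}y'$ in $Y$, then applying $p$ to both sides gives $\s(\gamma)=\s(\gamma')$, hence $\gamma=\gamma'$ since $\s|_K$ is injective, and then acting by $\gamma$ and using $\gamma(\gamma^{-1}y)=(\gamma\gamma^{-1})y=\r(\gamma)y=y$ (and similarly for $y'$) yields $y=y'$.

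Next I would compute the source, range and the induced map $\tau_S$. From $p(y)=\r(\gamma)$ one gets $\r(S)\subseteq p^{-1}(\r(K))$, and from $p(\gamma^{-1}y)=\s(\gamma)$ one gets $\s(S)\subseteq p^{-1}(\s(K))$. Conversely, given $z\in p^{-1}(\s(K))$, let $\gamma\in K$ be the unique element with $\s(\gamma)=p(z)$ and set $y\coloneqq\gamma z$; then $p(y)=\r(\gamma)$, so $(y,\gamma)\in S$, while $\s(y,\gamma)=\gamma^{-1}(\gamma z)=(\gamma^{-1}\gamma)z=p(z)z=z$. This shows $\s(S)=p^{-1}(\s(K))$ and identifies $(\s|_S)^{-1}(z)=(\gamma z,\gamma)$, so $\tau_S(z)=\r(\gamma z,\gamma)=\gamma z$. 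Since this $\gamma$ is precisely the element $\gamma_z\in K$ selected in the definition of $\beta_K$, we conclude $\tau_S(z)=\gamma_z z=\beta_K(z)$ for every $z\in p^{-1}(\s(K))$, which is the asserted identity (and in particular $\tau_S$ and $\beta_K$ have the same domain $p^{-1}(\s(K))$ and range $p^{-1}(\r(K))$).

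No step here is genuinely hard; the only point that needs care is keeping the various identifications straight and making each use of the unit and associativity axioms of the $\G$-action explicit, as these are exactly what licenses cancellations such as $\gamma^{-1}(\gamma z)=z$. I would present the argument so that every such invocation is visible.
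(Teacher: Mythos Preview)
Your proof is correct and is exactly the kind of direct verification the paper has in mind; the paper itself omits the proof entirely, introducing the lemma with ``The following is straightforward.'' Your careful unpacking of the unit and associativity axioms to justify cancellations like $\gamma^{-1}(\gamma z)=z$ is appropriate and matches the intended argument.
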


Analogous to Definition \ref{admissible}, we introduce the following:

\begin{defn}\label{defn:dynamically admissible}
Let $\mathcal{G}$ be a groupoid with a length function $\ell$, and $(Y,p)$ be a $\G$-space equipped with a probability measure structure $(Y,\RR,\mu)$. A bisection $K \subseteq \G$ is called \emph{dynamically admissible} if $p^{-1}(\s(K)),p^{-1}(\r(K)) \in \RR$, the bijection $\beta_{K}$ from (\ref{EQ:beta_K}) is a measure-class-preserving measurable isomorphism and $\ell(K)$ is finite. 
\end{defn}

Now we aim to relate Definition \ref{defn:dynamically admissible} to the notion of admissible bisections for the semi-direct product groupoid $Y \rtimes \G$. Firstly, note that a length function $\ell$ on $\G$ gives rise to a length function $\hat{\ell}$ on $Y \rtimes \G$ by 
\begin{equation}\label{EQ:induced ell}
\hat{\ell}(y,\gamma) \coloneqq \ell(\gamma) \quad \text{for} \quad (y,\gamma)\in Y\rtimes \G.
\end{equation}
Then Lemma \ref{lem:semi direct groupoid bisection} implies the following:

\begin{lem}\label{lem:admissible equivalence between betaK and Yrtimes K}
In the setting above, 
a bisection $K\subseteq \G$ is dynamically admissible \emph{if and only if} $Y\tensor*[_{p}]{\ast}{_\r}K$ is admissible in $Y \rtimes \G$.
\end{lem}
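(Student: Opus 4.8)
The plan is to unwind both notions of admissibility to their three defining conditions and check they correspond term by term, using Lemma \ref{lem:semi direct groupoid bisection} to identify the induced bijections. First I would fix a bisection $K \subseteq \G$ and observe that $Y \tensor*[_{p}]{\ast}{_\r}K$ is automatically a bisection in $Y \rtimes \G$ by Lemma \ref{lem:semi direct groupoid bisection}, so this part costs nothing. Next I would compute its source and range: from the definition of the source and range maps on $Y \rtimes \G$, one has $\r(Y \tensor*[_{p}]{\ast}{_\r}K) = p^{-1}(\r(K))$ and $\s(Y \tensor*[_{p}]{\ast}{_\r}K) = p^{-1}(\s(K))$, so the measurability condition ``$p^{-1}(\s(K)), p^{-1}(\r(K)) \in \RR$'' in Definition \ref{defn:dynamically admissible} is literally the condition ``$\s(\cdot), \r(\cdot) \in \RR$'' from Definition \ref{admissible} applied to $Y \tensor*[_{p}]{\ast}{_\r}K$.

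For the middle condition, I would invoke the second assertion of Lemma \ref{lem:semi direct groupoid bisection}, namely $\tau_{Y\tensor*[_{p}]{\ast}{_\r}K} = \beta_K$. Thus ``$\beta_K$ is a measure-class-preserving measurable isomorphism'' (Definition \ref{defn:dynamically admissible}) is verbatim the statement ``$\tau_{Y\tensor*[_{p}]{\ast}{_\r}K}$ is a measure-class-preserving measurable isomorphism'' (Definition \ref{admissible}), once we recall that $Y$ carries the measure structure $(Y,\RR,\mu)$ serving as the unit space of $Y \rtimes \G$. For the length condition, I would use that $\hat\ell$ on $Y \rtimes \G$ is defined by $\hat\ell(y,\gamma) = \ell(\gamma)$ in \eqref{EQ:induced ell}, so that
\[
\hat\ell(Y\tensor*[_{p}]{\ast}{_\r}K) = \sup\{\hat\ell(y,\gamma) \mid (y,\gamma) \in Y\tensor*[_{p}]{\ast}{_\r}K\} = \sup\{\ell(\gamma) \mid \gamma \in K\} = \ell(K),
\]
so finiteness of $\ell(K)$ is equivalent to finiteness of $\hat\ell(Y\tensor*[_{p}]{\ast}{_\r}K)$. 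Assembling the three equivalences gives the claim.

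I do not expect a genuine obstacle here: the statement is essentially a bookkeeping exercise, and the only mildly delicate point is making sure the measure-class-preserving condition transfers cleanly, which is immediate once $\tau_{Y\tensor*[_{p}]{\ast}{_\r}K} = \beta_K$ is in hand from Lemma \ref{lem:semi direct groupoid bisection}. The one thing worth a sentence of care is that the surjectivity of $p$ (built into the definition of a fibre space) guarantees that $p^{-1}(\s(K))$ and $p^{-1}(\r(K))$ really are the source and range of $Y\tensor*[_{p}]{\ast}{_\r}K$ as subsets of the unit space $Y$, rather than proper subsets of some ambient product; but this too is routine. I would therefore write the proof as a short three-line verification and, as the excerpt does for similar lemmas, could even state that the proof is straightforward — though including the explicit term-by-term matching makes the correspondence transparent for the reader.
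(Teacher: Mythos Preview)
Your proposal is correct and matches the paper's approach exactly: the paper simply states that the lemma follows from Lemma~\ref{lem:semi direct groupoid bisection} without giving details, and your term-by-term verification of the three admissibility conditions (measurability of source/range, the identification $\tau_{Y\tensor*[_{p}]{\ast}{_\r}K} = \beta_K$, and the length computation via $\hat\ell$) is precisely the routine unpacking the paper leaves implicit.
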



Lemma \ref{lem:admissible equivalence between betaK and Yrtimes K} suggests us to consider the following specific family of admissible bisection in $Y \rtimes \G$:
\begin{equation}\label{EQ:semi-direct family}
\KK_{\G,Y}=\left\{Y \tensor*[_{p}]{\ast}{_\r} K\subseteq Y\rtimes \G~\big|~K\subseteq \G \text{ dynamically admissible}\right\}.
\end{equation}
It is clear that $\KK_{\G,Y}$ is closed under taking composition and inverse, and contains $Y \tensor*[_{p}]{\ast}{_\r} \Gz = Y$ since $\Gz$ is dynamically admissible. Applying the discussion at the end of Section \ref{sec:Main results}, we define $\KK_{\G,Y}$-decomposability for subsets in $Y \rtimes \G$. This can also be translated using the following notion of dynamical decomposability:

\begin{defn}\label{defn:dynamically decomposable}
Let $\mathcal{G}$ be a groupoid with a length function $\ell$, and $(Y,p)$ be a $\G$-space equipped with a probability measure structure $(Y,\RR,\mu)$. A subset $K \subseteq \G$ is called \emph{dynamically decomposable} if $K=\bigcup_{i=1}^{N} K_{i}$ for $N \in \N$ and dynamically admissible bisection $K_i \subseteq \G$.
\end{defn}

Directly from Lemma \ref{lem:admissible equivalence between betaK and Yrtimes K}, we have the following.

\begin{cor}\label{cor:decomposable equivalence between betaK and Yrtimes K}
Let $\mathcal{G}$ be a groupoid with a length function $\ell$, and $(Y,p)$ be a $\G$-space equipped with a probability measure structure $(Y,\RR,\mu)$. Equip $Y\rtimes \G$ with the length function $\hat{\ell}$ from (\ref{EQ:induced ell}). Then for $K\subseteq \G$, $K$ is dynamically decomposable \emph{if and only if} $Y\tensor*[_{p}]{\ast}{_\r}K\subseteq Y\rtimes \G$ is $\KK_{\G,Y}$-decomposable.
\end{cor}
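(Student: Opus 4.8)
The plan is to deduce this directly from Lemma \ref{lem:admissible equivalence between betaK and Yrtimes K}, together with two elementary observations about the assignment $K \mapsto Y \tensor*[_{p}]{\ast}{_\r} K$: that it is compatible with finite unions, and that it reflects inclusions. For the first, I would simply record the set-theoretic identity $Y \tensor*[_{p}]{\ast}{_\r} \big(\bigcup_{i=1}^N K_i\big) = \bigcup_{i=1}^N \big(Y \tensor*[_{p}]{\ast}{_\r} K_i\big)$, which is immediate from the definition of the fibred product. For the second, using that $p$ is surjective, I would check that for bisections $K, K' \subseteq \G$ one has $Y \tensor*[_{p}]{\ast}{_\r} K \subseteq Y \tensor*[_{p}]{\ast}{_\r} K'$ if and only if $K \subseteq K'$: the ``if'' direction is trivial, and for ``only if'', given $\gamma \in K$ one picks $y \in Y$ with $p(y) = \r(\gamma)$, so that $(y,\gamma) \in Y \tensor*[_{p}]{\ast}{_\r} K \subseteq Y \tensor*[_{p}]{\ast}{_\r} K'$, forcing $\gamma \in K'$. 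In particular $K \mapsto Y \tensor*[_{p}]{\ast}{_\r} K$ is injective.

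For the forward implication, suppose $K = \bigcup_{i=1}^N K_i$ with each $K_i$ dynamically admissible. Then $Y \tensor*[_{p}]{\ast}{_\r} K = \bigcup_{i=1}^N (Y \tensor*[_{p}]{\ast}{_\r} K_i)$ by the union identity, and by Lemma \ref{lem:admissible equivalence between betaK and Yrtimes K} each $Y \tensor*[_{p}]{\ast}{_\r} K_i$ is an admissible bisection in $Y \rtimes \G$ lying in $\KK_{\G,Y}$; hence $Y \tensor*[_{p}]{\ast}{_\r} K$ is $\KK_{\G,Y}$-decomposable.

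For the converse, suppose $Y \tensor*[_{p}]{\ast}{_\r} K = \bigcup_{i=1}^N L_i$ with $L_i \in \KK_{\G,Y}$. By the definition of $\KK_{\G,Y}$ in (\ref{EQ:semi-direct family}), each $L_i = Y \tensor*[_{p}]{\ast}{_\r} K_i$ for some dynamically admissible bisection $K_i \subseteq \G$. Applying the union identity again gives $Y \tensor*[_{p}]{\ast}{_\r} K = Y \tensor*[_{p}]{\ast}{_\r} \big(\bigcup_{i=1}^N K_i\big)$, and by the injectivity of $K \mapsto Y \tensor*[_{p}]{\ast}{_\r} K$ we conclude $K = \bigcup_{i=1}^N K_i$, exhibiting $K$ as dynamically decomposable.

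There is no real obstacle here: all the content sits in Lemma \ref{lem:admissible equivalence between betaK and Yrtimes K}, and the only point needing a moment's care is the use of surjectivity of $p$ to see that $K \mapsto Y \tensor*[_{p}]{\ast}{_\r} K$ reflects inclusions, which is precisely what allows one to transport a decomposition in $Y \rtimes \G$ back to one in $\G$.
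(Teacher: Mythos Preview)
Your proof is correct and follows the same route as the paper, which simply states that the corollary is immediate from Lemma~\ref{lem:admissible equivalence between betaK and Yrtimes K}; you have just spelled out the details (compatibility with unions and injectivity of $K \mapsto Y \tensor*[_{p}]{\ast}{_\r} K$ via surjectivity of $p$) that the paper leaves implicit. One minor remark: your inclusion-reflection argument does not actually use that $K, K'$ are bisections, so the restriction to bisections there is unnecessary---which is good, since you later apply injectivity to the arbitrary subset $K$ and the union $\bigcup_i K_i$.
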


Similar to Lemma \ref{basic assumprop2}, it is easy to see that for any dynamically decomposable $K\subseteq \G$ and measurable $A \subseteq Y$, then $K\cdot A$ is also measurable. Moreover, direct calculations show that 
\[
K\cdot A=\r\big((Y \tensor*[_{p}]{\ast}{_\r} K) \cdot A\big).
\]
Hence it turns out that the $\KK_{\G,Y}$-asymptotic expansion of the semi-direct groupoid $\G \rtimes Y$ is equivalent to the following:


 \begin{defn}\label{asymptotic expansion of groupoid action}
Let $\mathcal{G}$ be a groupoid with a length function $\ell$, and $(Y,p)$ be a $\G$-space equipped with a probability measure structure $(Y,\RR,\mu)$. We say that the $\G$-action on $(Y,p)$ is \emph{asymptotically expanding (in measure $\mu$)} if for any $\alpha\in (0,\frac{1}{2}]$ there exist $C_\alpha, N_\alpha, L_\alpha>0$ and a unital symmetric dynamically $N_\alpha$-decomposable subset $K_\alpha\subseteq \G$ with $\ell(K_\alpha)\leq L_\alpha$ such that for any $A \in \RR$ with $\alpha \leq \mu(A) \leq \frac{1}{2}$, then $\mu((K_\alpha\cdot A)\setminus A)>C_\alpha \mu(A)$. 
 \end{defn}


%
 
Combining Theorem \ref{cor:cor to thm general case pprime} with the discussions above, we reach the following:

\begin{cor}\label{main theorem applied on semi direct groupoid}
Let $\mathcal{G}$ be a groupoid with a length function $\ell$, and $(Y,p)$ be a $\G$-space equipped with a probability measure structure $(Y,\RR,\mu)$. Equip $Y\rtimes \G$ with the length function $\hat{\ell}$ from (\ref{EQ:induced ell}). Let $P$ be a rank-one projection in $\B(L^2(Y,\mu))$ and $\nu$ the associated measure to $P$ constructed in Section \ref{sec:Main results}, then the followings are equivalent:
\begin{enumerate}
	\item $P \in \CC^*_{\dyn}(Y\rtimes \mathcal{G},\KK_{\G,Y})$;
    \item $P \in \CC^*_{\dyn,\q}(Y\rtimes \mathcal{G},\KK_{\G,Y})$;
    \item $Y\rtimes \G$ is $\KK_{\G,Y}$-asymptotically expanding in measure $\nu$;
    \item The $\G$-action on $(Y,p)$ is asymptotically expanding in measure $\nu$.
\end{enumerate}
\end{cor}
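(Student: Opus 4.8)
The plan is to deduce the equivalence of (1), (2) and (3) directly from Theorem~\ref{cor:cor to thm general case pprime} applied to the family $\KK_{\G,Y}$ on the semi-direct product groupoid $Y\rtimes\G$, and then to establish (3)~$\Leftrightarrow$~(4) by a purely definitional translation. First I would check that $\KK_{\G,Y}$ from~(\ref{EQ:semi-direct family}) satisfies the standing hypotheses of Theorem~\ref{cor:cor to thm general case pprime} with respect to the length function $\hat\ell$: by Lemma~\ref{lem:admissible equivalence between betaK and Yrtimes K} every $Y\tensor*[_{p}]{\ast}{_\r}K$ with $K$ dynamically admissible is an admissible bisection of $Y\rtimes\G$; the family is closed under composition and inverse via $(Y\tensor*[_{p}]{\ast}{_\r}K)^{-1}=Y\tensor*[_{p}]{\ast}{_\r}K^{-1}$ and $(Y\tensor*[_{p}]{\ast}{_\r}K_1)\cdot(Y\tensor*[_{p}]{\ast}{_\r}K_2)=Y\tensor*[_{p}]{\ast}{_\r}(K_1\cdot K_2)$; and it contains the unit space $Y=Y\tensor*[_{p}]{\ast}{_\r}\Gz$. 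Since $\nu$ is a probability measure on $Y$, which is the unit space of $Y\rtimes\G$, Theorem~\ref{cor:cor to thm general case pprime} then yields (1)~$\Leftrightarrow$~(2)~$\Leftrightarrow$~(3).

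For (3)~$\Leftrightarrow$~(4) I would set up the dictionary between $\KK_{\G,Y}$-decomposable subsets of $Y\rtimes\G$ and dynamically decomposable subsets of $\G$. A $\KK_{\G,Y}$-decomposable set is $\bigcup_{i=1}^{N}(Y\tensor*[_{p}]{\ast}{_\r}K_i)=Y\tensor*[_{p}]{\ast}{_\r}\big(\bigcup_{i=1}^{N}K_i\big)$, hence of the form $Y\tensor*[_{p}]{\ast}{_\r}K$ for a dynamically decomposable $K$, and conversely; this is essentially Corollary~\ref{cor:decomposable equivalence between betaK and Yrtimes K}. Under this correspondence $\hat\ell(Y\tensor*[_{p}]{\ast}{_\r}K)=\ell(K)$ by~(\ref{EQ:induced ell}), and the number of pieces, unitality, and symmetry of a decomposition match on the two sides, again using $Y=Y\tensor*[_{p}]{\ast}{_\r}\Gz$ and $(Y\tensor*[_{p}]{\ast}{_\r}K)^{-1}=Y\tensor*[_{p}]{\ast}{_\r}K^{-1}$. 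Combined with the set-level identity $\r\big((Y\tensor*[_{p}]{\ast}{_\r}K)\cdot A\big)=K\cdot A$ recorded before Definition~\ref{asymptotic expansion of groupoid action}, the defining inequality of $\KK_{\G,Y}$-asymptotic expansion of $Y\rtimes\G$ in measure $\nu$ becomes, term for term, the defining inequality in Definition~\ref{asymptotic expansion of groupoid action} for the $\G$-action on $(Y,p)$ in measure $\nu$, with matching ranges of $\alpha$ and $A$; chasing the parameters $C_\alpha,N_\alpha,L_\alpha$ through this dictionary then gives the equivalence.

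I do not expect a genuine obstacle: the whole argument is a change of viewpoint, and the only thing requiring care is the bookkeeping, namely verifying that length, unitality and symmetry of decompositions are faithfully transported by $K\mapsto Y\tensor*[_{p}]{\ast}{_\r}K$ and that $\nu$ is interpreted consistently as a measure on the common space $Y$. The substantive inputs --- Lemma~\ref{lem:admissible equivalence between betaK and Yrtimes K}, Corollary~\ref{cor:decomposable equivalence between betaK and Yrtimes K}, the identity $\r((Y\tensor*[_{p}]{\ast}{_\r}K)\cdot A)=K\cdot A$, and Theorem~\ref{cor:cor to thm general case pprime} --- are all already in place, so the proof amounts to assembling these pieces.
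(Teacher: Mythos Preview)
Your proposal is correct and follows exactly the paper's approach: the paper simply states ``Combining Theorem~\ref{cor:cor to thm general case pprime} with the discussions above, we reach the following'', and your write-up is precisely a faithful unpacking of that combination, invoking Lemma~\ref{lem:admissible equivalence between betaK and Yrtimes K}, Corollary~\ref{cor:decomposable equivalence between betaK and Yrtimes K}, the identity $\r\big((Y\tensor*[_{p}]{\ast}{_\r}K)\cdot A\big)=K\cdot A$, and Theorem~\ref{cor:cor to thm general case pprime} in the same way the surrounding text intends.
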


\begin{rem}
When $\G$ is a group $G$ acting on a space $Y$, then the action is measure-class-preserving if and only if for any bisection $K \subseteq G$ (in this case, $K$ consists of a single point), $Y\tensor*[_{p}]{\ast}{_\r} K$ is admissible. 
Moreover, if $G$ is equipped with a proper length function, Lemma \ref{lem:dec sets for gorup actions} shows that when the action is measure-class-preserving then the family $\KK_{G,Y}$ is cofinal in the sense of Remark \ref{rem:cofinal family}. Hence combining Remark \ref{rem:cofinal family} with Proposition \ref{prop:group action ppg and quasi-local coincidence}, Corollary \ref{main theorem applied on semi direct groupoid} generalises Corollary \ref{cor:main thm ave proj general gorup action}.
\end{rem}

\subsection{The HLS groupoid and its variant}


Throughout this subsection, let $\Gamma$ be a finitely generated group and $\{N_i\}_{i\in \N}$ be a family of nested, finite index normal subgroups of $\Gamma$ with trivial intersection. For each $i \in \N$, denote the quotient map $\pi_i: \Gamma \to \Gamma/N_i$, and $\pi_\infty: \Gamma \to \Gamma$ the identity map.

The associated \emph{HLS groupoid} (after Higson, Lafforgue and Skandalis) $\G$ is defined to be
\[
\G\coloneqq\bigsqcup_{i\in \N \cup \{\infty\}} \{i\}\times X_i, \quad \text{where} \quad X_i=
\begin{cases}
\Gamma/N_i \quad &\text{if}\quad i\in \N;\\
\Gamma \quad &\text{if}\quad i=\infty.
\end{cases}
\]
Equipping $\G$ with the topology generated by the following sets:
\begin{itemize}
	\item the singletons $\{(i,g)\}$ for $i \in \N$ and $g\in \Gamma/N_i$;
	\item the tails $\{(i,\pi_i (g))~|~i\in \N \cup \{\infty\}, i>N\}$ for $g\in \Gamma$ and $N\in \N$.
\end{itemize}
then $\G$ becomes an \'{e}tale, locally compact and Hausdorff groupoid.

The HLS groupoid was introduced by Higson, Lafforgue and Skandalis in \cite{Higson2002} to provide counterexamples to the groupoid Baum-Connes conjecture. It also provides the first known example of a non-amenable groupoid with the weak containment property due to Willett \cite{Wil15}.

Since the HLS groupoid $\G$ is a bundle of groups, it cannot be asymptotically expanding for any given measure and length function. When regarding $\G$ as a family of groups, then trivially it is uniformly asymptotically expanding since each unit space consists of a single point.


Now we consider a variant of the HLS groupoid from \cite{Alekseev2017}, which provides the first known example of a principal non-amenable groupoid with the weak containment property. To recall their construction, we use the same notation as above and denote $\hat{\Gamma}\coloneqq \varprojlim \Gamma/N_i$ the profinite completion of $\Gamma$ with respect to the family $\{\Gamma/N_i\}_{i \in \N}$. 

For each $i \in \N$, let $\G^{\AFS}_i$ be the transformation groupoid $X_i \rtimes (\Gamma/N_i)$ with the action by left multiplication. For $i = \infty$, let $\G^{\AFS}_\infty$ be the transformation groupoid $\hat{\Gamma} \rtimes \Gamma$ with the natural free action. Then the groupoid constructed in \cite{Alekseev2017} is defined to be the disjoint union (thanks to \cite[Lemma 2.1]{Alekseev2017}):
\[
\G^{\AFS} \coloneqq \bigsqcup_{i \in \N \cup \{\infty\}} \G^{\AFS}_i.
\]

To consider asymptotic expansion, we need to endow measure and length function for each $\G^{\AFS}_i$. For $i\in \N$, we take the normalised counting measure $\mu_i$ on $\Gamma/N_i$. For $i =\infty$, we take the induced probability $\Gamma$-invariant measure $\mu_\infty$ on $\hat{\Gamma}$. On the other hand, fix a length function $\ell$ on $\Gamma$, which induces a quotient length function $\ell_i$ on $\Gamma/N_i$ for each $i \in \N$. Following Section \ref{Group action}, we obtain a length function on $\G^{\AFS}_i$ for each $i \in \N \cup \{\infty\}$. By the discussions in Section \ref{ssec:basic notions}, these can be combined to provide a measure and a length function for $\G^{\AFS}$. 

We have the following characterisation:

\begin{prop}\label{prop:AFS groupoid}
With the same notation as above, the following are equivalent:
\begin{enumerate}
 \item $\G^{\AFS}_i$ is asymptotically expanding in measure uniformly for $i \in \N \cup \{\infty\}$ (\emph{i.e.}, having same expansion parameters);
 \item the natural action of $\Gamma$ on $\hat{\Gamma}$ is asymptotically expanding in measure;
 \item the natural action of $\Gamma$ on $\hat{\Gamma}$ is expanding in measure;
 \item the sequence $\{\Gamma/N_i\}_{i\in \N}$ forms a sequence of asymptotic expander graphs;
 \item the sequence $\{\Gamma/N_i\}_{i\in \N}$ forms a sequence of expander graphs;
 \item $\G^{\AFS}_i$ is expanding in measure uniformly for $i \in \N \cup \{\infty\}$.
\end{enumerate}
\end{prop}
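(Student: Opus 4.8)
The plan is to prove the six equivalences by first collapsing the list into two blocks — a block about the fiber $i=\infty$ (items (2) and (3)) and a block about the finite fibers (items (4) and (5)) — and then linking the two blocks together, with item (1) and item (6) falling out as the "uniform" packaging of everything. The backbone of the argument is the observation that each $\G^{\AFS}_i$ for $i \in \N$ is a transformation groupoid $X_i \rtimes (\Gamma/N_i)$ with $X_i = \Gamma/N_i$ acted on by left translation, so by Proposition \ref{prop:group action expansion coincidence} its asymptotic expansion is exactly asymptotic expansion of that action in measure $\mu_i$; and since $X_i$ is a \emph{finite} set with the normalized counting measure and the action is transitive, this in turn is precisely the statement that the Schreier/Cayley graph of $\Gamma/N_i$ with respect to the image of the generating set is an expander (the boundary $\partial_{L_\alpha}A$ of Section \ref{main thm for Uniform Roe algebra} matches the measure of $\r(K_{L_\alpha}\cdot A)\setminus A$ under Lemma \ref{lem:dec sets for gorup actions}). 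This identification makes (4) $\Leftrightarrow$ (5): for a \emph{single} finite vertex-transitive graph there is no distinction between "expanding" and "asymptotically expanding" once one fixes the finite list of thresholds $\alpha$, because the supremum defining the Cheeger constant is over a nonempty finite-dimensional set and any finite family of $(C_\alpha,L_\alpha)$ can be amalgamated into one pair $(C,L)$. More carefully, "a sequence of asymptotic expander graphs" $\Leftrightarrow$ "a sequence of expander graphs" is the standard fact (see \cite{structure}) that an asymptotically expanding sequence of bounded-degree graphs, after discarding finitely many terms, is an expanding sequence; I would simply cite this.

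Next I would handle (2) $\Leftrightarrow$ (3), i.e. asymptotic expansion of $\Gamma \curvearrowright \hat\Gamma$ implies expansion. Here the mechanism is that the profinite completion $\hat\Gamma = \varprojlim \Gamma/N_i$ carries the Haar probability measure $\mu_\infty$, and the "cylinder sets" (preimages of subsets of $\Gamma/N_i$ under the canonical projection $q_i:\hat\Gamma \to \Gamma/N_i$) generate the $\sigma$-algebra and exhaust it in measure. Given the finitely many thresholds $\alpha$, asymptotic expansion supplies finitely many pairs $(C_\alpha,L_\alpha)$; taking $C := \min_\alpha C_\alpha$ and $L := \max_\alpha L_\alpha$ and checking the expansion inequality for \emph{all} measurable $A$ with $0 < \mu_\infty(A) \le 1/2$ reduces, by an approximation argument on cylinder sets, to the case $\mu_\infty(A) \ge \alpha$ for the smallest such threshold once one notes that $\hat\Gamma$ has no atoms so there is no obstruction at small scales — equivalently, one invokes Lemma \ref{subset also asym expansion1} (with $Y = \hat\Gamma$, $\beta = 1$) to absorb small sets. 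Alternatively and more cleanly: asymptotic expansion of the $\Gamma$-action on $\hat\Gamma$ is equivalent to strong ergodicity of that action, which for the profinite action is equivalent to a spectral gap, which is a single-constant (hence "expanding") statement; I would phrase the argument through the spectral gap of the Koopman representation to get (2) $\Rightarrow$ (3) in one line, with (3) $\Rightarrow$ (2) being trivial.

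The heart of the proposition is linking the $\infty$-fiber block to the finite-fiber block, i.e. (3) $\Leftrightarrow$ (5) (say). This is where the structure of $\hat\Gamma$ as the inverse limit is essential: the Koopman representation $\kappa_\infty$ of $\Gamma$ on $L^2_0(\hat\Gamma,\mu_\infty)$ decomposes as the orthogonal direct sum of the representations $\kappa_i$ of $\Gamma$ on $L^2_0(\Gamma/N_i,\mu_i)$ over $i \in \N$ (plus, possibly, a piece coming from the "new" part at each level — precisely, $L^2_0(\hat\Gamma) = \bigoplus_i \big(L^2(\Gamma/N_i) \ominus L^2(\Gamma/N_{i-1})\big)$), so the Koopman representation of the profinite action has spectral gap \emph{if and only if} the Markov averaging operators of the Schreier graphs of $\Gamma/N_i$ have a \emph{uniform} spectral gap in $i$, which by Theorem \ref{thm:spectral.characterisation.markov.exp} (Cheeger–Buser) is exactly the statement that $\{\Gamma/N_i\}$ is an expander family. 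I would make this precise via the identification of the averaging operator on $\hat\Gamma$ with the inverse limit of the averaging operators on $\Gamma/N_i$ and a direct computation of matrix coefficients; the main obstacle — and the step I expect to be most delicate — is getting the \emph{uniformity} right in both directions: from a single spectral gap on $\hat\Gamma$ one must extract a gap bounded below uniformly over $i$, and conversely one must check that a uniform gap downstairs assembles to a genuine gap upstairs without loss. Finally, item (1) is (2) applied uniformly together with the finite-fiber equivalences, i.e. it says the finite fibers have a \emph{common} expansion parameter and the $\infty$-fiber has one too; since the finite fibers being a uniform expander family is (5) and the $\infty$-fiber is (2) $\Leftrightarrow$ (3), item (1) follows once one observes (via the direct-sum/inverse-limit picture again) that "same expansion parameters across all $i$, including $\infty$" is no stronger than "same parameters across $i \in \N$". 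Item (6) is then (1) with "asymptotically expanding" upgraded to "expanding" on each fiber, which for the finite fibers is again the single-graph triviality and for the $\infty$-fiber is (3); so (1) $\Leftrightarrow$ (6) is immediate. Assembling: (3) $\Leftrightarrow$ (5), (2) $\Leftrightarrow$ (3), (4) $\Leftrightarrow$ (5), (1) $\Leftrightarrow$ (6) $\Leftrightarrow$ [(3) and (5)], and the cycle is complete.
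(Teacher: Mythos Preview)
Your plan has a genuine gap at the step (4) $\Leftrightarrow$ (5). You claim this is ``the standard fact (see \cite{structure}) that an asymptotically expanding sequence of bounded-degree graphs, after discarding finitely many terms, is an expanding sequence.'' That claim is false: asymptotic expanders are a \emph{strict} generalisation of expanders --- indeed, the whole point of the structure theory in \cite{structure,faucris.282438222} is to handle the gap between them (attach a small F{\o}lner tail to each graph in an expander sequence and you kill expansion while keeping asymptotic expansion). Your supporting argument, that ``any finite family of $(C_\alpha,L_\alpha)$ can be amalgamated into one pair $(C,L)$,'' fails because $\alpha$ ranges over the continuum $(0,\tfrac12]$ and nothing prevents $C_\alpha\to 0$ or $L_\alpha\to\infty$ as $\alpha\to 0$, uniformly in $i$; vertex-transitivity of the Cayley graphs does not rescue this. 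Your first approach to (2) $\Rightarrow$ (3) has the same defect: the remark that ``$\hat\Gamma$ has no atoms so there is no obstruction at small scales'' is simply wrong, and Lemma \ref{subset also asym expansion1} only yields expansion above a fixed threshold $\alpha$, never down to $0$.

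The equivalence (4) $\Leftrightarrow$ (5) \emph{does} hold in this setting, but it is a special feature of box spaces and must be routed through the profinite completion. The paper does exactly this: it cites \cite[Theorem 6.16, Corollary 6.17]{asymptoticexpansionandstrongergodicity} for (2) $\Leftrightarrow$ (4) (the sequence $\{\Gamma/N_i\}$ is an approximating family for $\Gamma\curvearrowright\hat\Gamma$), and \cite[Theorem 4]{ABÉRT_ELEK_2012} with \cite[Proposition 3.5]{asymptoticexpansionandstrongergodicity} for (2) $\Leftrightarrow$ (3), so that (4) $\Leftrightarrow$ (2) $\Leftrightarrow$ (3) $\Leftrightarrow$ (5). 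Your alternative route to (2) $\Leftrightarrow$ (3) via spectral gap is correct in spirit and is precisely what the Ab\'ert--Elek reference supplies; and your Koopman direct-sum argument for (3) $\Leftrightarrow$ (5) is the standard proof behind \cite[Lemma 2.2]{ABÉRT_ELEK_2012}. Once you abandon the direct attempt at (4) $\Leftrightarrow$ (5) and instead pull (4) through (2), your packaging of (1) and (6) via the pair-groupoid identification (this is what the paper does for (5) $\Rightarrow$ (6)) closes the cycle.
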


\begin{proof}
``(1) $\Rightarrow$ (2)'': Condition (1) implies that $\G^{\AFS}_\infty$ is asymptotically expanding in measure. Then by Proposition \ref{prop:group action expansion coincidence}, this implies (2).

``(2) $\Leftrightarrow$ (3)'' is due to \cite[Theorem 4]{ABÉRT_ELEK_2012} and \cite[Proposition 3.5]{asymptoticexpansionandstrongergodicity}.

``(3) $\Leftrightarrow$ (5)'' is fairly well-known (see, \emph{e.g.}, \cite[Lemma 2.2]{ABÉRT_ELEK_2012}).

``(2) $\Leftrightarrow$ (4)'' is due to \cite[Theorem 6.16, Corollary 6.17]{asymptoticexpansionandstrongergodicity} and the discussions thereafter. (Note that the sequence $\{\Gamma/N_i\}_{i\in \N}$ is approximating spaces for the action of $\Gamma$ on $\hat{\Gamma}$.)

``(5) $\Rightarrow$ (6)'': For each $i \in \N$, note that $\G^{\AFS}_i$ is isomorphic to the pair groupoid $(\Gamma/N_i) \times (\Gamma/N_i)$, where the length function corresponds to the one given by $(x,y) \mapsto d_i(x,y)$ for the left-invariant metric $d_i$ on $\Gamma/N_i$ induced by $\ell_i$. According to Proposition \ref{prop:pair groupoids expansion coincidence}, (4) is equivalent to that $\G^{\AFS}_i$ is expanding uniformly for $i \in \N$. This concludes (5) since we already showed ``(4) $\Rightarrow$ (3)''.

``(6) $\Rightarrow$ (1)'' is trivial.
\end{proof}

Finally, combining with Theorem \ref{cor:cor to thm general case prime}, we obtain the following:

\begin{cor}
For the groupoid $\G^{\AFS}$, take $P_i \in \B(L^2(X_i,\mu_i))$ to be the averaging projection for $i \in \N \cup \{\infty\}$. For $P=\mathrm{(SOT)}-\sum_{i\in \N \cup \{\infty\}} P_i$, the following are equivalent:
\begin{enumerate}
 \item $P \in \CC^*_{\u}(\G^{\AFS})$;
 \item $P \in \CC^*_{\uq}(\G^{\AFS})$;
 \item the natural action of $\Gamma$ on $\hat{\Gamma}$ is asymptotically expanding in measure;
 \item the sequence $\{\Gamma/N_i\}_{i\in \N}$ forms a sequence of expander graphs.
\end{enumerate}
\end{cor}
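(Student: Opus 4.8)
The plan is to obtain this corollary by feeding the data of $\G^{\AFS}$ into the family version of the main theorem (Theorem~\ref{cor:cor to thm general case prime}) and then quoting Proposition~\ref{prop:AFS groupoid}. First I would identify the index set $\N\cup\{\infty\}$ with $\N$, so that $\G^{\AFS}=\bigsqcup_{i}\G^{\AFS}_i$ together with the length function and the measure $\mu$ built from the $\mu_i$ (as described just before the statement) is exactly of the form considered in Section~\ref{ssec:basic notions}. The one preliminary computation is that the probability measure $\nu_i$ associated to the averaging projection $P_i\in\B(L^2(X_i,\mu_i))$ is $\mu_i$ itself: since $\mu_i(X_i)=1$, a unit vector in the range of $P_i$ is $\xi_i=\chi_{X_i}$, so $\d\nu_i=|\xi_i|^2\,\d\mu_i=\d\mu_i$. (Here $\CC^*_{\u}(\G^{\AFS})$ and $\CC^*_{\uq}(\G^{\AFS})$ are understood as $\CC^*_{\dyn}(\G^{\AFS})$ and $\CC^*_{\dyn,\q}(\G^{\AFS})$; for the finite fibres $\G^{\AFS}_i\cong(\Gamma/N_i)\times(\Gamma/N_i)$ this is the usual uniform Roe and quasi-local algebra of the corresponding finite metric space.)

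Next I would apply Theorem~\ref{cor:cor to thm general case prime} directly with $\G_i=\G^{\AFS}_i$, $P_i$ the averaging projection and $\nu_i=\mu_i$. It yields at once the equivalence of: $P\in\CC^*_{\dyn}(\G^{\AFS})$; $P\in\CC^*_{\dyn,\q}(\G^{\AFS})$; and each $\G^{\AFS}_i$ being asymptotically expanding in measure $\mu_i$ \emph{with a common choice of expansion parameters} in $i$. This settles the equivalence of our conditions (1) and (2) and reduces the whole statement to the uniform asymptotic expansion of the family $\{\G^{\AFS}_i\}_{i\in\N\cup\{\infty\}}$.

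Finally, the latter is verbatim condition (1) of Proposition~\ref{prop:AFS groupoid}, which that proposition shows to be equivalent to the natural action of $\Gamma$ on $\hat\Gamma$ being asymptotically expanding in measure (its condition (2), our (3)) and to $\{\Gamma/N_i\}_{i\in\N}$ being a sequence of expander graphs (its conditions (4)/(5), our (4)). Chaining these equivalences gives $(1)\Leftrightarrow(2)\Leftrightarrow(3)\Leftrightarrow(4)$ and completes the proof.

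Since all the substantive content is already contained in Theorem~\ref{cor:cor to thm general case prime} and Proposition~\ref{prop:AFS groupoid}, I do not expect a genuine obstacle here; the only points needing care are bookkeeping ones: confirming that the ``uniformly in the sense that they have same expansion parameters'' clause of Theorem~\ref{cor:cor to thm general case prime}(3) is exactly the ``uniformly'' of Proposition~\ref{prop:AFS groupoid}(1), and the trivial but necessary check that $\nu_i=\mu_i$ for the averaging projection so that condition (3) of Theorem~\ref{cor:cor to thm general case prime} is the asymptotic expansion of $\G^{\AFS}_i$ in the intended measure.
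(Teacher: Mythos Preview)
Your proposal is correct and follows exactly the route the paper intends: the corollary is stated immediately after Proposition~\ref{prop:AFS groupoid} with the remark ``combining with Theorem~\ref{cor:cor to thm general case prime}'', and your argument is precisely that combination, together with the (necessary) observation that the associated measure $\nu_i$ for the averaging projection $P_i$ is $\mu_i$ itself. Your parenthetical that $\CC^*_{\u}(\G^{\AFS})$ and $\CC^*_{\uq}(\G^{\AFS})$ should be read as $\CC^*_{\dyn}(\G^{\AFS})$ and $\CC^*_{\dyn,\q}(\G^{\AFS})$ is also the correct reading of the paper's notation here.
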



\subsection{Graph groupoids}\label{main thm for graph groupoid} 

We firstly recall necessary background knowledge, and details can be found in \cite{KRR97}.

Throughout this subsection, let $G=(V,E,r,s)$ be a directed uniformly locally finite connected graph, where $V$ is the vertex set, $E$ is the edge set and $r,s: E\rightarrow V$ describing the range and source of edges. Here $G$ is called \emph{uniformly locally finite} if $\sup_{v\in V} |s^{-1}(v)|$ and $\sup_{v\in V} |r^{-1}(v)|$ are finite.
Given $v\in V$, denote $\sdeg(v) \coloneqq |s^{-1}(v)|$.


Recall that a \emph{finite path} in $G$ is a sequence $\alpha = (\alpha_1, \cdots, \alpha_k)$ of edges in $E$ with $s(\alpha_{j+1}) = r(\alpha_j)$ for $1\leq j \leq k-1$. Write $s(\alpha) = s(\alpha_1)$ and $r(\alpha) = \r(\alpha_k)$. The \emph{length} of $\alpha$ is defined to be $|\alpha| \coloneqq k$. We denote by $v$ the path of length $0$ with $s(v)=r(v)=v$.
Denote $F(G)$ the set of all finite paths in $G$, and $F(G,v)$ those starting at $v\in V$. Similarly, denote $P(G)$ the set of all infinite paths $\alpha=(\alpha_1,\alpha_2,...)$ in $G$, and $P(G,v)$ those starting at $v\in V$. For $\alpha,\mu\in F(G)$ satisfying $r(\alpha)=s(\mu)$, we define a path $\alpha\mu\in F(G)$ of length $|\alpha|+|\mu|$ by $\alpha\mu=(\alpha_1,...,\alpha_{|\alpha|},\mu_1,...,\mu_{|\mu|})$. We can similarly define $\alpha x\in P(G)$ for $\alpha\in F(G)$ and $x\in P(G)$ satisfying $s(x)=r(\alpha)$. 


Equip $P(G)$ with the induced topology from the product topology on the infinite product $\Pi E$ of the edge set $E$.
By \cite[Corollary 2.2]{KRR97}, this topology is locally compact, $\sigma$-compact, totally disconnected and Hausdorff. Moreover, it has a basis consisting of all the \emph{cylinder sets}:
\[
Z(\alpha)=\left\{x\in P(G) ~|~ x_1=\alpha_1,...,x_{|\alpha|}=\alpha_{|\alpha|}\right\} \quad \text{where} \quad \alpha \in F(G).
\]
By \cite[Lemma 2.1]{KRR97}, $Z(\alpha) \cap Z(\beta) \neq \emptyset$ if and only if either $\alpha$ is a \emph{prefix} of $\beta$ (\emph{i.e.}, $\beta = \alpha \beta'$ for some $\beta' \in F(G)$) or $\beta$ is a prefix of $\alpha$.


To construct the groupoid associated to the graph $G$, we consider the following equivalence relation on $P(G)$: two paths $x,y\in P(G)$ are \emph{shift equivalent} with lag $k\in \mathbb{Z}$ (written $x\sim_k y$) if there exists $N\in \mathbb{N}$ such that $x_i=y_{i+k}$ for all $i\geq N$. 

\begin{defn}\label{defn:graph groupoid}
For a directed uniformly locally finite connected graph $G=(V,E,r,s)$, the associated \emph{graph groupoid} $\G$ is defined to be (as a set):
\[
\mathcal{G}=\{(x,k,y)\in P(G)\times \mathbb{Z}\times P(G) ~|~ x\sim_k y\}
\]
with $(x,k,y)\cdot (y,l,z)\coloneqq(x,k+l,z)$ and $(x,k,y)^{-1}\coloneqq(y,-k,x)$. The range and source maps are given by $\r(x,k,y)=x$ and $\s(x,k,y)=y$, respectively, with $\Gz = P(G)$.
\end{defn}

For $\alpha, \beta \in F(G)$ with $r(\alpha)=r(\beta)$, denote
\[
Z(\alpha,\beta)\coloneqq \left\{(x,k,y) ~|~ x\in Z(\alpha),y\in Z(\beta),k=|\beta|-|\alpha|,x_i=y_{i+k}~\text{for}~i\geq |\alpha|\right\},
\]
which is a bisection in $\G$ with $\r(Z(\alpha, \beta)) = Z(\alpha)$ and $\s(Z(\alpha, \beta)) = Z(\beta)$.
It follows from \cite[Proposition 2.6]{KRR97} that all the sets $Z(\alpha,\beta)$ form a basis for a locally compact Hausdorff topology on $\mathcal{G}$, which makes it a second countable, \'{e}tale groupoid where each $Z(\alpha,\beta)$ is compact open. The induced topology on $\Gz=P(G)$ coincides with the product topology defined above.


\bigskip

To apply our results to the graph groupoid $\G$, we firstly endow $\G$ with the following length function $\ell$. For $(x,k,y) \in \G$, choose the smallest $N \in \N$ such that $x_i=y_{i+k}$ for all $i\geq N$, denoted by $N(x,k,y)$, and set:
\begin{equation}\label{EQ:length function for graph groupoids}
\ell(x,k,y)=2N(x,k,y) + k.
\end{equation}
Note that $N(x,k,y)+k \geq 0$ for any $(x,k,y) \in \G$.
Intuitively, $\ell(x,k,y)$ is the total length of the shortest starting segments in $x$ and $y$, deleting which $x$ and $y$ are the same with lag $k$. 
It is routine to check that $\ell$ is indeed a length function on $\G$ taking values in $\N$. Moreover, given $n \in \N$, the ball in $\G$ defined by $B_n\coloneqq \{(x,k,y) \in \G ~|~ \ell(x,k,y) \leq n\}$ can be reconstructed as follows:
\begin{equation}\label{EQ:dec for ball in graph groupoid}
B_n =\bigcup \left\{Z(\alpha,\beta) ~|~ \alpha, \beta \in F(G) \text{ with } r(\alpha)=r(\beta) \text{ and } |\alpha|+|\beta| \leq n\right\}.
\end{equation}
Moreover, we have 
\begin{equation}\label{EQ:convolution of B1}
B_n = (B_1)^n \quad \text{for any} \quad n\in \N.
\end{equation}
These can be verified by direct calculations, and we omit the details.

Next, we will define a Borel measure $\mu$ on $\Gz=P(G)$. Given $v\in V$ and $b_v>0$, we define a measure $\mu_v$ on $P(G,v)$ by firstly setting:
\begin{equation}\label{EQ:def for mu_v}
\mu_v(Z(\alpha)) \coloneqq b_v \cdot \prod_{i=1}^{k} \frac{1}{\sdeg(s(\alpha_i))} \quad \text{for} \quad \alpha = (\alpha_1, \alpha_2, \cdots, \alpha_k) \in F(G,v).
\end{equation}
Since the sets $Z(\alpha)$ for $\alpha \in F(G,v)$ form a basis for the topology on $P(G,v)$, then the above gives rise to a Borel measure $\mu_v$ on $P(G,v)$.

Now we combine $\mu_v$ into a single probability Borel measure $\mu$ on $P(G)$. Since $G$ is connected and uniformly locally finite, then $V$ is countable. For each $v\in V$, we choose $b_v>0$ such that $\sum_{v\in V} b_v =1$. Note that $P(G) = \bigsqcup_{v\in V} P(G,v)$ and $V$ is countable. Then the measures $\mu_v$ defined above give rise to a Borel measure $\mu$ on $P(G)$. In the sequel, we fix such a measure $\mu$.


By the construction of $\ell$ and $\mu$ above, it is clear that for any $\alpha, \beta \in F(G)$ with $r(\alpha)=r(\beta)$, the subset $Z(\alpha, \beta)$ is an admissible bisection. Moreover, we have:

\begin{lem}\label{decomposable and bounded length on graph groupoid}
For the graph groupoid $\G$ with the length function $\ell$ and the measure $\mu$ on $\Gz$ above, the ball $B_n=\{(x,k,y) \in \G ~|~ \ell(x,k,y) \leq n\}$ is decomposable for each $n\in \N$. 
\end{lem}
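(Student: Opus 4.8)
The plan is to build directly on the description (\ref{EQ:dec for ball in graph groupoid}), which already writes $B_n$ as a union of the admissible bisections $Z(\alpha,\beta)$, indexed by pairs $\alpha,\beta\in F(G)$ with $r(\alpha)=r(\beta)$ and $|\alpha|+|\beta|\le n$; the whole point is that when $V$ is infinite this union has infinitely many members, whereas decomposability demands finitely many, so I would reorganise it. First I would group the index pairs by their pair of lengths: there are only finitely many $(p,q)$ with $p,q\ge 0$ and $p+q\le n$, so it suffices to show that each
\[
E_{p,q}:=\bigcup\{Z(\alpha,\beta)\mid |\alpha|=p,\ |\beta|=q,\ r(\alpha)=r(\beta)\}
\]
is a finite union of admissible bisections.

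Fix such a $(p,q)$, and write $A_m(v)$ for the set of length-$m$ paths with range $v$, so that the index set of $E_{p,q}$ is exactly $\bigsqcup_{v\in V}A_p(v)\times A_q(v)$. The place where the hypothesis enters is that uniform local finiteness gives $|A_m(v)|\le M^m$ for all $v$, where $M:=\sup_v|r^{-1}(v)|<\infty$ (reconstruct a length-$m$ path ending at $v$ backwards, one incoming edge at a time). I would then, for each $v$ separately, decompose the complete bipartite graph with parts $A_p(v)$ and $A_q(v)$ into at most $D:=M^n$ matchings (a proper edge-colouring, possible by K\"onig's edge-colouring theorem since $\max\{|A_p(v)|,|A_q(v)|\}\le M^{\max(p,q)}\le D$), the crucial point being that the same palette of $D$ colours works for every $v$ simultaneously. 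For $c\in\{1,\dots,D\}$ let $\mathcal{C}_c\subseteq\bigsqcup_v A_p(v)\times A_q(v)$ collect the colour-$c$ matching from every $v$; then any two distinct pairs in $\mathcal{C}_c$ have distinct first coordinates and distinct second coordinates — within a fixed $v$ by properness of the colouring, and across distinct $v,v'$ because the first (resp.\ second) coordinates then already have different ranges.

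Next I would set $K_c:=\bigcup_{(\alpha,\beta)\in\mathcal{C}_c}Z(\alpha,\beta)$ and verify it is an admissible bisection. For distinct $(\alpha,\beta),(\alpha',\beta')\in\mathcal{C}_c$ the paths $\alpha,\alpha'$ have equal length and are unequal, so neither is a prefix of the other, whence $\r(Z(\alpha,\beta))=Z(\alpha)$ and $\r(Z(\alpha',\beta'))=Z(\alpha')$ are disjoint by the cylinder dichotomy from \cite[Lemma 2.1]{KRR97}; likewise the sources $Z(\beta),Z(\beta')$ are disjoint, and hence the $Z(\alpha,\beta)$'s are pairwise disjoint subsets of $\G$. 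It follows that $\s|_{K_c}$ and $\r|_{K_c}$ are injective, so $K_c$ is a bisection; its source and range are countable unions of cylinder sets, hence Borel; $\ell(K_c)\le n$ since $K_c\subseteq B_n$; and $\tau_{K_c}$ restricts on each of the pairwise disjoint pieces $Z(\beta)$ to the measure-class-preserving map $\tau_{Z(\alpha,\beta)}$, so $\tau_{K_c}$ is itself a measure-class-preserving isomorphism onto its image. Therefore $E_{p,q}=\bigcup_{c=1}^{D}K_c$, and finally $B_n=\bigcup_{p+q\le n}E_{p,q}$ is a finite union of admissible bisections, \emph{i.e.} decomposable.

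The one genuinely nontrivial point — and the place to be careful — is exactly the collapse of the infinite union (\ref{EQ:dec for ball in graph groupoid}) to a finite one. This rests on combining two facts: restricting to pairs of fixed lengths turns the cylinder dichotomy into ``unequal paths $\Rightarrow$ disjoint cylinders'' (rather than merely comparable), and uniform local finiteness bounds $|A_m(v)|$ uniformly in $v$, so a single finite colour palette serves all vertices at once. Everything else — that each $Z(\alpha,\beta)$ is an admissible bisection, that $\s(Z(\alpha,\beta))=Z(\beta)$ and $\r(Z(\alpha,\beta))=Z(\alpha)$, the cylinder intersection dichotomy, and the measurability of countable unions of cylinders — is already in place in the preceding discussion.
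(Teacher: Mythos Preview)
Your proof is correct and follows a genuinely different route from the paper's. The paper builds a single auxiliary ``conflict'' graph $\hat G$ on \emph{all} index pairs $(\alpha,\beta)$ with $|\alpha|+|\beta|\le n$, joining two pairs whenever any of the four prefix relations holds, observes that uniform local finiteness of $G$ makes $\hat G$ uniformly locally finite, and then invokes a vertex-colouring lemma (cited as \cite[Lemma 4.2]{ZhangquasilocalityandpropertyA}) to split $\hat V$ into finitely many independent sets $\hat V_p$; each $K_p=\bigsqcup_{(\alpha,\beta)\in\hat V_p}Z(\alpha,\beta)$ is then an admissible bisection. Your argument instead first stratifies by the pair of lengths $(p,q)$, which has the pleasant effect of turning the cylinder dichotomy into ``unequal $\Rightarrow$ disjoint'' and reduces the combinatorics at each vertex $v$ to edge-colouring the complete bipartite graph on $A_p(v)\times A_q(v)$; K\"onig's theorem and the uniform bound $|A_m(v)|\le M^m$ give a single palette of $M^n$ colours that works simultaneously for every $v$. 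Your approach is a bit more elementary and self-contained (no external colouring lemma needed), and the verification that each $K_c$ is a bisection is cleaner because the fixed-length restriction rules out nontrivial prefix relations; the paper's approach, on the other hand, handles all length pairs in one stroke and yields a single decomposition of $B_n$ directly rather than of each $E_{p,q}$.
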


\begin{proof}
Fix $n\in \N$. We aim to decompose the ball $B_n$ into finitely many admissible bisections. The idea is to combine several $Z(\alpha,\beta)$ in (\ref{EQ:dec for ball in graph groupoid}) into a single bisection. 

To achieve, we construct another graph $\hat{G} = (\hat{V}, \hat{E})$ with 
\[
\hat{V} \coloneqq \big\{(\alpha, \beta) ~\big|~ \alpha, \beta \in F(G) \text{ with } r(\alpha)=r(\beta) \text{ and } |\alpha|+|\beta| \leq n\big\}
\]
and two vertices $(\alpha_1, \beta_1)$ and $(\alpha_2, \beta_2)$ in $\hat{V}$ are connected by an edge if and only if at least one of the following four cases hold: (i) $\alpha_1$ is a prefix of $\alpha_2$; (ii) $\alpha_2$ is a prefix of $\alpha_1$; (iii) $\beta_1$ is a prefix of $\beta_2$; (iv) $\beta_2$ is a prefix of $\beta_1$. 
Since $G$ is uniformly locally finite and connected, then $\hat{G}$ is uniformly locally finite as well. It is a well-known fact (see, \emph{e.g.}, \cite[Lemma 4.2]{ZhangquasilocalityandpropertyA}) that $\hat{V}$ can be decomposed into $\hat{V} = \bigsqcup_{p=1}^N \hat{V}_p$ such that for each $p$, any two distinct vertices in $\hat{V}_p$ are \emph{not} connected by an edge. 

For each $p=1,\cdots,N$, set
\[
K_p\coloneqq \bigsqcup_{(\alpha ,\beta) \in \hat{V}_p} Z(\alpha, \beta).
\]
By the construction of $\hat{E}$, it is routine to check that $K_p$ is an admissible bisection. Finally, (\ref{EQ:dec for ball in graph groupoid}) implies $B_n = \bigcup_{p=1}^N K_p$ and hence, we conclude the proof.
\end{proof}

Lemma~\ref{decomposable and bounded length on graph groupoid} shows that the set $\{B_n~|~n\in \N\}$ is cofinal in the set of all decomposable subsets of $\G$ in the sense of Remark \ref{rem:cofinal family}. Hence we can merely use these balls to define asymptotic expansion, dynamical propagation and quasi-locality for graph groupoids by the discussion at the end of Section \ref{sec:Main results}.

Moreover, we have the following:


\begin{lem}\label{replace Bl(k) by Bl(1)}
For the graph groupoid $\G$ with the length function $\ell$ and the measure $\mu$ on $\Gz$ above, then $\mathcal{G}$ is asymptotically expanding in measure \emph{if and only if} for any $\alpha\in (0,\frac{1}{2}]$, there exists $C_\alpha>0$ such that for any measurable $A\subseteq \mathcal{G}^{(0)}$ with $\alpha \leq \mu(A)\leq \frac{1}{2}$, then $\mu(\r(B_1 \cdot A))>(1+C_\alpha)\mu(A)$.
\end{lem}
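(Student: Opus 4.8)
## Proof Proposal for Lemma~\ref{replace Bl(k) by Bl(1)}

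The plan is to exploit the key algebraic identity (\ref{EQ:convolution of B1}), namely $B_n = (B_1)^n$, together with the fact that the balls $\{B_n\}_{n\in\N}$ are cofinal among decomposable subsets of $\G$ (Lemma~\ref{decomposable and bounded length on graph groupoid} and Remark~\ref{rem:cofinal family}). The ``only if'' direction is the substantive one; the ``if'' direction will be essentially a bootstrapping argument of the same flavour as the proof of Proposition~\ref{asym expansion and quasi local} (sufficiency part) and of Lemma~\ref{subset also asym expansion1}.

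First I would handle the ``if'' direction. Assume that for each $\alpha\in(0,\tfrac12]$ there is $C_\alpha>0$ with $\mu(\r(B_1\cdot A))>(1+C_\alpha)\mu(A)$ whenever $\alpha\le\mu(A)\le\tfrac12$. Note that $\r(B_1\cdot A)\setminus A$ has measure $>C_\alpha\mu(A)$, so this already gives expansion with the decomposable set $K_\alpha=B_1$ — except that we must check $\r(B_1\cdot A)$ always satisfies $\mu(\r(B_1\cdot A))\le \tfrac12$ or else iterate. Actually the definition of asymptotic expansion (Definition~\ref{asymptotically expanding}) only requires $\mu(\r(K_\alpha\cdot A)\setminus A)>C_\alpha\mu(A)$ for $\alpha\le\mu(A)\le\tfrac12$, which is exactly what the hypothesis provides with $K_\alpha=B_1$ (unital and symmetric by the properties of balls listed after (\ref{EQ:Kn}), and decomposable by Lemma~\ref{decomposable and bounded length on graph groupoid}). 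So the ``if'' direction is immediate once one observes $\mu(\r(B_1\cdot A)\setminus A)=\mu(\r(B_1\cdot A))-\mu(A)>C_\alpha\mu(A)$.

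For the ``only if'' direction, suppose $\G$ is asymptotically expanding. Fix $\alpha\in(0,\tfrac12]$. Since $\{B_n\}$ is cofinal among decomposable subsets (Remark~\ref{rem:cofinal family}), the expansion witness $K_\alpha$ may be taken to be some ball $B_{n_\alpha}$: there is $C'_\alpha>0$ with $\mu(\r(B_{n_\alpha}\cdot A)\setminus A)>C'_\alpha\mu(A)$ for all $A$ with $\alpha\le\mu(A)\le\tfrac12$. Using Lemma~\ref{extend to beta} (applied to $Y=\Gz$), upgrade this to an estimate valid on the wider range $\alpha\le\mu(A)\le\beta$ for a suitable $\beta\in[\tfrac12,1)$, say $\beta$ close to $1$; this produces constants depending only on $\alpha,\beta$ and the expansion parameters. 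Now write $B_{n_\alpha}=(B_1)^{n_\alpha}$ by (\ref{EQ:convolution of B1}). The main step is a telescoping/iteration argument: for $A$ with $\alpha\le\mu(A)\le\tfrac12$, consider the chain $A\subseteq \r(B_1\cdot A)\subseteq \r((B_1)^2\cdot A)\subseteq\cdots\subseteq \r((B_1)^{n_\alpha}\cdot A)=\r(B_{n_\alpha}\cdot A)$ (monotonicity holds because $\Gz\subseteq B_1$). If every intermediate set $\r((B_1)^i\cdot A)$ had measure at most $\beta$, then the expansion estimate from Lemma~\ref{extend to beta} could be invoked at each step only if we knew $\mu(\r((B_1)^i\cdot A)) \le \tfrac12$, which need not hold; instead I would argue: either some intermediate $\r((B_1)^i\cdot A)$ already has measure exceeding $\tfrac12$ (say the first such $i$), in which case $\mu(\r(B_1\cdot(\r((B_1)^{i-1}\cdot A))))>\tfrac12 \ge (1+\delta)\mu(A)$ for a $\delta$ depending on $\alpha$, establishing the claim after re-indexing back to $\r(B_1\cdot A)$ via a downward estimate; or all intermediates stay $\le\tfrac12$ and then applying expansion $n_\alpha$ times contradicts $\mu(\r(B_{n_\alpha}\cdot A))\le 1$ unless... — this is not quite what we want, since we need an estimate on $\r(B_1\cdot A)$, not on $\r(B_{n_\alpha}\cdot A)$.

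The cleaner route, which I would actually pursue, is a \emph{proof by contradiction on $B_1$ directly}. Suppose the $B_1$-criterion fails for some $\alpha_0$: for every $C>0$ there is $A_C$ with $\alpha_0\le\mu(A_C)\le\tfrac12$ and $\mu(\r(B_1\cdot A_C))\le(1+C)\mu(A_C)$. Taking $C\to 0$, we get sets $A_m$ with $\mu(\r(B_1\cdot A_m))-\mu(A_m)\to 0$ relative to $\mu(A_m)$, i.e.\ the $A_m$ are asymptotically $(\epsilon_m,B_1)$-F\o lner with $\epsilon_m\to0$. Now I would iterate: $\mu(\r((B_1)^k\cdot A_m))\le(1+C)^k\mu(A_m)$ by applying the failure bound $k$ times (valid as long as each intermediate measure stays $\le\tfrac12$, which one arranges by passing to the largest $k$ for which this holds, and handling the boundary case by the monotone-measure-bounded-by-$1$ argument). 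For $k=n_{\alpha_0}$ this yields $\mu(\r(B_{n_{\alpha_0}}\cdot A_m))\le(1+C)^{n_{\alpha_0}}\mu(A_m)$; choosing $C$ small enough that $(1+C)^{n_{\alpha_0}}<1+C'_{\alpha_0}$ contradicts the expansion estimate for $B_{n_{\alpha_0}}$ established above. The main obstacle — and the place requiring genuine care — is controlling the intermediate sets $\r((B_1)^i\cdot A)$ in the iteration: ensuring their measures do not prematurely exceed $\tfrac12$, and correctly handling the step where they do (which is precisely where one extracts, via Lemma~\ref{extend to beta} run ``backwards'' or via the trivial bound $\mu\le 1$, the desired lower bound on $\mu(\r(B_1\cdot A))$ itself rather than on a higher power). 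Once that bookkeeping is set up, everything else is routine, and the argument closely parallels the inductive estimates already used in the proofs of Lemma~\ref{subset also asym expansion1} and Proposition~\ref{asym expansion and quasi local}.
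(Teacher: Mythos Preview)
Your ``if'' direction is correct and immediate. The substantive gap is in the ``only if'' direction, specifically the iteration step of your contradiction argument. You claim $\mu(\r((B_1)^k\cdot A_m))\le(1+C)^k\mu(A_m)$ by ``applying the failure bound $k$ times''. But the failure of the $B_1$-criterion at $\alpha_0$ is an \emph{existential} statement: for each $C>0$ it furnishes \emph{one} set $A_C$ with $\mu(\r(B_1\cdot A_C))\le(1+C)\mu(A_C)$; it does \emph{not} give a universal bound $\mu(\r(B_1\cdot D))\le(1+C)\mu(D)$ for all $D$ with $\alpha_0\le\mu(D)\le\tfrac12$. Hence there is no reason the intermediate sets $D=\r(B_1^i\cdot A_C)$ satisfy such a bound, and the inductive step $\mu(\r(B_1^{i+1}\cdot A_C))\le(1+C)\,\mu(\r(B_1^i\cdot A_C))$ is simply unavailable. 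You do flag ``controlling the intermediate sets'' as the crux, but you misdiagnose the obstruction: the issue is not whether their measures exceed $\tfrac12$, it is that the failure bound does not apply to them at all.

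The paper omits the argument and refers to \cite[Lemma~3.16]{asymptoticexpansionandstrongergodicity}. One workable route is to control boundary layers rather than ratios: from the containment $A^{(i+1)}\setminus A^{(i)}\subseteq\r\big(B_1\cdot(A^{(i)}\setminus A^{(i-1)})\big)$ together with an a~priori estimate of the form $\mu(\r(B_1\cdot D))\le M\mu(D)$ (available, for instance, when $B_1$ is $N$-decomposable with bounded Radon--Nikodym derivatives; compare the claim established inside the proof of Proposition~\ref{simplify A into finite sets}), one gets $\mu(A^{(n)}\setminus A)\le(1+M+\cdots+M^{n-1})\,\mu(A^{(1)}\setminus A)$. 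This genuinely transfers a $B_1$-F\o lner witness to a $B_n$-F\o lner witness for the \emph{same} set $A$, yielding the desired contradiction with asymptotic expansion. Your sketch does not supply this (or any substitute) ingredient, so as written the argument does not close.
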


Thanks to Lemma \ref{decomposable and bounded length on graph groupoid} and (\ref{EQ:convolution of B1}), the proof follows almost the same as that for \cite[Lemma 3.16]{asymptoticexpansionandstrongergodicity}, and hence we omit the proof. In fact, a similar result (\emph{i.e.}, we can always choose the decomposable subset $K$ in the definition of asymptotic expansion to be the unit ball $B_1$) holds provided each ball $B_n=\{\gamma \in \G ~|~ \ell(\gamma) \leq n\}$ is decomposable and $B_n = (B_1)^n$ for any $n\in \N$.

Lemma \ref{replace Bl(k) by Bl(1)} can be further refined as follows under an extra assumption:

\begin{prop}\label{simplify A into finite sets}
Let $G$ be a uniformly locally finite connected graph, and $\G$ the associated graph groupoid with the length function $\ell$ and the measure $\mu$ on $\Gz$ above. Moreover, assume that there exists $C\geq 1$ such that $\frac{b_w}{C} \leq b_v \leq Cb_w$ for any $v,w \in V$ connected by an edge, where $(b_v)_v$ are the weights used to define $\mu$. Then $\G$ is asymptotically expanding in measure \emph{if and only if} for any $0<\alpha\leq\frac{1}{2}$, there exists $C_\alpha>0$ such that for any measurable $A \subseteq \Gz$ of the form $A=\bigcup_{i=1}^{N} Z(\alpha_i)$ with $\alpha \leq\mu(A)\leq\frac{1}{2}$ for $N\in \N$ and $\alpha_i\in F(G)$, we have $\mu(\r(B_1 \cdot A))>(1+C_\alpha)\mu(A)$.
\end{prop}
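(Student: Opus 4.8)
The plan is to deduce everything from Lemma~\ref{replace Bl(k) by Bl(1)}. One implication is immediate: if $\G$ is asymptotically expanding, then by Lemma~\ref{replace Bl(k) by Bl(1)} the inequality $\mu(\r(B_1\cdot A))>(1+C_\alpha)\mu(A)$ holds for \emph{all} measurable $A$ with $\alpha\le\mu(A)\le\frac12$, in particular for finite unions of cylinder sets. For the converse, again by Lemma~\ref{replace Bl(k) by Bl(1)} it suffices to produce, for each $\alpha\in(0,\frac12]$, a constant $C'_\alpha>0$ with $\mu(\r(B_1\cdot A))>(1+C'_\alpha)\mu(A)$ for \emph{every} measurable $A$ with $\alpha\le\mu(A)\le\frac12$. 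I would obtain this by approximating $A$ in the measure algebra by a finite union of cylinders, applying the hypothesis to the approximant, and transferring the estimate back; the weight-comparability assumption is exactly what powers the transfer step.

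First I would record a Lipschitz-type bound for $E\mapsto\r(B_1\cdot E)$. Using the reconstruction (\ref{EQ:dec for ball in graph groupoid}) one sees $B_1=\Gz\cup\bigcup_{e\in E}Z(e,r(e))\cup\bigcup_{e\in E}Z(r(e),e)$; grouping the edges into finitely many partial sections of the range map (possible by uniform local finiteness) yields a unital symmetric decomposition $B_1=\bigcup_{i=1}^N K_i$ into admissible bisections with $N$ controlled by the graph. A direct computation from (\ref{EQ:def for mu_v}) shows that for the forward bisection $K=Z(e,r(e))$ the derivative $\RN(K,\cdot)$ is the constant $\frac{b_{s(e)}}{b_{r(e)}\,\sdeg(s(e))}$ on $Z(r(e))$, and symmetrically for the backward ones; since $s(e)$ and $r(e)$ are joined by an edge, the hypothesis bounds all these derivatives by $C$. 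Hence $\mu(\tau_{K_i}(E))\le C\mu(E)$ for measurable $E\subseteq\s(K_i)$, so $\mu(\r(B_1\cdot E))\le\sum_{i}\mu(\tau_{K_i}(E\cap\s(K_i)))\le NC\,\mu(E)=:D\mu(E)$ for every measurable $E\subseteq\Gz$. This is the only place the hypothesis $\frac{b_w}{C}\le b_v\le C b_w$ enters. In parallel I would check that finite unions of cylinder sets are dense in the measure algebra: let $\A$ be the family of such finite unions; it is closed under finite unions and, since cylinders are pairwise disjoint or nested, under finite intersections. The key point is that $P(G)\setminus Z(\alpha)$ splits as the finite cylinder union of the siblings of $\alpha$ in $Z(s(\alpha))$ together with $\bigsqcup_{w\ne s(\alpha)}Z(w)$, whose measure $\sum_{w\ne s(\alpha)}b_w$ is finite and hence approximable by finite sub-unions; so $P(G)\setminus Z(\alpha)$ is $\mu$-approximable by members of $\A$. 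Routine bookkeeping then shows the family of Borel sets $\mu$-approximable by members of $\A$ is a $\sigma$-algebra, hence all Borel sets are such.

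Now the approximation--transfer step. Fix $\alpha$ and let $C_{\alpha/2}$ be the constant the hypothesis furnishes for parameter $\alpha/2$. Given $A$ with $\alpha\le\mu(A)\le\frac12$ and a small $\epsilon>0$ (to be fixed in terms of $\alpha,C_{\alpha/2},D$ only), pick a cylinder union $A_0$ with $\mu(A\triangle A_0)<\epsilon$. If $\mu(A_0)>\frac12$ I would trim it: every $\mu$-atom lying in $A_0$ has measure $\le\frac12$ (an atom of measure $>\frac12$ in $A_0$ would, since $\mu(A\triangle A_0)<\epsilon<\frac12$, have to lie in $A$, contradicting $\mu(A)\le\frac12$), while the remaining non-atomic part of $A_0$ subdivides into cylinders of measure $<\epsilon$; discarding a suitable sub-collection produces a cylinder union $A'\subseteq A_0$ with $\mu(A')\le\frac12$ and $\mu(A_0\setminus A')<\epsilon$. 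Otherwise set $A'=A_0$. In either case $\mu(A\triangle A')<2\epsilon$ and $\mu(A')\ge\mu(A)-2\epsilon>\frac{\alpha}{2}$, so the hypothesis applies: $\mu(\r(B_1\cdot A'))>(1+C_{\alpha/2})\mu(A')$. Combining with the Lipschitz bound via $\mu(\r(B_1\cdot A'))\le\mu(\r(B_1\cdot A))+D\mu(A'\setminus A)\le\mu(\r(B_1\cdot A))+2D\epsilon$ and with $\mu(A')\ge\mu(A)-2\epsilon$, one gets $\mu(\r(B_1\cdot A))>\mu(A)+C_{\alpha/2}\alpha-2(1+C_{\alpha/2}+D)\epsilon$; choosing $\epsilon$ so that the error term is below $\frac12 C_{\alpha/2}\alpha$ yields $\mu(\r(B_1\cdot A))>(1+C_{\alpha/2}\alpha)\mu(A)$ (using $\mu(A)\le\frac12$). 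Thus $C'_\alpha:=C_{\alpha/2}\alpha$ works, and Lemma~\ref{replace Bl(k) by Bl(1)} concludes.

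I expect the main obstacle to be precisely this boundary bookkeeping: a naive cylinder-union approximant of $A$ may overshoot $\mu=\frac12$, and when $\mu$ carries atoms one cannot shave it back by an arbitrarily small amount, so Step~3 must argue that no relevant atom exceeds $\frac12$ and then trim only the non-atomic part. (An alternative would be to first self-improve the cylinder hypothesis to the range $\alpha\le\mu(A)\le\beta$ for every $\beta<1$, in the spirit of Lemma~\ref{extend to beta} with $Y=\Gz$, which would remove the $\frac12$-constraint; but the atom-trimming route keeps the argument self-contained.) Everything else is either the elementary Radon--Nikodym computation of Step~1 or the standard measure-algebra density of Step~2, both of which are routine.
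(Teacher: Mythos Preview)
Your argument is correct and follows the same three-part architecture as the paper's proof: a Lipschitz bound for $E\mapsto\r(B_1\cdot E)$, density of finite cylinder unions, and an approximate--transfer step. The Lipschitz step is in fact cleaner in your version: once you bound the Radon--Nikodym derivatives of the constituent bisections of $B_1$, the inequality $\mu(\r(B_1\cdot E))\le D\mu(E)$ holds for \emph{all} measurable $E$ by integration, whereas the paper establishes it first on cylinders and then invokes the Monotone Class Theorem. One small slip: for the backward bisections $Z(r(e),e)$ the derivative is $\frac{b_{r(e)}\sdeg(s(e))}{b_{s(e)}}$, bounded by $C\cdot\sup_v\sdeg(v)$ rather than by $C$; this only changes the constant $D$.

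The genuine difference is at the boundary $\mu(A)=\tfrac12$. The paper sidesteps trimming entirely: for $\mu(A)<\tfrac12$ it takes $\epsilon_0\le\tfrac12-\mu(A)$ so the cylinder approximant automatically stays below $\tfrac12$, and for $\mu(A)=\tfrac12$ it allows the approximant $\tilde A$ to overshoot and instead runs the expansion estimate on the complement $\tilde B=\Gz\setminus\r(B_1\cdot\tilde A)$ (using the already-established case $\mu<\tfrac12$). Your uniform trimming route also works, but the sentence ``the remaining non-atomic part of $A_0$ subdivides into cylinders of measure $<\epsilon$'' needs a little more: one should repeatedly refine each cylinder $Z(\alpha_i)$ into its children until every leaf is either an atom or has measure $<\epsilon$, and check via K\"onig's lemma (the tree is finitely branching, and along any infinite branch the cylinder measures either tend to $0$ or the branch is eventually an atom) that this terminates in finitely many leaves. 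After that, the greedy selection yields $\mu(A_0\setminus A')<2\epsilon$ rather than $\epsilon$, which is harmless. Both approaches give the same conclusion; the paper's case split is shorter, while yours avoids singling out $\mu(A)=\tfrac12$.
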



\begin{proof}
By Lemma \ref{replace Bl(k) by Bl(1)}, we only need to prove sufficiency. Since $\Gz = P(G)$ is a locally compact Hausdorff space in which every open subset is $\sigma$-compact and $\mu$ is finite, it follows from \cite[Theorem 7.8]{Fol99} that $\mu$ is regular. Take $C$ as in the assumption and set
\[
D=\max\left\{\sup_{v\in V} |s^{-1}(v)|, \, \sup_{v\in V} |r^{-1}(v)|\right\}.
\]

We claim that for any measurable $A' \subseteq P(G)$, we have 
\begin{equation}\label{EQ:upper bound for expansion}
\mu(\r(B_1 \cdot A')) \leq CD(D+2)\mu(A').
\end{equation}
Indeed, for any $\alpha = (\alpha_1, \cdots, \alpha_k) \in F(G)$ with $k \geq 1$, set
\[
F(G)_\alpha \coloneqq \{\alpha\} \cup \{(\alpha_2, \cdots, \alpha_k)\} \cup \{(\alpha_0, \alpha_1,\cdots,\alpha_k) ~|~ \alpha_0 \in E \text{ with } r(\alpha_0) = s(\alpha_1)\}.
\]
Here we take the convention that if $k=1$, then $(\alpha_2, \cdots, \alpha_k) = r(\alpha_1)$. By the choice of $D$, we have $|F(G)_\alpha| \leq D+2$. Moreover, for any $\beta \in F(G)_\alpha$, we have
\[
\mu(Z(\beta)) \leq CD \mu(Z(\alpha))
\]
by the definition of $\mu$ (see (\ref{EQ:def for mu_v})). Hence we obtain:
\[
\mu(\r(B_1 \cdot Z(\alpha))) \leq \sum_{\beta \in F(G)_\alpha} \mu(Z(\beta)) \leq CD(D+2)\mu(Z_\alpha).
\]

Denote $\mathcal{M}\coloneqq \{\text{measurable }A' \subseteq P(G) ~|~ (\ref{EQ:upper bound for expansion}) \text{ holds for } A'\}$. Note that the set $\mathcal{P}\coloneqq \{Z(\alpha) ~|~ \alpha \in F(G)\}$ is a semi-ring and hence, the analysis above implies that $\mathcal{M}$ contains the ring $R(\mathcal{P})$ generated by $\mathcal{P}$. Due to the Monotone Class Theorem, we know that $\mathcal{M}$ contains the $\sigma$-algebra generated by $\mathcal{P}$, which coincides with the Borel $\sigma$-algebra on $P(G)$. Hence we conclude the claim.


Now given $0<\alpha\leq \frac{1}{2}$, we have a constant $C_{\alpha/2}>0$ from the assumption for $\alpha/2$. Also given a measurable $A \subseteq P(G)$ with $\alpha\leq \mu(A)<\frac{1}{2}$, take 
\[
\epsilon_0 \coloneqq \min\left\{\frac{\alpha}{2}, \frac{1}{2}-\mu(A), \frac{\alpha C_{\alpha/2}}{2+2C_{\alpha/2} + 2CD(D+2)}\right\} > 0. 
\]
Due to regularity, there is an open subset $\tilde{A} = \bigcup_{i=1}^{N} Z(\alpha_i)$ for $\alpha_i \in F(G)$ such that $\mu(A \symdiff \tilde{A}) < \epsilon_0$, which implies that $\frac{\alpha}{2} \leq \mu(\tilde{A}) \leq \frac{1}{2}$. By the claim above, we have:
\begin{align}\label{EQ:estimate in graph groupoids}
\mu(\r(B_1 \cdot A))&  \geq  \mu(\r(B_1 \cdot \tilde{A})) - \mu(\r(B_1 \cdot (A \symdiff \tilde{A}))) 
>(1+C_{\alpha/2})\mu(\tilde{A}) - CD(D+2)\mu(A \symdiff \tilde{A}) \nonumber\\
& \geq (1+C_{\alpha/2})(\mu(A) - \epsilon_0) - CD(D+2) \epsilon_0  \nonumber \\
& \geq (1+C_{\alpha/2})\mu(A) - \frac{\alpha C_{\alpha/2}}{2} \geq (1+\frac{C_{\alpha/2}}{2})\mu(A).
\end{align}

Finally, we consider measurable $A \subseteq P(G)$ with $\mu(A) = \frac{1}{2}$. Take
\[
\epsilon_1 \coloneqq \min\left\{\frac{3}{8}, ~\frac{C_{1/8}}{32+4C_{1/8} + 32CD(D+2)}\right\} > 0. 
\]
Without loss of generality, assume $C_{1/8} \leq 1$. Again due to regularity, we choose open subset $\tilde{A} = \bigcup_{i=1}^{N} Z(\alpha_i)$ for $\alpha_i \in F(G)$ such that $\mu(A \symdiff \tilde{A}) < \epsilon_1$. This implies that $ \frac{1}{2} - \epsilon_1 < \mu(\tilde{A}) < \frac{1}{2} + \epsilon_1$. We need to control the expansion of $\tilde{A}$.

If $\mu(\tilde{A}) \leq \frac{1}{2}$, then by assumption we have $\mu(\r(B_1\cdot \tilde{A})\setminus \tilde{A})\geq C_{1/8} \mu(A)$. Otherwise, set $\tilde{B} \coloneqq P(G)\setminus (\r(B_1\cdot \tilde{A}))$ and then $\mu(\tilde{B}) < \frac{1}{2}$. If $\mu(\tilde{B})< \frac{1}{4}$, then $\mu(\r(B_1\cdot \tilde{A})) = 1- \mu(\tilde{B}) >1-\frac{1}{4} \geq \frac{3}{2}\mu(A)$. If $\mu(\tilde{B})\geq \frac{1}{4}$, then applying the estimate (\ref{EQ:estimate in graph groupoids}) to $\tilde{B}$ we obtain $\mu(\r(B_1\cdot \tilde{B}))> (1+\frac{C_{1/8}}{2})\mu(\tilde{B})$. Hence 
\[
\mu(\r(B_1\cdot \tilde{A})\setminus \tilde{A})\geq \mu(\r(B_1\cdot \tilde{B})\setminus\tilde{B})> \frac{C_{1/8}}{2} \mu(\tilde{B})\geq \frac{C_{1/8}}{8} \cdot \mu(\tilde{A}).
\]
In conclusion, we obtain $\mu(\r(B_1\cdot \tilde{A})) > (1+ \frac{C_{1/8}}{8}) \mu(\tilde{A})$. Then we obtain:
\begin{align*}
\mu(\r(B_1 \cdot A))&  \geq  \mu(\r(B_1 \cdot \tilde{A})) - \mu(\r(B_1 \cdot (A \symdiff \tilde{A}))) 
>(1+\frac{C_{1/8}}{8})\mu(\tilde{A}) - CD(D+2)\mu(A \symdiff \tilde{A}) \nonumber\\
& \geq (1+\frac{C_{1/8}}{8})(\mu(A) - \epsilon_1) - CD(D+2) \epsilon_1  \nonumber \\
& \geq (1+\frac{C_{1/8}}{8})\mu(A) - \frac{C_{1/8}}{8} = (1+\frac{C_{1/8}}{16})\mu(A),
\end{align*}
which concludes the proof.
\end{proof}

Finally, we provide a concrete graph and study the asymptotic expansion of its graph groupoid.

\begin{ex}\label{example on the graph of N with k period}
Fix $k$ be a positive integer. Consider the graph $G=(V,E)$ where $V= \N$ and $E=\{(n,n+i) ~|~ n\in \N \text{ and } i=1,2,\cdots, k\}$. Let $\G$ be the associated graph groupoid with the length function from (\ref{EQ:length function for graph groupoids}), and equip $P(G)$ with the Borel probability measure from (\ref{EQ:def for mu_v}) by setting $b_n=\mu(Z(n)) \coloneqq \frac{1}{2^{n+1}}$ for $n\in \N$. More precisely, for $\alpha=(\alpha_1, \cdots, \alpha_m) \in F(G)$ with $\s(\alpha_1) = n$, set $\mu(Z(\alpha)) = \frac{1}{2^{n+1}k^m}$.
%
%
We have the following:
\begin{enumerate}
\item If $k=1$, then $\G$ is expanding in measure.
\item If $k\geq 2$, then $\G$ is not asymptotically expanding in measure.
\end{enumerate}

\textbf{For (1):} Note that for any $\alpha \in F(G)$, the set $Z(\alpha)$ consists of a single point, which is the infinite path starting at $s(\alpha)$. Hence $Z(\alpha) = Z(s(\alpha))$. Now for any measurable $A \subseteq P(G)$ with $0<\mu(A) \leq \frac{1}{2}$, take $n_A\coloneqq \min\{s(\alpha) ~|~ \alpha \in A\}$. 

If $n_A=0$, then $\mu(A) \leq \frac{1}{2}$ implies that $A=Z(0)$. Hence $Z(1)\subseteq \r(B_1\cdot A)$, which implies that $\mu(\r(B_1\cdot A))\geq \frac{3}{2}\mu(A)$. 
If $n_A\geq 1$, then by the choice of $\mu$ we have $\mu(Z(n_A))\geq \frac{1}{2}\mu(A)$. Since $Z(n_A-1)\subseteq \r(B_1\cdot A)$, then 
\[
\mu(\r(B_1\cdot A))\geq \mu(A)+2\mu(Z(n_A))\geq 2\mu(A). 
\]
Hence $\mathcal{G}$ is expanding in measure, and we conclude (1).

\textbf{For (2):} Fix $k\geq 2$. For $n_1 \leq n_2\in \N$, denote
\[
F(G)_{n_1, n_2} \coloneqq \{\alpha \in F(G) ~|~ s(\alpha) = n_1, r(\alpha) = n_2\} \quad \text{and} \quad Z_{n_1, n_2} \coloneqq \bigcup \{Z(\alpha) ~|~ \alpha \in F(G)_{n_1, n_2}\}.
\]
Direct calculations show that for $n \geq k$, we have
\[
\mu(Z_{0,n})=\mu(Z(0)) -\sum_{j=1}^{k-1}\frac{k-j}{k}\mu(Z_{0,n-j}).
\]
Rewrite the above as follows:
\[
\mu(Z_{0,n}) + \sum_{j=1}^{k-1}\frac{k-j}{k}\mu(Z_{0,n-j}) = \frac{1}{2},
\]
and note that $1+\sum_{j=1}^{k-1}\frac{k-j}{k} = \frac{k+1}{2}$. Hence for any $p \in \N$, there exists an integer $n_p \in \left\{pk+1, pk+2, \cdots, (p+1)k\right\}$ such that $\mu(Z_{0,n_p})\geq\frac{1}{k+1}$.

Now for positive integer $p \in \N$, define
\[
F(G)_{n_p} \coloneqq \bigsqcup_{m=0}^{n_p} F(G)_{m,n_p} \quad \text{and} \quad A_p=\bigcup \left\{ Z(\alpha\alpha_p)~|~\alpha \in F(G)_{n_p}\right\},
\]
where $\alpha_p\coloneqq (n_p,n_p+1)$. Then we have 
\[
\mu(A_p)>\frac{1}{k}\mu(Z_{0,n_p})\geq \frac{1}{k(k+1)} \quad \text{and} \quad \mu(A_p)\leq \frac{1}{k}\mu(P(G))=\frac{1}{k}.
\]
By the construction of $F(G)_{n_p}$, it is clear that for any $\alpha \in F(G)_{n_p}$ with $|\alpha|\geq 1$ we have $\r(B_1\cdot Z(\alpha\alpha_p))\subseteq A_p$. Hence
\[
\mu(\r(B_1\cdot A_p))=\mu(A_p \sqcup Z(n_p +1))=\mu(A_p)+\frac{1}{2^{n_p +2}},
\]
which shows that $\mu\big(\r((B_1\cdot A_p) \setminus A_p)\big) = \frac{1}{2^{n_p +2}} \to 0$ as $p \to \infty$. This concludes (2).
\end{ex}

\bibliographystyle{plain}
\bibliography{reference}

\end{document}